\documentclass[letterpaper,10pt]{article}

\usepackage[dvipsnames]{xcolor}
\usepackage{
  amsmath, amsthm, amssymb, mathtools, dsfont, units,       % Math typesetting
  graphicx, wrapfig, subfig, float,                         % Figures and graphics formatting
  listings, color, inconsolata, pythonhighlight,            % Code formatting
  fancyhdr, hyperref, framed, authblk                      % Headers/footers, section fonts
}
\usepackage{newpxtext, newpxmath, inconsolata}
\usepackage[cal=cm,scr=boondox,bb=dsfontserif]{mathalpha}
\usepackage[inline]{enumitem}
\usepackage{array,longtable}

\usepackage[
  backend=biber,
  style=numeric,
  giveninits=true,
  maxnames = 999
]{biblatex}
\usepackage[left=1in, right=1in, top=1.0in, bottom=.9in, headsep=.2in, footskip=0.35in]{geometry}

\usepackage[bottom]{footmisc}

\allowdisplaybreaks

\usepackage[font={it,footnotesize}]{caption}

\hypersetup{colorlinks=true, linkcolor=RoyalBlue, citecolor=RedOrange, urlcolor=RoyalBlue}

\usepackage{titlesec}
\titleformat{\section}{\large\bfseries\selectfont}{\thesection\;\;\;}{0em}{}
\titleformat{\subsection}{\normalsize\bfseries\selectfont}{\thesubsection\;\;\;}{0em}{}

\setlist[itemize]{wide=0pt, leftmargin=16pt, labelwidth=10pt, align=left}

\usepackage[subfigure]{tocloft}

\graphicspath{{Images/}{../Images/}}

\theoremstyle{plain}
\newtheorem{theorem}{Theorem}
\newtheorem{proposition}[theorem]{Proposition}

\newtheorem{lemma}[theorem]{Lemma}
\newtheorem{corollary}[theorem]{Corollary}

\theoremstyle{definition}
\newtheorem{definition}[theorem]{Definition}
\newtheorem{remark}[theorem]{Remark}

\newtheorem*{notations*}{Notations}
\numberwithin{theorem}{section}
\numberwithin{equation}{section}

\newcommand{\bba}{\mathbb{A}}

\newcommand{\bbe}{\mathbb{E}}
\newcommand{\bbg}{\mathbb{G}}
\newcommand{\bbk}{\mathbb{K}}
\newcommand{\bbn}{\mathbb{N}}
\newcommand{\bbp}{\mathbb{P}}
\newcommand{\bbr}{\mathbb{R}}
\newcommand{\bbt}{\mathbb{T}}
\newcommand{\bbone}{\mathbb{1}}

\newcommand{\bfa}{\mathbf{a}}
\newcommand{\bfb}{\mathbf{b}}

\newcommand{\bfe}{\mathbf{e}}

\newcommand{\bfs}{\mathbf{s}}

\newcommand{\bfu}{\mathbf{u}}
\newcommand{\bfv}{\mathbf{v}}
\newcommand{\bfx}{\mathbf{x}}
\newcommand{\bfy}{\mathbf{y}}

\newcommand{\bfDelta}{\boldsymbol{\Delta}}

\newcommand{\bfSigma}{\boldsymbol{\Sigma}}
\newcommand{\bfUps}{\boldsymbol{\Upsilon}}
\newcommand{\bfA}{\mathbf{A}}
\newcommand{\bfB}{\mathbf{B}}

\newcommand{\bfD}{\mathbf{D}}
\newcommand{\bfF}{\mathbf{F}}
\newcommand{\bfG}{\mathbf{G}}
\newcommand{\bfH}{\mathbf{H}}
\newcommand{\bfI}{\mathbf{I}}
\newcommand{\bfL}{\mathbf{L}}
\newcommand{\bfM}{\mathbf{M}}
\newcommand{\bfP}{\mathbf{P}}
\newcommand{\bfQ}{\mathbf{Q}}
\newcommand{\bfR}{\mathbf{R}}
\newcommand{\bfS}{\mathbf{S}}

\newcommand{\bfX}{\mathbf{X}}
\newcommand{\bfY}{\mathbf{Y}}

\newcommand{\fkc}{\mathfrak{c}}
\newcommand{\fkd}{\mathfrak{d}}
\newcommand{\fke}{\mathfrak{e}}
\newcommand{\fkr}{\mathfrak{r}}
\newcommand{\fks}{\mathfrak{s}}
\newcommand{\fkm}{\mathfrak{m}}
\newcommand{\fkn}{\mathfrak{n}}
\newcommand{\fku}{\mathfrak{u}}
\newcommand{\fkw}{\mathfrak{w}}
\newcommand{\fkx}{\mathfrak{x}}
\newcommand{\fkq}{\mathfrak{q}}
\newcommand{\fkD}{\mathfrak{D}}
\newcommand{\fkJ}{\mathfrak{J}}
\newcommand{\fkS}{\mathfrak{S}}
\newcommand{\fkW}{\mathfrak{W}}
\newcommand{\fkX}{\mathfrak{X}}

\newcommand{\bfke}{\boldsymbol{\mathfrak{e}}}
\newcommand{\bfku}{\boldsymbol{\mathfrak{u}}}
\newcommand{\bfkw}{\boldsymbol{\mathfrak{w}}}
\newcommand{\bfks}{\boldsymbol{\mathfrak{s}}}

\newcommand{\caA}{\mathcal{A}}
\newcommand{\caB}{\mathcal{B}}
\newcommand{\caC}{\mathcal{C}}
\newcommand{\caE}{\mathcal{E}}
\newcommand{\caF}{\mathcal{F}}
\newcommand{\caG}{\mathcal{G}}
\newcommand{\caH}{\mathcal{H}}
\newcommand{\caI}{\mathcal{I}}
\newcommand{\caJ}{\mathcal{J}}

\newcommand{\caL}{\mathcal{L}}
\newcommand{\caS}{\mathcal{S}}
\newcommand{\caT}{\mathcal{T}}

\newcommand{\scA}{\mathscr{A}}
\newcommand{\scN}{\mathscr{N}}
\newcommand{\scP}{\mathscr{P}}

\newcommand{\rmd}{\mathrm{d}}

\newcommand{\Var}{\operatorname{Var}}
\newcommand{\diag}{\operatorname{diag}}

\newcommand{\PRM}{\operatorname{PRM}}
\newcommand{\pconv}{\xrightarrow{\bbp}}

\DeclarePairedDelimiter\abs{\lvert}{\rvert}
\DeclarePairedDelimiter\angles{\langle}{\rangle}
\DeclarePairedDelimiter\braks{\lbrack}{\rbrack}
\DeclarePairedDelimiter\curls{\lbrace}{\rbrace}
\DeclarePairedDelimiter\dbraks{\llbracket}{\rrbracket}
\DeclarePairedDelimiter\norm{\lVert}{\rVert}
\DeclarePairedDelimiter\pars{\lparen}{\rparen}
\NewDocumentCommand{\bigO}{o m}{%
  O\IfNoValueTF{#1}
    {\pars{#2}}        % no optional argument
    {\pars[#1]{#2}}    % with optional argument
}
\NewDocumentCommand{\smallo}{o m}{%
  o\IfNoValueTF{#1}
    {\pars{#2}}        % no optional argument
    {\pars[#1]{#2}}    % with optional argument
}

\newcommand{\eps}{\varepsilon}

\newcommand{\shortpara}[1]{\noindent\underline{\textit{#1}}}

\begin{document}

% --- Main title and subtitle ---

\title{Staircase phase transitions for the largest eigenvalue of heavy-tailed sample correlation matrices}

% --- Author and date of last update ---

\author[1]{Yanpeng Li\textsuperscript{a,}}
\author[2]{Zeqin Lin\textsuperscript{b,}}
\author[3]{Yiming Liu\textsuperscript{c,}}
\author[4]{Jiahui Xie\textsuperscript{d,}}
\author[5]{Haozhu Zhao\textsuperscript{e,}}

\affil[1]{Harbin Institute of Technology \textsuperscript{a}\texttt{\href{mailto:20230256@hit.edu.cn}{20230256@hit.edu.cn}}}
\affil[2]{Nanyang Technological University \textsuperscript{b}\texttt{\href{mailto:zeqin.lin@ntu.edu.sg}{zeqin.lin@ntu.edu.sg}}}
\affil[3]{Jinan University \textsuperscript{c}\texttt{\href{mailto:liuyiming@jnu.edu.cn}{liuyiming@jnu.edu.cn}}}
\affil[4]{National University of Singapore \textsuperscript{d}\texttt{\href{mailto:jhxie@nus.edu.sg}{jhxie@nus.edu.sg}}}
\affil[5]{Changchun University of Science and Technology \textsuperscript{e}\texttt{\href{mailto:zhaohz24@163.com}{zhaohz24@163.com}}}

\date{\vspace{-2.5em}}

\renewcommand\Authfont{\normalsize}
\renewcommand\Affilfont{\footnotesize}

% --- Add title and table of contents ---

\maketitle
\begin{abstract}
We establish staircase phase transitions for the largest eigenvalue of heavy-tailed sample correlation matrices formed from a $p_n \times n$ data matrix with i.i.d. real entries of mean zero and unit variance, allowing an infinite fourth moment. In the proportional regime $p_n / n \to \phi \in (0, \infty)$, the first-order asymptotics depend jointly on the aspect ratio and the entry tail. The transitions are driven by collisions of large entries in distinct rows of a common column. The first collision order capable of producing a separated upper outlier is $k_* (\phi) = \lfloor \sqrt{\phi} \rfloor + 2$, yielding the critical tail exponent $\alpha_* (\phi) = 2 + 2 / k_* (\phi)$. This exponent decreases in steps as $\phi$ increases, creating a staircase boundary between convergence to the upper Mar\v{c}enko--Pastur edge and successive outlier levels. At exact critical tail scales, the point process of eigenvalues above the upper edge or the preceding deterministic level converges to a Poisson point process. The resulting nondegenerate limiting laws for the largest eigenvalue connect adjacent phases and have a positive atom at this baseline. If every fixed collision order is supercritical, the largest eigenvalue diverges in probability despite finite entry variance.
\end{abstract}
\tableofcontents \label{sec:contents}

% --- Main content: import lectures as subfiles ---

% *******************************************
\section{Introduction}
\label{sec:introduction}
% *******************************************

Let $\xi$ be a real-valued random variable, and let $(p_n)_{n \geq 1} \subset \bbn$. For each $n$, consider a $p_n \times n$ data matrix with i.i.d. entries distributed as $\xi$,
\begin{equation}
    \bfX_n = (x_{i \mu})_{1 \leq i \leq p_n, \, 1 \leq \mu \leq n}
    \in \bbr^{p_n \times n},
    \qquad
    \bfX_n^\top = (\bfx_1, \ldots, \bfx_{p_n}).
    \label{def:data-matrix}
\end{equation}
Following the conventions of multivariate statistics, we regard $p_n$ as the data dimension and $n$ as the sample size. Hence, the $i$-th row $\bfx_i = (x_{i 1}, \ldots, x_{i n})^\top \in \bbr^n$ collects the observations of the $i$-th feature. The basic assumptions used in this manuscript are
\begin{equation}
    \bbe \xi = 0,
    \qquad
    \bbe \abs{\xi}^2 = 1,
    \qquad
    p_n / n \to \phi \in (0, \infty).
    \label{eqn:basic-assumption}
\end{equation}
Here the condition on the aspect ratio places us in the proportional high-dimensional regime $p_n \asymp n$.

The \emph{uncentered sample covariance matrix} associated with the data matrix $\bfX_n$ is
\begin{equation}
    \bfS_n := \frac{1}{n} \bfX_n \bfX_n^\top = (S_{ij})_{i,j=1}^{p_n},
    \qquad
    S_{ij} := \frac{1}{n} \angles{\bfx_i, \bfx_j}
    = \frac{1}{n} \sum\nolimits_{\mu=1}^n x_{i \mu} x_{j \mu}.
    \label{def:uncentered-covariance}
\end{equation}
Let $\bfD_n$ be the diagonal matrix formed from the diagonal entries of $\bfS_n$,
\begin{equation}
    \bfD_n = \diag (D_1, \ldots, D_{p_n}),
    \qquad
    D_i = \norm{\bfx_i}^2 / n.
\end{equation}
The main object studied in this manuscript is the \emph{uncentered sample correlation matrix}
\begin{equation}
    \bfR_n = \bfD_n^{-1 / 2} \bfS_n \bfD_n^{-1 / 2}
    = \bfY_n \bfY_n^\top,
    \qquad
    \bfY_n^\top = (\bfy_1, \ldots, \bfy_{p_n}),
    \qquad
    \bfy_i = {\bfx_i} / {\norm{\bfx_i}}.
    \label{def:uncentered-correlation}
\end{equation}
Specifically, its entries are
\begin{equation*}
    \bfR_n := (R_{ij})_{i,j=1}^{p_n},
    \qquad
    R_{i j} = \frac{\angles{\bfx_i, \bfx_j}}
    {\norm{\bfx_i} \, \norm{\bfx_j}}
    = \frac{\sum_{\mu = 1}^n x_{i \mu} x_{j \mu}}
    {\sqrt{\pars[\big]{\sum_{\mu = 1}^n \abs{x_{i \mu}}^2}
    \pars[\big]{\sum_{\mu = 1}^n \abs{x_{j \mu}}^2}}}.
\end{equation*}
By \eqref{def:uncentered-correlation}, the matrix $\bfY_n$ is obtained by normalizing each row of $\bfX_n$ to have unit norm, and $\bfR_n$ is the corresponding Gram matrix. The correlation matrix $\bfR_n$ is invariant under positive rescaling of individual rows, and therefore the normalization to unit variance in \eqref{eqn:basic-assumption} entails no loss of generality for entry distributions with zero mean and finite variance $\bbe \abs{\xi}^2 \in (0, \infty)$.

For entry distributions with nonzero population mean, $\bbe \xi \neq 0$, a more standard choice in multivariate statistics is the \emph{Pearson sample correlation matrix}, which centers each feature by its sample mean before normalization. Specifically, define the sample mean of the $i$-th feature by
\begin{equation}
    \bar{x}_i := \frac{1}{n} \angles{\mathbf{1}_n, \bfx_i}
    = \frac{1}{n} \sum\nolimits_{\mu=1}^n x_{i \mu},
    \qquad
    \mathbf{1}_n = (1, \ldots, 1)^\top \in \bbr^n.
    \label{def:pearson-projection}
\end{equation}
The centered sample covariance matrix and the Pearson sample correlation matrix are then
\begin{equation}
    \bfS_n^\circ
    = \frac{1}{n-1} \bfX_n^\circ (\bfX_n^\circ)^\top,
    \qquad
    \bfx_i^{\circ} := \bfx_i - \bar{x}_i \mathbf{1}_n,
    \qquad
    \bfR_n^\circ
    = \bfY_n^\circ (\bfY_n^\circ)^\top,
    \qquad
    \bfy_i^\circ
    = \bfx_i^{\circ} / \norm{\bfx_i^{\circ}}.
    \label{def:pearson-correlation}
\end{equation}
The Pearson correlation matrix is invariant under shifts of individual features. To simplify the presentation, we first state our results for $\bfR_n$ under \eqref{eqn:basic-assumption} and then extend them to $\bfR_n^\circ$ as a corollary.

For a real symmetric matrix $\bfA \in \bbr^{d \times d}$, write $\lambda_1 (\bfA) \geq \cdots \geq \lambda_d (\bfA)$ for its eigenvalues in nonincreasing order. This manuscript studies the first-order asymptotics of the largest eigenvalue $\lambda_1 (\bfR_n)$ under \eqref{eqn:basic-assumption}. We allow entries with heavy tails and, in particular, do not require $\bbe \abs{\xi}^4 < \infty$. Our objective is to determine the tail conditions on $\xi$ that govern whether $\lambda_1 (\bfR_n)$ converges to the upper edge of the bulk spectrum or emerges as a separated outlier. It turns out that the interaction between heavy tails and self-normalization leads to a hierarchy of phase transitions whose structure depends on the aspect ratio $\phi$, revealing a new phenomenon for extreme eigenvalues in random matrix theory (RMT). Before presenting our results, we first review the relevant literature on limiting spectral distributions and extreme eigenvalues.

Under \eqref{eqn:basic-assumption}, the empirical spectral distributions (ESDs) of $\bfS_n$ and $\bfR_n$ converge weakly almost surely to the Mar\v{c}enko--Pastur (MP) law \cite{jiangLimitingDistributionsEigenvalues2004,marcenkoDistributionEigenvaluesSets1967} with aspect ratio $\phi$. Its distribution function is
\begin{equation}
    \bar{F}_\phi (t)
    = \int_{-\infty}^t
    \frac{\sqrt{(\lambda_+ - x) (x - \lambda_-)}}{2 \pi \phi x}
    \bbone \{\lambda_- < x < \lambda_+\} \, \rmd x
    + \pars*{1 - \frac{1}{\phi}}_+ \bbone \{t \geq 0\},
    \label{def:MP-law}
\end{equation}
where the upper and lower MP edges are given by
\begin{equation}
    \lambda_+ \equiv \lambda_+ (\phi) = (1 + \sqrt{\phi})^2,
    \qquad
    \lambda_- \equiv \lambda_- (\phi) = (1 - \sqrt{\phi})^2.
\end{equation}

Convergence of the ESD alone does not determine the behavior of the extreme eigenvalues. The classical Bai--Yin theorem \cite{baiLimitSmallestEigenvalue1993,yinLimitLargestEigenvalue1988} gives $\lambda_1 (\bfS_n) \to \lambda_+$ and $\lambda_{p_n \wedge n} (\bfS_n) \to \lambda_-$ almost surely under $\bbe \abs{\xi}^4 < \infty$. For the upper edge, this fourth moment condition is also necessary. Indeed, Bai, Silverstein, and Yin \cite{baiNoteLargestEigenvalue1988} proved that $\bbe \abs{\xi}^4 = \infty$ implies $\limsup_{n \to \infty} \lambda_1 (\bfS_n) = \infty$ almost surely. For the lower edge, however, Tikhomirov \cite{tikhomirovLimitSmallestSingular2015} showed that finite variance alone is sufficient for almost sure convergence to $\lambda_-$ when $\phi \ne 1$.

For sample correlation matrices, Jiang \cite{jiangLimitingDistributionsEigenvalues2004} established the corresponding almost sure limits for the extreme eigenvalues of $\bfR_n$ under a finite fourth moment condition. The argument compares $\bfR_n$ with $\bfS_n$ by using uniform convergence of the row empirical second moments $D_i = \norm{\bfx_i}^2/n$ to one. Under the same moment condition, Jiang \cite{jiangLimitingDistributionsEigenvalues2004} also proved $\lambda_{1} (\bfR_n^\circ) \to \lambda_+$ almost surely for the Pearson correlation matrix, while Xiao and Zhou \cite{xiaoAlmostSureLimit2010} subsequently obtained the corresponding limit for its smallest nonzero eigenvalue. More recently, Jiang, Zhu, and Li \cite{jiangEigenvaluesRenormalizedSample2026} established almost sure convergence of the largest eigenvalue of a renormalized Pearson correlation matrix when $p_n \asymp n^\beta$ for $\beta \geq 1$, under the moment condition $\bbe \abs{\xi}^{2 (\beta + 1)} < \infty$.

When $\bbe \abs{\xi}^4 = \infty$, the comparison with $\bfS_n$ based on uniform convergence of the $D_i$ need not remain valid, and the dependence among coordinates of $\bfy_i$ induced by row normalization must be treated directly. Heiny and Mikosch \cite{heinyAlmostSureConvergence2018} obtained almost sure convergence of the extreme eigenvalues of $\bfR_n$ to the MP edges for $\phi \in (0, 1]$ and symmetric $\xi$ under a recursive condition on the mixed moments of a normalized row $\bfy_i$. This criterion does not explicitly impose a finite fourth moment, but its formulation in terms of normalized coordinates does not yield a simple condition on the distribution of $\xi$. In a related setting, Chafa{\"i} and Tikhomirov \cite{chafaiConvergenceExtremalEigenvalues2018} established convergence in probability of the extreme eigenvalues to the MP edges for independent isotropic observations with dependent coordinates, under tail projection assumptions that allow heavy tails. For the normalized rows considered here, however, these assumptions do not translate directly into simple conditions on $\xi$.

For entries with regularly varying tails of index $\alpha \in (0, 4)$, Auffinger, Ben Arous, and P\'{e}ch\'{e} \cite{auffingerPoissonConvergenceLargest2009} established Poisson limits for the rescaled upper extreme eigenvalues of Wigner and sample covariance matrices, yielding Fr\'{e}chet limits for their largest eigenvalues. At the crossover scale $\lim_{t \to \infty} t^4 \bbp \{\abs{\xi} > t\} \in (0, \infty)$, Diaconu \cite{diaconuMoreLimitingDistributions2023} used the moment method to derive a deformed Fr\'{e}chet law for the largest eigenvalue of Wigner matrices with symmetric entry distributions. The resulting limiting law has an atom at the upper spectral edge. Han \cite{hanDeformedFrechetLaw2025} subsequently proved the corresponding limiting law for $\lambda_1 (\bfS_n)$ by combining the resampling technique with a resolvent analysis of finite-rank perturbations. In these regimes, the behavior of the largest eigenvalues is driven by individual large entries.

In our companion work \cite{linPhaseTransitionSmallest2026}, we studied the smallest nonzero eigenvalue of $\bfR_n$ under conditions stated directly in terms of the distribution of $\xi$. In the tall regime $\phi > 1$, we established almost sure convergence to $\lambda_-$ under finite variance alone, matching Tikhomirov's result \cite{tikhomirovLimitSmallestSingular2015} for $\bfS_n$. In the wide regime $\phi < 1$, however, we showed that $\lambda_{p_n} (\bfR_n)$ converges in probability to $\lambda_-$ or zero according to whether $t^3 \bbp \{\abs{\xi} > t\}$ tends to zero or infinity, respectively. At the crossover scale $\lim_{t \to \infty} t^3 \bbp \{\abs{\xi} > t\} \in (0, \infty)$, collisions of large entries in two distinct rows and a common column generate a Poisson process of outliers in $(0, \lambda_-)$. The limiting law of $\lambda_{p_n} (\bfR_n)$ then has a continuous density on $(0, \lambda_-)$ and a positive atom at $\lambda_-$. Thus, in the wide regime, the smallest eigenvalue of $\bfR_n$ can separate from $\lambda_-$ while the MP bulk remains unchanged.

This manuscript establishes corresponding results for the first-order asymptotic behavior of the largest eigenvalue of $\bfR_n$. The behavior of $\lambda_1 (\bfR_n)$ exhibits a richer phase structure because collisions involving more than two rows also contribute, giving rise to a hierarchy of deterministic outlier levels and critical laws. The aspect ratio $\phi$ plays a more intricate role here, determining both the minimum number of colliding rows needed to produce an outlier above $\lambda_+$ and the locations of the resulting outlier levels. Depending on the tail of $\xi$, we distinguish four regimes for $\lambda_1 (\bfR_n)$.

% *******************************************
\subsection{Main results}
\label{subsec:main-results}
% *******************************************

We begin with the subcritical regime, in which $\lambda_1 (\bfR_n)$ converges in probability to the upper MP edge. Write
\begin{equation*}
    \bar{F}_{\xi} (t) := \bbp \{\abs{\xi} > t\},
\end{equation*}
and define $\alpha_k := 2 + 2 / k$ for integers $k \geq 2$. Here the exponent $\alpha_k$ arises from counting sets of $k$ entries in distinct rows of a common column, all exceeding $\sqrt{n}$ in absolute value. In fact, the expected number of these $k$-row collisions is
\begin{equation*}
    n \binom{p_n}{k} \bar{F}_{\xi} (\sqrt{n})^k
    = \pars*{\frac{\phi^k}{k!} + o (1)}
    \braks*{n^{\alpha_k / 2} \bar{F}_{\xi} (\sqrt{n})}^k.
\end{equation*}
We call the tail of $\xi$ \emph{subcritical}, \emph{supercritical}, or \emph{critical} at order $k$ according to whether $t^{\alpha_k} \bar{F}_{\xi} (t)$ tends to zero, infinity, or a finite positive constant, respectively, as $t \to \infty$.

The tail condition for convergence to $\lambda_+$ depends on the minimum number of colliding rows needed to produce a separated outlier. We call this number the \emph{first collision order} and define it, together with the corresponding critical tail exponent, by
\begin{equation}
    k_* (\phi) 
    := \lfloor \sqrt{\phi} \rfloor + 2,
    \qquad
    \alpha_* (\phi) 
    := \alpha_{k_* (\phi)}
    = 2 + \frac{2}{k_* (\phi)}.
    \label{def:first-collision-order}
\end{equation}

% *******************************************
\begin{theorem}[Subcritical regime]
\label{thm:upper-edge-convergence}
% *******************************************
Suppose \eqref{eqn:basic-assumption} holds and
\begin{equation}
    t^{\alpha_* (\phi)} \bar{F}_{\xi} (t) \to 0,
    \qquad t \to \infty.
    \label{cond:upper-tail-subcritical}
\end{equation}
Then
\begin{equation}
    \lambda_1 (\bfR_n) \pconv \lambda_+ (\phi) = (1 + \sqrt{\phi})^2.
    \label{eqn:upper-edge-convergence}
\end{equation}
\end{theorem}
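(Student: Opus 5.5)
Since the empirical spectral distribution of $\bfR_n$ converges to the Mar\v{c}enko--Pastur law, $\liminf_n \lambda_1(\bfR_n) \geq \lambda_+$ holds automatically in probability. The whole task is therefore the upper bound $\lambda_1(\bfR_n) \le \lambda_+ + \eps$ with probability tending to one. I would prove it by splitting each normalized row $\bfy_i$ at a threshold and controlling the large and small parts separately.

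\textbf{Truncation and decomposition.} Fix a small $\delta>0$ and split $\bfY_n = \bfA + \bfB$. Here $\bfA$ keeps the entries with $\abs{x_{i\mu}} \le \delta\sqrt n$ (normalized by $\norm{\bfx_i}$), and $\bfB$ keeps the large entries $\abs{x_{i\mu}} > \delta\sqrt n$. Finite variance and the weak law give $\max_i D_i$ bounded and $\min_i D_i \ge c$ with high probability, after a lower-truncation argument using self-normalization. Under \eqref{cond:upper-tail-subcritical}, $\bbe\abs{\xi}^{\alpha_*}$-type tail bounds together with the expected collision count $n\binom{p_n}{k}\bar F_\xi(\delta\sqrt n)^k$ show two facts. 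First, with probability $\to1$ no column contains $k_*(\phi)$ or more large entries. Second, each row has $O(1)$ large entries, and large entries are typically isolated; for $k=2$, collisions occur only at rate $n^{2}\cdot n^{-(\alpha_2/2)\cdot 2+\ldots}$. Hence $\bfB$ is, up to a permutation, block diagonal with blocks of size at most $k_*-1 = \lfloor\sqrt\phi\rfloor+1$ in each column. Each column of $\bfB$ has $\ell\le k_*-1$ nonzero entries of size at most $1$ (a normalized row coordinate). Row supports of distinct large columns are essentially disjoint.

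\textbf{Bulk part.} The matrix $\bfA$ has entries bounded by $\delta\sqrt n/\norm{\bfx_i}\lesssim \delta$ after normalization. I would show $\norm{\bfA}^2 \le \lambda_+ + o_\delta(1)$. One route replaces the row norms by $\sqrt n$ (this is valid for the truncated matrix since $\norm{\bfx_i}^2/n \to 1$ uniformly is no longer needed; one uses the bounded ratio and a leave-one-out/resolvent comparison). Then one applies the known fact that i.i.d.\ matrices with entries truncated at $\delta\sqrt n$ have largest singular value at most $1+\sqrt\phi+o_\delta(1)$. This follows from the moment method with $\bbe\abs{\xi}^4 \mathbb 1\{\abs\xi\le\delta\sqrt n\} = o(n)$, as in Bai--Yin-type truncation arguments.

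\textbf{Combining: the main obstacle.} The core step is to bound $\lambda_1((\bfA+\bfB)(\bfA+\bfB)^\top)$. Here the norm $\norm{\bfB}$ alone can be of order $\sqrt{k_*-1}$, so a triangle inequality fails. I would use a finite-rank perturbation/resolvent argument in the style of the companion paper on the smallest eigenvalue. Condition on the large-entry positions, and treat the columns of $\bfB$ as a low-rank-per-block perturbation that is nearly orthogonal to $\bfA$ (by independence and delocalization of the bulk resolvent). An outlier then corresponds to a solution $z>\lambda_+$ of a block secular equation. For a column carrying $\ell$ large entries whose normalized squares $s_1,\dots,s_\ell$ satisfy $\sum s_j\le \ell$, this equation involves the Stieltjes transform of the MP law. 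One shows that spikes of strength at most $\ell \le \lfloor\sqrt\phi\rfloor+1$ stay below the BBP threshold $\sqrt\phi$ in the relevant normalization. The spike strength is roughly $\ell$ minus one unit from self-normalization, because the large entry absorbs its row's mass, so $\ell-1\le\lfloor\sqrt\phi\rfloor\le\sqrt\phi$ produces no outlier. This computation is what makes $k_*(\phi)=\lfloor\sqrt\phi\rfloor+2$ sharp. I expect this step to be the main obstacle. It needs the exact spike identification, uniformity over the $O(n)$ blocks (handled by near-disjointness and a union bound on isotropic local laws), and care at the equality case $\ell-1=\sqrt\phi$. At equality the spike sits exactly at the threshold and yields only $\lambda_+ + o(1)$. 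Letting $\delta\to0$ then concludes the proof.
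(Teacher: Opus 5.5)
\textbf{Verdict: genuine gap.} Your heuristic matches the paper's mechanism. An $\ell$-row collision produces an effective block eigenvalue $\gamma_\ell<\ell$, an outlier needs $\gamma_\ell>1+\sqrt{\phi}$, and $\ell\le k_*(\phi)-1=1+\lfloor\sqrt{\phi}\rfloor$ keeps every block at or below this threshold.

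At your fixed scale $\delta\sqrt{n}$ the combinatorics is even simpler than you say. The expected number of connected configurations of $k_*(\phi)$ large entries is $O([n^{\alpha_*/2}\bar{F}_{\xi}(\delta\sqrt{n})]^{k_*})\to0$. So every block has at most $k_*-1$ rows, and the trace bound caps it at $k_*-1$. This also covers multi-column components, which do occur for $\phi\ge4$, so the near-disjointness you assert is neither true in general nor needed.

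The gap is that the step you call the main obstacle is essentially the whole theorem, and the tools you name for it fail after a fixed-$\delta$ truncation.

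(a) The bound $\norm{\bfA}^2\le\lambda_++o_\delta(1)$ does not follow from Bai--Yin. Their truncation is at $\delta_n\sqrt{n}$ with $\delta_n\to0$, and it is harmless only because $\bbe\abs{\xi}^4<\infty$. The hypothesis you cite, $\bbe\abs{\xi}^4\bbone\{\abs{\xi}\le\delta\sqrt{n}\}=o(n)$, holds under finite variance alone. Yet for $\bar{F}_{\xi}(t)\asymp t^{-2}(\log t)^{-2}$, some column of the truncated matrix has $\gtrsim\log n/\log\log n$ entries of modulus in $(\eps\sqrt{n},\delta\sqrt{n}]$, so its norm diverges. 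Any such bound must use the tail condition and needs its own proof.

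(b) More seriously, $\bfA$ generally still carries polynomially many entries of size comparable to $\delta$. There are about $n^{2-\alpha/2}$ of them when $\bar{F}_{\xi}(t)\asymp t^{-\alpha}$ with $\alpha_*<\alpha<4$, including collisions among them and leftover leaves of collisions whose other leaves lie in $\bfB$. So the bulk resolvent obeys no entrywise or isotropic MP law to $o(1)$ accuracy, and it is not ``delocalized'' relative to $\bfB$. Any decoupling would have to be quantitative in $\delta$, uniformly over polynomially many such defects, and nothing in the proposal supplies this.

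(c) $\bfB$ has polynomially growing rank. A union bound of isotropic laws over blocks controls only the entries of the compressed resolvent on the occupied columns, whereas a block secular equation needs its operator norm. The cross-block terms are never controlled.

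Separately, $\max_i D_i$ is unbounded when $\bbe\abs{\xi}^4=\infty$. For $\bar{F}_{\xi}(t)\asymp t^{-7/2}$, which is subcritical for every $\phi$, the largest entry is of order $n^{4/7}\gg\sqrt{n}$, so only lower bounds on row norms are available.

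The paper makes the opposite trade: it truncates at $\fku_n=n^{1/2-\tau}$ and splits by rows rather than by entries.
\begin{enumerate}[label=(\arabic*)]
\item Rows with no entry above $\fku_n$ form a typical block with normalized entries $O(n^{-\tau})$. The sparse local law applies to it, giving rigidity at $\lambda_+$ and $\bfG_{\caT}\approx\fkm_{\varphi_n}\bfI$ entrywise up to $n^{-\tau}$.
\item All other rows form $\caB$. A Schur complement shows that the number of eigenvalues of $\bfR_n$ above $z$ equals the number of negative eigenvalues of $\bfI+\bfY_{\caB}\bfG_{\caT}(z)\bfY_{\caB}^\top$.
\item Stabilized normalizers $\hbar_i$ decouple the row normalization. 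Resampling through a uniform random column permutation makes the occupied column set $\caJ$ a uniform random subset independent of $\bfG_{\caT}$. Tropp's random-submatrix bound then controls $\norm{\bfP_{\caJ}\bfUps_{\caT}\bfP_{\caJ}^\top}$ in operator norm.
\item With matrix Bernstein for the cross terms, this gives Proposition~\ref{prop:unified-resolvent} and the count sandwich of Proposition~\ref{prop:spectral-transfer}.
\end{enumerate}
The price is that components with $k_*$ edges, in particular $k_*$-stars, may now occur in growing numbers and cannot be excluded combinatorially. Instead, only $k_*$-stars can push a block above $k_*-1$ (Lemma~\ref{lem:collision-components}). Lemma~\ref{lem:star-spectrum}\ref{item:star-spectrum-amplitudes} shows that $\gamma_{k_*}\ge k_*-1+\eta$ forces every leaf entry to exceed $c\sqrt{n}$. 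The first-moment bound $n\binom{p_n}{k_*}\bar{F}_{\xi}(c\sqrt{n})^{k_*}\to0$ then rules such stars out (Lemma~\ref{lem:collision-cap}). This gives $\lambda_1(\bfL_{\caB})\le1+\sqrt{\phi}+o_{\bbp}(1)$, and Corollary~\ref{cor:extremal-transfer} concludes.
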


A distinctive feature of this subcritical result is the dependence of the tail condition \eqref{cond:upper-tail-subcritical} on the aspect ratio $\phi$. The table below lists the first few values of $k_* (\phi)$ and $\alpha_* (\phi)$.
% *******************************************
\begin{center}
\renewcommand{\arraystretch}{1.2}
\begin{tabular}{c c c}
\hline
Aspect ratio & First collision order $k_* (\phi)$ & Critical exponent $\alpha_* (\phi)$\\
\hline
    $0 < \phi < 1$ & $2$ & $3$\\
    $1 \leq \phi < 4$ & $3$ & $8 / 3$\\
    $4 \leq \phi < 9$ & $4$ & $5 / 2$\\
\hline
\end{tabular}
\end{center}
% *******************************************
For every $\phi > 0$, condition \eqref{cond:upper-tail-subcritical} is weaker than the moment assumption $\bbe \abs{\xi}^{\alpha_* (\phi)} < \infty$. For example, a tail comparable to $t^{-\alpha_* (\phi)} / \log t$ satisfies this condition even though $\bbe \abs{\xi}^{\alpha_* (\phi)} = \infty$.

We next turn to the regime in which $\lambda_1 (\bfR_n)$ converges to a deterministic level above $\lambda_+$. This occurs when the tail is supercritical at some order $k \geq k_* (\phi)$ and subcritical at the next order $k + 1$. To specify the limit, we introduce the upper outlier map
\begin{equation}
    \theta_\phi (x)
    := x \pars*{1 + \frac{\phi}{x - 1}},
    \qquad x \geq 1 + \sqrt{\phi}.
    \label{def:upper-outlier-map}
\end{equation}
In spiked population models \cite{johnstoneDistributionLargestEigenvalue2001}, this map relates a population spike to the limiting location of its sample outlier in the supercritical regime of the BBP transition \cite{baikPhaseTransitionLargest2005,baikEigenvaluesLargeSample2006}. The map $\theta_\phi$ equals $\lambda_+$ at $1 + \sqrt{\phi}$ and is strictly increasing above this threshold. In our context, the number $k$ of colliding rows plays the role of the population spike, yielding the deterministic limit $\theta_\phi (k)$ in the following result. Note that this also explains our definition of $k_{*} (\phi)$ in \eqref{def:first-collision-order} as it is the smallest integer that exceeds the threshold $1 + \sqrt{\phi}$.

% *******************************************
\begin{theorem}[$k$-supercritical regime]
\label{thm:collision-layer}
% *******************************************
Suppose that \eqref{eqn:basic-assumption} holds, and fix an integer $k \geq k_* (\phi)$. Assume
\begin{equation}
    t^{\alpha_k} \bar{F}_{\xi} (t) \to \infty,
    \qquad
    t^{\alpha_{k + 1}} \bar{F}_{\xi} (t) \to 0,
    \qquad t \to \infty.
    \label{cond:deterministic-layer}
\end{equation}
Then
\begin{equation}
    \lambda_1 (\bfR_n) \pconv \theta_\phi (k)
    = k \pars*{1 + \frac{\phi}{k - 1}}.
    \label{eqn:deterministic-layer-limit}
\end{equation}
\end{theorem}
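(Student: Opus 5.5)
We prove the two matching bounds $\lambda_1(\bfR_n)\ge \theta_\phi(k)-\eps$ and $\lambda_1(\bfR_n)\le\theta_\phi(k)+\eps$ with probability tending to one. The plan is to split $\bfY_n$ into a part carrying the few very large entries and a light-tailed remainder. That remainder behaves like a Mar\v{c}enko--Pastur bulk, as in Theorem~\ref{thm:upper-edge-convergence}. The large part acts as a finite-rank spike whose strength equals the number of colliding rows.

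\textbf{Setup.} Fix a small $\delta>0$ and call an entry \emph{large} if $|x_{i\mu}|>\delta\sqrt n$. By Chebyshev, $\norm{\bfx_i}^2/n$ is close to $1$ for most rows, though rows carrying a large entry have norm inflated. If $\bfx_i$ has one entry $x_{i\mu}=s\sqrt n$ and its remaining entries are typical, then
\[
\bfy_i\approx \frac{s}{\sqrt{1+s^2}}\,\bfe_\mu+\frac{1}{\sqrt{1+s^2}}\,\bfu_i ,
\]
where $\bfu_i$ is the normalized remainder. A column $\mu$ in which $m$ distinct rows $i_1,\dots,i_m$ carry large entries therefore produces rows of $\bfY_n$ that all share a common component along $\bfe_\mu$. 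When $s\to\infty$, the Gram matrix gains an approximate rank-one piece $\bfv\bfv^\top$ with $\bfv=\sum_{r}\bfe_{i_r}$, so $\norm{\bfv}^2=m$. This acts as a population spike of size $m$: a master-equation / BBP argument (the isotropic resolvent for the remaining rows and columns) shows the associated outlier sits at $\theta_\phi(m)$ when $m>1+\sqrt\phi$.

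\textbf{Counting.} The hypothesis $t^{\alpha_k}\bar F_\xi(t)\to\infty$ gives $n\binom{p_n}{k}\bar F_\xi(M\sqrt n)^k\to\infty$ for every fixed $M$, since the growth is not affected by a constant rescaling of $t$. A second moment argument, using that collision events in distinct columns are nearly independent, then produces w.h.p.\ a column with $k$ rows whose entries all exceed $M\sqrt n$. Conversely, $t^{\alpha_{k+1}}\bar F_\xi(t)\to0$ gives, by a first moment bound, that w.h.p.\ no column has $k+1$ entries exceeding $\delta\sqrt n$. We also need that the large entries form disjoint structures, with no row having two large entries and every large entry lying in a single column cluster. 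This follows from $\bar F_\xi(\delta\sqrt n)=o(n^{-1-1/(k+1)})$ together with a union bound.

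\textbf{Lower bound.} For a $k$-collision with $s\ge M$, we use the test vector built from $\bfv$ and the resolvent of the remaining matrix. Equivalently, we apply interlacing to reduce to the $k$ rows plus the complement, where the large part is a rank-one perturbation of strength $k\,s^2/(1+s^2)$ on a matrix whose spectrum is MP-like. This forces an eigenvalue near $\theta_\phi(k s^2/(1+s^2))$, which is $\ge\theta_\phi(k)-\eps$ once $M$ is large, because $\theta_\phi$ is continuous and increasing.

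\textbf{Upper bound.} We decompose $\bfY_n=\bfY^{\rm small}+\bfY^{\rm large}$, where the large part is supported on the large entries. $\bfY^{\rm large}$ is block diagonal across columns, and each column block has norm at most $\sqrt{k}$. Since $\theta_\phi$ is increasing, every cluster of size at most $k$ contributes an outlier at most $\theta_\phi(k)$. This needs a uniform finite-rank perturbation argument in which the number of clusters may grow; we control it through the isotropic local law for $\bfY^{\rm small}$ and delocalization, so that distinct clusters interact negligibly.

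\textbf{Main obstacle.} The main obstacle is controlling $\lambda_1(\bfY^{\rm small}(\bfY^{\rm small})^\top)\le\lambda_+ +\eps$ after truncation at $\delta\sqrt n$ with self-normalization, when the truncated entries still lack a fourth moment. Here we would reuse the machinery behind Theorem~\ref{thm:upper-edge-convergence}, applied to the truncated normalized rows, since the subcritical argument presumably already handles entries below $\delta\sqrt n$. The same step must also justify passing from $\delta$ to $0$ in the spike strengths.
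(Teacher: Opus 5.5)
Your architecture matches the paper's. The lower bound comes from a $k$-collision of entries exceeding $M\sqrt n$; your strength $ks^2/(1+s^2)$ is exactly the bound $\lambda_1(\bfL_{\caB})\ge\|\bfks_\mu\|^2$ used in Lemma~\ref{lem:strong-stars}. The true block eigenvalue $\gamma_k(\bfa)$ is at least this, so ``near'' should read ``at least''. The upper bound comes from a first-moment exclusion of macroscopic $(k+1)$-collisions, and the transfer goes through $\theta_\phi$. The upper bound, however, has a genuine gap.

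First, a concrete error: it is false that with probability tending to one no row carries two entries above $\delta\sqrt n$. Your union bound only gives $p_n n^2\bar F_\xi(\delta\sqrt n)^2=o(n^{1-2/(k+1)})$, which does not vanish. In fact the supercritical hypothesis gives $n^3\bar F_\xi(\delta\sqrt n)^2=n^{1-2/k}\,[n^{1+1/k}\bar F_\xi(\delta\sqrt n)]^2\to\infty$, so such rows exist w.h.p.\ and clusters span several columns. This point alone is repairable. A first-moment bound $n^{k+2}\bar F_\xi(\delta\sqrt n)^{k+1}\to0$ shows every component at level $\delta\sqrt n$ has at most $k$ edges, and a positive semidefinite unit-diagonal block has top eigenvalue at most its number of rows. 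Still, the column-by-column picture must be dropped.

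The serious gap is the placement of the split at $\delta\sqrt n$, which makes your ``main obstacle'' the whole upper bound rather than a side step. Entries in $(\fku_n,\delta\sqrt n)$, with $\fku_n=n^{1/2-\tau}$, have normalized size up to order $\delta$, not $o(1)$. They may collide among themselves (for regularly varying tails, $k$-collisions with amplitudes in $(c,\delta)$ occur w.h.p.), and they may share a column with the huge entries, turning a $k$-star into a $(k+1)$-star. So the sparse local law, which needs normalized entries $O(n^{-\tau})$, is not available for $\bfY^{\mathrm{small}}$. A bound $\lambda_1(\bfY^{\mathrm{small}}\bfY^{\mathrm{small}\top})\le\lambda_+ + \eps$, even if proved, is not what the master equation needs. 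You need the resolvent of the remainder on the occupied columns, uniformly over a polynomially growing number of clusters. Theorem~\ref{thm:upper-edge-convergence} does not apply as stated: it concerns a fixed law, and for the $n$-dependent truncated law the collision counts at scales $c\sqrt n$, $0<c<\delta$, need not vanish. Re-running its proof is precisely the paper's reduction.

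The paper avoids all this by splitting at $\fku_n$ instead. Typical rows then obey the sparse local law with error $n^{-\tau}$ (Proposition~\ref{prop:unified-core}). Every entry above $\fku_n$, at all scales, goes into $\bfL_{\caB}$, using stabilized normalizers $\hbar_i$, the random-permutation resampling, and Tropp's random-compression bound (Propositions~\ref{prop:unified-centered-compressions} and~\ref{prop:unified-resolvent}). The price is that, with $\fku_n=n^{1/2-\tau_{k+1}}$, components with $k+1$ edges do occur, so the cap $\lambda_1(\bfL_{\caB})\le k+o_{\bbp}(1)$ is no longer a trace bound. It follows from Lemma~\ref{lem:collision-components} (only a $(k+1)$-star block can exceed $k$) together with Lemma~\ref{lem:star-spectrum}~\ref{item:star-spectrum-amplitudes} ($\gamma_{k+1}(\bfa)\ge k+\eta$ forces $\min_r a_r\ge c(\eta)$). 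Your first-moment bound $n\binom{p_n}{k+1}\bar F_\xi(c\sqrt n)^{k+1}\to0$ then closes the argument (Lemma~\ref{lem:collision-cap}), and Corollary~\ref{cor:extremal-transfer} transfers $\lambda_1(\bfL_{\caB})=k+o_{\bbp}(1)$ to $\theta_\phi(k)$. This amplitude lemma is the rigorous form of your ``$\delta\to0$'' step, and it is the idea the proposal is missing.
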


As a consequence of Theorem \ref{thm:collision-layer}, 
\begin{alignat*}{3}
    0 < {} & \phi < 1, \qquad
    & t^3 \bar{F}_{\xi} (t) & \to \infty, \qquad
    t^{8 / 3} \bar{F}_{\xi} (t) \to 0
    & \qquad \Longrightarrow \qquad
    \lambda_1 (\bfR_n) & \pconv 2 (1 + \phi), \\
    1 \leq {} & \phi < 4, \qquad
    & t^{8 / 3} \bar{F}_{\xi} (t) & \to \infty, \qquad
    t^{5 / 2} \bar{F}_{\xi} (t) \to 0
    & \qquad \Longrightarrow \qquad
    \lambda_1 (\bfR_n) & \pconv 3 (1 + \phi / 2).
\end{alignat*}
In contrast, if the tail of $\xi$ is supercritical for every fixed collision order, the largest eigenvalue diverges.

% *******************************************
\begin{theorem}[Ultra-supercritical regime]
\label{thm:all-orders-divergence}
% *******************************************
Suppose that \eqref{eqn:basic-assumption} holds and
\begin{equation}
    t^{\alpha_k} \bar{F}_{\xi} (t) \to \infty,
    \qquad t \to \infty,
    \qquad \forall \, k \geq k_* (\phi).
    \label{cond:all-orders-supercritical}
\end{equation}
Then
\begin{equation}
    \lambda_1 (\bfR_n) \pconv \infty.
    \label{eqn:divergence-max-eig}
\end{equation}
\end{theorem}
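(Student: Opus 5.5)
Under \eqref{cond:all-orders-supercritical} we must show that for every fixed $M>0$, $\bbp\{\lambda_1(\bfR_n) \ge M\} \to 1$. Since $\theta_\phi(k) \ge k \to \infty$, it suffices to prove that for every fixed integer $k \ge k_*(\phi)$,
\[
\bbp\{\lambda_1(\bfR_n) \ge \theta_\phi(k) - \eps\} \to 1
\]
for every $\eps>0$. The natural route is to rerun the lower-bound half of the proof of Theorem \ref{thm:collision-layer}, which uses only the supercriticality $t^{\alpha_k}\bar F_\xi(t)\to\infty$ at order $k$. The subcriticality at order $k+1$ there should be needed only for the matching upper bound. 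So I would first isolate from that argument a one-sided statement: supercriticality at order $k$ alone gives $\lambda_1(\bfR_n) \ge \theta_\phi(k) - \smallo[\bbp]{1}$. I would then prove this statement without using order $k+1$.

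\textbf{Step 1 (existence of a clean $k$-collision).} Fix $k$ and a small $\delta>0$. Consider columns $\mu$ containing $k$ distinct rows $i_1,\dots,i_k$ with $\abs{x_{i_r\mu}}^2 \ge (1-\delta)\norm{\bfx_{i_r}}^2$. Such a row is dominated by its single large entry, so $y_{i_r\mu}^2 \ge 1-\delta$. The expected number of columns with $k$ entries exceeding $C\sqrt n$ is of order $[n^{\alpha_k/2}\bar F_\xi(C\sqrt n)]^k \to \infty$. The domination event holds with probability bounded below, because the remaining row sum is $\bigO{n}$ by finite variance. I would get existence with probability tending to one by a second-moment argument on this count, or by Poissonization and independence across columns. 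The danger in the heavier-tail regime is that rows may contain several large entries, which destroys domination. To control this, I would truncate the tail: replace $\xi$ by a variable whose tail agrees with $\bar F_\xi$ up to a level that still gives $\to\infty$ collisions but has $t^{\alpha_{k+1}}$ subcriticality. A cleaner alternative is a monotone coupling, conditioning on the large entries and the rest. I expect this Step 1 cleanliness to be the main obstacle, since heavier tails create many large entries per row. The counting I would use is that a row has a second entry above $c\sqrt n$ with probability $n\bar F_\xi(c\sqrt n)$, which is $o(1)$ because the variance is finite ($t^2\bar F_\xi(t)\to0$). This keeps the selected rows clean with high probability.

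\textbf{Step 2 (spike lower bound).} Given such a column $\mu$ and rows $I=\{i_1,\dots,i_k\}$, write $\bfy_{i_r} = s_r\sqrt{1-\eta_r}\,\bfe_\mu + \bfv_r$ with $\eta_r \le \delta$. Take the test vector
\[
\bfu = \sum_{r} s_r \bfe_{i_r} + \bfw,
\]
where $\bfw$ is supported off $I$ and chosen as in a BBP-type computation: $\bfw \propto (\bfR_n^{(I)} - z)^{-1}$ applied to the column-$\mu$ interactions. Equivalently, use a Schur complement. The restriction of $\bfY_n$ to rows outside $I$ and columns outside $\mu$ is a row-normalized matrix with an MP spectrum. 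The $k$ rows $I$ act as a rank-one spike of strength $\approx k$ in direction $\bfe_\mu$ in column space. By the standard finite-rank perturbation identity, an eigenvalue appears near the solution of $1 = k\,m(z)$-type equations, giving $\theta_\phi(k)$ up to $\bigO{\delta}$. For a pure lower bound I only need an explicit vector $\bfu$ with Rayleigh quotient $\ge \theta_\phi(k) - \bigO{\delta} - \smallo[\bbp]{1}$. This requires only isotropic concentration of quadratic forms of the bulk resolvent at real $z > \lambda_+$. Those bounds are presumably available from the proof of Theorem \ref{thm:collision-layer} and remain valid because they concern the bulk part only.

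\textbf{Step 3.} Letting $\delta\to0$ and then $k\to\infty$ gives the divergence $\lambda_1(\bfR_n) \pconv \infty$.
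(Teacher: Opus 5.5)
Your Step 1 is essentially the paper's argument (Lemma \ref{lem:strong-stars}), and it already proves the theorem. The gap is in Step 2, which you do not need.

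\textbf{Why Step 2 fails.} It is not true that the lower-bound half of Theorem \ref{thm:collision-layer} uses only supercriticality at order $k$. The bound $\lambda_1(\bfL_{\caB}) \geq k - o_{\bbp}(1)$ does. But transferring it to $\bfR_n$, via the left inequality in \eqref{eqn:spectral-count-sandwich}, rests on the resolvent machinery of Section \ref{sec:truncation}. That machinery requires the envelope \eqref{eqn:analytic-tail-envelope} at some order, and under \eqref{cond:all-orders-supercritical} it fails at every order. For example, if $\bar{F}_{\xi}(t) \asymp t^{-2}(\log t)^{-2}$, each row has on average $n\bar{F}_{\xi}(n^{1/2-\tau}) \asymp n^{2\tau}/\log^2 n \to \infty$ entries above the truncation level, so the typical block $\caT$ is essentially empty.

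More fundamentally, the matrix you want to treat as the bulk, $\bfR_n^{(I)}$ with the rows in $I$ removed, is the same model with $p_n - k$ rows. Its top eigenvalue therefore diverges, by your own Step 1 at higher orders. So there is no controlled resolvent $(\bfR_n^{(I)} - z)^{-1}$ at real $z$ near $\theta_\phi(k)$ from which to build the test vector, and no isotropic law to invoke. Replacing $\xi$ by a lighter-tailed variable does not help either: it changes the matrix, and $\lambda_1(\bfR_n)$ has no monotonicity that would let you pass back.

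\textbf{The fix.} Aiming for $\theta_\phi(k)$ is unnecessary, since $k$ is arbitrary. Once Step 1 gives a column $\mu$ and $k$ rows with $y_{i_r\mu}^2 \geq 1-\delta$, you have
\[
\lambda_1(\bfR_n) = \lambda_1(\bfY_n^\top \bfY_n) \geq \angles{\bfe_\mu, \bfY_n^\top \bfY_n \bfe_\mu} = \sum\nolimits_{i} y_{i\mu}^2 \geq k(1-\delta).
\]
This is exactly \eqref{eqn:lower-Rn-k}, and letting $k \to \infty$ gives $\lambda_1(\bfR_n) \pconv \infty$.

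Step 1 needs no tail modification either. The entry $x_{i\mu}$ is independent of $\sum_{\nu\neq\mu}\abs{x_{i\nu}}^2$, and the latter is at most $2n$ with probability tending to one by the law of large numbers. With a fixed $C > \sqrt{2}$, the domination events within a row are disjoint. The second-moment bound closes because $p_n\bar{F}_{\xi}(C\sqrt{n}) \to 0$ by finite variance, while the expected count tends to infinity by supercriticality at order $k$. The paper instead uses a slowly diverging threshold $\sqrt{n}\,b_n$ to get $k - o(1)$ directly, but a fixed large $C$ suffices for divergence.
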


The ultra-supercritical condition \eqref{cond:all-orders-supercritical} can hold even when $\xi$ has finite variance. For example, a centered distribution with $\bar{F}_{\xi} (t) \asymp {t^{-2} (\log t)^{-2}}$ has a finite second moment and satisfies this condition. Thus finite variance alone does not ensure that the largest eigenvalue $\lambda_1 (\bfR_n)$ remains bounded.

\begin{figure}[tb]
    \centering
    \includegraphics[width=0.95\textwidth]{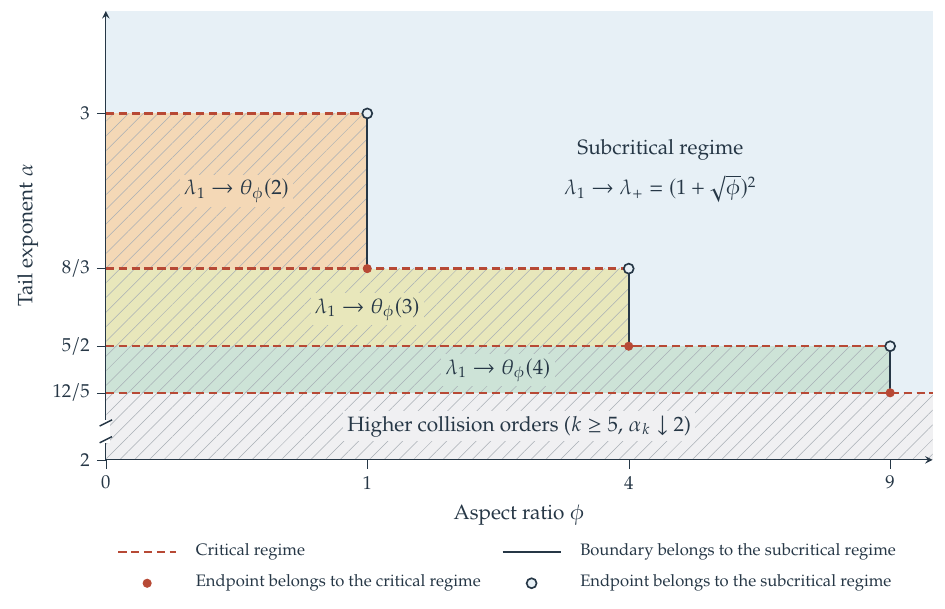}
    \caption{Phase diagram for $\lambda_1 (\bfR_n)$}
    \label{fig:phase-diagram}
\end{figure}

Figure \ref{fig:phase-diagram} summarizes how the tail of $\xi$ and the aspect ratio $\phi$ determine the first-order behavior of $\lambda_1 (\bfR_n)$. For this illustration, we consider regularly varying tails of the form
\begin{equation}
    \bar{F}_{\xi} (t) = t^{-\alpha} \ell (t),
    \qquad t \to \infty,
    \qquad \alpha > 2,
    \label{cond:regular-variation}
\end{equation}
where $\ell$ is slowly varying at infinity and the condition $\alpha > 2$ ensures finite variance. The unhatched region in Figure \ref{fig:phase-diagram} represents the subcritical regime $\alpha > \alpha_* (\phi)$, in which Theorem \ref{thm:upper-edge-convergence} gives convergence to the upper MP edge $\lambda_+$. Its staircase boundary reflects the increase in the first collision order $k_* (\phi)$ at $\phi = 1, 4, 9, \ldots$ and the corresponding decrease in the critical exponent $\alpha_* (\phi)$.

The diagonally hatched bands correspond to the $k$-supercritical regimes. The interior of the $k$-th band corresponds to $\alpha_{k + 1} < \alpha < \alpha_k$ and $0 < \phi < (k - 1)^2$, and Theorem \ref{thm:collision-layer} gives convergence to $\theta_\phi (k)$. On the right vertical boundary $\phi = (k - 1)^2$, this outlier level coincides with the upper MP edge, $\theta_\phi (k) = \lambda_+$. Since $k_* (\phi) = k + 1$ on this boundary, Theorem \ref{thm:upper-edge-convergence} gives convergence to $\lambda_+$ for $\alpha_{k + 1} < \alpha \leq \alpha_k$. Thus each right vertical boundary belongs to the subcritical regime, including the upper endpoint marked by an open circle but excluding the lower endpoint marked by a filled circle. The lower endpoint, at $\alpha = \alpha_{k + 1}$, lies in the critical regime of order $k + 1$ described below.

% *******************************************
\begin{remark}
% *******************************************
A related staircase phenomenon appears in the spectral deviations of random regular graphs, as recently established by Chen, Garza-Vargas, Tropp, and van Handel \cite{chenNewApproachStrongConvergence2026}. Fix an integer $d \geq 2$, and let $\bar{\bfA}_n$ be the adjacency matrix of a random $2d$-regular graph on $n$ vertices in the permutation model. Write $\bfA_n = \bar{\bfA}_n |_{\{\mathbf{1}_n\}^\perp}$, so that $\norm{\bfA_n}$ gives the largest absolute value among the nontrivial eigenvalues of $\bar{\bfA}_n$. Set $m_* = \lfloor (\sqrt{2d - 1} + 1)/2 \rfloor$ and define the spectral thresholds
\begin{equation*}
    \rho (m_*) = 2\sqrt{2d - 1},
    \qquad
    \rho (m) = 2m - 1 + \frac{2d - 1}{2m - 1},
    \qquad
    m_* < m \leq d.
\end{equation*}
For each $m_* \leq m \leq d - 1$ and fixed $\varepsilon \in (0, \rho (m + 1) - \rho (m))$, their staircase theorem \cite[Theorem~3.6]{chenNewApproachStrongConvergence2026} gives
\begin{equation}
    \bbp \{\norm{\bfA_n} \geq \rho (m) + \varepsilon\}
    \asymp_{d,\varepsilon} n^{-m},
    \qquad n \to \infty.
    \label{eqn:staircase-theorem}
\end{equation}
Thus the exponent of this polynomial decay increases in integer steps as the spectral threshold rises.

The analogy of \eqref{eqn:staircase-theorem} with our results lies in the local structures that produce distinct outlier levels. In our setting, $k$-row collisions of large entries form stars in the bipartite large-entry graph $\caG$, with a column vertex at the center and $k$ row vertices as leaves, which produce the levels $\theta_\phi (k)$. For random regular graphs, vertices with $m$ self-loops can produce outliers near $\rho (m)$ for $m_* < m \leq d$, as originally shown by Friedman \cite{friedmanProofAlonSecondEigenvalue2008}. In both models, the levels are images under the respective outlier maps. The different forms of these maps reflect the different limiting bulk spectra. Nevertheless, the two staircases  describe different phenomena. Our results describe changes in the first-order behavior of the largest eigenvalue as the entry tail and aspect ratio vary, whereas \eqref{eqn:staircase-theorem} concerns rare-event probabilities as the spectral threshold varies.
\end{remark}

We now turn to the critical regime, where transitions between the phases described above occur. Fix an integer $k \geq k_* (\phi)$ and assume the critical tail condition
\begin{equation}
    t^{\alpha_k} \bar{F}_{\xi} (t) \to \omega \in (0, \infty),
    \qquad t \to \infty.
    \label{cond:critical-collision}
\end{equation}
In Figure \ref{fig:phase-diagram}, this condition corresponds to the dashed horizontal segment $\alpha = \alpha_k$ with $0 < \phi < (k - 1)^2$.

We first define the thresholds
\begin{equation}
    \beta_\phi (k - 1) := (1 + \sqrt{\phi}) \vee (k - 1),
    \qquad
    \vartheta_\phi (k - 1)
    := \theta_\phi (\beta_\phi (k - 1)).
    \label{def:critical-baseline}
\end{equation}
In particular, the spectral baseline $\vartheta_\phi (k - 1)$ equals the upper MP edge $\lambda_+$ at the first critical order $k = k_* (\phi)$, while it equals the level $\theta_\phi (k - 1)$ at higher orders. Under \eqref{cond:critical-collision}, the eigenvalues of $\bfR_n$ above this baseline have a Poisson limit. Specifically, define the point process on $(\vartheta_\phi (k - 1), \infty)$,
\begin{equation}
    \scN_{k, n}
    := \sum\nolimits_{i = 1}^{p_n}
    \bbone \{\lambda_i (\bfR_n) > \vartheta_\phi (k - 1)\}
    \, \delta_{\lambda_i (\bfR_n)}.
    \label{def:upper-spectral-process}
\end{equation}

To describe the limiting process, we next introduce the collision amplitudes. Set
\begin{equation}
    \varphi (a) := \frac{a}{\sqrt{1 + a^2}},
    \qquad a \in \bbr_+,
    \qquad
    \varphi (\bfa) := (\varphi (a_1), \ldots, \varphi (a_k))^\top,
    \qquad \bfa = (a_r)_{r=1}^k \in \bbr_+^k.
    \label{def:star-amplitude-normalization}
\end{equation}
For a $k$-row collision, define the associated correlation matrix and its largest eigenvalue by
\begin{equation}
    \bfM_k (\bfa)
    := \diag \pars[\bigg]{\frac{1}{1 + a_1^2}, \ldots, \frac{1}{1 + a_k^2}}
    + \varphi (\bfa) \varphi (\bfa)^\top,
    \qquad
    \gamma_k (\bfa)
    := \lambda_1 (\bfM_k (\bfa)).
    \label{def:star-Gram}
\end{equation}
The matrix $\bfM_k (\bfa)$ has unit diagonal. Under \eqref{cond:critical-collision}, with probability tending to one there is no collision involving more than $k$ rows, while those involving fewer than $k$ rows cannot produce limiting outliers above $\vartheta_\phi (k - 1)$. Thus the limiting outliers come from collisions involving exactly $k$ rows (see Lemmas \ref{lem:unified-graph} and \ref{lem:collision-components}). For such a collision with normalized entry magnitudes $\bfa \in \bbr_+^k$, the matrix $\bfM_k (\bfa)$ approximates the corresponding principal submatrix of $\bfR_n$ up to conjugation by a diagonal sign matrix that does not affect the eigenvalues. When $\gamma_k (\bfa) > \beta_\phi (k - 1)$, the resulting outlier has asymptotic location $\theta_\phi (\gamma_k (\bfa))$, which lies above the spectral baseline $\vartheta_\phi (k - 1)$.

Finally, to describe the limiting distribution of the collision amplitudes, introduce the tail measure
\begin{equation}
    \varrho_{k, \omega} (\rmd t)
    := \omega \alpha_k t^{-\alpha_k - 1} \, \rmd t,
    \qquad t > 0.
    \label{def:critical-tail-measure}
\end{equation}
Recall that $\PRM (\Lambda)$ denotes the law of a Poisson random measure with intensity $\Lambda$.

% *******************************************
\begin{theorem}[Poisson limit for upper spectral spikes]
\label{thm:upper-poisson}
% *******************************************
Assume \eqref{eqn:basic-assumption} and suppose that the critical tail condition \eqref{cond:critical-collision} holds for a fixed integer $k \geq k_* (\phi)$. Then, in the vague topology on $(\vartheta_\phi (k - 1), \infty)$,
\begin{equation}
    \scN_{k, n} \Rightarrow \scN_k,
    \qquad
    \scN_k \sim \PRM (\Lambda_{k, \phi, \omega}).
    \label{eqn:upper-spectral-poisson}
\end{equation}
For every Borel set $B \subset (\vartheta_\phi (k - 1), \infty)$, the limiting intensity is given by
\begin{equation}
    \Lambda_{k, \phi, \omega} (B)
    := \frac{\phi^k}{k!}
    \int_{\bbr_+^k}
    \bbone \curls[\big]{
    \gamma_k (\bfa) > \beta_\phi (k - 1), \,
    \theta_\phi (\gamma_k (\bfa)) \in B} \,
    \varrho_{k, \omega}^{\otimes k} (\rmd \bfa).
    \label{def:critical-spectral-intensity}
\end{equation}
The intensity is atomless and has finite positive total mass, $\Lambda_{k, \phi, \omega} ((\vartheta_\phi (k - 1), \infty)) \in (0, \infty)$. Moreover, 
\begin{equation}
    \abs[\big]{\{i \in \dbraks{p_n} : \lambda_i (\bfR_n) > \lambda\}}
    \Rightarrow \operatorname{Poisson}
    \pars[\big]{\Lambda_{k, \phi, \omega} ((\lambda, \infty))},
    \qquad
    \lambda > \vartheta_\phi (k - 1).
    \label{eqn:fixed-gap-poisson}
\end{equation}
\end{theorem}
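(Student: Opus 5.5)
We split $\bfX_n$ into a small-entry part and a sparse large-entry part, reduce $\bfR_n$ to a finite-rank perturbation of an MP-type bulk, and then pass to a Poisson limit for the large-entry configurations.

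\emph{Step 1: decomposition.} Fix a truncation level $\delta\sqrt{n}$ with $\delta>0$ small. Write $x_{i\mu}=x_{i\mu}\bbone\{\abs{x_{i\mu}}\le\delta\sqrt n\}+x_{i\mu}\bbone\{\abs{x_{i\mu}}>\delta\sqrt n\}$, and let $\caG$ be the bipartite graph of large entries. Under \eqref{cond:critical-collision}, a first-moment count gives three facts with probability tending to one (this is where Lemmas \ref{lem:unified-graph} and \ref{lem:collision-components} enter):
\begin{enumerate*}[label=(\roman*)]
\item every row contains a bounded number of large entries, and each such entry carries a non-negligible fraction of $\norm{\bfx_i}^2$;
\item the components of $\caG$ are stars centered at a column, with at most $k$ leaves, since the count of $(k+1)$-stars is $O\pars{(n^{\alpha_{k+1}/2}\bar F_\xi(\sqrt n))^{k+1}}\to0$;
\item no row belongs to two stars of size $\ge2$.
\end{enumerate*}
The $k$-stars with normalized magnitudes $\abs{x_{i\mu}}/\sqrt n$ form a point process on $(\delta,\infty)^k/\mathrm{Sym}$. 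By the expectation computation in the introduction together with regular-variation scaling, this process converges to a PRM with intensity $\frac{\phi^k}{k!}\varrho_{k,\omega}^{\otimes k}$ restricted to $(\delta,\infty)^k$. We prove this by the standard Poisson approximation for sums of sparse indicators (Chen--Stein, or factorial moments).

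\emph{Step 2: spectrum given the large-entry configuration.} Conditionally on $\caG$, the light part of each normalized row is asymptotically a uniformly spread vector, and $\bfR_n$ equals a bulk Gram matrix plus a finite-rank perturbation. For a star with column $\mu$ and rows $i_1,\dots,i_k$, we have $y_{i_r\mu}\approx\pm\varphi(a_r)$, while the remaining coordinates have squared norm $1/(1+a_r^2)$. This is exactly the decomposition $\bfM_k(\bfa)$. Treat the column $e_\mu$ as a spike direction: removing column $\mu$ leaves an asymptotically MP matrix with rows rescaled by $(1+a_r^2)^{-1/2}$. The outlier equation, obtained through a Schur complement or resolvent argument in the style of Benaych-Georges--Nadakuditi, reduces to $1/\gamma$-type secular equations for $\bfM_k(\bfa)$. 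Its solutions above $\lambda_+$ satisfy $\lambda=\theta_\phi(\gamma)$ exactly when $\gamma>1+\sqrt\phi$.

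The contributions of stars with $j<k$ leaves must also be controlled. Their $\gamma_j$ is at most $j\le k-1$, so they produce eigenvalues at most $\theta_\phi(\beta_\phi(k-1))+o(1)$ and therefore stay below the baseline. They are possibly numerous: for $j=k-1$ the count diverges. Hence a uniform, simultaneous resolvent bound is needed. Isolated large entries ($j=1$) give $\gamma=1$ and no outliers.

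\emph{Step 3: assembling the limit.} Distinct stars use disjoint rows and columns, so their perturbations decouple asymptotically and each produces its own outlier. The map $\bfa\mapsto\theta_\phi(\gamma_k(\bfa))$ is continuous, so the continuous mapping theorem transfers the PRM limit of Step 1 to $\scN_{k,n}$ on sets $(\lambda,\infty)$ with $\lambda>\vartheta_\phi(k-1)$. Letting $\delta\to0$ is harmless: $\gamma_k(\bfa)\le1+\min_r a_r^2\cdot(\text{const})$-type bounds show that stars with some $a_r<\delta$ cannot exceed the baseline by a fixed gap.

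Finite total mass follows because $\gamma_k>\beta_\phi(k-1)\ge k-1$ forces all $a_r$ to be bounded below, and $\varrho$ is integrable away from $0$. Positive mass follows because $\gamma_k(\bfa)\to k>\beta_\phi(k-1)$ as all $a_r\to\infty$. Atomlessness holds because level sets of the real-analytic, non-constant function $\gamma_k$ are Lebesgue null. Finally, \eqref{eqn:fixed-gap-poisson} follows from the vague convergence, since $\Lambda$ has no atom at $\lambda$.

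\emph{Main obstacle.} The hard part is Step 2. We need a uniform resolvent analysis at spectral parameters above the baseline that handles simultaneously the divergent number of $(k-1)$-stars near the baseline and the dependence induced by row normalization. Sharp outlier locations must come from the $k$-stars, while the $(k-1)$-stars must not create spurious eigenvalues exceeding $\vartheta_\phi(k-1)+\eps$. My first attempt would be to condition on $\caG$, remove all heavy columns, and apply an isotropic local law for the light, row-normalized matrix; from there I would write $\bfR_n$ via the heavy columns as a rank-$O(\#\text{heavy columns})$ perturbation whose secular equation block-diagonalizes over stars up to $o(1)$ errors.
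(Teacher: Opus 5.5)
You correctly identify Step 2 as the crux, but you leave it open, and the route you sketch would not close it as set up. Steps 1 and 3 are broadly in line with the paper's Propositions \ref{prop:collision-poisson} and \ref{prop:collision-spectrum}. Step 2 is where the theorem is actually proved.

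With truncation at $\delta\sqrt n$ there are of order $\delta^{-\alpha_k}n^{1-1/k}$ large entries. The perturbation of the bulk is therefore not of finite rank, and Benaych-Georges--Nadakuditi-type results do not apply. Moreover, for fixed $\delta$ the light part still has normalized entries of size up to $\delta$. It is not delocalized, so local laws, which need normalized entries $O(n^{-c})$, say nothing about it.

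The paper instead does the following:
\begin{enumerate*}[label=(\roman*)]
\item it truncates at $\fku_n=n^{1/2-\tau}$ with $\tau<1/(\alpha_k k(k+1))$, which makes the typical rows delocalized and, with probability tending to one, keeps every component of $\caG$ to at most $k$ edges;
\item it takes the Schur complement over the atypical \emph{rows};
\item it proves the operator-norm approximation $\sup_{z\in\bbk}\norm{\bfY_{\caB}\bfG_{\caT}(z)\bfY_{\caB}^\top-\fkm_\phi(z)\bfL_{\caB}}\pconv0$ (Proposition \ref{prop:unified-resolvent}), which gives the count sandwich of Proposition \ref{prop:spectral-transfer}.
\end{enumerate*}
This approximation must hold simultaneously for all $\abs{\caB}$ atypical rows, of order $n^{1-\sigma}$. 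Your sentence that distinct stars ``decouple asymptotically'' is exactly what requires proof.

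Entrywise or isotropic local laws do not give it. The bound $\norm{\bfUps_{\caT}}_{\max}\prec n^{-\tau}$ only yields $\norm{\bfP_{\caJ}\bfUps_{\caT}\bfP_{\caJ}^\top}\le\abs{\caJ}\,\norm{\bfUps_{\caT}}_{\max}$, which diverges. An $\varepsilon$-net in dimension $\abs{\caJ}$ is incompatible with polynomially small failure probabilities. The missing ingredients are:
\begin{enumerate*}[label=(\alph*)]
\item resampling column labels by a uniform permutation, so that $\caJ$ is a uniform subset given $\bfUps_{\caT}$ and Tropp's random-submatrix bound applies;
\item stabilized normalizers $\hbar_i$, which decouple the large entries from the row norms;
\item matrix Bernstein with decoupling for the $\fkS$--$\fkW$ and $\fkW$--$\fkW$ cross terms (Proposition \ref{prop:unified-centered-compressions}).
\end{enumerate*}

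Several auxiliary claims also need correcting.

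Your structural statements (ii)--(iii) are false. A row with two large entries is a component that is not a column-centered star. The path row--column--row--column--row has expected count of order $n^{1-4/k}$ at your truncation, which does not vanish for $k\ge4$. What is true, and suffices, is Lemma \ref{lem:collision-components}: on $\tilde\Omega_{k,n}$, the trace bound $\lambda_1(\bfL_{\caI(\caC)})\le\fkr(\caC)\le\fke(\caC)\le k$ shows that only $k$-stars give block eigenvalues above $k-1$.

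Your finite-mass argument fails whenever $\beta_\phi(k-1)=k-1$, that is, for every $k>k_*(\phi)$ and for $k=k_*(\phi)$ when $\sqrt\phi\in\bbn$. In that case $\gamma_k(\bfa)>k-1$ does not bound $\min_r a_r$ from below. For any $a_k>0$, the left side of \eqref{eqn:unified-star-secular} at $x=k-1$ exceeds one once $a_1,\dots,a_{k-1}$ are large. Lemma \ref{lem:star-spectrum}\ref{item:star-spectrum-amplitudes} needs a fixed gap $\eta>0$, which is absent here. Finiteness comes instead from the quantitative fact that a small $a_k$ forces $a_r\gtrsim a_k^{-1}$ for all $r<k$. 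This bounds the mass by $C\int_0^{1/2}t^{\alpha_k(k-2)-1}\,\rmd t<\infty$ (see the proof of Lemma \ref{lem:critical-intensity}).

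Finally, \eqref{eqn:fixed-gap-poisson} does not follow from vague convergence alone, since $(\lambda,\infty)$ is not relatively compact in $(\vartheta_\phi(k-1),\infty)$. You also need that no eigenvalue escapes to infinity. The paper gets this from the count sandwich together with $\norm{\bfL_{\caB}}\le k$ on $\tilde\Omega_{k,n}$.
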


From this Poisson limit we can deduce the limiting distribution of $\lambda_1 (\bfR_n)$ in the critical regime.

% *******************************************
\begin{theorem}[$k$-critical regime]
\label{thm:critical-maximum}
% *******************************************
Assume \eqref{eqn:basic-assumption} and suppose that \eqref{cond:critical-collision} holds for a fixed integer $k \geq k_* (\phi)$. Then
\begin{equation}
    \lambda_1 (\bfR_n) \Rightarrow H_{k, \phi, \omega},
    \label{eqn:weak-convergence-critical}
\end{equation}
where the distribution function $H_{k, \phi, \omega}$ is given by
\begin{equation}
    H_{k, \phi, \omega} (\lambda)
    := \begin{cases}
        0, & \quad \lambda < \vartheta_\phi (k - 1), \\
        \exp \curls[\big]{-\Lambda_{k, \phi, \omega} ((\lambda, \infty))},
        & \quad \vartheta_\phi (k - 1) \leq \lambda < \theta_\phi (k), \\
        1, & \quad \lambda \geq \theta_\phi (k).
    \end{cases}
    \label{eqn:critical-maximum-cdf}
\end{equation}
The distribution function $H_{k, \phi, \omega}$ is continuous except at the spectral baseline $\vartheta_\phi (k - 1)$, where the limiting law has an atom of mass $H_{k, \phi, \omega} (\vartheta_\phi (k - 1)) \in (0,1)$.
\end{theorem}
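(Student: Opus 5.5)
Theorem \ref{thm:critical-maximum} will be derived from Theorem \ref{thm:upper-poisson}, supplemented by two facts: a lower bound showing $\lambda_1(\bfR_n)$ does not fall below the baseline, and an upper bound showing it does not exceed $\theta_\phi(k)$.

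For $\lambda > \vartheta_\phi(k-1)$, the event $\{\lambda_1(\bfR_n) \le \lambda\}$ coincides with $\{\abs{\{i : \lambda_i(\bfR_n) > \lambda\}} = 0\}$. By \eqref{eqn:fixed-gap-poisson},
\[
\bbp\{\lambda_1(\bfR_n) \le \lambda\} \to \exp\{-\Lambda_{k,\phi,\omega}((\lambda,\infty))\}.
\]
This gives the middle line of \eqref{eqn:critical-maximum-cdf} at every $\lambda$ strictly above the baseline.

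For the range $\lambda \ge \theta_\phi(k)$, I will check that $\gamma_k(\bfa) < k$ for every $\bfa \in \bbr_+^k$. Indeed, $\bfM_k(\bfa)$ is a positive semidefinite correlation matrix, so its trace is $k$. The diagonal part is strictly positive, so $\bfM_k(\bfa)$ is positive definite and not of rank one, which forces $\lambda_1 < k$. Since $\theta_\phi$ is strictly increasing, $\Lambda_{k,\phi,\omega}((\theta_\phi(k),\infty)) = 0$, so the limit equals $1$ there. The identity $\exp\{-\Lambda((\lambda,\infty))\} = 1$ for $\lambda \ge \theta_\phi(k)$ is consistent with this.

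For the left part, I need $\bbp\{\lambda_1(\bfR_n) < \vartheta_\phi(k-1) - \eps\} \to 0$ for each $\eps > 0$. I will split into two cases.
\begin{itemize}
\item When $k = k_*(\phi)$, the baseline is $\lambda_+$. The lower bound follows from weak convergence of the ESD to the MP law, whose support reaches $\lambda_+$; this is cited from \cite{jiangLimitingDistributionsEigenvalues2004} under \eqref{eqn:basic-assumption}.
\item When $k > k_*(\phi)$, the tail condition \eqref{cond:critical-collision} is supercritical at order $k-1$, since $t^{\alpha_{k-1}}\bar F_\xi(t) = t^{\alpha_{k-1}-\alpha_k}\cdot t^{\alpha_k}\bar F_\xi(t) \to \infty$. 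With high probability there are then many $(k-1)$-row collisions whose amplitudes all tend to infinity, so the corresponding $\gamma_{k-1}$ values tend to $k-1$. The lower-bound half of the argument behind Theorem \ref{thm:collision-layer} should then apply verbatim, giving an eigenvalue near $\theta_\phi(k-1)$. I would invoke the lower-bound component of the proof of Theorem \ref{thm:collision-layer}, which is stated for $k-1 \ge k_*(\phi)$ and uses only supercriticality at order $k-1$.
\end{itemize}

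I expect this reuse to be the main obstacle. Theorem \ref{thm:collision-layer} as stated also assumes subcriticality at order $k$, which fails here, so I must check that the lower bound does not depend on that assumption. It should not, since extra collisions only add positive semidefinite structure, but this needs verifying. Given the lower bound, the distribution function equals $0$ below the baseline in the limit.

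It remains to assemble the pieces and establish the stated properties of $H_{k,\phi,\omega}$.
\begin{itemize}
\item \emph{Convergence at the baseline.} Convergence of $\bbp\{\lambda_1(\bfR_n) \le \lambda\}$ holds at every continuity point; the single exceptional point is the baseline itself, which weak convergence does not require.
\item \emph{Right limit and the atom.} Because $\Lambda$ has finite total mass, $\lim_{\lambda \downarrow \vartheta} H(\lambda) = \exp\{-\Lambda((\vartheta,\infty))\}$. Together with the lower bound, this gives an atom of this mass at $\vartheta_\phi(k-1)$. The mass lies in $(0,1)$ because the total mass of $\Lambda$ lies in $(0,\infty)$, by Theorem \ref{thm:upper-poisson}.
\item \emph{Continuity elsewhere.} On $(\vartheta, \theta_\phi(k))$, continuity follows from atomlessness of $\Lambda$. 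At $\theta_\phi(k)$, I need $\Lambda((\lambda,\infty)) \to 0$ as $\lambda \uparrow \theta_\phi(k)$. This holds by continuity of measure, since $\bigcap_\lambda (\lambda,\infty) \cap \operatorname{supp}$ is contained in $[\theta_\phi(k),\infty)$, which has $\Lambda$-measure zero.
\end{itemize}
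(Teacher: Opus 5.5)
Your proposal is correct and follows the paper's proof, differing only cosmetically: you use ESD convergence rather than $\lambda_1(\bfR_{\caT})\to\lambda_+$ with interlacing for the baseline when $k=k_*(\phi)$, and you prove $\gamma_k<k$ directly by positive definiteness. The step you flag closes exactly as in the paper, though not for the positive-semidefinite reason you suggest: build $\bfL_{\caB}$ at order $k$, apply Lemma~\ref{lem:strong-stars} at order $k-1$ (valid for any $\tau\in(0,1/2)$) to get $\lambda_1(\bfL_{\caB})\geq k-1-o_{\bbp}(1)$, and transfer this via the lower half of Proposition~\ref{prop:spectral-transfer} at order $k$, whose only tail input is the envelope $\bar{F}_{\xi}(t)\leq Ct^{-\alpha_k}$ supplied by the critical condition, so subcriticality is never used.
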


The limiting law in Theorem \ref{thm:critical-maximum} is concentrated on $[\vartheta_\phi (k - 1), \theta_\phi (k))$, with upper support endpoint $\theta_\phi (k)$. It therefore connects the deterministic limits of the phases on either side of the critical line. The mass of the atom at the baseline $\vartheta_\phi (k - 1)$ is the probability that the limiting Poisson process $\scN_{k}$ has no points.

% *******************************************
\begin{remark}[$2$-critical regime]
% *******************************************
Although $\gamma_k$ generally has no simple closed-form expression, it is explicit when $k = 2$. In this critical regime, $0 < \phi < 1$, and $\bfM_{2} (\bfa)$ has unit diagonal and off-diagonal entry
\begin{equation*}
    \varphi_2 (a_1, a_2)
    := \varphi (a_1) \varphi (a_2)
    = \frac{a_1 a_2}{\sqrt{(1 + a_1^2) (1 + a_2^2)}}.
\end{equation*}
Thus $\gamma_2 (a_1, a_2) = 1 + \varphi_2 (a_1, a_2)$. Substituting this expression into \eqref{def:critical-spectral-intensity} gives
\begin{equation*}
    \Lambda_{2, \phi, \omega} (B)
    = \frac{\phi^2}{2} \int_0^\infty \! \int_0^\infty
    \bbone \{\varphi_2 (a_1, a_2) > \sqrt{\phi}, \, \theta_\phi (\gamma_2 (a_1, a_2)) \in B\}
    \, \varrho_{2, \omega} (\rmd a_1) 
    \varrho_{2, \omega} (\rmd a_2).
\end{equation*}
Here $\varrho_{2, \omega} (\rmd a) = 3 \omega a^{-4} \, \rmd a$. This gives the critical law explicitly in terms of the two collision amplitudes.
\end{remark}

Finally, the preceding results also apply to the Pearson correlation matrix $\bfR_n^{\circ}$.

% *******************************************
\begin{corollary}[Pearson extension]
\label{cor:pearson-extension}
% *******************************************
Under their respective assumptions, Theorems \ref{thm:upper-edge-convergence}, \ref{thm:collision-layer}, \ref{thm:all-orders-divergence}, \ref{thm:upper-poisson}, and \ref{thm:critical-maximum} remain valid with the Pearson correlation matrix $\bfR_n^{\circ}$ in place of the uncentered correlation matrix $\bfR_n$.
\end{corollary}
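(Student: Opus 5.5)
The plan is to reduce each statement for $\bfR_n^\circ$ to the corresponding statement for $\bfR_n$ through a finite-rank perturbation argument combined with eigenvalue interlacing. The first step is to remove the nonzero-mean issue. Centering each row by its sample mean is invariant under adding a constant to the row, so $\bfx_i^\circ$ is unchanged if $\xi$ is replaced by $\xi - \bbe \xi$. Rescaling rows also leaves $\bfR_n^\circ$ unchanged. We may therefore assume \eqref{eqn:basic-assumption} with $\bbe\xi = 0$. The tail conditions are unaffected by a shift, since $\bar F_{\xi - c}(t) \sim \bar F_\xi(t)$ along the relevant scales; more precisely, $\bar F_\xi(t \pm c)/\bar F_\xi(t) \to 1$ holds for all the regularly varying scalings in \eqref{cond:critical-collision}, and the $0/\infty$ conditions are robust to shifts. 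Let $\bfP = \bfI_n - n^{-1}\mathbf{1}_n\mathbf{1}_n^\top$. Then $\bfx_i^\circ = \bfP \bfx_i$ and $\bfR_n^\circ = \bfY_n^\circ(\bfY_n^\circ)^\top$ with rows $\bfP\bfx_i/\norm{\bfP\bfx_i}$.

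The second step is the norm comparison. We have $\norm{\bfP\bfx_i}^2 = \norm{\bfx_i}^2 - n\bar x_i^2$. The rows are the nonempty index set here, so we need $\max_i n\bar x_i^2/\norm{\bfx_i}^2 \to 0$ in probability. Write $n \bar x_i^2 = (n^{-1/2}\sum_\mu x_{i\mu})^2$. I would bound $\max_{i \le p_n} \abs{n^{-1/2}\sum_\mu x_{i\mu}}$ by truncating at level $n^{1/2-\delta}$. The truncated part is handled by Bernstein's inequality together with a union bound, which gives $O(\sqrt{\log n})$; its mean is controlled by $\bbe\abs{\xi}\bbone\{\abs\xi > n^{1/2-\delta}\} \le n^{-1/2+\delta}\,\bbe \xi^2\bbone\{\cdots\}$, so the mean correction is $o(1)$ after multiplying by $\sqrt n$. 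The large-entry part has at most one or a bounded number of terms per row, each at most $\max\abs{x_{i\mu}}$. That contribution to $\bar x_i^2 n/\norm{\bfx_i}^2$ is at most $x^2/(n\cdot \norm{\bfx_i}^2/n)$ times a bounded count, and it is small because $\norm{\bfx_i}^2 \ge x_{i\mu}^2$ while the remaining mass is of order $n$. In the cleanest route, I would instead write $\bfP\bfx_i = \bfx_i - \bar x_i\mathbf{1}_n$ and directly bound $\norm{\bfy_i^\circ - \bfy_i} \lesssim \abs{\bar x_i}\sqrt n/\norm{\bfx_i}$, which is $\bigO{\sqrt{\log n/n}}$ uniformly in $i$ with high probability (using $\min_i \norm{\bfx_i}^2/n \ge c$ w.h.p., a standard lower-tail bound under finite variance).

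The third step is the rank-one-plus-small decomposition. Write
\begin{equation*}
\bfY_n^\circ = \bfD^{-1}\bfY_n\bfP' \quad\text{roughly, i.e.}\quad \bfy_i^\circ = c_i(\bfP\bfy_i),\qquad c_i = \norm{\bfx_i}/\norm{\bfP\bfx_i} = 1 + \smallo{1}\ \text{uniformly}.
\end{equation*}
Then $\bfY_n^\circ = \bfC\bfY_n\bfP$ with $\bfC = \diag(c_i)$ and $\norm{\bfC - \bfI} \to 0$ in probability. Since $\bfP$ is a rank-one perturbation of the identity, $\bfY_n\bfP\bfY_n^\top = \bfR_n - \bfu\bfu^\top$ with $\bfu = n^{-1/2}\bfY_n\mathbf{1}_n$, a positive semidefinite rank-one subtraction. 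By Weyl interlacing,
\begin{equation*}
\lambda_{i+1}(\bfR_n) \le \lambda_i(\bfY_n\bfP\bfY_n^\top) \le \lambda_i(\bfR_n).
\end{equation*}
Conjugating by $\bfC$ changes eigenvalues by a factor $\norm{\bfC}^2$ or $\norm{\bfC^{-1}}^{-2}$, which is $1+\smallo{1}$. When $\lambda_1$ stays bounded this gives additive $\smallo{1}$ errors, and in the divergent case the ratio is still preserved.

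The final step transfers each theorem. For Theorem~\ref{thm:all-orders-divergence}, the index shift by one is harmless because $\lambda_2(\bfR_n) \pconv\infty$ as well. The divergence mechanism produces arbitrarily many high-order collisions, and I expect the proof of that theorem yields at least two well-separated large stars; if not, I would rerun its lower bound with two disjoint stars. For Theorems~\ref{thm:upper-edge-convergence} and \ref{thm:collision-layer}, the upper bound comes from $\lambda_1(\bfR_n^\circ) \le (1+\smallo{1})\lambda_1(\bfR_n)$. The lower bound needs $\lambda_2(\bfR_n)$ to have the same limit. This holds at $\lambda_+$ by the MP bulk, and at $\theta_\phi(k)$ because there are $\to\infty$ many $k$-collisions in the supercritical regime. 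For the Poisson statements (Theorems~\ref{thm:upper-poisson} and \ref{thm:critical-maximum}), interlacing alone loses one point, so counts could differ by one.

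This is the main obstacle, and I would handle it by showing $\bfu$ is asymptotically orthogonal to the outlier eigenvectors. Those eigenvectors are localized on the $k$ rows of a collision, up to a delocalized bulk component. Meanwhile $\bfu_i = \sqrt n\,\bar x_i/\norm{\bfx_i} = \bigO{\sqrt{\log n/n}}$ uniformly, so $\norm{\bfu}$ is $\bigO{1}$ but $\bfu$ is delocalized and uncorrelated with the outlier directions. A resolvent argument then applies: outliers of $\bfR_n - \bfu\bfu^\top$ above the baseline are zeros of $\det$ of the same $k\times k$ spike equation up to $\smallo{1}$. Equivalently, one can replace $\bfu\bfu^\top$ by its effect $\angles{\bfv,\bfu}^2 \to 0$ on each outlier eigenvector $\bfv$ and use continuity of isolated eigenvalues. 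This yields the same point-process limit and hence the same $H_{k,\phi,\omega}$.
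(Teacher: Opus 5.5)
\textbf{Gap: the critical regime.} Your interlacing reduction works, in outline, for Theorems~\ref{thm:upper-edge-convergence}, \ref{thm:collision-layer} and \ref{thm:all-orders-divergence}. There you only need $\lambda_2(\bfR_n)$ to share the limit of $\lambda_1(\bfR_n)$, and the MP bulk, or many collisions in distinct components, supplies this. Theorems~\ref{thm:upper-poisson} and \ref{thm:critical-maximum}, however, rest entirely on your claim that $\bfu=\bfY_n\bfke_n$ is asymptotically orthogonal to the outlier eigenvectors, and your justification for that claim does not hold up.

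Take an eigenvector $\mathbf{w}=(\mathbf{w}_\caT,\mathbf{w}_\caB)$ of $\bfR_n$ with eigenvalue $z>\lambda_1(\bfR_\caT)$. It satisfies $\mathbf{w}_\caT=-(\bfR_\caT-z\bfI)^{-1}\bfY_\caT\bfY_\caB^\top\mathbf{w}_\caB$, and this bulk part is not negligible. Asymptotically $\mathbf{w}_\caB$ carries only the fraction $(1-\phi/(x-1)^2)/(1+\phi/(x-1))<1$ of the mass, where $x=\fkx_\phi(z)$, exactly as for BBP outliers. The restriction of $\bfu$ to the typical rows also has $\norm{\bfu_\caT}^2\approx\phi$. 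So you are pairing two delocalized vectors of order-one norm, and $\ell^\infty$-smallness of $\bfu$ says nothing about $\angles{\bfu_\caT,\mathbf{w}_\caT}$. In fact $\bfY_\caT^\top(\bfR_\caT-z\bfI)^{-1}\bfY_\caT=\bfI+z\bfG_\caT(z)$ gives the exact identity
\[
\angles{\bfu,\mathbf{w}}=-z\,\angles{\bfY_\caB\bfG_\caT(z)\bfke_n,\mathbf{w}_\caB},
\]
in which the localized term $\angles{\bfu_\caB,\mathbf{w}_\caB}$ cancels altogether.

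What is missing is therefore a resolvent estimate for the direction $\bfG_\caT(z)\bfke_n$ against the atypical rows, for instance $\sup_{z\in\bbk}\norm{\bfY_\caB\bfG_\caT(z)\bfke_n}\pconv 0$. This is precisely the paper's Lemma~\ref{lem:pearson-mean-direction}. It comes from the independence of $\bfG_\caT$ from the locations of the occupied columns and from $\fkW_\caB$, not from any delocalization of $\bfu$. The paper proves it in three pieces:
\begin{enumerate}[label = (\roman*)]
\item a leave-one-column-out bound $\max_\mu\abs{\angles{\bfe_\mu,\bfG_\caT\bfke_n}}\prec n^{-\tau}$ for the i.i.d.\ proxy;
\item the uniformly random location of $\caJ$, which handles the $\fkS_\caB$ part;
\item matrix Bernstein, which handles the $\fkW_\caB$ part.
\end{enumerate}

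\textbf{Finishing your route, and the paper's shortcut.} With that estimate your route can be completed, but not by the first-order continuity heuristic, because $\norm{\bfu\bfu^\top}\approx\phi$ is not small. Use instead the secular equation $\bfu^\top(\bfR_n-z\bfI)^{-1}\bfu=1$ for the eigenvalues of $\bfR_n-\bfu\bfu^\top$. Suppose the total squared overlap of $\bfu$ with the eigenvectors of $\bfR_n$ above a level $z_0$ is below $\delta$. Then every eigenvalue of $\bfR_n-\bfu\bfu^\top$ above $z_0$ lies within $\delta$ below an eigenvalue of $\bfR_n$. Atomlessness of $\Lambda_{k,\phi,\omega}$, together with $\norm{\bfDelta_n-\bfI}\to0$, then preserves the counts.

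The paper uses the same estimate more economically. It inserts $\bfH_n$ into the Schur complement and applies Sherman--Morrison. The only new term is then $\bfY_\caB\bfG_\caT\bfke_n\bfke_n^\top\bfG_\caT\bfY_\caB^\top/\angles{\bfke_n,\bfG_\caT\bfke_n}$, whose denominator is bounded away from zero. This yields the sandwich \eqref{eqn:pearson-spectral-sandwich} with the same $\bfL_\caB$. Theorems~\ref{thm:upper-edge-convergence}, \ref{thm:collision-layer}, \ref{thm:upper-poisson} and \ref{thm:critical-maximum} then follow at once, without eigenvector analysis or $\lambda_2$ bounds. Divergence is handled separately by comparing $k\times k$ principal blocks.
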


Since Pearson correlations are invariant under shifts and positive rescaling, the mean-zero and unit-variance conditions in \eqref{eqn:basic-assumption} can be removed. The same conclusions hold for any distribution of $\xi$ with finite mean and finite positive variance, provided that the tail assumptions are imposed on the standardized variable $(\xi - \bbe \xi) / \sqrt{\operatorname{Var} (\xi)}$.

% *******************************************
\subsection{Proof strategy and organization}
\label{subsec:proof-organization}
% *******************************************

Our proof uses the framework developed for the critical wide regime in \cite[Section 5]{linPhaseTransitionSmallest2026}. This framework combines truncation, resampling, and resolvent estimates to reduce the study of the smallest eigenvalue $\lambda_{p_n} (\bfR_n)$ to an analysis of collisions of large entries. The earlier analysis concerns the lower spectral gap under the assumptions $0 < \phi < 1$ and $t^3 \bar{F}_{\xi} (t) \to \omega \in (0, \infty)$. We extend the framework to the region above the upper MP edge $\lambda_+$ for every $\phi > 0$ under more general tail conditions. This extension requires understanding how collisions of different orders determine the upper spectral behavior.

As in the analysis \cite{linPhaseTransitionSmallest2026} of the smallest eigenvalue $\lambda_{p_n} (\bfR_n)$, the main analytic difficulty comes from row normalization, which couples the large and small entries within each row. Section \ref{sec:truncation} develops the resolvent estimates needed to handle this dependence. Following \cite{linPhaseTransitionSmallest2026}, we use truncation and resampling to separate typical rows from atypical rows containing large entries. The typical block supplies the MP bulk, while stabilization of the atypical row norms separates the contribution of large entries from that of the truncated entries. A Schur complement then reduces the study of eigenvalues above $\lambda_+$ to that of an effective Gram matrix $\bfL_{\caB}$, whose blocks are determined by the connected components of the large-entry graph $\caG$. The upper outlier map $\theta_\phi$ transfers the effective eigenvalues of $\bfL_\caB$ above $1 + \sqrt{\phi}$ to spectral outliers of $\bfR_n$.

The principal difference lies in the geometry of the large-entry graph $\caG$ and the spectra of the corresponding blocks of the effective Gram matrix $\bfL_{\caB}$. In the analysis of \cite{linPhaseTransitionSmallest2026}, the only nontrivial blocks of $\bfL_{\caB}$ arise from two-row collisions. Under the more general tail conditions considered here, we must allow collisions of arbitrary fixed order, and connected components of $\caG$ may involve several columns. The key observation is that, when every component has at most $k$ edges, an eigenvalue of $\bfL_{\caB}$ above $k - 1$ can only come from a star in $\caG$ centered at a column vertex with $k$ row leaves. The corresponding block has largest eigenvalue $\gamma_k (\bfa)$, introduced in \eqref{def:star-Gram} above. This observation identifies the configurations responsible for each transition, while the BBP threshold $1 + \sqrt{\phi}$ determines which collision orders can produce outliers above $\lambda_+$. Section \ref{sec:collisions} formalizes this observation and uses it to establish the noncritical limits in Theorems \ref{thm:upper-edge-convergence}, \ref{thm:collision-layer}, and \ref{thm:all-orders-divergence}.

In the critical regime, Section \ref{sec:critical-collisions} extends the argument of \cite{linPhaseTransitionSmallest2026} to obtain a Poisson limit for the point process of rescaled amplitude vectors associated with $k$-row collisions. The map $\gamma_k$ converts these magnitudes into the largest eigenvalues of the corresponding blocks of $\bfL_\caB$, and $\theta_\phi$ sends those above the BBP threshold to outlier locations. The limiting largest eigenvalue of $\bfR_n$ is the maximum of these random outliers and the spectral baseline $\vartheta_\phi (k - 1)$. We therefore focus on the point process of eigenvalues above this baseline, whose Poisson limit is established in Theorem \ref{thm:upper-poisson}. The limiting distribution in Theorem \ref{thm:critical-maximum} then follows by evaluating the probability that the limiting Poisson process has no points above a given level.

Finally, Section \ref{sec:pearson} extends the results to the Pearson correlation matrix $\bfR_n^\circ$. Sample centering preserves the spectral reduction to $\bfL_\caB$, and hence the arguments in Sections \ref{sec:collisions} and \ref{sec:critical-collisions} remain valid.

% *******************************************
\subsection{Notation and conventions}
\label{subsec:notation}
% *******************************************

For $d \in \bbn$, write $\dbraks{d} := \{1, \ldots, d\}$. We use $\abs{A}$ to denote the cardinality of a finite set $A$. We generally use Roman letters $i, j$ for row indices in $\dbraks{p_n}$ and Greek letters $\mu, \nu$ for column indices in $\dbraks{n}$.

We use $\norm{\cdot}$ for the Euclidean norm of a vector or the induced operator norm of a matrix, and $\angles{\bfx, \bfy} := \bfx^\top \bfy$ for the standard inner product of real vectors. The symbol $\bfI$ denotes the identity matrix of the appropriate dimension, and $\bfe_i$ denotes the standard basis vector associated with index $i$ in the relevant coordinate space. We write $\mathbf{1}_d$ for the vector of $d$ ones. For $\caI \subset \dbraks{d}$, the coordinate projection matrix $\bfP_{\caI} \in \bbr^{\abs{\caI} \times d}$ restricts a vector to its coordinates in $\caI$. For a square matrix $\bfA = (A_{ij})_{i,j=1}^d$, write $\mathcal{D}[\bfA]$ and $\mathcal{D}_\perp[\bfA]$ for its diagonal and off-diagonal parts, respectively, so that
\begin{equation*}
    \bfA = \mathcal{D} [\bfA] + \mathcal{D}_\perp [\bfA],
    \qquad
    \mathcal{D} [\bfA] = \diag (A_{11}, \ldots, A_{dd}).
\end{equation*}
For real symmetric matrices, $\bfA \leq \bfB$ means that $\bfB - \bfA$ is positive semidefinite.

Throughout, we regard $n$ as the asymptotic parameter. Quantities may depend on $n$, and we suppress this dependence when convenient. Unless otherwise specified, limits are taken as $n \to \infty$. Constants $c, C > 0$ are independent of $n$ but may depend on the entry distribution and fixed parameters. Their values may change between occurrences unless explicitly fixed. For deterministic sequences $a_n$ and $b_n > 0$, the relations $a_n = O (b_n)$ and $a_n = o (b_n)$ mean that $\abs{a_n} / b_n$ is bounded and tends to zero, respectively. For positive sequences, $a_n \sim b_n$ means $a_n / b_n \to 1$, while $a_n \asymp b_n$ means $c \leq a_n / b_n \leq C$ for constants $0 < c < C$ independent of $n$. For random quantities, $X_n = O_{\bbp} (b_n)$ and $X_n = o_{\bbp} (b_n)$ mean that $\abs{X_n} / b_n$ is bounded in probability and converges to zero in probability, respectively.

\section{Spectral reduction to the effective Gram matrix}
\label{sec:truncation}
% *******************************************

Throughout this section, we fix an integer $k \geq 2$ and assume, in addition to \eqref{eqn:basic-assumption}, the tail bound
\begin{equation}
    \bar{F}_{\xi} (t) \leq C_{\ref{eqn:analytic-tail-envelope}} t^{-\alpha},
    \qquad \forall \, t \geq 1,
    \qquad \alpha \equiv \alpha_k = 2 + {2} / {k} \in (2, 3].
    \label{eqn:analytic-tail-envelope}
\end{equation}
Here $C_{\ref{eqn:analytic-tail-envelope}} > 0$ is a constant that may depend on $k$. We define the truncation threshold by
\begin{equation}
    \fku_n = n^{1 / 2 - \tau},
    \qquad
    0 < \tau < \frac{1}{\alpha k (k + 1)}.
    \label{eqn:unified-truncation}
\end{equation}
The parameter $\tau$ may depend on $k$, but both are fixed independently of $n$. We suppress the dependence of auxiliary quantities and constants on these parameters in the notation. The threshold $\fku_n$ lies slightly below the natural scale $\sqrt{n}$ at which individual entries make a nonnegligible contribution to the row norms $\norm{\bfx_i}$. The estimates below rely on \eqref{eqn:basic-assumption} and the tail bound \eqref{eqn:analytic-tail-envelope}, with the truncation threshold chosen as in \eqref{eqn:unified-truncation}.

The main objective of this section is to establish the resolvent estimate in Proposition \ref{prop:unified-resolvent}. This estimate is the key input to a Schur complement argument that reduces the analysis of $\lambda_1 (\bfR_n)$ to that of an effective Gram matrix $\bfL_{\caB}$, whose randomness comes solely from the large entries. We prove the estimate by adapting the arguments of \cite[Sections 5.1--5.6]{linPhaseTransitionSmallest2026}. To avoid fully repeating the arguments, we summarize the main steps and focus on the modifications needed in the present setting. Once this estimate is established, the Schur complement argument yields the comparison of eigenvalue counts in Proposition \ref{prop:spectral-transfer}. Corollary \ref{cor:extremal-transfer} then gives criteria for convergence of the largest eigenvalue to the upper MP edge or a deterministic outlier level. Together, these results provide the spectral reduction used in Sections \ref{sec:collisions} and \ref{sec:critical-collisions}. 

We begin by introducing notation that streamlines the probabilistic statements in this section.

% *******************************************
\begin{definition}[stochastic domination]
% *******************************************
Let $X = (X_n (t))$ and $Y = (Y_n (t))$ be families of nonnegative random variables indexed by $n \geq 1$ and $t \in \bbt_n$. The index sets $\bbt_n$ are deterministic and may depend on $n$. We write $X \prec Y$ and say that $X$ is \emph{stochastically dominated} by $Y$, uniformly in $t \in \bbt_n$, if for every (small) $\eps > 0$ and (large) $D > 0$ there exists $N_0 (\eps, D)$ such that
\begin{equation*}
    \sup\nolimits_{t \in \bbt_n} \bbp \{X_n (t) > n^\eps Y_n (t)\}
    \leq n^{-D},
    \qquad n \geq N_0 (\eps, D).
\end{equation*}
We also write $X_n = O_\prec (Y_n)$ if $\abs{X_n} \prec Y_n$. We say a sequence of events $(\Omega_n)_{n \geq 1}$ holds with \emph{overwhelming probability} if, for every $D > 0$, there exists $N_0 (D)$ such that $\bbp (\Omega_n^c) \leq n^{-D}$ for all $n \geq N_0 (D)$.

We use the same definitions for conditional estimates, replacing $\bbp$ by the relevant conditional probability and requiring all bounds to hold uniformly over the conditioning realizations under consideration.
\end{definition}

% *******************************************
\subsection{Resampling representation and the large-entry graph}
% *******************************************

The probability that an entry exceeds the truncation threshold $\fku_n$ in absolute value is
\begin{equation}
    \fkq_n
    := \bar{F}_{\xi} (\fku_n)
    = \bbp \{ \abs{\xi} > \fku_n \}.
    \label{eqn:unified-large-entry-probability}
\end{equation}
We introduce the \emph{large-entry graph} $\caG = (\dbraks{p_n}, \dbraks{n}, \caE)$ as the bipartite graph whose two vertex sets consist of the row and column indices. An edge joins row $i$ to column $\mu$ precisely when $\abs{x_{i \mu}} > \fku_n$, so its edge set is
\begin{equation}
    \caE := \curls[\big]{
    (i, \mu) \in \dbraks{p_n} \times \dbraks{n} :
    \abs{x_{i \mu}} > \fku_n}.
    \label{def:unified-large-entry-edges}
\end{equation}
For notational convenience, we suppress the dependence on $n$ in the graph and its associated index sets. The index sets of \emph{atypical} and \emph{typical} rows are defined, respectively, by
\begin{equation}
    \caB := \{i \in \dbraks{p_n} :
    \{i\} \times \dbraks{n} \cap \caE \neq \varnothing \},
    \qquad
    \caT := \dbraks{p_n} \setminus \caB.
\end{equation}
Thus, a row $i \in \caB$ contains at least one entry exceeding $\fku_n$ in absolute value, whereas a row $i \in \caT$ contains none. Define the neighbourhood $\caA_i$ of row $i$ in $\caG$ and the set $\caJ$ of occupied columns by
\begin{equation}
    \caA_i := \{\mu \in \dbraks{n} : (i, \mu) \in \caE\},
    \qquad
    \caJ := \bigcup\nolimits_{i \in \caB} \caA_i.
\label{eqn:unified-row-sets}
\end{equation}
The sets $\caB$ and $\caJ$ are therefore the occupied row and column vertex sets, respectively. Recall that $\bfP_{\caI}$ denotes the coordinate projection matrix onto the index set $\caI$. Set
\begin{equation*}
    \bfX_{\caT} := \bfP_{\caT} \bfX_n, \qquad
    \bfX_{\caB} := \bfP_{\caB} \bfX_n, \qquad
    \bfY_{\caT} := \bfP_{\caT} \bfY_n, \qquad
    \bfY_{\caB} := \bfP_{\caB} \bfY_n,
\end{equation*}
so that $\bfX_{\caT}$ and $\bfX_{\caB}$ are the submatrices of $\bfX_n$ containing the typical and atypical rows, respectively, while $\bfY_{\caT}$ and $\bfY_{\caB}$ are their row-normalized counterparts.

As in \cite[Section 5]{linPhaseTransitionSmallest2026}, we condition on a template of the large-entry pattern while keeping the occupied column set $\caJ$ random. We achieve this by constructing a template graph and relabelling its columns using an independent uniform permutation. This resampling representation preserves the law of $\bfX_n$ and makes the conditional structure explicit. Let $\zeta_n$ and $\psi_n$ have the laws of $\xi$ conditional on $\abs{\xi} \leq \fku_n$ and $\abs{\xi} > \fku_n$, respectively. Specifically, for every Borel set $A \subset \bbr$,
\begin{subequations} \label{eqn:alpha3-resampled-laws}
\begin{align}
    \bbp \{\zeta_n \in A\}
    = \frac{\bbp \{\xi \in A, \, \abs{\xi} \leq \fku_n\}}
    {\bbp \{\abs{\xi} \leq \fku_n\}},
    \label{eqn:alpha3-resampled-zeta} \\
    \bbp \{\psi_n \in A\}
    = \frac{\bbp \{\xi \in A, \, \abs{\xi} > \fku_n\}}
    {\bbp \{\abs{\xi} > \fku_n\}}.
    \label{eqn:alpha3-resampled-psi}
\end{align}
\end{subequations}
On an enlarged probability space, let
\begin{equation*}
    \bar{\chi}_{i \nu} \overset{\mathrm{i.i.d.}}{\sim} \mathrm{Bern} (\fkq_n),
    \qquad
    \bar{\psi}_{i \nu} \overset{\mathrm{i.i.d.}}{\sim} \caL (\psi_n),
    \qquad
    \zeta_{i \mu} \overset{\mathrm{i.i.d.}}{\sim} \caL (\zeta_n),
    \qquad
    \pi_n \sim \operatorname{Unif} (\caS_n)
\end{equation*}
be mutually independent random variables, where $i \in \dbraks{p_n}$ and $\mu, \nu \in \dbraks{n}$, and $\caS_n$ denotes the permutation group of $\dbraks{n}$. The resampling representation is
\begin{equation}
    x_{i \mu} = (1 - \chi_{i \mu}) \zeta_{i \mu}
       + \chi_{i \mu} \psi_{i \mu}, \qquad
    (\chi_{i \mu}, \psi_{i \mu})
    = (\bar{\chi}_{i \nu}, \bar{\psi}_{i \nu}),
    \qquad
    \nu = \pi_n^{-1} (\mu).
    \label{eqn:unified-resampling}
\end{equation}
For every fixed realization of $\pi_n$, the permuted pairs $(\chi_{i \mu}, \psi_{i \mu})$ remain i.i.d. and are independent of the truncated variables $\zeta_{i \mu}$. The entries in \eqref{eqn:unified-resampling} are therefore independent with law $\caL (\xi)$, so this representation preserves the law of $\bfX_n$. In particular, $\caE = \{(i, \mu) : \chi_{i \mu} = 1\}$.

The unpermuted indicators define the \emph{template large-entry graph} $\bar{\caG} = (\dbraks{p_n}, \dbraks{n}, \bar{\caE})$, with edge set
\begin{equation}
    \bar{\caE} := \{(i, \nu) \in \dbraks{p_n} \times \dbraks{n} :
    \bar{\chi}_{i \nu} = 1\}.
    \label{def:unified-template-edges}
\end{equation}
The actual graph $\caG$ is obtained by relabelling each template column $\nu$ of $\bar{\caG}$ as $\pi_n (\nu)$, while keeping all row labels fixed. In particular, $\abs{\caE} = \abs{\bar{\caE}}$. Using the template labels, define
\begin{equation*}
    \bar{\caA}_i := \{\nu \in \dbraks{n} : (i, \nu) \in \bar{\caE}\},
    \qquad
    \bar{\caJ} := \bigcup\nolimits_{i \in \caB} \bar{\caA}_i.
\end{equation*}
Then $\caA_i = \pi_n (\bar{\caA}_i)$ and $\caJ = \pi_n (\bar{\caJ})$, while the occupied row set $\caB$ is unchanged.

For the conditioning arguments, introduce the $\sigma$-fields
\begin{equation*}
    \bar{\caH}_n := \sigma \pars[\big]{
    \{\bar{\chi}_{i \nu}\}_{i \in \dbraks{p_n}, \nu \in \dbraks{n}}},
    \qquad
    \caH_n := \sigma (\bar{\caH}_n, \pi_n).
\end{equation*}
To simplify the notation, we write
\begin{equation*}
    \bbe^{\bar{\caH}} [Z] := \bbe [Z \mid \bar{\caH}_n],
    \qquad
    \bbe^{{\caH}} [Z] := \bbe [Z \mid {\caH}_n],
    \qquad
    \bbp^{\bar{\caH}} (\Omega) := \bbp (\Omega \, | \, \bar{\caH}_n),
    \qquad
    \bbp^{\caH}  (\Omega) := \bbp (\Omega \, | \, {\caH}_n).
\end{equation*}
Conditioning on these two $\sigma$-fields fixes the large-entry pattern in the template and actual column labels, respectively, while leaving the large-entry values independent with common law $\caL (\psi_n)$. In particular, the index sets $\caB$, $\caT$, and $\bar{\caJ}$ and the counts $\abs{\caE}$ and $\abs{\caJ}$ are $\bar{\caH}_n$-measurable. Conditional on $\bar{\caH}_n$, the set $\caJ = \pi_n (\bar{\caJ})$ is uniformly distributed over the subsets of $\dbraks{n}$ of cardinality $\abs{\bar{\caJ}}$ and is independent of both the large-entry values $(\bar{\psi}_{i \nu})_{i \in \dbraks{p_n}, \nu \in \dbraks{n}}$ in template labels and the truncated variables $({\zeta_{i \mu}})_{i \in \dbraks{p_n}, \mu \in \dbraks{n}}$. We use this independence to apply the random coordinate compression estimate in Section \ref{subsec:atypical-centered-resolvent}.

The following lemma gives elementary bounds for the large-entry graph. Part \ref{item:unified-graph-edge-count} controls the numbers of atypical rows and occupied columns, while part \ref{item:unified-graph-components} limits the number of edges in each connected component. These bounds play a key role in ruling out complicated local configurations in the large-entry graph.

% *******************************************
\begin{lemma}[Large-entry graph]
\label{lem:unified-graph}
% *******************************************
For all sufficiently large $n$, the following statements hold.
\begin{enumerate}[label = (\roman*)]
    \item \label{item:unified-graph-edge-count} The following event is $\bar{\caH}_n$-measurable and holds with overwhelming probability,
    \begin{equation}
        \Omega_n := \{\abs{\bar{\caE}} \leq n^{1 - \sigma / 2}\},
        \qquad
        \sigma \equiv \sigma_k = 1 / k - \alpha \tau.
        \label{def:unified-edge-event}
    \end{equation}
    \item \label{item:unified-graph-components} The event
    \begin{equation*}
        \tilde{\Omega}_n \equiv \tilde{\Omega}_{k, n} := \Omega_n \cap
        \{\text{every connected component of $\bar{\caG}$
        has at most $k$ edges}\}
    \end{equation*}
    is $\bar{\caH}_n$-measurable and satisfies
    \begin{equation}
        \bbp (\tilde{\Omega}_n) \geq 1 - O (n^{-(k + 1) \sigma + 1})
        = 1 - o(1).
        \label{eqn:unified-component-event-tail}
    \end{equation}
\end{enumerate}
\end{lemma}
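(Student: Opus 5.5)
Both parts rest on the estimate for $\fkq_n$ coming from \eqref{eqn:analytic-tail-envelope}. Since $\fku_n = n^{1/2-\tau}$ and $\alpha = \alpha_k = 2+2/k$, we get
\begin{equation*}
    \fkq_n \leq C n^{-\alpha(1/2-\tau)} = C n^{-1-1/k+\alpha\tau} = C n^{-1-\sigma},
\end{equation*}
where $\sigma = 1/k - \alpha\tau > 0$. Positivity of $\sigma$ follows from \eqref{eqn:unified-truncation}, which gives $\alpha\tau < 1/(k(k+1)) < 1/k$. The event $\Omega_n$ and the component condition depend only on the indicators $\bar{\chi}_{i\nu}$, so both events are $\bar{\caH}_n$-measurable.

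For part \ref{item:unified-graph-edge-count}, note that $\abs{\bar{\caE}}$ is $\mathrm{Bin}(p_n n, \fkq_n)$. Its mean is at most $C n^{1-\sigma}$, because $p_n \asymp n$. I would apply a Chernoff bound of the form $\bbp\{\mathrm{Bin} \geq t\} \leq (e\,\bbe/t)^t$ with $t = n^{1-\sigma/2}$. This gives
\begin{equation*}
    \bbp(\Omega_n^c) \leq \pars[\big]{C n^{-\sigma/2}}^{n^{1-\sigma/2}},
\end{equation*}
which is smaller than any power of $n$. Hence $\Omega_n$ holds with overwhelming probability.

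For part \ref{item:unified-graph-components}, I would use a union bound over connected subgraphs with $k+1$ edges. Any component with at least $k+1$ edges contains a connected subgraph with exactly $k+1$ edges; take a spanning tree of the component and prune leaves until $k+1$ edges remain. A connected bipartite graph with $k+1$ edges has at most $k+2$ vertices. If $a$ of them are row vertices and $b$ are column vertices, then $a+b \leq k+2$. There are $O(p_n^a n^b) = O(n^{k+2})$ ways to choose these vertex labels. There are $O_k(1)$ edge patterns on a fixed vertex set, and each pattern occurs with probability $\fkq_n^{k+1}$ by independence. The expected number of such subgraphs is therefore
\begin{equation*}
    O\pars[\big]{n^{k+2} \, n^{-(k+1)(1+\sigma)}} = O\pars[\big]{n^{1-(k+1)\sigma}}.
\end{equation*}
Markov's inequality together with part \ref{item:unified-graph-edge-count} then gives \eqref{eqn:unified-component-event-tail}.

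It remains to check that this bound is $o(1)$, i.e.\ that $(k+1)\sigma > 1$. Expanding, $(k+1)\sigma = (k+1)/k - (k+1)\alpha\tau$, so the condition is $(k+1)\alpha\tau < 1/k$, i.e.\ $\tau < 1/(\alpha k(k+1))$. This is exactly the constraint in \eqref{eqn:unified-truncation}, so the argument closes with no margin to spare. The only mildly delicate step is the subgraph counting, specifically the reduction to connected subgraphs with exactly $k+1$ edges and at most $k+2$ vertices. Everything else is a routine first-moment computation.
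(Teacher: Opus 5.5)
Your proposal is correct and follows essentially the same route as the paper: the bound $\fkq_n \leq C n^{-1-\sigma}$, a Chernoff bound with $t = n^{1-\sigma/2}$ for part (i), and a first-moment union bound over connected $(k+1)$-edge subgraphs with at most $k+2$ vertices, giving $O(n^{1-(k+1)\sigma})$, for part (ii). One small repair is needed: pruning a spanning tree fails when the component has cycles and at most $k+1$ vertices, since the tree then has fewer than $k+1$ edges (e.g.\ a $K_{2,2}$ component when $k=3$, whose spanning tree has only $3$ edges); in that case add non-tree edges back to the spanning tree, or simply grow a connected edge set one adjacent edge at a time until it has exactly $k+1$ edges.
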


The choice of $\tau$ in \eqref{eqn:unified-truncation} and the definition in \eqref{def:unified-edge-event} imply that $1 / (k + 1) < \sigma_k < 1 / k \leq 1 / 2$.

% *******************************************
\begin{proof}[Proof of Lemma \ref{lem:unified-graph}]
% *******************************************
We first work with the template graph $\bar{\caG}$, whose edge indicators $\{\bar{\chi}_{i \nu}\}_{i \in \dbraks{p_n}, \nu \in \dbraks{n}}$ are independent. By the tail condition \eqref{eqn:analytic-tail-envelope} and the definition of $\sigma$ in \eqref{def:unified-edge-event},
\begin{equation}
    \fkq_n
    \leq C_{\ref{eqn:analytic-tail-envelope}} \fku_n^{-\alpha}
    = C_{\ref{eqn:analytic-tail-envelope}} n^{-1 - 1 / k + \alpha \tau}
    = C_{\ref{eqn:analytic-tail-envelope}} n^{-1 - \sigma}.
    \label{eqn:unified-edge-probability-bound}
\end{equation}
By independence of $\{\bar{\chi}_{i \nu}\}_{i \in \dbraks{p_n}, \nu \in \dbraks{n}}$, the number of large entries has a binomial distribution,
\begin{equation*}
    \abs{\bar{\caE}}
    = \sum\nolimits_{i = 1}^{p_n}
    \sum\nolimits_{\nu = 1}^n
    \bar{\chi}_{i \nu}
    \sim \operatorname{Binom} (p_n n, \fkq_n),
    \qquad
    \bbe \abs{\bar{\caE}}
    = p_n n \fkq_n
    \leq C n^{1 - \sigma},
\end{equation*}
where the expectation bound uses $p_n \asymp n$. With $t_n = n^{1 - \sigma / 2}$, a standard application of Chernoff's bound (see, e.g., \cite[Theorem 2.3.1]{vershyninHighdimensionalProbabilityIntroduction2018}) gives, for all sufficiently large $n$,
\begin{equation*}
    \bbp \{\abs{\bar{\caE}} > t_n\}
    \leq \pars{{e \bbe \abs{\bar{\caE}}} / {t_n}}^{t_n}
    \leq \exp (-c t_n)
    = \exp \pars{- c n^{1 - \sigma / 2}}.
\end{equation*}
Since $\sigma < 1 / 2$, this yields the claimed probability bound. The $\bar{\caH}_n$-measurability of $\Omega_n$ is immediate.

We next bound the number of edges in the connected components of $\bar{\caG}$. Any connected component of $\bar{\caG}$ with at least $k + 1$ edges must contain a connected subgraph with exactly $k + 1$ edges. Let $\mathbb{G}_{k + 1}$ be the finite collection of isomorphism types of connected bipartite graphs with exactly $k + 1$ edges. For $\caC \in \mathbb{G}_{k + 1}$, write $\fkr (\caC)$ and $\fkc (\caC)$ for its numbers of row and column vertices. Then there are at most $p_n^{\fkr (\caC)} n^{\fkc (\caC)}$ embeddings of $\caC$ into the vertex set of $\bar{\caG}$, and all prescribed edges of any embedding are present with probability $\fkq_n^{k + 1}$ by independence of $\bar{\chi}_{i \nu}$. Since $\fkr (\caC) + \fkc (\caC) \leq k + 2$ by connectedness of $\caC$, a union bound gives
\begin{equation*}
\begin{split}
    & ~ \bbp \curls{\bar{\caG}
    \text{ has a connected component with at least $k + 1$ edges}} \\
    & \qquad \leq
    \sum\nolimits_{\caC \in \mathbb{G}_{k + 1}}
    p_n^{\fkr (\caC)} n^{\fkc (\caC)} \fkq_n^{k + 1}
    \leq C n^{k + 2} \fkq_n^{k + 1}
    \leq C n^{k + 2 - (k + 1) (1 + \sigma)}
    = C n^{- (k + 1) \sigma + 1}.
\end{split}
\end{equation*}
Here we used the bound on $\fkq_n$ from \eqref{eqn:unified-edge-probability-bound}. Note that $n^{- (k + 1) \sigma + 1} = o (1)$ since $\sigma > 1/(k+1)$. Combining this with part \ref{item:unified-graph-edge-count} proves \eqref{eqn:unified-component-event-tail}. The $\bar{\caH}_n$-measurability of $\tilde{\Omega}_n$ is again immediate.
\end{proof}

For later use, we record the following consequences of the scalar and matrix Bernstein inequalities in terms of stochastic domination.
\begin{subequations}
\begin{enumerate}[label = (\roman*)]
    \item Let $\{x_a\}_{a = 1}^N$ be independent, centered real random variables. If, for some deterministic $M_n, V_n \geq 0$,
    \begin{equation*}
        \max\nolimits_{a \in \dbraks{N}} \abs{x_a} \leq M_n,
        \qquad
        \sum\nolimits_{a = 1}^N \bbe \abs{x_a}^2 \leq V_n,
    \end{equation*}
    then the scalar Bernstein inequality (see, e.g., \cite[Theorem 2.8.4]{vershyninHighdimensionalProbabilityIntroduction2018}) gives
    \begin{equation}
        \abs*{\sum\nolimits_{a = 1}^N x_a}
        \prec M_n + \sqrt{V_n}.
        \label{eqn:scalar-Bernstein-domination}
    \end{equation}
    \item Similarly, let $\{\fkX_a\}_{a = 1}^N$ be independent, centered random matrices of size $d_1 \times d_2$, where $d_1 + d_2 \leq n^C$ for some constant $C > 0$. Suppose that
    \begin{equation*}
        \max\nolimits_{a \in \dbraks{N}}
        \norm{\fkX_a} \leq M_n,
        \qquad
        \max \curls*{
        \norm*{\sum\nolimits_{a = 1}^N \bbe [\fkX_a \fkX_a^\top]},
        \norm*{\sum\nolimits_{a = 1}^N \bbe [\fkX_a^\top \fkX_a]}}
        \leq V_n.
    \end{equation*}
    Then, Tropp's matrix Bernstein inequality \cite[Theorem 1.6]{troppUserFriendlyTailBounds2012} gives
    \begin{equation}
        \norm*{\sum\nolimits_{a = 1}^N \fkX_a}
        \prec M_n + \sqrt{V_n}.
        \label{eqn:matrix-Bernstein-domination}
    \end{equation}
\end{enumerate}
\end{subequations}

% *******************************************
\subsection{Truncated moments and denominator concentration}
\label{subsec:truncated-moments}
% *******************************************

We first establish a concentration estimate for both typical and atypical rows. Recall the truncated law \eqref{eqn:alpha3-resampled-zeta} and the tail condition \eqref{eqn:analytic-tail-envelope}. A standard tail integration argument, as in \cite[Lemma 5.2]{linPhaseTransitionSmallest2026}, gives
\begin{equation}
    \abs{\bbe \zeta_n} \leq C \fku_n^{1 - \alpha}, \qquad
    \abs[\big]{\bbe \abs{\zeta_n}^2 - 1} \leq C \fku_n^{2 - \alpha}, \qquad
    \bbe \abs{\zeta_n}^4 \leq C \fku_n^{4 - \alpha}.
    \label{eqn:unified-low-moments}
\end{equation}

% *******************************************
\begin{lemma}
\label{lem:unified-square-sums}
% *******************************************
Uniformly over all realizations of $\bar{\caH}_n$ in $\Omega_n$, the following estimate holds under $\bbp^{\bar{\caH}}$,
\begin{equation}
    \max_{i \in \dbraks{p_n}} \,
    \abs*{\sum\nolimits_{\mu \in \dbraks{n} \backslash \caA_i}
    \pars[\big]{\abs{\zeta_{i \mu}}^2 - \bbe \abs{\zeta_n}^2}}
    \prec \fku_n^2 = n^{1 - 2 \tau}.
    \label{eqn:unified-square-sums}
\end{equation}
\end{lemma}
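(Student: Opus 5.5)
Conditional on $\bar{\caH}_n$ (realization in $\Omega_n$), I would apply the scalar Bernstein bound \eqref{eqn:scalar-Bernstein-domination} row by row, and then take a union bound over the $p_n \asymp n$ rows.

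\textbf{Step 1: conditional structure.} The summation set $\dbraks{n}\setminus\caA_i$ depends on $\pi_n$, which is not $\bar{\caH}_n$-measurable. However, the $\zeta_{i\mu}$ are independent of $(\bar{\caH}_n, \pi_n)$. So I would first condition on $\caH_n = \sigma(\bar{\caH}_n,\pi_n)$. Given $\caH_n$, for each fixed $i$ the variables $\{\abs{\zeta_{i\mu}}^2 - \bbe\abs{\zeta_n}^2\}_{\mu \notin \caA_i}$ are i.i.d. and centered, with a deterministic index set of size at most $n$.

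\textbf{Step 2: Bernstein parameters.} Each summand is bounded by $M_n = \fku_n^2$, since $\abs{\zeta_{i\mu}} \le \fku_n$ and $\bbe\abs{\zeta_n}^2 \le 1 + C\fku_n^{2-\alpha} \le 2$. By \eqref{eqn:unified-low-moments} the variance sum satisfies
\begin{equation*}
V_n \le n\,\bbe\abs{\zeta_n}^4 \le C n \fku_n^{4-\alpha}.
\end{equation*}
Hence $\sqrt{V_n} \le C n^{1/2}\fku_n^{2-\alpha/2}$. I would compare this with $\fku_n^2$: the ratio is $n^{1/2}\fku_n^{-\alpha/2} = n^{1/2 - \alpha(1/2-\tau)/2}$. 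Since $\alpha > 2$ and $\tau$ is small, the exponent $1/2 - \alpha/4 + \alpha\tau/2$ is negative. Indeed $\alpha/4 - 1/2 = 1/(2k)$, and $\alpha\tau/2 < 1/(2k(k+1)) < 1/(2k)$ by \eqref{eqn:unified-truncation}. Thus $\sqrt{V_n} \le \fku_n^2$, and Bernstein gives a bound $\prec \fku_n^2$ for each fixed $i$. The bound holds uniformly in the conditioning realization, because $M_n$ and $V_n$ are deterministic.

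\textbf{Step 3: union bound and passage to $\bar{\caH}_n$.} A union bound over $i \in \dbraks{p_n}$ costs only a factor $p_n \le Cn$, which is absorbed by the $n^{-D}$ tail in the definition of $\prec$. Averaging over $\pi_n$ then passes from $\bbp^{\caH}$ to $\bbp^{\bar{\caH}}$, since the bound is uniform over $\pi_n$.

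The restriction to $\Omega_n$ is not actually needed here. The only mild point is that the summation index set is random through $\pi_n$, which is why I condition on the finer $\sigma$-field first. I expect no real obstacle beyond the exponent check in Step 2.
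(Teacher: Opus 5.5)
Your proposal is correct and matches the paper's proof step for step: condition on $\caH_n$, apply the scalar Bernstein bound with $M_n = C\fku_n^2$ and $V_n = Cn\fku_n^{4-\alpha}$, absorb $\sqrt{V_n}$ into $\fku_n^2$ using $\alpha\tau < 1/k$, take a union bound over rows, and average over $\pi_n$. Your remark that membership in $\Omega_n$ plays no substantive role here is also accurate.
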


% *******************************************
\begin{proof}[Proof of Lemma \ref{lem:unified-square-sums}]
% *******************************************
We first prove \eqref{eqn:unified-square-sums} under $\bbp^{\caH}$. Fix a realization of $\caH_n$ in $\Omega_n$. Conditional on $\caH_n$, the variables $\{\abs{\zeta_{i \mu}}^2 - \bbe \abs{\zeta_n}^2\}_{\mu \in \dbraks{n} \backslash \caA_i}$ are independent and centered. Moreover, by \eqref{eqn:alpha3-resampled-zeta} and \eqref{eqn:unified-low-moments},
\begin{equation*}
    \abs[\big]{\abs{\zeta_{i \mu}}^2 - \bbe \abs{\zeta_n}^2}
    \leq C \fku_n^2,
    \qquad
    \operatorname{Var} \pars[\big]{\abs{\zeta_{i \mu}}^2 - \bbe \abs{\zeta_n}^2}
    \leq \bbe \abs{\zeta_n}^4 \leq C \fku_n^{4 - \alpha}.
\end{equation*}
Applying the Bernstein inequality \eqref{eqn:scalar-Bernstein-domination} with $M_n = C \fku_n^2$ and $V_n = C n \fku_n^{4 - \alpha}$ gives
\begin{equation*}
    \abs*{\sum\nolimits_{\mu \in \dbraks{n} \backslash \caA_i}
    \pars[\big]{\abs{\zeta_{i \mu}}^2 - \bbe \abs{\zeta_n}^2}}
    \prec \sqrt{n} \fku_n^{2 - \alpha / 2} + \fku_n^2
    \prec \fku_n^2,
\end{equation*}
where the last bound follows from $\alpha \tau < 1 / k$. A union bound over $i \in \dbraks{p_n}$ then proves \eqref{eqn:unified-square-sums} under $\bbp^{\caH}$. Finally, since $\caH_n = \sigma (\bar{\caH}_n, \pi_n)$ and $\Omega_n$ is $\bar{\caH}_n$-measurable, averaging the uniform conditional estimate over the random permutation $\pi_n$ gives the same bound under $\bbp^{\bar{\caH}}$.
\end{proof}

Recall \eqref{def:MP-law}. Let $F_\phi$ denote the companion MP law with aspect ratio $\phi$ and $\fkm_\phi$ its Stieltjes transform,
\begin{equation}
    F_\phi (t) = \phi \bar{F}_\phi (t) + (1 - \phi) \bbone \{t \geq 0\},
    \qquad
    \fkm_\phi (z)
    = \int_{\bbr} \frac{1}{x - z} \, \rmd F_\phi (x).
    \label{eqn:companion-MP-transform}
\end{equation}
Let $\varphi_n := \abs{\caT} / n$ denote the aspect ratio of the typical block $\bfY_{\caT}$. Define
\begin{equation}
    \bfR_{\caT} = \bfY_{\caT} \bfY_{\caT}^\top,
    \qquad
    \bfQ_{\caT} := \bfY_{\caT}^\top \bfY_{\caT},
    \qquad
    \bfG_{\caT} (z) := (\bfQ_{\caT} - z \bfI)^{-1} = (G_{\mu \nu} (z))_{\mu, \nu = 1}^n.
    \label{eqn:unified-centered-resolvent}
\end{equation}
The next result gives the estimates for the typical block $\bfY_\caT$ needed below.

% *******************************************
\begin{proposition}[Typical block estimates]
\label{prop:unified-core}
% *******************************************
Fix a compact interval $\bbk \subset (\lambda_+, \infty)$. Uniformly over all realizations of $\bar{\caH}_n$ in $\Omega_n$, the following estimates hold under $\bbp^{\bar{\caH}}$.
\begin{enumerate}[label = (\roman*)]
    \item There exists $c_{\ref{eqn:unified-core-trace}} \equiv c_{\ref{eqn:unified-core-trace}} (\bbk) > 0$ such that, with overwhelming probability,
    \begin{equation}
        \lambda_1 (\bfQ_{\caT}) \leq \min \bbk - c_{\ref{eqn:unified-core-trace}},
        \qquad
        \sup\nolimits_{z \in \bbk} \norm{\bfG_{\caT} (z)} \leq 1 / c_{\ref{eqn:unified-core-trace}}.
        \label{eqn:unified-core-trace}
    \end{equation}
    \item The following entrywise law holds outside the spectrum,
    \begin{equation}
        \sup\nolimits_{z \in \bbk}
        \max\nolimits_{\mu, \nu \in \dbraks{n}} \,
        \abs[\big]{G_{\mu \nu} (z) - \delta_{\mu \nu} \fkm_{\varphi_n} (z)}
        \prec n^{-\tau}.
        \label{eqn:unified-core-entrywise}
    \end{equation}
\end{enumerate}
\end{proposition}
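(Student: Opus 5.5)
Conditional on $\bar{\caH}_n$ with realization in $\Omega_n$, the typical block $\bfY_\caT$ consists of $\abs{\caT}$ independent rows. Each row is $\bfy_i = \bfx_i/\norm{\bfx_i}$ with $\bfx_i = (\zeta_{i\mu})_{\mu}$, whose entries are i.i.d. copies of the truncated variable $\zeta_n$, bounded by $\fku_n = n^{1/2-\tau}$. Since $\caA_i = \varnothing$ for $i \in \caT$ and $\abs{\caB} \le \abs{\bar\caE} \le n^{1-\sigma/2}$ on $\Omega_n$, we have $\varphi_n = \abs{\caT}/n \to \phi$. I would therefore compare $\bfQ_\caT$ with the unnormalized Gram matrix $\frac1n \bfZ_\caT^\top \bfZ_\caT$, where $\bfZ_\caT = (\zeta_{i\mu} - \bbe\zeta_n)$ or the uncentered version. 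The comparison uses the denominator concentration of Lemma \ref{lem:unified-square-sums}: together with \eqref{eqn:unified-low-moments} it gives $\max_{i\in\caT}\abs{D_i - 1} \prec n^{-2\tau} + \fku_n^{2-\alpha}$. Hence $\bfY_\caT = \bfD_\caT^{-1/2} n^{-1/2}\bfX_\caT$, and $\norm{\bfD_\caT^{-1/2} - \bfI} \prec n^{-2\tau}$. The mean shift is rank one with norm $\sqrt{p_n n}\,\abs{\bbe\zeta_n}/\sqrt n \le C n^{1/2}\fku_n^{1-\alpha} = o(1)$ since $\alpha>2$, so it can be discarded by a norm perturbation.

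For part (i), I would invoke the known edge result for sample covariance matrices with i.i.d. entries bounded by $n^{1/2-\tau}$ (the sparse/bounded-support local law, e.g. Ding--Yang or the version used in \cite[Section 5]{linPhaseTransitionSmallest2026}). That result gives $\lambda_1(\frac1n \bfZ_\caT\bfZ_\caT^\top) \le \lambda_+(\varphi_n) + n^{-c}$ with overwhelming probability, where the variance is normalized by $\bbe\abs{\zeta_n}^2 = 1 + O(\fku_n^{2-\alpha})$. Combined with the $n^{-2\tau}$-perturbation of $\bfD_\caT$ and $\lambda_+(\varphi_n)\to\lambda_+$, this yields $\lambda_1(\bfQ_\caT) \le \lambda_+ + o(1) \le \min\bbk - c$, and the resolvent norm bound follows from $\operatorname{dist}(z, \operatorname{spec}) \ge c$. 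This is the step I expect to be the main obstacle: one must check that the available bounded-support edge estimate tolerates support $q^{-1} = n^{-\tau}$ with only the moment bounds \eqref{eqn:unified-low-moments}. The alternative is a moment method on $\operatorname{tr}(\bfQ_\caT)^{\ell}$ with $\ell \asymp \log n$, which works because entries are bounded by $n^{1/2-\tau}$ and $\bbe\abs{\zeta}^4 \le C\fku_n^{4-\alpha}$. I would cite the first and fall back on the second.

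For part (ii), I would use the standard Schur complement/resolvent expansion for $\bfG_\caT(z)$ with $z$ at distance at least $c$ from the spectrum. The diagonal entry is
\[
G_{\mu\mu} = -\bigl(z + z\,\bfy^{(\mu)\top}\widetilde{\bfG}^{(\mu)}\bfy^{(\mu)}\bigr)^{-1},
\]
where $\bfy^{(\mu)}$ is the $\mu$-th column and $\widetilde{\bfG}^{(\mu)}$ is the $p\times p$ resolvent with column $\mu$ removed. The off-diagonal entries are $G_{\mu\nu} = -z G_{\mu\mu}G^{(\mu)}_{\nu\nu}\, \bfy^{(\mu)\top}\widetilde{\bfG}^{(\mu\nu)}\bfy^{(\nu)}$. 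The quadratic forms are controlled by large deviation bounds for bounded entries, which after normalization have size at most $\fku_n/\sqrt n = n^{-\tau}$. This gives fluctuations $\prec n^{-\tau}$. The row normalization couples columns only through $D_i$, which is handled by replacing $\bfY_\caT$ with $n^{-1/2}\bfZ_\caT$ at cost $n^{-2\tau}$ in norm, via the resolvent identity and (i).

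The self-consistent equation is $\fkm = -(z - \varphi_n/(1+\fkm) )^{-1}$ up to $O_\prec(n^{-\tau})$. Stability of the companion MP equation away from the spectrum gives $\abs{G_{\mu\mu} - \fkm_{\varphi_n}} \prec n^{-\tau}$. Uniformity in $z\in\bbk$ follows from a grid of mesh $n^{-10}$ and the Lipschitz bound $\norm{\partial_z\bfG_\caT}\le C$ on the overwhelming event of (i). Uniformity over $\bar{\caH}_n\in\Omega_n$ holds because all bounds depend only on $\abs{\caT}$.
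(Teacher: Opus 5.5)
Your proposal is correct. Part (i) follows the paper's route. You use an i.i.d.\ proxy for the typical block and a sparse edge estimate; the paper uses \cite{hwangLocalLawTracy2019} with sparsity $q = n^{\tau}$. You then transfer through Lemma~\ref{lem:unified-square-sums} and the rank-one mean shift.

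The obstacle you flag is not one. The bound $\abs{\tilde{y}_{i\mu}} \leq C n^{-\tau}$ together with variance $1/n$ gives $\bbe \abs{\tilde{y}_{i\mu}}^{\ell} \leq C^{\ell-2} n^{-1-(\ell-2)\tau}$ directly, which is exactly that sparse moment assumption. Your one-sided edge bound suffices for the statement, although the paper proves two-sided rigidity because it also needs \eqref{eqn:eig-typical-to-edge} later. Also, the mean-shift norm $\sqrt{n}\,\fku_n^{1-\alpha} = n^{-1/k + (\alpha-1)\tau}$ is $O(n^{-2\tau})$ because of the constraint $(\alpha+1)\tau < 1/k$, not merely because $\alpha > 2$.

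Part (ii) is where you genuinely diverge. The paper imports the entrywise local law of \cite{hwangLocalLawTracy2019}, proved off the real axis. It extends this law to the real axis outside the spectrum as in \cite{linPhaseTransitionSmallest2026}, and transfers it by the resolvent identity. You instead derive the real-axis law directly from three ingredients: Schur complements, large deviation bounds for linear and quadratic forms in entries of size $n^{-\tau}$, and stability of $z m^2 + (z + 1 - \varphi_n) m + 1 = 0$. Your route is self-contained, and because part (i) already bounds every resolvent on $\bbk$, it needs no bootstrap in $\operatorname{Im} z$. The paper's route is shorter but rests on external input.

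To complete your argument, address four points.
\begin{enumerate}
\item The off-diagonal identity is $G_{\mu\nu} = z\, G_{\mu\mu} G^{(\mu)}_{\nu\nu}\, \bfy^{(\mu)\top} \widetilde{\bfG}^{(\mu\nu)} \bfy^{(\nu)}$, with a plus sign. The sign error is harmless.
\item On the real axis $\widetilde{\bfG}^{(\mu)}$ has no a priori bound. You must therefore apply the large deviation bounds conditionally on the gap event for the minors. That event holds with overwhelming probability by interlacing and is independent of the removed columns.
\item Write $m_n$ for the normalized trace of the $n \times n$ proxy resolvent. Closing the equation then uses $\frac1n \operatorname{tr} \widetilde{\bfG}^{(\mu)} = m_n + (1 - \varphi_n)/z + O_\prec(n^{-1})$.
\item Stability only places $m_n$ near one of the two roots of the quadratic. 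The other root is $1/(z \fkm_{\varphi_n}(z))$, at distance at least $c(\bbk) > 0$. To select $\fkm_{\varphi_n}$, you need a continuity argument in $z$ over an enlarged interval $[\min \bbk, C_0]$, anchored at large $C_0$ where $m_n(z) \approx -1/z$.
\end{enumerate}
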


% *******************************************
\begin{proof}[Proof of Proposition \ref{prop:unified-core}]
% *******************************************
Fix a realization of $\bar{\caH}_n$ in $\Omega_n$. All stochastic estimates below hold under $\bbp^{\bar{\caH}}$, uniformly over such realizations. The argument follows that in \cite[Section 5.2]{linPhaseTransitionSmallest2026}, and we present only the main steps. We first introduce the standardized proxy
\begin{equation}
    \tilde{\bfY}_{\caT}
    := (\tilde{y}_{i \mu})_{i \in \caT, \mu \in \dbraks{n}},
    \qquad
    \tilde{y}_{i \mu} := \frac{\zeta_{i \mu} - \bbe \zeta_n}
    {\sqrt{n \! \Var (\zeta_n)}}.
    \label{def:iid-proxy}
\end{equation}
Its companion matrix and resolvent are
\begin{equation*}
    \tilde{\bfQ}_{\caT} := \tilde{\bfY}_{\caT}^\top \tilde{\bfY}_{\caT},
    \qquad
    \tilde{\bfG}_{\caT} (z) := (\tilde{\bfQ}_{\caT} - z \bfI)^{-1}
    = (\tilde{G}_{\mu \nu} (z))_{\mu, \nu = 1}^n.
\end{equation*}
Conditional on $\bar{\caH}_n$, the entries $\{\tilde{y}_{i \mu}\}_{i \in \caT, \mu \in \dbraks{n}}$ are i.i.d. with mean zero and variance $1 / n$. Moreover, \eqref{eqn:alpha3-resampled-zeta} implies that these entries are bounded in absolute value by $O (n^{-\tau})$. Consequently,
\begin{equation}
    \bbe \abs{\tilde{y}_{i \mu}}^\ell
    \leq C^{\ell-2} n^{-(\ell-2) \tau} \bbe \abs{\tilde{y}_{i \mu}}^2
    \leq C^{\ell-2} n^{-1-(\ell-2) \tau},
    \qquad
    \ell \geq 3.
    \label{eqn:unified-sparse-moments}
\end{equation}
Thus \cite[Assumption 2.6]{hwangLocalLawTracy2019} holds with sparsity parameter $n^\tau$. Moreover, Lemma \ref{lem:unified-graph} \ref{item:unified-graph-edge-count} implies that $\varphi_n \to \phi$ uniformly over all realizations of $\bar{\caH}_n$ in $\Omega_n$. In particular, we have $\varphi_n \in [\phi / 2, 2 \phi]$ for all sufficiently large $n$. Therefore, the argument in \cite[Section 5.2]{linPhaseTransitionSmallest2026}, together with the upper edge rigidity in \cite[Theorem 2.9]{hwangLocalLawTracy2019} and the endpoint expansion in \cite[Equation (2.29)]{hwangLocalLawTracy2019}, gives
\begin{equation}
    \abs{\lambda_1 (\tilde{\bfQ}_{\caT}) - \lambda_+ (\varphi_n)}
    \prec n^{-2 \tau}.
    \label{eqn:unified-proxy-law}
\end{equation}
As in the proof of \cite[Corollary 5.4]{linPhaseTransitionSmallest2026}, the entrywise local law in \cite[Proposition 2.13]{hwangLocalLawTracy2019} extends to the real axis outside the spectrum,
\begin{equation}
    \sup\nolimits_{z \in \bbk}
    \max\nolimits_{\mu, \nu \in \dbraks{n}} \,
    \abs[\big]{\tilde{G}_{\mu \nu} (z) - \delta_{\mu \nu} \fkm_{\varphi_n} (z)}
    \prec n^{-\tau}.
    \label{eqn:entrywise-law-iid-proxy}
\end{equation}

We now transfer these estimates to $\bfQ_\caT$. Define
\begin{equation*}
    \bar{\bfY}_{\caT}
    := (\bar{y}_{i \mu})_{i \in \caT, \mu \in \dbraks{n}},
    \qquad
    \bar{y}_{i \mu} := \frac{\zeta_{i \mu}}
    {\sqrt{n \bbe \abs{\zeta_n}^2}}.
\end{equation*}
Comparing the two normalizations directly gives
\begin{equation*}
    \norm{\bar{\bfY}_{\caT} - \tilde{\bfY}_{\caT}}
    \prec \sqrt{n} \abs{\bbe \zeta_n} + \abs{\bbe \zeta_n}^2
    = \bigO[\big]{\sqrt{n} \fku_n^{1 - \alpha}}
    = \bigO[\big]{n^{-1 / k + (\alpha-1) \tau}}
    = O (n^{-2 \tau}),
\end{equation*}
where \eqref{eqn:unified-low-moments} controls $\abs{\bbe \zeta_n}$. The last bound follows from $(\alpha + 1) \tau < 1 / k$, which is ensured by \eqref{eqn:unified-truncation}. On the other hand, for $i \in \caT$, we have $\caA_i = \varnothing$. Since $\norm{\bar{\bfY}_{\caT}} = O_ \prec (1)$, Lemma \ref{lem:unified-square-sums} gives
\begin{equation*}
    \norm{{\bfY}_{\caT} - \bar{\bfY}_{\caT}}
    \leq \norm{\bar{\bfY}_{\caT}} \max_{i \in \caT} \,
    \abs[\bigg]{\frac{\sqrt{n \bbe \abs{\zeta_n}^2}}{\norm{\bfx_i}} - 1}
    \prec n^{-2 \tau}.
\end{equation*}
Combining the two preceding comparison inequalities gives
\begin{equation}
    \norm{\bfY_{\caT} - \tilde{\bfY}_{\caT}}
    + \norm{\bfQ_{\caT} - \tilde{\bfQ}_{\caT}}
    \prec n^{-2 \tau}.
    \label{eqn:unified-core-comparison}
\end{equation}
Combining \eqref{eqn:unified-proxy-law} and \eqref{eqn:unified-core-comparison} with $\varphi_n \to \phi$ proves \eqref{eqn:unified-core-trace} with $c_{\ref{eqn:unified-core-trace}} = (\min \bbk - \lambda_+) / 2$. For later use, note that these estimates also yield the following unconditional convergence in probability,
\begin{equation}
    \lambda_1 (\bfR_{\caT}) = \lambda_1 (\bfQ_{\caT}) \pconv \lambda_+ (\phi).
    \label{eqn:eig-typical-to-edge}
\end{equation}

On the other hand, it follows from \eqref{eqn:unified-proxy-law}, $\varphi_n \to \phi$, and the second bound in \eqref{eqn:unified-core-trace} that both resolvents $\tilde{\bfG}_{\caT} (z)$ and ${\bfG}_{\caT} (z)$ are uniformly $O_ \prec (1)$ on the interval $\bbk$. Hence, we can use the resolvent identity
\begin{equation*}
    {\bfG}_{\caT} (z) - \tilde{\bfG}_{\caT} (z)
    = {\bfG}_{\caT} (z)
    (\tilde{\bfQ}_{\caT} - \bfQ_{\caT})
    \tilde{\bfG}_{\caT} (z)
\end{equation*}
together with \eqref{eqn:entrywise-law-iid-proxy} and \eqref{eqn:unified-core-comparison} to obtain \eqref{eqn:unified-core-entrywise}. This completes the proof.
\end{proof}

% *******************************************
\subsection{Stabilization and decomposition of the atypical block}
\label{subsec:atypical-decomposition}
% *******************************************

We now turn to the atypical block $\bfY_{\caB}$. First, we enlarge the conditioning $\sigma$-fields to include the large entries and the truncated variables in the typical block,
\begin{equation*}
    \bar{\caF}_n = \sigma \pars[\big]{\bar{\caH}_n,
    \{\bar{\psi}_{i \nu}\}_{i \in \dbraks{p_n}, \nu \in \dbraks{n}},
    \{\zeta_{i \mu}\}_{i \in \caT, \mu \in \dbraks{n}}},
    \qquad
    \caF_n = \sigma (\bar{\caF}_n, \pi_n).
\end{equation*}
Conditional on $\bar{\caF}_n$, the remaining randomness in $\bfX_n$ under the resampling representation \eqref{eqn:unified-resampling} comes from the truncated variables $\{\zeta_{i \mu}\}_{i \in \caB, \mu \in \dbraks{n}}$ in atypical rows and the random permutation $\pi_n$ that maps template column labels to actual column labels. In particular, the index set $\caJ$ of occupied columns remains uniformly distributed over the subsets of $\dbraks{n}$ of cardinality $\abs{\bar{\caJ}}$. Conditioning on ${\caF}_n$ also fixes $\pi_n$. 

The large entries in atypical rows contribute at leading order to the row norms, so we need to retain them when stabilizing the normalizers. Specifically, for $i \in \caB$, define the \emph{stabilized normalizers} and the diagonal comparison matrix by
\begin{equation}
    \hbar_i^2 = n + \sum\nolimits_{\mu \in \caA_i} \abs{\psi_{i \mu}}^2
    = n + \sum\nolimits_{\nu \in \bar{\caA}_i} \abs{\bar{\psi}_{i \nu}}^2,
    \qquad
    \boldsymbol{\Theta}_{\caB}
    = \diag \pars[\big]{ (\Theta_i)_{i \in \caB}},
    \qquad
    \Theta_i := {\hbar_i} / {\norm{\bfx_i}}.
    \label{eqn:unified-frozen-denominator}
\end{equation}
The normalizers $\hbar_i \geq \sqrt{n}$ are $\bar{\caF}_n$-measurable. The stabilized block has the decomposition
\begin{equation}
    \mathfrak{Y}_{\caB}
    := \boldsymbol{\Theta}_{\caB}^{-1} \bfY_{\caB}
    = \fkS_{\caB} + \fkW_{\caB} + \fkD_{\caB},
    \label{eqn:unified-atypical-decomposition}
\end{equation}
where the components $\fkS_{\caB} = (\fks_{i \mu})$, $\fkW_{\caB} = (\fkw_{i \mu})$ and $\fkD_{\caB} = (\fkd_{i \mu})$ have entries
\begin{equation}
    \fks_{i \mu}
    = \frac{\chi_{i \mu} \psi_{i \mu}}{\hbar_i},
    \qquad
    \mathfrak{w}_{i \mu}
    = \frac{(1 - \chi_{i \mu}) (\zeta_{i \mu} - \bbe \zeta_n)}{\hbar_i},
    \qquad
    \mathfrak{d}_{i \mu}
    = \frac{(1 - \chi_{i \mu}) \bbe \zeta_n}{\hbar_i},
    \label{eqn:unified-S-W-D}
\end{equation}
for $i \in \caB$ and $\mu \in \dbraks{n}$. Here $\fkS_{\caB}$ contains the normalized large entries, $\fkW_{\caB}$ contains the centered truncated entries, and $\fkD_{\caB}$ accounts for the mean of the truncated variables. Conditional on $\caF_n$, the matrices $\fkS_{\caB}$ and $\fkD_{\caB}$ are fixed, while $\fkW_{\caB}$ has independent centered entries. The next lemma gives elementary bounds on these components, analogous to \cite[Lemma 5.7]{linPhaseTransitionSmallest2026}. Recall the event $\tilde{\Omega}_n$ from Lemma \ref{lem:unified-graph} \ref{item:unified-graph-components}.

% *******************************************
\begin{lemma}
\label{lem:unified-freezing}
% *******************************************
The following statements hold for all sufficiently large $n$.
\begin{subequations}
\begin{enumerate}[label = (\roman*)]
    \item For every realization of $\bar{\caF}_n$ in $\tilde{\Omega}_n$, the following bounds hold deterministically,
    \begin{equation}
        \norm{\fkS_{\caB}} \leq k,
        \qquad
        \norm{\fkS_{\caB}}_{\mathrm{F}}^2
        \leq n^{1 - \sigma / 2},
        \qquad
        \norm{\fkD_{\caB}} 
        \leq C n^{-1 / k - \sigma / 4 + (\alpha - 1) \tau}
        \leq C n^{-\tau}.
        \label{eqn:unified-freezing-bounds}
    \end{equation}
    \item \label{item:fkW-entries-moments} Conditional on $\caF_n$, the entries of $\fkW_{\caB}$ are independent and centered, with
    \begin{equation}
        \abs{\mathfrak{w}_{i \mu}} \leq C n^{-\tau},
        \qquad
        \bbe^{\caF} \abs{\mathfrak{w}_{i \mu}}^2 \leq C n^{-1},
        \qquad
        \Var^{\caF} \pars[\big]{\abs{\mathfrak{w}_{i \mu}}^2}
        \leq C n^{-\alpha / 2-(4 - \alpha) \tau}.
        \label{eqn:unified-diffuse-entries}
    \end{equation}
    \item Write $\fkW_{\caB} = (\bfkw_1, \ldots, \bfkw_n)$ for its columns. Uniformly over all realizations of $\caF_n$ in $\tilde{\Omega}_n$, the following bounds hold simultaneously under $\bbp^{\caF}$,
    \begin{equation}
        \max\nolimits_{\mu \in \dbraks{n}} \norm{\bfkw_\mu} \prec n^{-\tau}, \qquad
        \norm{\fkW_{\caB}} \prec 1, \qquad
        \norm{\fkW_{\caB}}_{\mathrm{F}}^2 \prec n^{1 - \sigma / 2}.
        \label{eqn:unified-column-bound}
    \end{equation}
    \item Uniformly over all realizations of $\caF_n$ in $\tilde{\Omega}_n$, the following bound holds under $\bbp^{\caF}$,
    \begin{equation}
        \norm{\boldsymbol{\Theta}_{\caB} - \bfI}
        \prec n^{-2 \tau}.
        \label{eqn:unified-normalizer-comparison}
    \end{equation}
\end{enumerate}
\end{subequations}
\end{lemma}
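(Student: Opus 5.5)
We prove the four parts in order. Throughout we work on $\tilde{\Omega}_n$, where each connected component of $\bar{\caG}$ (equivalently of $\caG$, since relabelling columns preserves the graph structure) has at most $k$ edges.

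\emph{Part (i).} We handle the three bounds separately.

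For $\norm{\fkS_{\caB}}$, decompose $\fkS_{\caB}$ into blocks indexed by the connected components of $\caG$. These blocks act on disjoint row and column sets, so $\norm{\fkS_{\caB}}$ is the maximum of the block norms. Each entry satisfies $\abs{\fks_{i\mu}} \le \abs{\psi_{i\mu}}/\hbar_i$, and each row sum obeys
\begin{equation*}
\sum\nolimits_\mu \abs{\fks_{i\mu}}^2 = \frac{\sum_{\mu\in\caA_i}\abs{\psi_{i\mu}}^2}{\hbar_i^2} < 1 .
\end{equation*}
Hence every row of a block has norm at most $1$. A block has at most $k$ rows, because each atypical row carries an edge. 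Therefore its operator norm is at most its Frobenius norm, which is at most $\sqrt{k}\le k$.

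For the Frobenius bound, the same row estimate gives $\norm{\fkS_{\caB}}_{\mathrm F}^2 \le \abs{\caB} \le \abs{\bar{\caE}} \le n^{1-\sigma/2}$ on $\Omega_n$.

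For $\fkD_{\caB}$, every entry is bounded by $\abs{\bbe\zeta_n}/\sqrt n$, since $\hbar_i\ge\sqrt n$. The matrix has $\abs{\caB}\le n^{1-\sigma/2}$ rows and $n$ columns, so
\begin{equation*}
\norm{\fkD_{\caB}} \le \norm{\fkD_{\caB}}_{\mathrm F} \le \sqrt{\abs{\caB}\,n}\cdot \frac{C\fku_n^{1-\alpha}}{\sqrt n}.
\end{equation*}
Substituting $\fku_n^{1-\alpha} = n^{(1-\alpha)/2 + (\alpha-1)\tau}$ and $(1-\alpha)/2 + 1/2 = -1/k$, this equals $C n^{-1/k-\sigma/4+(\alpha-1)\tau}$. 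This is at most $Cn^{-\tau}$ because $\alpha\tau<1/k$.

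\emph{Part (ii).} Conditional on $\caF_n$, the variables $\zeta_{i\mu}$ for $i\in\caB$ are independent of everything fixed by the conditioning, and $\hbar_i$ is fixed. So the entries of $\fkW_{\caB}$ are independent and centered. The bounds follow from $\abs{\zeta_n}\le\fku_n$, $\hbar_i^2\ge n$, and \eqref{eqn:unified-low-moments}:
\begin{itemize}
\item $\abs{\fkw_{i\mu}}\le 2\fku_n/\sqrt n = 2n^{-\tau}$;
\item $\bbe^{\caF}\abs{\fkw_{i\mu}}^2 \le C/n$;
\item $\Var^{\caF}(\abs{\fkw_{i\mu}}^2)\le C\fku_n^{4-\alpha}/n^2$.
\end{itemize}
For the last bound, $\fku_n^{4-\alpha}n^{-2} = n^{(4-\alpha)/2-(4-\alpha)\tau-2} = n^{-\alpha/2-(4-\alpha)\tau}$.

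\emph{Part (iii).} The column bound comes from the scalar Bernstein inequality \eqref{eqn:scalar-Bernstein-domination}. It gives
\begin{equation*}
\norm{\bfkw_\mu}^2 \le \abs{\caB}\,C/n + O_\prec(n^{-2\tau}) \prec n^{-2\tau},
\end{equation*}
where $\abs{\caB}/n\le n^{-\sigma/2}$ is negligible. A union bound over $\mu$ then gives the maximum.

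The operator norm bound is the step needing the most care. We write
\begin{equation*}
\fkW_{\caB}\fkW_{\caB}^\top=\sum\nolimits_\mu \bfkw_\mu\bfkw_\mu^\top .
\end{equation*}
The expectation of this sum is diagonal with entries at most $C$. The centered summands have norm $\prec n^{-2\tau}$ by the column bound; for matrix Bernstein we truncate on this high-probability event. Their variance proxy is at most $\max_\mu\norm{\bfkw_\mu}^2\,\norm{\sum_\mu\bbe\bfkw_\mu\bfkw_\mu^\top}\prec n^{-2\tau}$. Matrix Bernstein \eqref{eqn:matrix-Bernstein-domination} then gives $\norm{\fkW_{\caB}}^2\prec 1$.

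The Frobenius bound follows from scalar Bernstein on $\sum_{i,\mu}\abs{\fkw_{i\mu}}^2$. Its mean is at most $C\abs{\caB}\le Cn^{1-\sigma/2}$ and its fluctuation is of lower order.

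\emph{Part (iv).} We have
\begin{equation*}
\norm{\bfx_i}^2 = \hbar_i^2 + \sum\nolimits_{\mu\notin\caA_i}\pars[\big]{\abs{\zeta_{i\mu}}^2-\bbe\abs{\zeta_n}^2} + (n-\abs{\caA_i})(\bbe\abs{\zeta_n}^2-1) - \abs{\caA_i}.
\end{equation*}
We bound the three error terms:
\begin{itemize}
\item by Lemma~\ref{lem:unified-square-sums} applied under $\caF_n$ (the same Bernstein argument, since the atypical $\zeta$ remain free), the first is $\prec n^{1-2\tau}$;
\item the second is $O(n\fku_n^{2-\alpha})=o(n^{1-2\tau})$;
\item $\abs{\caA_i}\le k$.
\end{itemize}
Dividing by $\hbar_i^2\ge n$ gives $\abs{\Theta_i^{-2}-1}\prec n^{-2\tau}$ uniformly in $i$, hence the same bound for $\Theta_i-1$.
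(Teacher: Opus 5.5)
Your proof is correct and follows the paper's argument part by part. You use the same row-norm identity $\sum_\mu\abs{\fks_{i\mu}}^2=(\hbar_i^2-n)/\hbar_i^2$ for $\fkS_{\caB}$, the Frobenius bound for $\fkD_{\caB}$, scalar Bernstein for the column and Frobenius norms of $\fkW_{\caB}$, and the same expansion of $\norm{\bfx_i}^2-\hbar_i^2$ for $\boldsymbol{\Theta}_{\caB}$.

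There are only two local deviations. First, you bound $\norm{\fkS_{\caB}}$ blockwise by Frobenius norms, which gives $\sqrt{k}$, where the paper uses the Schur test $\sqrt{\norm{\cdot}_1\norm{\cdot}_\infty}\le k$. Second, you control $\norm{\fkW_{\caB}}$ through matrix Bernstein for $\sum_\mu\bfkw_\mu\bfkw_\mu^\top$ with a truncation on the column-norm event. The paper instead applies rectangular matrix Bernstein directly to $\fkW_{\caB}=\sum_{i,\mu}\fkw_{i\mu}\bfe_i\bfe_\mu^\top$. That route needs no truncation, since the entries are already deterministically bounded by $Cn^{-\tau}$.
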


% *******************************************
\begin{proof}[Proof of Lemma \ref{lem:unified-freezing}]
% *******************************************
We first prove \eqref{eqn:unified-freezing-bounds}. The $\bar{\caF}_n$-measurable counterparts of $\fkS_{\caB}$ and $\fkD_{\caB}$ are
\begin{equation*}
    \bar{\fkS}_{\caB} = (\bar{\mathfrak{s}}_{i \nu}),
    \qquad
    \bar{\mathfrak{s}}_{i \nu} = \bar{\chi}_{i \nu} \bar{\psi}_{i \nu} / {\hbar_i},
    \qquad
    \bar{\fkD}_{\caB} = (\bar{\mathfrak{d}}_{i \nu}),
    \qquad
    \bar{\mathfrak{d}}_{i \nu} = (1 - \bar{\chi}_{i \nu}) \bbe \zeta_n / {\hbar_i}.
\end{equation*}
The matrices $\fkS_{\caB}$ and $\fkD_{\caB}$ can be obtained from these counterparts by permuting the columns using $\pi_n$. The norms in \eqref{eqn:unified-freezing-bounds} are unchanged by this permutation and are therefore $\bar{\caF}_n$-measurable. Fix a realization of $\bar{\caF}_n$ in $\tilde{\Omega}_n$. For each $i \in \caB$, we have
\begin{equation*}
    \sum\nolimits_{\nu \in \bar{\caA}_i} \abs{\bar{\fks}_{i \nu}}^2
    = {(\hbar_i^2 - n)} / {\hbar_i^2} \leq 1.
\end{equation*}
Thus $\abs{\bar{\fks}_{i \nu}} \leq 1$ and $\norm{\fkS_{\caB}}_{\mathrm{F}}^2 \leq \abs{\caB} \leq \abs{\caE}$. We can bound $\abs{\caE}$ using Lemma \ref{lem:unified-graph} \ref{item:unified-graph-edge-count}. On the other hand, by Lemma \ref{lem:unified-graph} \ref{item:unified-graph-components}, each row and column contains at most $k$ nonzero entries, so $\norm{\bar{\fkS}_{\caB}}_1 \vee \norm{\bar{\fkS}_{\caB}}_\infty \leq k$. Consequently, we have $\norm{\fkS_{\caB}} \leq \sqrt{\norm{\bar{\fkS}_{\caB}}_1 \norm{\bar{\fkS}_{\caB}}_\infty} \leq k$. For the mean term, \eqref{eqn:unified-low-moments} and $\hbar_i \geq \sqrt{n}$ give
\begin{equation*}
    \norm{\fkD_{\caB}} \leq \norm{\fkD_{\caB}}_{\mathrm{F}}
    \leq C \abs{\caB}^{1 / 2} \abs{\bbe \zeta_n}
    \leq C n^{-1 / k - \sigma / 4 + (\alpha - 1) \tau}
    = C n^{- {5\sigma}/{4} - \tau}
    \leq C n^{-\tau}.
\end{equation*}
Here the equality follows from $\sigma = 1 / k - \alpha \tau$. Next, note that $\hbar_i^2 \geq n$. The first two bounds in \eqref{eqn:unified-diffuse-entries} follow directly from the definition of $\zeta_{i \mu}$. The fourth moment estimate in \eqref{eqn:unified-low-moments} gives the last bound,
\begin{equation*}
    \Var^{\caF} \pars[\big]{\abs{\mathfrak{w}_{i \mu}}^2}
    \leq \bbe^{\caF} \abs{\mathfrak{w}_{i \mu}}^4
    \leq {C n^{-2} \bbe \abs{\zeta_n - \bbe \zeta_n}^4}
    \leq C n^{-2} \fku_n^{4 - \alpha}.
\end{equation*}

We now prove \eqref{eqn:unified-column-bound}. Fix a realization of $\caF_n$ in $\tilde{\Omega}_n$. For each $\mu \in \dbraks{n}$, write
\begin{equation}
    \norm{\bfkw_\mu}^2
    = \sum\nolimits_{i \in \caB}
    \pars[\big]{\abs{\mathfrak{w}_{i \mu}}^2
    - \bbe^{\caF} \abs{\mathfrak{w}_{i \mu}}^2}
    + \sum\nolimits_{i \in \caB}
    \bbe^{\caF} \abs{\mathfrak{w}_{i \mu}}^2.
    \label{eqn:column-norm-decomposition}
\end{equation}
The summands in the first sum are independent and centered conditional on $\caF_n$. By \eqref{eqn:unified-diffuse-entries},
\begin{equation*}
    \abs{\mathfrak{w}_{i \mu}}^2 \leq C n^{-2 \tau},
    \qquad
    \sum\nolimits_{i \in \caB}
    \Var^{\caF} \pars{\abs{\mathfrak{w}_{i \mu}}^2}
    \leq C n^{-\alpha / 2-(4 - \alpha) \tau} \abs{\caB}
    = o (n^{-4 \tau}).
\end{equation*}
The last step uses $n^{-\alpha / 2} \abs{\caB} \leq \abs{\caB} / n \leq n^{-\sigma / 2}$ and $\sigma / 2 > \alpha \tau$, which follows from \eqref{eqn:unified-truncation}. Applying the Bernstein inequality \eqref{eqn:scalar-Bernstein-domination} to the first sum in \eqref{eqn:column-norm-decomposition} with $M_n = C n^{-2 \tau}$ and $V_n = C n^{-4 \tau}$ gives
\begin{equation*}
    \norm{\bfkw_\mu}^2
    \prec n^{-2 \tau} + \sum\nolimits_{i \in \caB}
    \bbe^{\caF} \abs{\mathfrak{w}_{i \mu}}^2
    \leq n^{-2 \tau} + C \abs{\caB} / n
    \leq C n^{-2 \tau},
\end{equation*}
where the second moments are controlled by \eqref{eqn:unified-diffuse-entries}. This proves the first estimate in \eqref{eqn:unified-column-bound}. The same argument controls the Frobenius norm of $\fkW_{\caB}$,
\begin{align*}
    \norm{\fkW_{\caB}}_{\mathrm{F}}^2
    & = \sum\nolimits_{i \in \caB, \mu \in \dbraks{n}}
    \pars[\big]{\abs{\mathfrak{w}_{i \mu}}^2
    - \bbe^{\caF} \abs{\mathfrak{w}_{i \mu}}^2}
    + \sum\nolimits_{i \in \caB, \mu \in \dbraks{n}}
    \bbe^{\caF} \abs{\mathfrak{w}_{i \mu}}^2 \\
    & \prec \pars[\big]{ n^{-2 \tau}
    + n^{1 / 2 - \alpha / 4-(4 - \alpha) \tau / 2} \abs{\caB} }
    + \abs{\caB}
    \leq C \abs{\caB}
    \leq C n^{1 - \sigma / 2},
\end{align*}
which proves the last estimate in \eqref{eqn:unified-column-bound}. To control the operator norm of $\fkW_{\caB}$, write
\begin{equation*}
    \fkW_{\caB}
    = \sum\nolimits_{i \in \caB, \mu \in \dbraks{n}}
    \fkX_{i \mu},
    \qquad
    \fkX_{i \mu}
    := \mathfrak{w}_{i \mu} \bfe_{i} \bfe_{\mu}^\top.
\end{equation*}
Conditional on $\caF_n$, the summands are independent and centered. Moreover, by \eqref{eqn:unified-diffuse-entries}, on the event $\tilde{\Omega}_n$,
\begin{align*}
    \norm{\fkX_{i \mu}}
    & = \abs{\mathfrak{w}_{i \mu}}
    \leq C n^{-\tau}, \\
    \norm*{\sum\nolimits_{i \in \caB}
    \sum\nolimits_{\mu = 1}^n
    \bbe^{\caF} [\fkX_{i \mu} \fkX_{i \mu}^{\top}] }
    & = \max_{i \in \caB}
    \sum\nolimits_{\mu = 1}^n
    \bbe^{\caF} \abs{\mathfrak{w}_{i \mu}}^2
    \leq C, \\
    \norm*{\sum\nolimits_{i \in \caB}
    \sum\nolimits_{\mu = 1}^n
    \bbe^{\caF} [\fkX_{i \mu}^{\top} \fkX_{i \mu}] }
    & = \max_{\mu \in \dbraks{n}}
    \sum\nolimits_{i \in \caB}
    \bbe^{\caF} \abs{\mathfrak{w}_{i \mu}}^2
    \leq C \abs{\caB} / n
    \leq C n^{-\sigma / 2}.
\end{align*}
Therefore, we can apply the matrix Bernstein inequality \eqref{eqn:matrix-Bernstein-domination} with $M_n = C n^{-\tau}$ and $V_n = C$ to obtain the second estimate in \eqref{eqn:unified-column-bound}.

Finally, we prove \eqref{eqn:unified-normalizer-comparison}. Conditional on $\caF_n$, the truncated variables $\{ \zeta_{i \mu} \}_{i \in \caB, \mu \notin \caA_i}$ in atypical rows are independent with common law $\caL (\zeta_n)$. Therefore, the proof of Lemma \ref{lem:unified-square-sums} applies without change to the atypical rows when conditioning on $\caF_n$ and gives
\begin{equation*}
    \max_{i \in \caB} \,
    \abs*{\sum\nolimits_{\mu \notin \caA_i}
    \abs{\zeta_{i \mu}}^2
    - (n - \abs{\caA_i}) \bbe \abs{\zeta_n}^2}
    \prec n^{1-2 \tau}.
\end{equation*}
On the event $\tilde{\Omega}_n$, we have $\abs{\caA_i} \leq k$. Together with \eqref{eqn:unified-low-moments}, this gives
\begin{equation*}
    \abs[\big]{(n - \abs{\caA_i}) \bbe \abs{\zeta_n}^2 - n}
    \leq C (k + n \fku_n^{2 - \alpha}) = o (n^{1-2 \tau}).
\end{equation*}
Since $\hbar_i^2 \geq n$, these bounds imply $\norm{\bfx_i}^2 / \hbar_i^2 = 1 + O_{\prec} (n^{-2 \tau})$ uniformly over $i \in \caB$, which proves \eqref{eqn:unified-normalizer-comparison}.
\end{proof}

% *******************************************
\subsection{Centered resolvent estimates}
\label{subsec:atypical-centered-resolvent}
% *******************************************

The next result is the counterpart of \cite[Proposition 5.8]{linPhaseTransitionSmallest2026}. It bounds the interaction of the sparse and centered truncated components in \eqref{eqn:unified-atypical-decomposition} with the centered resolvent of the typical block,
\begin{equation*}
    \bfUps_{\caT} (z)
    := \bfG_{\caT} (z) - \fkm_{\varphi_n} (z) \bfI
    = (\Upsilon_{\mu \nu} (z))_{\mu, \nu = 1}^n.
\end{equation*}

% *******************************************
\begin{proposition}
\label{prop:unified-centered-compressions}
% *******************************************
Fix a compact interval $\bbk \subset (\lambda_+, \infty)$. Uniformly over all realizations of $\bar{\caH}_n$ in $\tilde{\Omega}_n$, the following estimates hold under $\bbp^{\bar{\caH}}$,
\begin{subequations}
\begin{align}
    \sup\nolimits_{z \in \bbk}
    \curls*{\norm[\big]{\fkS_{\caB} \bfUps_{\caT} (z) \fkS_{\caB}^{\top}}
    + \norm[\big]{\fkS_{\caB} \bfUps_{\caT} (z) \fkW_{\caB}^{\top}}
    + \norm[\big]{\fkW_{\caB} \bfUps_{\caT} (z) \fkW_{\caB}^{\top}}}
    & \prec n^{-\tau},
    \label{eqn:unified-centered-compressions} \\
    \norm[\big]{\fkS_{\caB} \fkW_{\caB}^{\top}}
    + \norm[\big]{\mathcal{D}_\perp[\fkW_{\caB} \fkW_{\caB}^{\top}]}
    & \prec n^{-\tau}.
    \label{eqn:unified-identity-kernel}
\end{align}
\end{subequations}
\end{proposition}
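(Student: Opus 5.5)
The plan is to follow \cite[Proposition 5.8]{linPhaseTransitionSmallest2026} and to use the conditional structure built above. We condition on $\bar{\caF}_n$ in $\tilde{\Omega}_n$. Under this conditioning, $\bfUps_{\caT}(z)$ is measurable, since it depends only on typical rows and on $\bar{\caH}_n$, whose column labels are fixed only up to $\pi_n$. The template counterpart $\bar{\fkS}_{\caB}$ is also fixed. The randomness that remains is the uniform random placement $\caJ = \pi_n(\bar{\caJ})$ and the independent centered entries of $\fkW_{\caB}$, which are independent of $\bfUps_{\caT}$ given $\caF_n$. Proposition \ref{prop:unified-core} supplies $\norm{\bfUps_{\caT}(z)} = O(1)$ and the entrywise bound $\max_{\mu,\nu}\abs{\Upsilon_{\mu\nu}(z)} \prec n^{-\tau}$, both uniformly on $\bbk$. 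Uniformity in $z$ will come from a net of mesh $n^{-C}$ on $\bbk$ together with Lipschitz continuity of the resolvent there, since the resolvent is bounded by \eqref{eqn:unified-core-trace}.

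\emph{The $\fkS\bfUps\fkS^\top$ term.} On $\tilde{\Omega}_n$ each row and each column of $\fkS_{\caB}$ has at most $k$ nonzero entries, all bounded by $1$. The matrix $\fkS_{\caB}\bfUps\fkS_{\caB}^\top$ has $(i,j)$ entry $\sum_{\mu\in\caA_i,\nu\in\caA_j}\fks_{i\mu}\Upsilon_{\mu\nu}\fks_{j\nu}$, so each entry is $O_\prec(k^2 n^{-\tau})$ by the entrywise law. This alone does not control the operator norm, because the matrix is indexed by $\caB$ with $\abs{\caB}$ of order $n^{1-\sigma/2}$. I would therefore split the compression. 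Let $\bfP_{\caJ}$ be the projection onto the occupied columns. The object to control is $\norm{\bfP_{\caJ}\bfUps\bfP_{\caJ}^\top}$, because $\norm{\fkS_{\caB}\bfUps\fkS_{\caB}^\top}\le \norm{\fkS_{\caB}}^2\norm{\bfP_{\caJ}\bfUps\bfP_{\caJ}^\top}\le k^2\norm{\bfP_{\caJ}\bfUps\bfP_{\caJ}^\top}$. This is the random coordinate compression estimate mentioned earlier. Since $\caJ$ is a uniform random subset of size $m=\abs{\bar{\caJ}}\le n^{1-\sigma/2}$, independent of $\bfUps$, I would bound the diagonal part by the entrywise law. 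For the off-diagonal part, I would use a moment or trace method on $\tr (\bfP_{\caJ}\mathcal{D}_\perp[\bfUps]\bfP_{\caJ}^\top)^{2p}$, comparing the uniform subset with i.i.d.\ Bernoulli$(m/n)$ sampling. Each closed walk of length $2p$ on distinct indices picks up a factor $(m/n)^{\#\text{vertices}}$, and the sums $\sum_\nu \abs{\Upsilon_{\mu\nu}}^2\le \norm{\bfUps}^2 = O(1)$ control the walk weights. The outcome should be $\norm{\bfP_{\caJ}\bfUps\bfP_{\caJ}^\top}\prec n^{-\tau} + (m/n)^{1/2}\prec n^{-\tau}$, since $\sigma/4>\tau$. \textbf{This step is the main obstacle.} The alternative of using only the entrywise bound with $\norm{\fkS}_1\norm{\fkS}_\infty$ fails because the column sums run over all of $\caB$.

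\emph{The terms involving $\fkW$.} For $\fkS\bfUps\fkW^\top$, conditional on $\caF_n$ this matrix equals $\sum_{i,\mu}\fkw_{i\mu}\,(\fkS\bfUps\bfe_\mu)\bfe_i^\top$, a sum of independent centered matrices. Matrix Bernstein \eqref{eqn:matrix-Bernstein-domination} applies with $M_n\lesssim n^{-\tau}\max_\mu\norm{\fkS\bfUps\bfe_\mu}$. The variance proxy is bounded by $n^{-1}\norm{\fkS\bfUps}_{\mathrm F}^2$ on one side and by $\max_i n^{-1}\sum_\mu\norm{\fkS\bfUps\bfe_\mu}^2$ on the other. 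Using $\norm{\fkS\bfUps}_{\mathrm F}^2\le k^2\sum_{\mu\in\caJ}\norm{\bfUps\bfe_\mu}^2$, together with the rows of $\bfUps$ having $\ell^2$ norm $O(1)$ but small entries, gives $V_n\lesssim \abs{\caJ}/n+n^{-2\tau}$ and hence the bound $n^{-\tau}$. For $\fkW\bfUps\fkW^\top$, I would split it into diagonal and off-diagonal parts. The diagonal entries are quadratic forms $\bfkw_i^\top\bfUps\bfkw_i$ in independent entries, so a Hanson--Wright/Bernstein-type large deviation bound gives $\sum_\mu \bbe^{\caF}\abs{\fkw_{i\mu}}^2\Upsilon_{\mu\mu} + O_\prec(n^{-\tau})=O_\prec(n^{-\tau})$. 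For the off-diagonal part, I would apply the moment method, or a decoupling followed by matrix Bernstein conditional on one copy, using $\norm{\bfUps}=O(1)$ and $\abs{\caB}/n\le n^{-\sigma/2}$.

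\emph{Proof of \eqref{eqn:unified-identity-kernel}.} This is the same argument with $\bfUps$ replaced by $\bfI$. The product $\fkS\fkW^\top=\sum\fkw_{i\mu}\fkS\bfe_\mu\bfe_i^\top$ is handled by matrix Bernstein with $M_n\le kn^{-\tau}$ and $V_n\lesssim k^2\abs{\caB}/n + k^2\abs{\caJ}/n$. The off-diagonal part of $\fkW\fkW^\top$ is handled by decoupling and matrix Bernstein as in the Gram off-diagonal estimate of \cite{linPhaseTransitionSmallest2026}, which gives $\prec n^{-\tau}+\sqrt{\abs{\caB}/n}$. Finally, all the estimates are averaged from $\bbp^{\caF}$ to $\bbp^{\bar{\caH}}$, as in the proof of Lemma \ref{lem:unified-square-sums}.
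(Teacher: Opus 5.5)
\paragraph{Verdict: one genuine gap.} Your reductions match the paper's proof:
\begin{itemize}
\item the bound $\norm{\fkS_{\caB}\bfUps_{\caT}\fkS_{\caB}^\top}\le k^2\norm{\bfP_{\caJ}\bfUps_{\caT}\bfP_{\caJ}^\top}$, with $\caJ$ uniform given $\bar{\caF}_n$;
\item matrix Bernstein for $\fkS_{\caB}\bfA\fkW_{\caB}^\top$ with $\bfA\in\{\bfUps_{\caT},\bfI\}$;
\item decoupling plus matrix Bernstein for the $\fkW$--$\fkW$ terms. Your split of $\fkW_{\caB}\bfUps_{\caT}\fkW_{\caB}^\top$ by row index, with a quadratic-form bound on the diagonal, is a valid alternative to the paper's split by column index.
\end{itemize}
A small slip: for $\fkS_{\caB}\bfUps_{\caT}\fkW_{\caB}^\top$ your two variance proxies are the same quantity. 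The other proxy is $C\abs{\caB}n^{-1}\norm{\fkS_{\caB}\bfUps_{\caT}}^2\le Cn^{-\sigma/2}$, so nothing changes.

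\paragraph{Where the trace method fails.} The gap is the step you flag. Let $\bfB:=\mathcal{D}_\perp[\bfUps_{\caT}]$, which is frozen by $\bar{\caF}_n$. Let $\caJ_\eta$ be a Bernoulli$(\delta)$ sample with $\delta=\abs{\bar{\caJ}}/n$. The only randomness in $\bbe\operatorname{Tr}(\bfP_{\caJ_\eta}\bfB\bfP_{\caJ_\eta}^\top)^{2p}$ comes from the uncentered $\{0,1\}$-valued selectors. Hence a walk with $v$ distinct vertices gets weight $\delta^{v}$ whatever its shape, and nothing cancels.

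The row bounds $\sum_\nu\abs{\Upsilon_{\mu\nu}}^2\le\norm{\bfUps_{\caT}}^2$ control a walk only when every edge is traversed at least twice. Walks with singly traversed edges, such as $2p$ distinct indices forming a cycle, can then only be bounded in absolute value in the counting you describe. That produces $\delta$ times the norm of the entrywise absolute value $(\abs{B_{\mu\nu}})_{\mu,\nu}$.

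The only information available is $\norm{\bfB}=O(1)$ and $\norm{\bfB}_{\max}\prec n^{-\tau}$. Under it, this norm can be of order $\sqrt n$; think of $n^{-1/2}$ times a symmetric sign matrix. Then $\delta\sqrt n$ is not small: at criticality $\delta\asymp n^{-\sigma}$ with $\sigma<1/2$.

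\paragraph{The missing ingredient.} You need a mechanism that charges singly traversed edges to $\norm{\bfB}$ rather than to entrywise absolute values. That mechanism is decoupling followed by a noncommutative Khintchine or matrix Chernoff bound. This is Tropp's random principal submatrix estimate \cite{troppNormsRandomSubmatrices2008}, which the paper invokes through \cite[Lemma 5.9]{linPhaseTransitionSmallest2026}.

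You can also obtain it with tools already in the paper:
\begin{enumerate}
\item Since $\bfB$ has zero diagonal, decoupling reduces the problem to $\bfP_{\caJ_\eta}\bfB\bfP_{\caJ_{\eta'}}^\top$ with independent Bernoulli$(\delta)$ selections.
\item For frozen $\bfM$, apply \eqref{eqn:matrix-Bernstein-domination} to $\norm{\bfP_{\caJ_\eta}\bfM}^2=\norm{\sum_\mu\eta_\mu\bfM^\top\bfe_\mu\bfe_\mu^\top\bfM}$. This gives $\norm{\bfP_{\caJ_\eta}\bfM}\prec\sqrt{\delta}\norm{\bfM}+\max_\mu\norm{\bfe_\mu^\top\bfM}$.
\item Use this with $\bfM=\bfB\bfP_{\caJ_{\eta'}}^\top$, then again for $\bfP_{\caJ_{\eta'}}\bfB$, together with scalar Bernstein for its rows. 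This yields
\[
\norm{\bfP_{\caJ_\eta}\bfB\bfP_{\caJ_{\eta'}}^\top}\prec\norm{\bfB}_{\max}+\sqrt{\delta}\max\nolimits_\nu\norm{\bfB\bfe_\nu}+\delta\norm{\bfB}\prec n^{-\tau},
\]
since $\tau<\sigma/4$.
\item Pass to the uniform subset of fixed size $\abs{\bar{\caJ}}$. This is routine, e.g.\ via the mode bound $\bbp\{\abs{\caJ_\eta}=\abs{\bar{\caJ}}\}\ge(n+1)^{-1}$ used in Section~\ref{sec:pearson}.
\end{enumerate}
With this replacement for your trace method, the rest of your argument goes through.
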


% *******************************************
\begin{proof}[Proof of Proposition \ref{prop:unified-centered-compressions}]
% *******************************************
Throughout the proof, we fix a realization of $\bar{\caH}_n$ in $\tilde{\Omega}_n$. All stochastic estimates below are under $\bbp^{\bar{\caH}}$ and are uniform over such realizations. Proposition \ref{prop:unified-core} gives
\begin{equation}
    \sup\nolimits_{z \in \bbk}
    \curls[\big]{\norm{\bfUps_{\caT} (z)}
    + \norm{\partial_z \bfUps_{\caT} (z)}} \prec 1,
    \qquad
    \sup\nolimits_{z \in \bbk}
    \norm{\bfUps_{\caT} (z)}_{\max} \prec n^{-\tau},
    \label{eqn:unified-centered-lipschitz}
\end{equation}
where $\norm{\bfH}_{\max}$ denotes the largest absolute value among the entries of a matrix $\bfH$. The derivative estimate follows from $\partial_z \bfUps_{\caT} (z) = \bfG_{\caT} (z)^2 - \fkm_{\varphi_n}'(z) \bfI$. As explained in \cite[Proposition 5.8]{linPhaseTransitionSmallest2026}, it suffices to prove \eqref{eqn:unified-centered-compressions} for each fixed $z \in \bbk$. Uniformity over $z \in \bbk$ follows from a standard $\varepsilon$-net argument and the $n^C$-Lipschitz continuity of the quantities under consideration in $z$. This continuity follows from \eqref{eqn:unified-freezing-bounds}, \eqref{eqn:unified-column-bound} and \eqref{eqn:unified-centered-lipschitz}. We omit the details and henceforth suppress the spectral parameter $z$ for simplicity.

\shortpara{The $\fkS$-$\fkS$ estimate}. Since $\fkS_{\caB}$ is supported on the columns in $\caJ$ and $\norm{\fkS_{\caB}} \leq k$ by \eqref{eqn:unified-freezing-bounds},
\begin{equation*}
    \norm[\big]{\fkS_{\caB} \bfUps_{\caT} \fkS_{\caB}^\top}
    = \norm[\big]{\fkS_{\caB} \bfP_{\caJ}^\top
    \bfP_{\caJ} \bfUps_{\caT} \bfP_{\caJ}^\top
    \bfP_{\caJ} \fkS_{\caB}^\top}
    \leq k^2 \norm[\big]{\bfP_{\caJ} \bfUps_{\caT} \bfP_{\caJ}^\top}.
\end{equation*}
Condition further on $\bar{\caF}_n$. Under $\bbp^{\bar{\caF}}$, the centered resolvent $\bfUps_{\caT}$ is fixed, whereas $\caJ$ is uniformly distributed among the subsets of $\dbraks{n}$ with cardinality $\abs{\caJ} = \abs{\bar{\caJ}}$. Applying the random compression estimate for subsets of fixed cardinality in \cite[Lemma 5.9]{linPhaseTransitionSmallest2026}, which is derived from Tropp's random principal submatrix estimate \cite[Theorem 1.1]{troppNormsRandomSubmatrices2008}, we obtain
\begin{equation*}
    \norm[\big]{\bfP_{\caJ} \bfUps_{\caT} \bfP_{\caJ}^\top}
    \prec \norm{\bfUps_{\caT}}_{\max}
    + \pars[\big]{\sqrt{{\abs{\bar{\caJ}}} / {n}}
    + \abs{\bar{\caJ}} / {n}}
    \norm{\bfUps_{\caT}}
    \prec n^{-\tau}.
\end{equation*}
Here we used \eqref{eqn:unified-centered-lipschitz}, $\abs{\bar{\caJ}} \leq \abs{\bar{\caE}} \leq n^{1 - \sigma / 2}$ from Lemma \ref{lem:unified-graph} \ref{item:unified-graph-edge-count} and $\tau < \sigma / 4$.

\shortpara{The $\fkS$-$\fkW$ estimates}. Condition further on $\caF_n$ and let $\bfA \in \bbr^{n \times n}$ be $\caF_n$-measurable with $\norm{\bfA} = \bigO{1}$. Consider the rank-one decomposition
\begin{equation*}
    \fkS_{\caB} \bfA \fkW_{\caB}^{\top}
    = \sum\nolimits_{i \in \caB} \sum\nolimits_{\mu = 1}^n
    \fkX_{i \mu},
    \qquad
    \fkX_{i \mu} = \fkw_{i \mu}
    \fkS_{\caB} \bfA \bfe_{\mu} \bfe_{i}^\top.
\end{equation*}
The summands are independent centered matrices conditional on $\caF_n$. On $\tilde{\Omega}_n$, \eqref{eqn:unified-freezing-bounds} and \eqref{eqn:unified-diffuse-entries} give
\begin{align*}
    \norm{\fkX_{i \mu}}
    & \leq C \abs{\fkw_{i \mu}}
    \leq C n^{-\tau}, \\
    \norm*{\sum\nolimits_{i \in \caB}
    \sum\nolimits_{\mu = 1}^n
    \bbe^{\caF} [\fkX_{i \mu} \fkX_{i \mu}^{\top}] }
    & \leq \frac{C \abs{\caB}}{n} \norm{\fkS_{\caB} \bfA}^2
    \leq C n^{-\sigma / 2}, \\
    \norm*{\sum\nolimits_{i \in \caB}
    \sum\nolimits_{\mu = 1}^n
    \bbe^{\caF} [\fkX_{i \mu}^{\top} \fkX_{i \mu}] }
    & \leq \frac{C}{n} \norm{\fkS_{\caB} \bfA}_{\mathrm{F}}^2
    \leq \frac{C}{n} \norm{\fkS_{\caB}}_{\mathrm{F}}^2
    \leq C n^{-\sigma / 2}.
\end{align*}
Consequently, the matrix Bernstein inequality \eqref{eqn:matrix-Bernstein-domination} with $M_n = C n^{-\tau}$ and $V_n = C n^{-\sigma / 2}$ gives
\begin{equation*}
    \norm{\fkS_{\caB} \bfA \fkW_{\caB}^\top}
    \prec n^{-\tau} + n^{-\sigma / 4}
    \prec n^{-\tau}.
\end{equation*}
Taking $\bfA = \bfUps_{\caT}$ or $\bfA = \bfI$ gives the two $\fkS$-$\fkW$ estimates.

\shortpara{The $\fkW$-$\fkW$ estimates}. Again condition further on $\caF_n$ and let $\bfA \in \bbr^{n \times n}$ be $\caF_n$-measurable with $\norm{\bfA} = \bigO{1}$. As in the proof of \cite[Proposition 5.8]{linPhaseTransitionSmallest2026}, we introduce the conditional covariance $\bfSigma_\mu := \bbe^{\caF} [\bfkw_\mu \bfkw_\mu^\top]$ and write
\begin{equation}
    \fkW_{\caB} \bfA \fkW_{\caB}^\top
    - \bbe^{\caF} [\fkW_{\caB} \bfA \fkW_{\caB}^\top]
    = \sum\nolimits_{\mu = 1}^n A_{\mu \mu}
    (\bfkw_\mu \bfkw_\mu^\top - \bfSigma_\mu)
    + \sum\nolimits_{\mu \neq \nu}
    A_{\mu \nu} \bfkw_\mu \bfkw_\nu^\top.
    \label{eqn:fkW-fkW-diag-offdiag}
\end{equation}
By Lemma \ref{lem:unified-freezing} \ref{item:fkW-entries-moments}, each covariance $\bfSigma_\mu$ is diagonal and satisfies $\norm{\bfSigma_\mu} \leq C / n$.

For the diagonal sum in \eqref{eqn:fkW-fkW-diag-offdiag}, fix an arbitrarily small $\delta > 0$ and introduce
\begin{equation*}
    \Xi_\mu := \curls[\big]{\norm{\bfkw_\mu} \leq n^{-\tau + \delta}},
    \qquad
    \Xi := \bigcap\nolimits_{\mu = 1}^n \Xi_\mu,
    \qquad
    \hat{\bfSigma}_\mu
    := \bbe^{\caF} [\bfkw_\mu \bfkw_\mu^\top \mid \Xi_\mu].
\end{equation*}
By \eqref{eqn:unified-column-bound}, we have $\bbp^{\caF} (\Xi^c) \leq n^{-D}$ for every fixed large $D > 0$. Since each event $\Xi_\mu$ depends only on the corresponding column, the columns $\bfkw_\mu$ remain independent under $\bbp^{\caF}\{\, \cdot \mid \Xi\}$. Moreover, the definition of $\hat{\bfSigma}_\mu$, the first bound in \eqref{eqn:unified-diffuse-entries} and the fact that $\Xi$ holds with overwhelming probability imply $\norm{\hat{\bfSigma}_\mu - \bfSigma_\mu} = O (n^{-D + 1})$. Consequently, for sufficiently large $D$,
\begin{equation}
    \max\nolimits_{\mu \in \dbraks{n}} \norm{\hat{\bfSigma}_\mu} \leq C / n ,
    \qquad
    \norm*{\sum\nolimits_{\mu = 1}^n \hat{\bfSigma}_\mu} \leq C,
    \qquad
    \sum\nolimits_{\mu = 1}^n
    \norm{\hat{\bfSigma}_\mu - \bfSigma_\mu} \leq C n^{-D + 2}.
    \label{eqn:hat-Sigma-properties}
\end{equation}
Under $\bbp^{\caF} \{\, \cdot \mid \Xi\}$, consider the following sum of independent centered self-adjoint matrices,
\begin{equation*}
    \sum\nolimits_{\mu = 1}^n \fkX_\mu,
    \qquad
    \fkX_\mu := A_{\mu \mu}
    (\bfkw_\mu \bfkw_\mu^{\top} - \hat{\bfSigma}_\mu).
\end{equation*}
Using \eqref{eqn:hat-Sigma-properties} and $\norm{\bfkw_\mu} \leq n^{-\tau + \delta}$ on $\Xi$, we obtain
\begin{align*}
    \norm{\fkX_\mu}
    & \leq C n^{-2 \tau + 2 \delta}, \\
    \qquad
    \norm*{\sum\nolimits_{\mu = 1}^n
    \bbe^{\caF} [\fkX_\mu^2 \mid \Xi]}
    & \leq C \norm*{\sum\nolimits_{\mu = 1}^n
    \bbe^{\caF} \braks{(\bfkw_\mu \bfkw_\mu^{\top})^2 \mid \Xi} }
    \leq C n^{-2 \tau + 2 \delta}
    \norm*{\sum\nolimits_{\mu = 1}^n \hat{\bfSigma}_\mu}
    \leq C n^{-2 \tau + 2 \delta}.
\end{align*}
Consequently, the matrix Bernstein inequality \eqref{eqn:matrix-Bernstein-domination} with $M_n = C n^{-2 \tau + 2 \delta}$ and $V_n = C n^{-2 \tau + 2 \delta}$ implies that $\norm{\sum_{\mu = 1}^n \fkX_\mu} \prec n^{-\tau + \delta}$ under $\bbp^{\caF} \{\, \cdot \mid \Xi\}$. Since $\delta > 0$ is arbitrarily small and $\Xi$ holds with overwhelming probability, removing the conditioning on $\Xi$ preserves the bound. Together with the last estimate in \eqref{eqn:hat-Sigma-properties}, this gives, under $\bbp^{\caF}$,
\begin{equation}
    \norm*{\sum\nolimits_{\mu = 1}^n A_{\mu \mu}
    (\bfkw_\mu \bfkw_\mu^\top - \bfSigma_\mu)}
    \leq \norm*{\sum\nolimits_{\mu = 1}^n \fkX_\mu}
    + \norm*{\sum\nolimits_{\mu = 1}^n
    A_{\mu \mu} ({\hat{\bfSigma}_\mu - \bfSigma_\mu})}
    \prec n^{-\tau}.
    \label{eqn:diagonal-fkW-fkW-bound}
\end{equation}

We now turn to the off-diagonal sum in \eqref{eqn:fkW-fkW-diag-offdiag}. Let $\tilde{\fkW}_{\caB}$ be a conditionally independent copy of $\fkW_{\caB}$. By the standard decoupling inequality for the tails of $U$-statistics (see, e.g., \cite[Theorem 3.4.1]{deLaPenaGineDecoupling1999}), it suffices to control the decoupled sum
\begin{equation*}
    \sum\nolimits_{\mu \neq \nu}
    A_{\mu \nu} \bfkw_\mu \tilde{\bfkw}_\nu^{\top}
    = \sum\nolimits_{i \in \caB}
    \sum\nolimits_{\nu = 1}^n \tilde{\fkX}_{i \nu},
    \qquad
    \tilde{\fkX}_{i \nu}
    := \tilde{\fkw}_{i \nu} \bfku_\nu \bfe_i^{\top},
    \qquad
    \bfku_\nu := \fkW_{\caB} \mathcal{D}_\perp[\bfA] \bfe_\nu.
\end{equation*}
After conditioning also on $\fkW_{\caB}$, the matrices $\tilde{\fkX}_{i \nu}$ are independent and centered. By \eqref{eqn:unified-column-bound} and the definition of $\bfku_\nu$, the following estimates hold with overwhelming probability,
\begin{equation*}
    \norm{\bfku_\nu}
    \leq C \norm{\fkW_{\caB}}
    \leq C n^{\delta},
    \qquad
    \norm*{\sum\nolimits_{\nu = 1}^n \bfku_\nu \bfku_\nu^\top}
    \leq C \norm{\fkW_{\caB}}^2
    \leq C n^{2 \delta},
    \qquad
    \sum\nolimits_{\nu = 1}^n \norm{\bfku_\nu}^2
    \leq C \norm{\fkW_{\caB}}_{\mathrm{F}}^2
    \leq C n^{1 - \sigma / 2 + \delta}.
\end{equation*}
Hence it suffices to consider realizations of $\fkW_{\caB}$ for which these estimates hold. For such realizations,
\begin{align*}
    \norm{\tilde{\fkX}_{i \nu}}
    & \leq C n^{\delta} \abs{\tilde{\fkw}_{i \nu}}
    \leq C n^{-\tau + \delta}, \\
    \norm*{\sum\nolimits_{i \in \caB}
    \sum\nolimits_{\nu = 1}^n
    \bbe^{\caF} [\tilde{\fkX}_{i \nu}
    \tilde{\fkX}_{i \nu}^\top \mid \fkW_{\caB}]}
    & \leq \frac{C \abs{\caB}}{n}
    \norm*{\sum\nolimits_{\nu = 1}^n \bfku_\nu \bfku_\nu^\top}
    \leq C n^{-\sigma/2 + 2 \delta}, \\
    \norm*{\sum\nolimits_{i \in \caB}
    \sum\nolimits_{\nu = 1}^n
    \bbe^{\caF} [\tilde{\fkX}_{i \nu}^\top
    \tilde{\fkX}_{i \nu} \mid \fkW_{\caB}]}
    & \leq \frac{C}{n} \sum\nolimits_{\nu = 1}^n \norm{\bfku_\nu}^2
    \leq C n^{- \sigma / 2 + \delta}.
\end{align*}
Consequently, the matrix Bernstein inequality \eqref{eqn:matrix-Bernstein-domination} with $M_n = C n^{-\tau + \delta}$ and $V_n = C n^{-\sigma / 2 + 2 \delta}$ bounds the norm of the decoupled sum by $O_{\prec} (n^{-\tau + \delta})$. Since $\delta > 0$ is arbitrarily small, the decoupling inequality for the tails of $U$-statistics gives the corresponding bound for the original off-diagonal sum,
\begin{equation}
    \norm*{\sum\nolimits_{\mu \neq \nu}
    A_{\mu \nu} \bfkw_\mu {\bfkw}_\nu^{\top}}
    \prec n^{-\tau}.
    \label{eqn:offdiagonal-fkW-fkW-bound}
\end{equation}

Applying the two estimates \eqref{eqn:diagonal-fkW-fkW-bound} and \eqref{eqn:offdiagonal-fkW-fkW-bound} to \eqref{eqn:fkW-fkW-diag-offdiag}, we obtain
\begin{equation}
    \norm[\big]{\fkW_{\caB} \bfA \fkW_{\caB}^\top
    - \bbe^{\caF} [\fkW_{\caB} \bfA \fkW_{\caB}^\top]}
    \prec n^{-\tau}.
    \label{eqn:fkW-fkW-centered}
\end{equation}
For $\bfA = \bfI$, the conditional mean $\bbe^{\caF} [\fkW_{\caB} \fkW_{\caB}^\top]$ is diagonal, so using $\norm{\mathcal{D}_\perp[\bfH]} \leq 2 \norm{\bfH}$ and \eqref{eqn:fkW-fkW-centered} proves the second estimate in \eqref{eqn:unified-identity-kernel}. For $\bfA = \bfUps_{\caT}$, the conditional mean satisfies
\begin{equation*}
    \norm[\big]{\bbe^{\caF} [\fkW_{\caB} \bfUps_{\caT} \fkW_{\caB}^\top]}
    = \norm*{\sum\nolimits_{\mu = 1}^n \Upsilon_{\mu \mu} \bfSigma_\mu}
    \leq \norm{\bfUps_{\caT}}_{\max} \prec n^{-\tau},
\end{equation*}
where we used $\norm{\bfSigma_\mu} \leq C/n$ and \eqref{eqn:unified-centered-lipschitz}. This proves the last estimate in \eqref{eqn:unified-centered-compressions}.
\end{proof}

% *******************************************
\subsection{The effective Gram matrix and resolvent approximation}
% *******************************************

The preceding estimates allow us to isolate the contribution of large entries in the atypical block $\bfY_{\caB}$. As in \cite[Section 5.5]{linPhaseTransitionSmallest2026}, define the \emph{effective Gram matrix} of this block by
\begin{equation}
    \bfL_{\caB}
    = \bfI + \mathcal{D}_\perp[\fkS_{\caB} \fkS_{\caB}^{\top}]
    = \fkS_{\caB} \fkS_{\caB}^{\top}
    + \diag \pars[\big]{(n / \hbar_i^2)_{i \in \caB}}.
    \label{eqn:unified-component-matrix}
\end{equation}
By definition, $\bfL_{\caB}$ is positive definite with unit diagonal.

% *******************************************
\begin{proposition}
\label{prop:unified-resolvent}
% *******************************************
Fix an integer $k \geq 2$ and assume \eqref{eqn:basic-assumption} and \eqref{eqn:analytic-tail-envelope}. Then, for every compact interval $\bbk \subset (\lambda_+, \infty)$,
\begin{equation}
    \sup\nolimits_{z \in \bbk}
    \norm*{\bfY_{\caB} (\bfQ_{\caT} - z \bfI)^{-1} \bfY_{\caB}^{\top}
    - \fkm_\phi (z) \bfL_{\caB}} \pconv 0.
    \label{eqn:unified-resolvent-approximation}
\end{equation}
\end{proposition}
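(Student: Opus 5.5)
We first work conditionally. It suffices to prove the bound on the event $\tilde{\Omega}_n$, uniformly over realizations of $\bar{\caH}_n$ in $\tilde{\Omega}_n$, with an error $O_\prec(n^{-\tau})$ or any $o_{\bbp}(1)$ bound. Lemma \ref{lem:unified-graph} gives $\bbp(\tilde{\Omega}_n) \to 1$, so the unconditional convergence in probability then follows. We write $\bfY_{\caB} = \boldsymbol{\Theta}_{\caB} \mathfrak{Y}_{\caB}$. Then
\[
    \bfY_{\caB} \bfG_{\caT} \bfY_{\caB}^\top
    = \boldsymbol{\Theta}_{\caB}
    \mathfrak{Y}_{\caB} \bfG_{\caT} \mathfrak{Y}_{\caB}^\top
    \boldsymbol{\Theta}_{\caB}.
\]
Proposition \ref{prop:unified-core} gives $\norm{\bfG_{\caT}} \prec 1$, and Lemma \ref{lem:unified-freezing} gives $\norm{\mathfrak{Y}_{\caB}} \prec 1$ and $\norm{\boldsymbol{\Theta}_{\caB} - \bfI} \prec n^{-2\tau}$. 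Hence replacing $\boldsymbol{\Theta}_{\caB}$ by $\bfI$ costs $O_\prec(n^{-2\tau})$ in operator norm. The term $\fkD_{\caB}$ can also be dropped, since $\norm{\fkD_{\caB}} \leq C n^{-\tau}$. This reduces the problem to $(\fkS_{\caB} + \fkW_{\caB}) \bfG_{\caT} (\fkS_{\caB} + \fkW_{\caB})^\top$.

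Next we split $\bfG_{\caT} = \fkm_{\varphi_n}(z) \bfI + \bfUps_{\caT}(z)$. By \eqref{eqn:unified-centered-compressions}, the $\bfUps_{\caT}$ part contributes $O_\prec(n^{-\tau})$ uniformly on $\bbk$. All four blocks are covered: $\fkS\fkS$, the two cross terms $\fkS\fkW$ and $\fkW\fkS$ (one is the transpose of the other), and $\fkW\fkW$. What remains is $\fkm_{\varphi_n}(z) (\fkS_{\caB} + \fkW_{\caB})(\fkS_{\caB} + \fkW_{\caB})^\top$. By \eqref{eqn:unified-identity-kernel}, the cross terms $\fkS_{\caB}\fkW_{\caB}^\top$ and the off-diagonal part of $\fkW_{\caB}\fkW_{\caB}^\top$ are $O_\prec(n^{-\tau})$. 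Since $\fkm_{\varphi_n}$ is bounded on $\bbk$, we are left with
\[
    \fkm_{\varphi_n}(z) \Bigl(\fkS_{\caB}\fkS_{\caB}^\top
    + \mathcal{D}[\fkW_{\caB}\fkW_{\caB}^\top]\Bigr).
\]

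It remains to identify the diagonal. We show
\[
    \max_{i \in \caB} \Bigl| (\fkW_{\caB}\fkW_{\caB}^\top)_{ii} - n/\hbar_i^2 \Bigr| \prec n^{-2\tau}.
\]
We have $(\fkW\fkW^\top)_{ii} = \hbar_i^{-2} \sum_{\mu \notin \caA_i} |\zeta_{i\mu} - \bbe\zeta_n|^2$. The same Bernstein argument as in Lemma \ref{lem:unified-square-sums} gives $\sum_{\mu \notin \caA_i} |\zeta_{i\mu}|^2 = n + O_\prec(n^{1-2\tau})$. Here we also use \eqref{eqn:unified-low-moments}, the bound $|\caA_i| \leq k$, and the fact that $n|\bbe\zeta_n|^2$ and $|\bbe\zeta_n|\,|\sum_\mu \zeta_{i\mu}|$ are negligible. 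Dividing by $\hbar_i^2 \geq n$ gives the claim. With \eqref{eqn:unified-component-matrix} this yields $\fkm_{\varphi_n}(z)\bfL_{\caB}$. Finally, $\norm{\bfL_{\caB}} \leq k^2 + 1$ on $\tilde{\Omega}_n$, and $\sup_{z \in \bbk} |\fkm_{\varphi_n}(z) - \fkm_\phi(z)| \to 0$ because $\varphi_n \to \phi$ uniformly on $\Omega_n$ and $\bbk$ stays at positive distance from the edges for $n$ large. So we may replace $\fkm_{\varphi_n}$ by $\fkm_\phi$.

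The main obstacle is uniformity in $z$ together with the order of conditioning. Proposition \ref{prop:unified-centered-compressions} is already uniform over $z \in \bbk$ under $\bbp^{\bar{\caH}}$. The other steps involve no $z$-dependent randomness beyond $\norm{\bfG_{\caT}(z)}$, which is controlled uniformly by \eqref{eqn:unified-core-trace}. All the error bounds are uniform over realizations in $\tilde{\Omega}_n$. Taking expectations over $\bar{\caH}_n$ and adding $\bbp(\tilde{\Omega}_n^c) = o(1)$ then completes the argument.
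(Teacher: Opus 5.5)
Your proposal is correct and follows essentially the same route as the paper: you reduce to the stabilized block via \eqref{eqn:unified-normalizer-comparison}, drop $\fkD_{\caB}$, control the $\bfUps_{\caT}$ part by \eqref{eqn:unified-centered-compressions} and the $\fkm_{\varphi_n}\bfI$ part by \eqref{eqn:unified-identity-kernel}, and finally replace $\fkm_{\varphi_n}$ by $\fkm_\phi$. The only difference is the diagonal: the paper never computes $\mathcal{D}[\fkW_{\caB}\fkW_{\caB}^\top]$, because $\bfY_{\caB}\bfY_{\caB}^\top$ and $\bfL_{\caB}$ both have unit diagonal and only their off-diagonal parts need comparing, whereas you match the diagonal directly with $n/\hbar_i^2$ by re-running the row-sum Bernstein estimate, which is equally valid.
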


% *******************************************
\begin{proof}[Proof of Proposition \ref{prop:unified-resolvent}]
% *******************************************
The proof follows that of \cite[Proposition 5.10]{linPhaseTransitionSmallest2026}, and thus we present only the three main estimates. Suppressing the spectral parameter $z \in \bbk$, we decompose the matrix in \eqref{eqn:unified-resolvent-approximation} as
\begin{equation}
    \bfY_{\caB} \bfG_{\caT} \bfY_{\caB}^\top
    - \fkm_\phi \bfL_{\caB}
    = \bfY_{\caB} (\bfG_{\caT} - \fkm_{\varphi_n} \bfI) \bfY_{\caB}^\top
    + \fkm_{\varphi_n}
    (\bfY_{\caB} \bfY_{\caB}^\top - \bfL_{\caB})
    + (\fkm_{\varphi_n} - \fkm_\phi) \bfL_{\caB}.
    \label{eqn:resolvent-estimate-components}
\end{equation}
To bound the first term in \eqref{eqn:resolvent-estimate-components}, we use \eqref{eqn:unified-normalizer-comparison} to reduce the estimate to the stabilized block $\mathfrak{Y}_{\caB}$. We then expand $\mathfrak{Y}_{\caB}$ using \eqref{eqn:unified-atypical-decomposition}. The terms involving only $\fkS_{\caB}$ and $\fkW_{\caB}$ are controlled by \eqref{eqn:unified-centered-compressions}, while those involving $\fkD_{\caB}$ are controlled directly by \eqref{eqn:unified-freezing-bounds}. For the second term, both $\bfY_{\caB} \bfY_{\caB}^\top$ and $\bfL_{\caB}$ have unit diagonal, so it suffices to control the off-diagonal part of their difference. We proceed as for the first term, now using \eqref{eqn:unified-identity-kernel}. Finally, $\abs{\varphi_n - p_n / n} \leq n^{-\sigma / 2}$ on $\tilde{\Omega}_n$, so $\varphi_n \to \phi$. These arguments give the following estimates, uniformly over all realizations of $\bar{\caH}_n$ in $\tilde{\Omega}_n$,
\begin{align*}
    \sup\nolimits_{z \in \bbk}
    \norm[\big]{\bfY_{\caB}
    [\bfG_{\caT} (z) - \fkm_{\varphi_n} (z) \bfI]
    \bfY_{\caB}^\top}
    \prec n^{-\tau}, 
    \qquad
    \norm[\big]{\bfY_{\caB} \bfY_{\caB}^\top - \bfL_{\caB}}
    \prec n^{-\tau}, 
    \qquad
    \sup\nolimits_{z \in \bbk}
    \abs{\fkm_{\varphi_n} (z) - \fkm_\phi (z)}
    \to 0.
\end{align*}
Moreover, continuity of the companion MP transform gives $\sup_{z \in \bbk} \abs{\fkm_{\varphi_n} (z)} = O (1)$, while \eqref{eqn:unified-component-matrix} and \eqref{eqn:unified-freezing-bounds} give $\norm{\bfL_{\caB}} \leq k^2 + 1$ on $\tilde{\Omega}_n$. Applying these estimates to \eqref{eqn:resolvent-estimate-components} and recalling that $\bbp (\tilde{\Omega}_n) \to 1$ proves \eqref{eqn:unified-resolvent-approximation}.
\end{proof}

% *******************************************
\subsection{Spectral transfer from the atypical block}
\label{subsec:spectral-transfer}
% *******************************************

We use Proposition \ref{prop:unified-resolvent} to relate the eigenvalues of $\bfR_n$ above the upper MP edge $\lambda_+$ to those of the effective Gram matrix $\bfL_{\caB}$ introduced in \eqref{eqn:unified-component-matrix}. We begin with the definitions needed to state this relation precisely. Recall the Stieltjes transform $\fkm_\phi$ of the companion MP law introduced in \eqref{eqn:companion-MP-transform}. By \cite[Lemma 3.11]{baiSpectralAnalysisLarge2010}, it satisfies the self-consistent equation
\begin{equation}
    z = -\frac{1}{\fkm_\phi (z)}
    + \frac{\phi}{1 + \fkm_\phi (z)}.
 \label{eqn:companion-MP-equation}
\end{equation}
For real $z \in (\lambda_+, \infty)$, the transform $\fkm_\phi$ is negative and strictly increasing, and tends to zero as $z \to \infty$. In addition, $\fkm_\phi (\lambda_+) = - 1 / ({1 + \sqrt{\phi}})$. Recall the outlier map $\theta_\phi$ in \eqref{def:upper-outlier-map}. We denote its inverse by
\begin{equation}
    \fkx_\phi (z) 
    := \theta_\phi^{-1} (z)
    = - 1 / {\fkm_\phi (z)} ,
    \qquad
    z \in (\lambda_+, \infty).
    \label{def:inverse-upper-map}
\end{equation}
Indeed, substituting $\fkm_\phi = -1 / \fkx_\phi$ into \eqref{eqn:companion-MP-equation} gives $z = \theta_\phi (\fkx_\phi (z))$. The properties of $\fkm_\phi$ stated above imply
\begin{equation}
    \lim_{z \downarrow \lambda_+}
    \, \fkx_\phi(z)
    = 1 + \sqrt{\phi},
    \qquad
    \lim_{z \uparrow \infty}
    \, \fkx_\phi(z)
    = \infty,
    \qquad
    \partial_z \fkx_\phi (z)
    = \frac{\partial_z \fkm_\phi (z)}{
    \fkm_\phi (z)^2} > 0.
    \label{eqn:fkx-increasing-homeomorphism}
\end{equation}
Therefore, $\fkx_\phi$ is an increasing homeomorphism from $(\lambda_+, \infty)$ onto $(1 + \sqrt{\phi}, \infty)$. 

% *******************************************
\begin{proposition}
\label{prop:spectral-transfer}
% *******************************************
Fix an integer $k \geq 2$ and assume \eqref{eqn:basic-assumption} and \eqref{eqn:analytic-tail-envelope}. Then, for every compact interval $\bbk \subset (\lambda_+, \infty)$, there is a deterministic sequence $\eps_n \downarrow 0$ such that, with probability tending to one, simultaneously for all $z \in \bbk$,
\begin{equation}
    \fkn^{\bfL_{\caB}} (\fkx_\phi (z) + \eps_n)
    \leq \fkn^{\bfR_n} (z)
    \leq \fkn^{\bfL_{\caB}} (\fkx_\phi (z) - \eps_n).
    \label{eqn:spectral-count-sandwich}
\end{equation}
\end{proposition}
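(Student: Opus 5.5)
The plan is to use a Schur complement (inertia) argument and then compare the resulting matrix with $\fkm_\phi (z) \bfL_{\caB}$ through Proposition \ref{prop:unified-resolvent}. Here $\fkn^{\bfA} (x)$ denotes the number of eigenvalues of $\bfA$ strictly greater than $x$.

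\textbf{Step 1: reduce to the typical block.} Since $\bfR_n = \bfY_n \bfY_n^\top$ and $\bfQ_n := \bfY_n^\top \bfY_n$ share their nonzero eigenvalues, and $z > 0$, we have $\fkn^{\bfR_n} (z) = \fkn^{\bfQ_n} (z)$. Moreover,
\begin{equation*}
    \bfQ_n = \bfQ_{\caT} + \bfY_{\caB}^\top \bfY_{\caB}.
\end{equation*}
By Proposition \ref{prop:unified-core}, after averaging the conditional estimates over realizations in $\Omega_n$, with probability tending to one $\lambda_1 (\bfQ_{\caT}) < \min \bbk$. So for all $z \in \bbk$ the matrix $\bfQ_{\caT} - z \bfI$ is negative definite and $\bfG_{\caT} (z)$ is defined.

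\textbf{Step 2: inertia identity.} Work on this event and consider the symmetric block matrix
\begin{equation*}
    \begin{pmatrix} \bfQ_{\caT} - z \bfI & \bfY_{\caB}^\top \\ \bfY_{\caB} & -\bfI \end{pmatrix}.
\end{equation*}
I will compute its inertia in two ways using Sylvester's law.
\begin{itemize}
    \item Taking the Schur complement of the $-\bfI$ block gives $\bfQ_n - z \bfI$, together with $\abs{\caB}$ negative eigenvalues.
    \item Taking the Schur complement of $\bfQ_{\caT} - z \bfI$, which contributes $n$ negative eigenvalues, gives $-\bfI - \bfY_{\caB} \bfG_{\caT} (z) \bfY_{\caB}^\top$.
\end{itemize}
Comparing the numbers of positive eigenvalues yields the exact identity
\begin{equation*}
    \fkn^{\bfQ_n} (z) = \abs[\big]{\{\text{eigenvalues of } \bfY_{\caB} \bfG_{\caT} (z) \bfY_{\caB}^\top \text{ strictly below } -1\}}.
\end{equation*}
This holds simultaneously for all $z \in \bbk$.

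\textbf{Step 3: perturbation.} Let
\begin{equation*}
    \delta_n := \sup\nolimits_{z \in \bbk} \norm[\big]{\bfY_{\caB} \bfG_{\caT} (z) \bfY_{\caB}^\top - \fkm_\phi (z) \bfL_{\caB}}.
\end{equation*}
By Proposition \ref{prop:unified-resolvent}, $\delta_n \pconv 0$, so there is a deterministic $\eta_n \downarrow 0$ with $\bbp \{\delta_n \le \eta_n\} \to 1$.

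By Weyl's inequality, each eigenvalue of $\bfY_{\caB} \bfG_{\caT} \bfY_{\caB}^\top$ is within $\delta_n$ of the corresponding eigenvalue $\fkm_\phi (z) \ell$ of $\fkm_\phi (z) \bfL_{\caB}$, where $\ell$ ranges over the eigenvalues of $\bfL_{\caB}$. Since $\fkm_\phi (z) < 0$, and recalling $\fkx_\phi = -1/\fkm_\phi$ from \eqref{def:inverse-upper-map}, we get
\begin{equation*}
    \fkm_\phi (z) \ell < -1 \iff \ell > \fkx_\phi (z).
\end{equation*}
A perturbation of size $\delta_n$ therefore moves the threshold on the $\ell$ scale by at most $\delta_n / \abs{\fkm_\phi (z)}$. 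On $\bbk$, $\abs{\fkm_\phi (z)}$ is bounded below by a constant $c > 0$, because $\fkm_\phi$ is continuous and negative on $(\lambda_+, \infty)$ and $\bbk$ is compact.

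Set $\eps_n := \eta_n / c$. Then:
\begin{itemize}
    \item any $\ell > \fkx_\phi (z) + \eps_n$ produces an eigenvalue below $-1$, giving the lower bound;
    \item any eigenvalue below $-1$ forces $\ell > \fkx_\phi (z) - \eps_n$, giving the upper bound.
\end{itemize}
Both bounds follow from Weyl's ordering-preserving matching of eigenvalues, so they hold simultaneously for all $z$ on the event $\{\delta_n \le \eta_n\}$ intersected with the event of Step 1. This establishes \eqref{eqn:spectral-count-sandwich}.

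\textbf{Main obstacle.} The only delicate point is the exact counting identity in Step 2, in particular the treatment of ties and zero eigenvalues. I will handle this by using strict inequalities on both sides. The identity is exact for each $z$ at which $\bfQ_{\caT} - z \bfI$ is invertible, and this holds for all $z \in \bbk$ on the event of Step 1, so no $z$-dependent exceptional sets arise. The uniformity in $z$ needed in Step 3 is already supplied by the supremum in Proposition \ref{prop:unified-resolvent}.
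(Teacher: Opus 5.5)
Your proof is correct and follows the paper's argument. The paper also obtains an exact Sylvester-inertia count, reducing $\fkn^{\bfR_n}(z)$ to the number of eigenvalues of $\bfY_{\caB}\bfG_{\caT}(z)\bfY_{\caB}^\top$ below $-1$, which it phrases as the negative eigenvalues of $\bfF_n(z)=\bfI+\bfY_{\caB}\bfG_{\caT}(z)\bfY_{\caB}^\top$, and then applies Weyl's inequality with Proposition \ref{prop:unified-resolvent} and $\inf_{\bbk}\abs{\fkm_\phi}>0$. The only difference is cosmetic: you derive the counting identity from the linearized $(n+\abs{\caB})$-dimensional block matrix built on $\bfQ_n=\bfQ_{\caT}+\bfY_{\caB}^\top\bfY_{\caB}$, whereas the paper takes the Schur complement of $\bfR_n-z\bfI$ directly with respect to the typical block $\bfR_{\caT}-z\bfI$.
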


% *******************************************
\begin{proof}[Proof of Proposition \ref{prop:spectral-transfer}]
% *******************************************
Fix a compact interval $\bbk \subset (\lambda_+, \infty)$. By the first estimate in \eqref{eqn:unified-core-trace} and Proposition \ref{prop:unified-resolvent}, there is a deterministic sequence $\eps_n \downarrow 0$ such that, with probability tending to one,
\begin{equation}
    \lambda_1 (\bfQ_{\caT}) 
    \leq \min \bbk - c_{\ref{eqn:unified-core-trace}},
    \qquad 
    \sup\nolimits_{z \in \bbk}
    \norm*{\bfY_{\caB} (\bfQ_{\caT} - z \bfI)^{-1} \bfY_{\caB}^{\top}
    - \fkm_\phi (z) \bfL_{\caB}} 
    < c_{\ref{def:event-reduction}} \eps_n / 2.
    \label{def:event-reduction}
\end{equation}
Here we set $c_{\ref{def:event-reduction}} \equiv c_{\ref{def:event-reduction}} (\bbk) := \inf\nolimits_{z \in \bbk} \, \abs{\fkm_\phi (z)} > 0$. We work on this event throughout the proof.

Up to a simultaneous permutation of the rows and columns, we have
\begin{equation*}
    \bfR_n - z \bfI = \begin{pmatrix}
        \bfY_{\caT} \bfY_{\caT}^\top - z \bfI 
        & \bfY_{\caT} \bfY_{\caB}^\top \\
        \bfY_{\caB} \bfY_{\caT}^\top 
        & \bfY_{\caB} \bfY_{\caB}^\top - z \bfI
    \end{pmatrix}.
\end{equation*}
On the event \eqref{def:event-reduction}, the upper-left block is negative definite for every $z \in \bbk$. By the standard resolvent identity, the Schur complement of this block is given by 
\begin{equation*}
    (\bfY_{\caB} \bfY_{\caB}^{\top} - z \bfI)
    - \bfY_{\caB} \bfY_{\caT}^{\top}
    (\bfR_\caT - z \bfI)^{-1}
    \bfY_{\caT} \bfY_{\caB}^{\top}
    = - z \braks*{\bfI
    + \bfY_{\caB} (\bfQ_\caT - z \bfI)^{-1} \bfY_{\caB}^{\top}}
    = - z \bfF_n (z),
\end{equation*}
where we define
\begin{equation*}
    \bfF_n (z) := \bfI
    + \bfY_{\caB} (\bfQ_{\caT} - z \bfI)^{-1} \bfY_{\caB}^\top
    \in \bbr^{\abs{\caB} \times \abs{\caB}}.
\end{equation*}
Since $z > 0$, Sylvester's law of inertia gives the counting identity
\begin{equation}
    \fkn^{\bfR_n} (z) 
    = \abs[\big]{\{1 \leq i \leq \abs{\caB}: 
    \lambda_i (\bfF_n (z)) < 0\}}.
    \label{eqn:exact-upper-Schur-count}
\end{equation}

We now use the second estimate in \eqref{def:event-reduction} to approximate $\bfF_n$ by $\bfI + \fkm_\phi (z) \bfL_{\caB}$. For $z \in \bbk$, we have $\fkm_\phi (z) \leq - c_{\ref{def:event-reduction}}$ and $\fkm_\phi (z) \fkx_\phi (z)= -1$. Therefore, Weyl's inequality gives, for each $1 \leq i \leq \abs{\caB}$,
\begin{alignat*}{2}
    \lambda_i (\bfL_{\caB}) > \fkx_\phi (z) + \eps_n
    &\quad \Longrightarrow \quad
    \lambda_{\abs{\caB}+1-i} \pars[\big]{
    \bfI + \fkm_\phi (z) \bfL_{\caB} } < - c_{\ref{def:event-reduction}} \eps_n
    &&\quad \Longrightarrow \quad
    \lambda_{\abs{\caB}+1-i} (\bfF_n (z)) < - c_{\ref{def:event-reduction}} \eps_n / 2 < 0, \\
    \lambda_i (\bfL_{\caB}) \leq \fkx_\phi (z) - \eps_n
    &\quad \Longrightarrow \quad
    \lambda_{\abs{\caB}+1-i} \pars[\big]{
    \bfI + \fkm_\phi (z) \bfL_{\caB} } \geq c_{\ref{def:event-reduction}} \eps_n
    &&\quad \Longrightarrow \quad
    \lambda_{\abs{\caB}+1-i} (\bfF_n (z)) \geq c_{\ref{def:event-reduction}} \eps_n / 2 > 0.
\end{alignat*}
These implications hold uniformly over $z \in \bbk$ on the event \eqref{def:event-reduction}. Consequently,
\begin{equation*}
    \fkn^{\bfL_{\caB}} (\fkx_\phi (z) + \eps_n)
    \leq \abs[\big]{\{1 \leq i \leq \abs{\caB}: 
    \lambda_i (\bfF_n (z)) < 0\}}
    \leq \fkn^{\bfL_{\caB}} (\fkx_\phi (z) - \eps_n).
\end{equation*}
Together with \eqref{eqn:exact-upper-Schur-count}, this proves the proposition.
\end{proof}

For the largest eigenvalue, the reduction takes a particularly simple form.

% *******************************************
\begin{corollary}
\label{cor:extremal-transfer}
% *******************************************
Fix an integer $k \geq 2$ and assume \eqref{eqn:basic-assumption} and \eqref{eqn:analytic-tail-envelope}. Then the following conclusions hold.
\begin{subequations}
\begin{enumerate}[label = (\roman*)]
    \item \label{item:extremal-transfer-edge} If $\lambda_1 (\bfL_{\caB}) \leq 1 + \sqrt{\phi} + o_{\bbp} (1)$, then
    \begin{equation}
        \lambda_1 (\bfR_n) \pconv \lambda_+.
        \label{eqn:extremal-transfer-edge}
    \end{equation}
    \item \label{item:extremal-transfer-outlier} If $\lambda_1 (\bfL_{\caB}) = x + o_{\bbp} (1)$ for a finite constant $x > 1 + \sqrt{\phi}$, then
    \begin{equation}
        \lambda_1 (\bfR_n) \pconv \theta_\phi (x).
        \label{eqn:extremal-transfer-outlier}
    \end{equation}
\end{enumerate}    
\end{subequations}
\end{corollary}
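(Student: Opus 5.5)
We combine the eigenvalue count sandwich of Proposition \ref{prop:spectral-transfer} with a lower bound coming from the typical block. Write $\fkn^{\bfA}(x)$ for the number of eigenvalues of $\bfA$ exceeding $x$, as in Proposition \ref{prop:spectral-transfer}.

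\emph{Lower bound, common to both parts.} Since $\bfR_{\caT}$ is a principal submatrix of $\bfR_n$, Cauchy interlacing gives $\lambda_1(\bfR_n) \geq \lambda_1(\bfR_{\caT})$. By \eqref{eqn:eig-typical-to-edge}, $\lambda_1(\bfR_{\caT}) \pconv \lambda_+$. Hence $\lambda_1(\bfR_n) \geq \lambda_+ - o_{\bbp}(1)$.

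\emph{Part \ref{item:extremal-transfer-edge}.} It suffices to show $\bbp\{\lambda_1(\bfR_n) > \lambda_+ + \delta\} \to 0$ for each small $\delta > 0$. Fix $\delta$ and take the compact interval $\bbk = [\lambda_+ + \delta, M]$. A single point $z = \lambda_+ + \delta$ would suffice if $M$ were irrelevant, but we must also rule out eigenvalues above $M$.

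\begin{itemize}
\item \emph{Eigenvalues above $M$.} On $\tilde{\Omega}_n$ we have $\norm{\bfL_{\caB}} \leq k^2+1$. Also $\bfY_{\caB}\bfY_{\caB}^\top$ is close to $\bfL_{\caB}$ by the proof of Proposition \ref{prop:unified-resolvent}, and $\norm{\bfY_{\caT}}^2 = \lambda_1(\bfR_{\caT}) = O_{\bbp}(1)$. Together these give $\lambda_1(\bfR_n) \leq \norm{\bfY_{\caT}}^2 + \norm{\bfY_{\caB}}^2 = O_{\bbp}(1)$, so we may choose $M$ large.
\item \emph{Eigenvalues in $(\lambda_+ + \delta, M]$.} Apply \eqref{eqn:spectral-count-sandwich} at $z = \lambda_+ + \delta$. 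Since $\fkx_\phi$ is strictly increasing with $\fkx_\phi(\lambda_+ +) = 1+\sqrt\phi$, we have $\fkx_\phi(z) - \eps_n \geq 1 + \sqrt{\phi} + \eta$ for some $\eta = \eta(\delta) > 0$ and $n$ large. The hypothesis gives $\lambda_1(\bfL_{\caB}) < 1+\sqrt\phi+\eta$ with probability tending to one, so $\fkn^{\bfL_{\caB}}(\fkx_\phi(z)-\eps_n) = 0$ and therefore $\fkn^{\bfR_n}(z) = 0$.
\end{itemize}

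\emph{Part \ref{item:extremal-transfer-outlier}.} Fix a small $\delta > 0$ so that $\theta_\phi(x) - \delta > \lambda_+$, and choose $\bbk = [\theta_\phi(x)-\delta, \theta_\phi(x)+\delta] \subset (\lambda_+,\infty)$.

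\begin{itemize}
\item \emph{Lower bound.} Take $z = \theta_\phi(x) - \delta$. Then $\fkx_\phi(z) < x$ strictly, by monotonicity of $\fkx_\phi = \theta_\phi^{-1}$. So $\fkx_\phi(z) + \eps_n < x - \eta$ for some $\eta > 0$. Since $\lambda_1(\bfL_{\caB}) > x - \eta$ with high probability, the left inequality of \eqref{eqn:spectral-count-sandwich} gives $\fkn^{\bfR_n}(z) \geq 1$, that is, $\lambda_1(\bfR_n) > \theta_\phi(x)-\delta$.
\item \emph{Upper bound.} Take $z = \theta_\phi(x)+\delta$. Then $\fkx_\phi(z) - \eps_n > x + \eta$, so $\fkn^{\bfL_{\caB}}(\fkx_\phi(z)-\eps_n) = 0$ and hence $\fkn^{\bfR_n}(z) = 0$. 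This bounds $\lambda_1(\bfR_n)$ by $\theta_\phi(x)+\delta$.
\end{itemize}

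The upper bound in part (ii) needs no separate control of huge eigenvalues. The count at $z$ in \eqref{eqn:exact-upper-Schur-count} already counts all eigenvalues above $z$, since Sylvester's inertia applies at the single point $z$. For the same reason the a priori bound above is not actually needed in part \ref{item:extremal-transfer-edge}, though it is harmless.

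\emph{Main subtlety.} The only delicate point is the $\eps_n$ slack. It is absorbed by the strict gap between $\fkx_\phi(z)$ and the relevant level, which follows from strict monotonicity and continuity of $\fkx_\phi$ (see \eqref{eqn:fkx-increasing-homeomorphism}) together with $\eps_n \to 0$.
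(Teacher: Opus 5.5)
Your proposal is correct and follows the paper's proof essentially step for step: an interlacing lower bound via \eqref{eqn:eig-typical-to-edge}, then the count sandwich \eqref{eqn:spectral-count-sandwich} at $z = \lambda_+ + \delta$ and $z = \theta_\phi(x) \pm \delta$, with the $\eps_n$ slack absorbed by the strict monotonicity of $\fkx_\phi$. As you yourself note, the separate a priori bound on eigenvalues above $M$ in part (i) is unnecessary, since $\fkn^{\bfR_n}(z)$ already counts all eigenvalues of $\bfR_n$ above $z$.
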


% *******************************************
\begin{proof}[Proof of Corollary \ref{cor:extremal-transfer}]
% *******************************************
For part \ref{item:extremal-transfer-edge}, recall from \eqref{eqn:eig-typical-to-edge} that $\abs{\lambda_1 (\bfR_\caT) - \lambda_+}= o_{\bbp} (1)$. Cauchy's interlacing inequality therefore gives the lower bound $\lambda_1 (\bfR_n) \geq \lambda_+ - o_{\bbp} (1)$. For the upper bound, we use the condition $\lambda_1 (\bfL_{\caB}) \leq 1 + \sqrt{\phi} + o_{\bbp} (1)$. For every fixed $\delta > 0$, strict monotonicity of $\fkx_\phi$ in \eqref{eqn:fkx-increasing-homeomorphism} implies $\fkx_\phi (\lambda_+ + \delta) \geq 1 + \sqrt{\phi} + c(\delta)$ for some constant $c(\delta) > 0$. The upper bound in \eqref{eqn:spectral-count-sandwich} then gives
\begin{equation*}
    \fkn^{\bfR_n} (\lambda_+ + \delta) 
    \leq \fkn^{\bfL_{\caB}} \pars[\big]{1 + \sqrt{\phi} + c(\delta) - \eps_n}
    \leq \fkn^{\bfL_{\caB}} \pars[\big]{1 + \sqrt{\phi} + c(\delta)/2}
    = 0
\end{equation*}
with probability tending to one. Since $\delta > 0$ is arbitrary, this implies the upper bound $\lambda_1 (\bfR_n) \leq \lambda_+ + o_{\bbp} (1)$. Combining this with the lower bound proves \eqref{eqn:extremal-transfer-edge}.

For part \ref{item:extremal-transfer-outlier}, suppose that $\lambda_1 (\bfL_{\caB}) \to x > 1 + \sqrt{\phi}$ in probability. Fix $\delta > 0$ small enough that $\theta_\phi (x) - \delta > \lambda_+$. Strict monotonicity of $\fkx_\phi$ in \eqref{eqn:fkx-increasing-homeomorphism} gives some constant $c(\delta) > 0$ such that
\begin{equation*}
    \fkx_\phi (\theta_\phi (x) - \delta)
    + c(\delta)
    \leq x
    \leq \fkx_\phi (\theta_\phi (x) + \delta)
    - c(\delta).
\end{equation*}
Consequently, by \eqref{eqn:spectral-count-sandwich} and $\lambda_1 (\bfL_{\caB}) \to x$ in probability, with probability tending to one,
\begin{align*}
    \fkn^{\bfR_n} (\theta_\phi (x) + \delta)
    & \leq \fkn^{\bfL_{\caB}} (x + c(\delta) - \eps_n)
    \leq \fkn^{\bfL_{\caB}} (x + c(\delta)/2)
    = 0, \\
    \fkn^{\bfR_n} (\theta_\phi (x) - \delta)
    & \geq \fkn^{\bfL_{\caB}} (x - c(\delta) + \eps_n)
    \geq \fkn^{\bfL_{\caB}} (x - c(\delta)/2)
    \geq 1.
\end{align*}
Since $\delta > 0$ is arbitrary, combining these two bounds proves \eqref{eqn:extremal-transfer-outlier}.
\end{proof}

In later sections, applying Proposition \ref{prop:spectral-transfer} or Corollary \ref{cor:extremal-transfer} \emph{at order $\ell$} means replacing $k$ by a fixed integer $\ell \geq 2$ in the hypotheses and construction. Specifically, we require the tail bound \eqref{eqn:analytic-tail-envelope} with $\alpha = \alpha_\ell = 2 + 2 / \ell$ and choose a fixed truncation exponent $0 < \tau_\ell < 1 / [\alpha_\ell \ell (\ell + 1)]$. All auxiliary objects, including the graph $\caG$ and the effective Gram matrix $\bfL_{\caB}$, are constructed using the same threshold $\fku_n = n^{1/2 - \tau_\ell}$. For notational simplicity, we frequently suppress the dependence of these objects on $\ell$ and $\tau_\ell$.

% *******************************************
\section{Noncritical asymptotics}
\label{sec:collisions}
% *******************************************

This section establishes the noncritical asymptotic results in Section \ref{sec:introduction}. Section \ref{subsec:collision-components} describes the block diagonal structure of the effective Gram matrix $\bfL_{\caB}$ and analyzes the spectrum of blocks associated with the star components. Sections \ref{subsec:upper-edge-proof} and \ref{subsec:collision-strong-stars} establish upper and lower bounds, respectively. Under the subcritical tail condition at order $k$, Lemma \ref{lem:collision-cap} gives $\lambda_1 (\bfL_{\caB}) \leq k - 1 + o_{\bbp} (1)$. Under the supercritical tail condition at order $k$, Lemma \ref{lem:strong-stars} gives the lower bound $k - o_{\bbp} (1)$ for both $\lambda_1 (\bfL_{\caB})$ and $\lambda_1 (\bfR_n)$. The upper-edge and outlier limits then follow directly from the spectral reduction results in Section \ref{subsec:spectral-transfer}, while the direct lower bound for $\bfR_n$ yields divergence in Section \ref{subsec:supercritical-proofs}.

% *******************************************
\subsection{Component blocks and their spectra}
\label{subsec:collision-components}
% *******************************************

The spectral reduction results in Section \ref{subsec:spectral-transfer} allow us to study upper outliers through the effective Gram matrix $\bfL_{\caB}$ defined in \eqref{eqn:unified-component-matrix}. As in \cite[Section 5.6]{linPhaseTransitionSmallest2026}, this matrix admits a block-diagonal structure determined by the connected components of the large-entry graph $\caG$. Here, however, the blocks may be larger than $2 \times 2$, and therefore we must account for a wider range of component structures.

Regard each connected component of $\caG$ as a bipartite subgraph, including its edges, and write
\begin{equation}
    \bbg := \{ \caC :
    \caC \text{ is a connected component of }
    \caG \text{ containing at least one edge} \}.
    \label{def:collision-component-collection}
\end{equation}
For $\caC \in \bbg$, let $\fke (\caC)$ denote its number of edges, and let $\fkr (\caC)$ and $\fkc (\caC)$ denote its numbers of row and column vertices, respectively. Each atypical row $i \in \caB$ belongs to exactly one component in $\bbg$. The corresponding row index sets therefore partition $\caB$,
\begin{equation}
    \caB = \bigsqcup \nolimits_{\caC \in \bbg}
    \caI (\caC),
    \qquad
    \caI (\caC) := \{ i \in \caB : 
    i \text{ is a row vertex of } \caC \},
    \label{eqn:component-row-partition}
\end{equation}
so that $\fkr (\caC) = \abs{\caI (\caC)}$. Every row vertex in $\caC$ is incident to at least one edge, giving $\fkr (\caC) \leq \fke (\caC)$. 

Let $\fkS_{\caI (\caC)} := (\fks_{i \mu})_{i \in \caI (\caC), \mu \in \dbraks{n}}$ be the restriction of the sparse matrix $\fkS_{\caB}$ from \eqref{eqn:unified-S-W-D} to the rows in $\caI (\caC)$. Following \eqref{eqn:unified-component-matrix}, define the corresponding block by
\begin{equation}
    \bfL_{\caI (\caC)} := \bfI
    + \mathcal{D}_\perp \braks{\fkS_{\caI (\caC)} \fkS_{\caI (\caC)}^{\top}}
    = \fkS_{\caI (\caC)} \fkS_{\caI (\caC)}^{\top}
    + \diag \pars[\big]{(n / \hbar_i^2)_{i \in \caI (\caC)}}.
    \label{def:component-block-matrix}
\end{equation}
This is the principal submatrix of $\bfL_{\caB}$ indexed by $\caI (\caC)$. For distinct rows $i, j \in \caB$, the off-diagonal entry $L_{i j}$ is the inner product of their sparse rows in $\fkS_{\caB}$ and can be nonzero only if $\caA_i \cap \caA_j \neq \varnothing$. Since rows in different components cannot share a large-entry column, the off-diagonal blocks between distinct component index sets vanish. Thus, after reordering the atypical rows, we have
\begin{equation}
    \bfL_{\caB} = \bigoplus \nolimits_{\caC \in \bbg}
    \bfL_{\caI (\caC)}.
    \label{eqn:unified-component-cap}
\end{equation}
Consequently,
\begin{equation}
    \lambda_1 (\bfL_{\caB}) 
    = \max\nolimits_{\caC \in \bbg}
    \lambda_1 (\bfL_{\caI (\caC)})
\end{equation}
Each block $\bfL_{\caI (\caC)}$ is positive definite with unit diagonal, and thus $\operatorname{Tr} \bfL_{\caI (\caC)} = \fkr (\caC)$. This also gives a bound on the norm of each block. Recall the event $\tilde{\Omega}_{k, n}$ from Lemma \ref{lem:unified-graph} \ref{item:unified-graph-components}. On this event, every connected component of $\caG$ has at most $k$ edges, and therefore
\begin{equation}
    \bfL_{\caI (\caC)} \geq 0, 
    \qquad
    \operatorname{Tr} \bfL_{\caI (\caC)}
    = \fkr (\caC),
    \qquad
    \norm{\bfL_{\caI (\caC)}}
    \leq \fkr (\caC) \leq \fke (\caC)
    \leq k
    \label{eqn:basic-prop-diag-block}
\end{equation}

\begin{remark}
If $\caC$ contains only one row vertex $i$, then $\caI (\caC) = \{i\}$ and $\bfL_{\caI (\caC)}$ is the $1 \times 1$ matrix with entry $1$, even when row $i$ contains several large entries. If $\caI (\caC) = \{i, j\}$, then
\begin{equation*}
    \bfL_{\caI (\caC)}
    = \begin{pmatrix} 1 & L_{i j} \\ L_{i j} & 1 \end{pmatrix},
    \qquad
    L_{i j} = \sum \nolimits_{\mu = 1}^n
    \fks_{i \mu} \fks_{j \mu}
    = \frac{1}{\hbar_i \hbar_j} 
    \sum \nolimits_{\mu \in \caA_i \cap \caA_j}
    \psi_{i \mu} \psi_{j \mu}
\end{equation*}
The eigenvalues of this block are $1 + \abs{L_{i j}}$ and $1 - \abs{L_{i j}}$. Such $2 \times 2$ blocks are the only nontrivial blocks needed to analyze the smallest eigenvalue $\lambda_{p_n} (\bfR_n)$ in the wide critical regime, where $0 < \phi < 1$ and $t^3 \bar{F}_{\xi} (t) \to \omega \in (0, \infty)$; see \cite[Equation (5.55)]{linPhaseTransitionSmallest2026}. In that setting, every component has at most two edges with probability tending to one. For a component with two row vertices, $\caA_i \cap \caA_j$ then contains exactly one column index, reducing the expression for $L_{i j}$ to a single term.

In the present setting, a component may connect several rows through more than one column. For example, when $k \geq 4$, a component may have the pattern
\begin{equation*}
    \caA_{i_1} = \{\mu\},
    \qquad 
    \caA_{i_2} = \{\mu, \nu\},
    \qquad
    \caA_{i_3} = \{\nu\}
\end{equation*}
Here rows $i_1$ and $i_2$ are connected through column $\mu$, while rows $i_2$ and $i_3$ are connected through column $\nu$. All three rows belong to the same block, although rows $i_1$ and $i_3$ share no large-entry column.
\end{remark}

Despite this variety of component structures, the following lemma shows that we can focus on stars centered at a column vertex with $k$ row leaves.

% *******************************************
\begin{lemma}
\label{lem:collision-components}
% *******************************************
Define the collection of stars
\begin{equation}
    \bbg_{k\star} := \{\caC \in \bbg : \caC \text{ is a star centered at a column vertex, with } k \text{ row leaves}\}.
    \label{def:collection-stars}
\end{equation}
Suppose $\tilde{\Omega}_{k, n}$ holds and let $\caC \in \bbg$. Then $\lambda_1 (\bfL_{\caI (\caC)}) > k - 1$ implies $\caC \in \bbg_{k\star}$.
\end{lemma}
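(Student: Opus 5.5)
On $\tilde{\Omega}_{k,n}$, \eqref{eqn:basic-prop-diag-block} gives $\lambda_1(\bfL_{\caI(\caC)}) \leq \fkr(\caC) \leq \fke(\caC) \leq k$. The plan is to show that $\lambda_1 > k-1$ forces $\fkr(\caC) = \fke(\caC) = k$, and that this equality pins down the star shape. A component with $\fkr(\caC) \leq k-1$ has $\lambda_1(\bfL_{\caI(\caC)}) \leq \operatorname{Tr} \bfL_{\caI(\caC)} = \fkr(\caC) \leq k-1$, since the block is positive semidefinite. So we may assume $\fkr(\caC) = k$, and then $\fkr \leq \fke \leq k$ gives $\fke(\caC) = k$. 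Hence every row vertex of $\caC$ has degree exactly one.

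Each row then carries a single large entry, so the rows are leaves attached to column vertices. Consider a column vertex $\mu$ with $d_\mu$ row neighbours. Since each row has one edge, connectedness of $\caC$ forces a single column vertex: if there were two columns, they could only be joined through a row of degree at least two. Therefore $\caC$ is a star centred at a column with $k$ row leaves, i.e.\ $\caC \in \bbg_{k\star}$. For $k \geq 2$, a single column with $k$ edges and $k$ distinct rows is exactly such a star.

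A quick check of consistency: if $\fkr = k$, then every row has degree one, so any component with $\fkr = k$ and $\fke \leq k$ is automatically a star. The only real content is the trace bound $\lambda_1 \leq \operatorname{Tr} = \fkr$ for the positive semidefinite block, combined with $\fkr \leq \fke \leq k$ from $\tilde{\Omega}_{k,n}$. The step needing care is the strict inequality: $\lambda_1 > k-1$ excludes $\fkr \leq k-1$ because $\fkr$ is an integer. No quantitative estimate on the entries $L_{ij}$ is needed. The main obstacle is therefore only bookkeeping, namely ensuring that $\fkr(\caC) \leq \fke(\caC)$ holds (each row vertex in $\caC$ is incident to an edge) and that connectedness is applied to the bipartite component including its column vertices.
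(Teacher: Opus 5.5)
Your proof is correct and follows the paper's argument. The trace bound in \eqref{eqn:basic-prop-diag-block} forces $\fkr(\caC) = \fke(\caC) = k$, and connectedness then leaves a single column vertex. The only cosmetic difference is in that last step: the paper gets $\fkc(\caC) = 1$ from $\fkr(\caC) + \fkc(\caC) \leq \fke(\caC) + 1$ for connected graphs, while you argue directly that degree-one rows cannot join two distinct columns; both are valid.
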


% *******************************************
\begin{proof}[Proof of Lemma \ref{lem:collision-components}]
% *******************************************
Recall that on $\tilde{\Omega}_{k, n}$, every connected component of $\caG$ has at most $k$ edges. Thus, by \eqref{eqn:basic-prop-diag-block}, the condition $\lambda_1 (\bfL_{\caI (\caC)}) > k - 1$ forces $\fkr (\caC) = \fke (\caC) = k$. Since $\caC$ is connected, $\fkr (\caC) + \fkc (\caC) \leq \fke (\caC) + 1$, so $\fkc (\caC) = 1$. Consequently, $\caC$ is a star centered at its unique column vertex, with $k$ row leaves.
\end{proof}

We next express the block of $\bfL_{\caB}$ associated with a star in terms of the matrix $\bfM_k$ from \eqref{def:star-Gram}. Let $\caC \in \bbg_{k\star}$ have center $\mu \in \dbraks{n}$ and row leaves $1 \leq i_1 < \cdots < i_k \leq p_n$. Define its raw amplitude vector by
\begin{equation}
    \bfa (\caC) := (a_r (\caC))_{r = 1}^k,
    \qquad
    a_r (\caC) := \frac{1}{\sqrt{n}} \abs{\psi_{i_r \mu}}.
    \label{def:raw-star-amplitudes}
\end{equation}
Since each leaf $i_r$ has $\caA_{i_r} = \{\mu\}$, the normalizer in \eqref{eqn:unified-frozen-denominator} satisfies $\hbar_{i_r}^2 = n + \abs{\psi_{i_r \mu}}^2$. Hence $\abs{\fks_{i_r \mu}} = \varphi (a_r (\caC))$ by \eqref{def:star-amplitude-normalization}. Together with \eqref{def:star-Gram}, the block representation \eqref{def:component-block-matrix} therefore gives
\begin{equation}
    \bfL_{\caI (\caC)}
    = \boldsymbol{\varsigma} (\caC)
    \, \bfM_k (\bfa (\caC))
    \boldsymbol{\varsigma} (\caC)^\top,
    \qquad
    \boldsymbol{\varsigma} (\caC)
    := \diag \pars[\big]{ (\operatorname{sgn} (\psi_{i_r \mu}))_{r = 1}^k },
    \label{eqn:star-sign-conjugation}
\end{equation}
where $\boldsymbol{\varsigma} (\caC)$ is a diagonal sign matrix. This sign conjugation does not change the spectrum, i.e., $\bfL_{\caI (\caC)}$ and $\bfM_k (\bfa (\caC))$ have the same eigenvalues, which depend only on the entry magnitudes $\bfa (\caC)$. The next lemma gives the basic properties of the maps $\bfa \mapsto \bfM_k (\bfa)$ and $\bfa \mapsto \gamma_k (\bfa) = \lambda_1 (\bfM_k (\bfa))$.

% *******************************************
\begin{lemma}
\label{lem:star-spectrum}
% *******************************************
Fix $k \geq 2$ and let $\bfa = (a_r)_{r = 1}^k \in [0, \infty)^k$. The following statements hold.
\begin{enumerate}[label = (\roman*)]
    \item \label{item:star-spectrum-bounds} The eigenvalues of $\bfM_k (\bfa)$ satisfy
    \begin{equation}
        \lambda_2 (\bfM_k (\bfa)) \leq 1 \leq \gamma_k (\bfa) \leq k,
        \qquad
        \gamma_k (a \mathbf{1}_k) = 1 + (k - 1) \frac{a^2}{1 + a^2},
        \label{eqn:equal-amplitude-star}
    \end{equation}
    where the equality holds for every $a \geq 0$. Moreover, $\gamma_k (\bfa) \to k$ as every $a_r \to \infty$.
    \item \label{item:star-spectrum-secular} If at least two coordinates of $\bfa$ are positive, then $\gamma_k (\bfa)$ is the unique root $x > 1$ of
    \begin{equation}
        \sum\nolimits_{r = 1}^k \frac{a_r^2}{x - 1 + x a_r^2} = 1.
        \label{eqn:unified-star-secular}
    \end{equation}
    \item \label{item:star-spectrum-monotonicity} The largest eigenvalue $\gamma_k (\bfa)$ is continuous on $[0, \infty)^k$. On $(0, \infty)^k$, it is strictly increasing in each coordinate and satisfies $1 < \gamma_k (\bfa) < k$.
    \item \label{item:star-spectrum-amplitudes} For every $\eta > 0$, there is a constant $c_{\ref{eqn:raw-star-macroscopic}} (k, \eta) > 0$ such that
    \begin{equation}
        \gamma_k (\bfa) \geq k - 1 + \eta
        \quad \Longrightarrow \quad
        \min\nolimits_{r \in \dbraks{k}} a_r
        \geq c_{\ref{eqn:raw-star-macroscopic}} (k, \eta).
        \label{eqn:raw-star-macroscopic}
    \end{equation}
\end{enumerate}
\end{lemma}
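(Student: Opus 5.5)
The matrix $\bfM_k (\bfa) = \bfD + \bfv \bfv^\top$ is a rank-one perturbation of the diagonal matrix $\bfD := \diag ((1 + a_r^2)^{-1})_{r=1}^k$, with $\bfv := \varphi (\bfa)$, so I would base everything on rank-one interlacing and the secular equation.

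For part \ref{item:star-spectrum-bounds}, all diagonal entries of $\bfD$ lie in $(0, 1]$. Weyl interlacing for a positive rank-one update gives $\lambda_2 (\bfM_k) \leq \lambda_1 (\bfD) \leq 1$. Since $\bfM_k (\bfa)$ is positive semidefinite with unit diagonal, its trace is $k$, so $\gamma_k \leq k$. Also $\gamma_k \geq \bfe_1^\top \bfM_k \bfe_1 = 1$.

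For equal amplitudes $\bfa = a \mathbf{1}_k$, the matrix equals $(1 + a^2)^{-1} \bfI + \frac{a^2}{1 + a^2} \mathbf{1}_k \mathbf{1}_k^\top$. Its top eigenvalue is $(1 + k a^2)/(1 + a^2) = 1 + (k - 1) a^2/(1 + a^2)$, which is the stated formula. For the limit as every $a_r \to \infty$, I would use the Rayleigh quotient with $\mathbf{1}_k / \sqrt{k}$. This gives $\gamma_k \geq \frac{1}{k} (\sum_r \varphi (a_r))^2 \to k$, and the upper bound $\gamma_k \leq k$ closes the limit.

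For part \ref{item:star-spectrum-secular}, I take $x > 1$, so that $x \bfI - \bfD$ is invertible. The matrix determinant lemma then shows that $x$ is an eigenvalue if and only if $\bfv^\top (x \bfI - \bfD)^{-1} \bfv = 1$. The summands simplify:
\begin{equation*}
    \frac{a_r^2 / (1 + a_r^2)}{x - 1/(1 + a_r^2)} = \frac{a_r^2}{x - 1 + x a_r^2}.
\end{equation*}
The left side of \eqref{eqn:unified-star-secular} is strictly decreasing in $x$ on $(1, \infty)$ and tends to $0$ as $x \to \infty$. As $x \downarrow 1$ it tends to $\sum_r a_r^2 / (1 + a_r^2) \cdot \ldots$; more precisely, each positive term tends to $1$, so the limit is the number of positive coordinates, which is at least $2$. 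Hence there is a unique root $x > 1$. Combined with $\lambda_2 \leq 1$ from part \ref{item:star-spectrum-bounds}, this root must be $\gamma_k$.

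For part \ref{item:star-spectrum-monotonicity}, continuity of $\gamma_k$ follows from continuity of $\bfa \mapsto \bfM_k (\bfa)$ together with Weyl's inequality. On $(0, \infty)^k$, each summand in \eqref{eqn:unified-star-secular} is strictly increasing in $a_r$ for fixed $x > 1$, because $a^2 / (x - 1 + x a^2) = 1 / (x + (x - 1)/a^2)$. So increasing one coordinate raises the secular function, and since the function is decreasing in $x$, the root moves strictly to the right. The strict bound $\gamma_k > 1$ comes from uniqueness of a root in $(1, \infty)$. For $\gamma_k < k$, note that equality would force $\bfM_k$ to have rank one, since trace $k$ with top eigenvalue $k$ leaves no room for other positive eigenvalues. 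But $\bfD$ is positive definite, so $\bfM_k$ is positive definite, a contradiction.

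For part \ref{item:star-spectrum-amplitudes}, which I expect to be the only mildly delicate step, I would work from the secular equation at $x = \gamma_k \geq k - 1 + \eta$. Each summand satisfies $a_r^2 / (x - 1 + x a_r^2) < 1/x$. Suppose $a_1 = \min_r a_r$. Then
\begin{equation*}
    1 \leq \frac{k - 1}{x} + \frac{a_1^2}{x - 1} \leq \frac{k - 1}{k - 1 + \eta} + \frac{a_1^2}{k - 2 + \eta},
\end{equation*}
which forces $a_1^2 \geq \eta (k - 2 + \eta)/(k - 1 + \eta)$. This gives an explicit constant $c (k, \eta)$. The case where fewer than two coordinates are positive needs a separate check: then $\gamma_k = 1 < k - 1 + \eta$ when $k \geq 2$, so the hypothesis fails and the implication holds vacuously. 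This case must be handled before invoking part \ref{item:star-spectrum-secular}.
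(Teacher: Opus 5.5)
Your proposal is correct and follows essentially the same route as the paper: rank-one interlacing plus the trace for part (i), the matrix determinant lemma and secular equation for part (ii), and bounding the secular equation at $x = \gamma_k \geq k - 1 + \eta$ for part (iv). The only differences are harmless variants. You obtain strict monotonicity by comparing the secular functions pointwise rather than through the implicit function theorem, and you get $\gamma_k < k$ from positive definiteness of $\bfM_k (\bfa)$ rather than by evaluating the secular function at $x = k$; the latter argument in fact works on all of $[0, \infty)^k$.
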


% *******************************************
\begin{proof}[Proof of Lemma \ref{lem:star-spectrum}]
% *******************************************
We work with normalized coordinates $\bfs = (s_r)_{r = 1}^k \in [0, 1]^k$ and write
\begin{equation*}
    \tilde{\bfM} \equiv \tilde{\bfM}_k (\bfs) := \bfD + \bfs \bfs^\top,
    \qquad
    \bfD \equiv \bfD_k (\bfs) := \diag (1 - s_1^2, \ldots, 1 - s_k^2).
\end{equation*}
For $\bfs = \varphi (\bfa)$, we have $\bfM_k (\bfa) = \tilde{\bfM}_k (\bfs)$ and $\gamma_k (\bfa) = \lambda_1 (\tilde{\bfM}_k (\bfs))$.

For part \ref{item:star-spectrum-bounds}, the bounds $0 \leq \bfD \leq \bfI$ imply $\tilde{\bfM} \geq 0$. Together with $\operatorname{Tr} \tilde{\bfM} = k$, this gives
\begin{equation*}
    1 = {\operatorname{Tr} \tilde{\bfM}} / {k}
    \leq \lambda_1 (\tilde{\bfM}) \leq \operatorname{Tr} \tilde{\bfM} = k,
    \qquad
    \lambda_2 (\tilde{\bfM})
    \leq \lambda_1 (\bfD) + \lambda_2 (\bfs \bfs^\top)
    \leq 1,
\end{equation*}
where the bound on $\lambda_2 (\tilde{\bfM})$ follows from Weyl's inequality and $\lambda_2 (\bfs \bfs^\top) = 0$. This proves the eigenvalue bounds in \eqref{eqn:equal-amplitude-star}. For equal coordinates $\bfs = s \mathbf{1}_k$ with $s \in [0,1]$, we have
\begin{equation*}
    \tilde{\bfM}_k (s \mathbf{1}_k)
    = (1 - s^2) \bfI + s^2 \mathbf{1}_k \mathbf{1}_k^\top.
\end{equation*}
The largest eigenvalue is therefore $1 + (k - 1) s^2$, with eigenvector $\mathbf{1}_k$. Taking $s = \varphi (a)$ gives the second formula in \eqref{eqn:equal-amplitude-star}. Finally, as every $a_r \to \infty$, we have $\bfs \to \mathbf{1}_k$ and $\tilde{\bfM}_k (\bfs) \to \mathbf{1}_k \mathbf{1}_k^\top$, which yields $\gamma_k (\bfa) \to k$.

For part \ref{item:star-spectrum-secular}, let $x \in (1, \infty)$ and put
\begin{equation*}
    H (x, \bfs)
    := \bfs^\top [x \bfI - \bfD_k (\bfs)]^{-1} \bfs
    = \sum\nolimits_{r = 1}^k \frac{s_r^2}{x - 1 + s_r^2}.
\end{equation*}
The matrix $x \bfI - \bfD$ is invertible, and the matrix determinant lemma gives
\begin{equation}
    \det (x \bfI - \tilde{\bfM})
    = \braks{1 - \bfs^\top (x \bfI - \bfD)^{-1} \bfs}
    \det (x \bfI - \bfD)
    = \braks{1 - H (x, \bfs)}
    \prod\nolimits_{r = 1}^k (x - 1 + s_r^2).
    \label{eqn:determinant-Mk}
\end{equation}
Note that ${s_r^2} / ({x - 1 + s_r^2}) \to \bbone \{s_r > 0\}$ as $x \downarrow 1$. Hence, if at least two coordinates of $\bfs$ are positive,
\begin{equation}
    \lim_{x \downarrow 1} H (x, \bfs) \geq 2,
    \qquad
    \lim_{x \to \infty} H (x, \bfs) = 0,
    \qquad
    \partial_x H (x, \bfs) = -\sum\nolimits_{r = 1}^k
    \pars*{\frac{s_r}{x - 1 + s_r^2}}^2 < 0.
    \label{eqn:monotonicity-H}
\end{equation}
Consequently, the equation $H (x, \bfs) = 1$ has a unique root in $(1, \infty)$, which equals $\lambda_1 (\tilde{\bfM})$ by \eqref{eqn:determinant-Mk} and part \ref{item:star-spectrum-bounds}. Substituting $s_r = \varphi (a_r)$ into this equation gives \eqref{eqn:unified-star-secular}. 

For part \ref{item:star-spectrum-monotonicity}, continuity of $\lambda_1 (\tilde{\bfM}_k (\bfs))$ and $\varphi$ gives continuity of $\gamma_k$. For $\bfs \in (0, 1)^k$, set $x = \lambda_1 (\tilde{\bfM}_k (\bfs))$. Part \ref{item:star-spectrum-secular} gives $x > 1$ and $H (x, \bfs) = 1$. Since $H$ is strictly decreasing in $x$ and $H (k, \bfs) < \sum_{r = 1}^k 1 / k = 1$, we also have $x < k$. The derivative $\partial_x H (x, \bfs)$ is negative by \eqref{eqn:monotonicity-H}, so the implicit function theorem yields
\begin{equation*}
    \partial_{s_r} \lambda_1 (\tilde{\bfM}_k (\bfs))
    = -\frac{\partial_{s_r} H (x, \bfs)}{\partial_x H (x, \bfs)}
    = \frac{2 (x - 1) s_r}{(x - 1 + s_r^2)^2}
    \braks*{\sum_{v = 1}^k
    \pars*{\frac{s_v}{x - 1 + s_v^2}}^2}^{-1} > 0.
\end{equation*}
Since $\varphi$ is strictly increasing from $(0, \infty)$ onto $(0, 1)$, this proves the corresponding assertions for $\gamma_k$.

For part \ref{item:star-spectrum-amplitudes}, suppose $x = \lambda_1 (\tilde{\bfM}_k (\bfs)) \geq k - 1 + \eta > 1$. Note that if at most one coordinate of $\bfs$ is positive, then $s_u s_v = 0$ for every $u \neq v$, and hence $\tilde{\bfM} = \bfI$. Therefore, at least two coordinates of $\bfs$ are positive, and we can use the equation $H (x, \bfs) = 1$ to get, for each $r \in \dbraks{k}$,
\begin{equation*}
    1 = \frac{s_r^2}{x - 1 + s_r^2}
    + \sum_{v \neq r} \frac{s_v^2}{x - 1 + s_v^2}
    \leq \frac{s_r^2}{x - 1 + s_r^2}
    + \frac{k - 1}{x},
\end{equation*}
where we also used $s_v^2 \leq 1$. Multiplying both sides by $x (x - 1 + s_r^2)$ and rearranging yields
\begin{equation*}
    (k - 1) s_r^2
    \geq (x - k + 1) (x - 1)
    \geq \eta (k - 2 + \eta)
    \geq \eta^2.
\end{equation*}
Since $a_r \geq \varphi (a_r) = s_r$, one may take $c_{\ref{eqn:raw-star-macroscopic}} (k, \eta) = \sqrt{\eta^2 / (k - 1)} > 0$.
\end{proof}

\subsection{The subcritical regime}
\label{subsec:upper-edge-proof}
% *******************************************

Note that the event $\tilde{\Omega}_{k, n}$ still allows stars with $k$ row leaves, whose blocks may have eigenvalues above $k - 1$. The next lemma uses the subcritical tail condition to rule out eigenvalues exceeding this level by any fixed positive amount, with probability tending to one.

% *******************************************
\begin{lemma}
\label{lem:collision-cap}
% *******************************************
Suppose that \eqref{eqn:basic-assumption} holds, and fix an integer $k \geq 2$ such that $t^{\alpha_k} \bar{F}_{\xi} (t) \to 0$ as $t \to \infty$. Construct $\bfL_{\caB}$ using the truncation threshold $\fku_n = n^{1/2-\tau_k}$ with $\tau_k$ satisfying \eqref{eqn:unified-truncation}. Then
\begin{equation}
    \lambda_1 (\bfL_{\caB}) \leq k - 1 + o_{\bbp} (1).
    \label{eqn:subcritical-local-cap}
\end{equation}
\end{lemma}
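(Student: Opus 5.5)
Fix $\eta > 0$; the goal is $\bbp\{\lambda_1(\bfL_{\caB}) > k - 1 + \eta\} \to 0$. The tail condition $t^{\alpha_k}\bar F_\xi(t) \to 0$ implies the envelope \eqref{eqn:analytic-tail-envelope} at order $k$, since $\bar F_\xi(t) \leq 1$ on compacts. Hence Lemma~\ref{lem:unified-graph} applies, and $\bbp(\tilde\Omega_{k,n}) \to 1$. On $\tilde\Omega_{k,n}$, the block decomposition \eqref{eqn:unified-component-cap} gives $\lambda_1(\bfL_{\caB}) = \max_{\caC} \lambda_1(\bfL_{\caI(\caC)})$. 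Lemma~\ref{lem:collision-components} shows that any block with top eigenvalue above $k-1$ comes from a star $\caC \in \bbg_{k\star}$. For such a star, \eqref{eqn:star-sign-conjugation} gives $\lambda_1(\bfL_{\caI(\caC)}) = \gamma_k(\bfa(\caC))$. By Lemma~\ref{lem:star-spectrum}\ref{item:star-spectrum-amplitudes}, the event $\gamma_k(\bfa(\caC)) \ge k-1+\eta$ forces $\min_r a_r(\caC) \ge c := c_{\ref{eqn:raw-star-macroscopic}}(k,\eta)$. That is, all $k$ leaf entries satisfy $|x_{i_r\mu}| \ge c\sqrt n$.

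So it suffices to show that, with probability tending to one, there is no column $\mu$ and no $k$ distinct rows $i_1,\dots,i_k$ with $|x_{i_r\mu}| \ge c\sqrt n$ for all $r$. This is stronger than needed, since it ignores the star structure. The union bound gives the bound
\[
n\binom{p_n}{k}\bar F_\xi(c\sqrt n)^k \le C n^{k+1} \bar F_\xi(c\sqrt n)^k = C c^{-\alpha_k k}\big[(c\sqrt n)^{\alpha_k}\bar F_\xi(c\sqrt n)\big]^k .
\]
This uses $n^{k+1} = (\sqrt n)^{\alpha_k k}$, because $\alpha_k k/2 = k+1$. The right-hand side tends to zero by the subcritical hypothesis. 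One should use the actual entries $x_{i\mu}$, which are i.i.d. with law $\xi$, rather than the template variables. This avoids any conditioning issues, because the event that $\bfa(\caC)$ is large is contained in this explicit event on the entries; note $c\sqrt n > \fku_n$ for large $n$, consistent with edges in $\caG$.

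Combining the pieces: on $\tilde\Omega_{k,n}$ intersected with the no-heavy-collision event, every block has $\lambda_1 \le k-1+\eta$. Both events have probability tending to one, and $\eta$ was arbitrary, which gives \eqref{eqn:subcritical-local-cap}. I do not expect a real obstacle. The only care point is the identity $\hbar_{i_r}^2 = n + |\psi_{i_r\mu}|^2$ for star leaves, which is needed for \eqref{eqn:star-sign-conjugation}. It holds because leaves of a component with $k$ edges and $k$ rows have degree one. If a block has fewer than $k$ edges, Lemma~\ref{lem:collision-components} already places its top eigenvalue at most $k-1$.
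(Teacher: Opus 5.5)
Your proof is correct and follows the paper's argument: you reduce to $k$-stars via Lemma~\ref{lem:collision-components}, use Lemma~\ref{lem:star-spectrum}~\ref{item:star-spectrum-amplitudes} to force all $k$ leaf amplitudes to be macroscopic, and then eliminate such collisions with the subcritical tail. The only difference is bookkeeping: you take the union bound directly over the i.i.d.\ entries $x_{i\mu}$, whereas the paper conditions on $\caH_n$, bounds by $Z_{k,n}[\fkq_n^{-1}\bar{F}_{\xi}(c\sqrt{n})]^k$ and averages, which gives the same bound $n\binom{p_n}{k}\bar{F}_{\xi}(c\sqrt{n})^k$ once $\fkq_n^{-k}$ cancels. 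One cosmetic fix: since $\bar{F}_{\xi}(t)=\bbp\{\abs{\xi}>t\}$ is a strict tail, pass to the strict inequality $\abs{x_{i_r\mu}}>(c/2)\sqrt{n}$, as the paper does with $2c$ and $c$, rather than bounding $\bbp\{\abs{\xi}\geq c\sqrt{n}\}$ by $\bar{F}_{\xi}(c\sqrt{n})$.
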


% *******************************************
\begin{proof}[Proof of Lemma \ref{lem:collision-cap}]
% *******************************************
Fix $\eta \in (0, 1)$ and condition on a realization of $\caH_n$ in $\tilde{\Omega}_{k, n}$. This fixes the large-entry graph $\caG$ while leaving the variables $\psi_{i \mu}$ independent with common law $\caL (\psi_n)$ from \eqref{eqn:alpha3-resampled-psi}.

By Lemma \ref{lem:collision-components}, any block with $\lambda_1 (\bfL_{\caI (\caC)}) \geq k - 1 + \eta$ must correspond to a star $\caC$ centered at a column vertex with $k$ row leaves. Lemma \ref{lem:star-spectrum} \ref{item:star-spectrum-amplitudes} and \eqref{eqn:star-sign-conjugation} then give $\min\nolimits_{r \in \dbraks{k}} a_r (\caC) \geq 2c$, where $c := c_{\ref{eqn:raw-star-macroscopic}} (k, \eta) / 2 > 0$. If the star has center $\mu \in \dbraks{n}$ and leaves $i_1, \ldots, i_k \in \dbraks{p_n}$, then \eqref{def:raw-star-amplitudes} implies
\begin{equation*}
    \abs{\psi_{i_r \mu}}
    = \sqrt{n} a_r (\caC)
    > c \sqrt{n},
    \qquad r \in \dbraks{k}.
\end{equation*}
Since $c \sqrt{n} > \fku_n$ for all sufficiently large $n$, the conditional large-entry law \eqref{eqn:alpha3-resampled-psi} gives
\begin{equation*}
    \bbp \{\abs{\psi_n} > c \sqrt{n}\}
    = \fkq_n^{-1} \bar{F}_{\xi}
    \pars{c \sqrt{n}}.
\end{equation*}
Define the $\caH_n$-measurable counting variable
\begin{equation}
    Z_{k, n}
    := \sum\nolimits_{\mu = 1}^n
    \sum\nolimits_{1 \leq i_1 < \cdots < i_k \leq p_n}
    \prod\nolimits_{r = 1}^k \chi_{i_r \mu}.
    \label{def:Zk-star-count}
\end{equation}
On $\tilde{\Omega}_{k, n}$, each nonzero summand in $Z_{k, n}$ identifies a star component with $k$ row leaves, since the component cannot contain any additional edges. Conditional independence of the $k$ large-entry variables in each star and a union bound therefore give, for all sufficiently large $n$,
\begin{equation}
    \bbp^{\caH} \{\lambda_1 (\bfL_{\caB}) > k - 1 + \eta\}
    \leq Z_{k, n}
    \braks[\big]{\fkq_n^{-1} \bar{F}_{\xi} \pars[\big]{c \sqrt{n}}}^k.
    \label{eqn:subcritical-conditional-cap}
\end{equation}
Since the indicators $\chi_{i \mu}$ are i.i.d. with law $\mathrm{Bern} (\fkq_n)$,
\begin{equation*}
    \bbe Z_{k, n} = n \binom{p_n}{k} \fkq_n^k.
\end{equation*}
Averaging the conditional bound \eqref{eqn:subcritical-conditional-cap} over $\caH_n$ thus cancels the factor $\fkq_n^{-k}$ and yields
\begin{align*}
    \bbp \{\lambda_1 (\bfL_{\caB}) > k - 1 + \eta\}
    & \leq \bbe \braks{Z_{k, n} \bbone (\tilde{\Omega}_{k, n})}
    \braks[\big]{\fkq_n^{-1} \bar{F}_{\xi} \pars[\big]{c \sqrt{n}}}^k
    + \bbp (\tilde{\Omega}_{k, n}^c) \\
    & \leq n \binom{p_n}{k} \bar{F}_{\xi} 
    \pars{c \sqrt{n}}^k
    + o (1)
    \leq C \braks[\big]{n^{\alpha_k/2} \bar{F}_{\xi} (c \sqrt{n})}^k
    + o (1)
    = o (1)    
\end{align*}
Here Lemma \ref{lem:unified-graph} \ref{item:unified-graph-components} bounds the complementary probability of $\tilde{\Omega}_{k, n}$, while $p_n \asymp n$ and $\alpha_k = 2 + 2/k$ give the bound on the first term. The tail assumption that $t^{\alpha_k} \bar{F}_{\xi} (t) \to 0$ as $t \to \infty$ makes that term tend to zero. Since $\eta \in (0, 1)$ was arbitrary, this proves \eqref{eqn:subcritical-local-cap}.
\end{proof}

Combining this upper bound with Corollary \ref{cor:extremal-transfer} immediately leads to the convergence to $\lambda_+$.

% *******************************************
\begin{proof}[Proof of Theorem \ref{thm:upper-edge-convergence}]
% *******************************************
The tail condition \eqref{cond:upper-tail-subcritical} implies \eqref{eqn:analytic-tail-envelope} at order $k_* (\phi)$. Construct $\bfL_{\caB}$ using the truncation threshold $\fku_n = n^{1/2-\tau_k}$, and apply Lemma \ref{lem:collision-cap} to obtain
\begin{equation*}
    \lambda_1 (\bfL_{\caB}) \leq k_* (\phi) - 1 + o_{\bbp} (1)
    \leq 1 + \sqrt{\phi} + o_{\bbp} (1).
\end{equation*}
The final inequality uses $k_* (\phi) - 1 = 1 + \lfloor \sqrt{\phi} \rfloor$. Corollary \ref{cor:extremal-transfer} \ref{item:extremal-transfer-edge} then implies \eqref{eqn:upper-edge-convergence}.
\end{proof}

% *******************************************
\subsection{The supercritical regime}
\label{subsec:collision-strong-stars}
% *******************************************

We now prove Theorem \ref{thm:collision-layer}. By Corollary \ref{cor:extremal-transfer}, it suffices to show that $\lambda_1 (\bfL_{\caB}) = k + o_{\bbp} (1)$. The subcritical tail condition $t^{\alpha_{k + 1}} \bar{F}_{\xi} (t) \to 0$ in \eqref{cond:deterministic-layer} allows us to apply Lemma \ref{lem:collision-cap} at order $k + 1$, yielding $\lambda_1 (\bfL_{\caB}) \leq k + o_{\bbp} (1)$. It remains to establish the matching lower bound under the supercritical condition at order $k$.

The following lemma provides this lower bound. Its proof finds $k$ large entries in a common column, each dominating the remaining entries in its row.

% *******************************************
\begin{lemma}
\label{lem:strong-stars}
% *******************************************
Suppose that \eqref{eqn:basic-assumption} holds, and fix an integer $k \geq 2$ such that $t^{\alpha_k} \bar{F}_{\xi} (t) \to \infty$ as $t \to \infty$. Construct $\bfL_{\caB}$ using the truncation threshold $\fku_n = n^{1/2-\tau}$ with $\tau \in (0, 1/2)$. Then
\begin{equation}
    \lambda_1 (\bfL_{\caB}) \geq k - o_{\bbp} (1),
    \qquad
    \lambda_1 (\bfR_n) \geq k - o_{\bbp} (1).
    \label{eqn:supercritical-local-bound}
\end{equation}
\end{lemma}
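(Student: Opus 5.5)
The plan is a variational lower bound. I will locate, with probability tending to one, a column $\mu$ and $k$ distinct rows $i_1, \ldots, i_k$ whose entries $x_{i_r \mu}$ are all of order at least $M \sqrt{n}$ for a large fixed $M$. I then test both $\bfR_n$ and $\bfL_{\caB}$ against the signed unit vector $\bfv = k^{-1/2} (\operatorname{sgn} (x_{i_r \mu}))_{r=1}^k$ supported on these rows. By Cauchy interlacing (or simply the Rayleigh quotient), $\lambda_1 \geq \bfv^\top (\cdot) \bfv = 1 + k^{-1} \sum_{r \neq s} \operatorname{sgn}(x_{i_r\mu} x_{i_s\mu})\, (\cdot)_{i_r i_s}$. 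So it suffices that each signed off-diagonal entry is at least $1 - \epsilon(M) - o_{\bbp}(1)$ with $\epsilon(M) \to 0$ as $M \to \infty$. Letting $M \to \infty$ slowly then gives $k - o_{\bbp}(1)$. Note that, unlike Section~\ref{sec:truncation}, no tail envelope is assumed here, so I cannot use $\tilde{\Omega}_{k,n}$ and must work directly with the definitions.

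\emph{Step 1 (existence of a strong star).} Fix $M$ and $C_0 > 0$. Let $W_n$ count pairs $(\mu, \{i_1, \ldots, i_k\})$ satisfying all of the following:
\begin{itemize}
\item $\abs{x_{i_r \mu}} > M \sqrt{n}$ for every $r$;
\item $\sum_{\nu \neq \mu} \abs{x_{i_r \nu}}^2 \leq C_0 n$ for every $r$;
\item no two of the rows $i_r$ share a large-entry column (one with $\abs{x} > \fku_n$) other than $\mu$.
\end{itemize}
By independence and the law of large numbers, the second condition has probability at least $1/2$ per row for $C_0 = 2$. The third fails with probability $O(n \bar{F}_{\xi}(\fku_n)^2) = O(n \cdot n^{-2 + 4\tau}) \cdot o(1) = o(1)$, using $t^2 \bar{F}_{\xi}(t) \to 0$ from finite variance. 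Hence
\begin{equation*}
\bbe W_n \geq c\, n \binom{p_n}{k} \bar{F}_{\xi}(M \sqrt{n})^k \geq c' \braks[\big]{(M\sqrt{n})^{\alpha_k} \bar{F}_{\xi}(M\sqrt{n})}^k M^{-k \alpha_k} \to \infty
\end{equation*}
by the supercritical hypothesis. A second moment computation shows $\Var W_n = o((\bbe W_n)^2)$. The only correlated pairs of configurations share a column or some rows. Sharing $j \geq 1$ rows or the column costs a factor $1/(p_n \bar{F}_{\xi}(M\sqrt n)^j)$ or $1/n$ relative to $(\bbe W_n)^2$, and these factors vanish because $n \bar{F}_{\xi}(M\sqrt{n}) \to 0$. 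The remaining dependence, coming from the row-sum and no-shared-column conditions, is handled by conditioning on the large entries. Chebyshev's inequality then gives $W_n \geq 1$ with probability tending to one. I expect this step to be the main obstacle, mainly the bookkeeping in the variance bound together with the need to avoid any tail envelope.

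\emph{Step 2 (bound for $\bfR_n$).} On such a star, $\norm{\bfx_{i_r}}^2 = x_{i_r\mu}^2 (1 + \delta_r)$ with $0 \leq \delta_r \leq C_0/M^2$. Moreover
\begin{equation*}
R_{i_r i_s} = \frac{x_{i_r\mu} x_{i_s\mu} + \sum_{\nu \neq \mu} x_{i_r\nu} x_{i_s\nu}}{\norm{\bfx_{i_r}} \norm{\bfx_{i_s}}}.
\end{equation*}
The cross sum is bounded by $C_0 n$ by Cauchy--Schwarz, and the denominator is at least $M^2 n$. Hence the signed entry is at least $(1 + C_0/M^2)^{-1} - C_0/M^2$, which gives the claim for $\bfR_n$.

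\emph{Step 3 (bound for $\bfL_{\caB}$).} All rows $i_r$ lie in $\caB$ since $M\sqrt{n} > \fku_n$. By the no-shared-column condition, $L_{i_r i_s} = \fks_{i_r\mu} \fks_{i_s\mu}$. Because $\hbar_{i_r}^2 \leq n + \norm{\bfx_{i_r}}^2$, we get
\begin{equation*}
\abs{\fks_{i_r\mu}}^2 \geq \frac{M^2}{1 + C_0 + M^2}.
\end{equation*}
The signed off-diagonal entries are therefore at least $1 - (1 + C_0)/M^2$. This bound uses only the principal submatrix on these rows, and it remains valid even if the component is not a star, because $\lambda_1(\bfL_{\caB})$ dominates the Rayleigh quotient of that submatrix.
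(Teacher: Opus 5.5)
Your Step 2 is correct, and your skeleton (a second-moment count of $k$ rows dominated by a common large entry, followed by a variational bound) is the same as the paper's. The genuine gap is the third condition in Step 1: that no two of the rows share a large-entry column other than $\mu$. Your estimate $n \bar{F}_{\xi}(\fku_n)^2 = o(n^{-1+4\tau})$ is $o(1)$ only for $\tau \le 1/4$, whereas the lemma is stated for every $\tau \in (0, 1/2)$.

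For $\tau > 1/4$ this is an actual failure, not just a loose bound. Take a centered $\xi$ with $\bar{F}_{\xi}(t) \asymp t^{-2} (\log t)^{-2}$, which has finite variance and is supercritical at every order. For two fixed rows, the number of columns $\nu \neq \mu$ in which both entries exceed $\fku_n$ is binomial with mean of order $n^{4\tau - 1} (\log n)^{-4} \to \infty$. So the no-shared-column condition holds with probability at most $\exp(-c\, n^{4\tau - 1} (\log n)^{-4})$. The amplitude condition involves only column $\mu$, while the no-shared-column condition involves only the other columns, so $\bbe W_n \le C n (\log n)^{-2k} \exp(-c\, n^{4\tau - 1} (\log n)^{-4}) \to 0$. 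Thus $W_n = 0$ with probability tending to one, and Step 3 has no configuration to act on. The $\bfR_n$ bound is easily rescued, since $\bfR_n$ does not depend on $\tau$ and Step 2 never uses this condition.

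The repair is to drop the no-shared-column condition. Without it,
$\operatorname{sgn}(x_{i_r \mu} x_{i_s \mu}) L_{i_r i_s} \ge \abs{\fks_{i_r \mu}} \abs{\fks_{i_s \mu}} - (\sum_{\nu \neq \mu} \abs{\fks_{i_r \nu}}^2)^{1/2} (\sum_{\nu \neq \mu} \abs{\fks_{i_s \nu}}^2)^{1/2}$.
The row-sum condition gives $\sum_{\nu \neq \mu} \abs{\fks_{i_r \nu}}^2 \le C_0 n / \hbar_{i_r}^2 \le C_0 / M^2$, in exact parallel with Step 2.

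The paper avoids cross terms entirely. It uses $\bfL_{\caB} \ge \fkS_{\caB} \fkS_{\caB}^\top \ge \bfks_\mu \bfks_\mu^\top$, so that $\lambda_1(\bfL_{\caB}) \ge \norm{\bfks_\mu}^2 \ge \sum_r \abs{\fks_{i_r \mu}}^2$. It also tests the companion matrix $\bfQ_n = \bfY_n^\top \bfY_n$ against $\bfe_\mu$, giving $\lambda_1(\bfR_n) \ge \sum_r \abs{y_{i_r \mu}}^2$. This column-side test is simpler than your signed row vector and needs no control of the other columns.

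Separately, your variance heuristic is misstated. For two configurations sharing $j \ge 1$ rows (which forces a common column once $M^2 > C_0$), the relative factor is $1 / (n (p_n \bar{F}_{\xi}(M\sqrt{n}))^j)$. It does not vanish because $n \bar{F}_{\xi}(M\sqrt{n}) \to 0$; that fact actually makes $(p_n \bar{F}_{\xi}(M \sqrt{n}))^{-j}$ large. It vanishes because $p_n \bar{F}_{\xi}(M\sqrt{n}) \le 1$ bounds it by $1 / (n (p_n \bar{F}_{\xi}(M\sqrt{n}))^k)$, which is of order $1 / \bbe W_n \to 0$. Configurations with disjoint row sets are exactly independent, since every condition is measurable with respect to the rows involved. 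So no further conditioning on the large entries is needed.
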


We emphasize that the definition of $\bfL_{\caB}$ remains valid without the upper tail bound \eqref{eqn:analytic-tail-envelope}. Under its stated assumptions, Lemma \ref{lem:strong-stars} applies whenever $\bfL_{\caB}$ is constructed using $\fku_n = n^{1/2-\tau}$ with any fixed $\tau \in (0,1/2)$.

% *******************************************
\begin{proof}[Proof of Lemma \ref{lem:strong-stars}]
% *******************************************
Define
\begin{equation*}
    b_n = B_n^{1 / (2 \alpha_k)},
    \qquad
    B_n = \inf\nolimits_{t \geq \sqrt{n}} \, {t^{\alpha_k} \bar{F}_{\xi} (t)}.
\end{equation*}
The supercritical assumption $t^{\alpha_k} \bar{F}_{\xi} (t) \to \infty$ gives $B_n \to \infty$ and $b_n \to \infty$. Hence, for all sufficiently large $n$,
\begin{subequations}
\label{eqn:strong-star-tail-selection}
\begin{align}
    n^{1 + 1 / k} \bar{F}_{\xi} (\sqrt{n} b_n)
    & = b_n^{-\alpha_k} \braks[\big]{
    (\sqrt{n} b_n)^{\alpha_k} \bar{F}_{\xi} (\sqrt{n} b_n)}
    \geq b_n^{-\alpha_k} B_n \geq B_n^{1/2} \to \infty, 
    \label{eqn:tail-bn-upper} \\
    n \bar{F}_{\xi} (\sqrt{n} b_n)
    & = b_n^{-2} \braks[\big]{ n b_n^2 \bar{F}_{\xi} (\sqrt{n} b_n) }
    \leq b_n^{-2} \bbe \braks[\big]{
    \abs{\xi}^2 \bbone \{\abs{\xi} > \sqrt{n} b_n\}} \to 0.
    \label{eqn:tail-bn-lower} 
\end{align}
\end{subequations}
where \eqref{eqn:tail-bn-lower} uses the finite variance assumption from \eqref{eqn:basic-assumption}.

For $i \in \dbraks{p_n}$ and $\mu \in \dbraks{n}$, define
\begin{equation}
    \Xi_{i \mu} := \curls*{\abs{x_{i \mu}} > \sqrt{n} b_n,
    \quad
    \sum\nolimits_{\nu \neq \mu} \abs{x_{i \nu}}^2 \leq 2 n}.
    \label{eqn:good-strong-star}
\end{equation}
Since $b_n \to \infty$, for each fixed row $i$ the events $\{\Xi_{i\mu}\}_{\mu=1}^n$ are pairwise disjoint for all sufficiently large $n$. As in the supercritical wide regime in \cite[Section 4]{linPhaseTransitionSmallest2026}, we call row $i$ \emph{single-dominant} if $\Xi_{i \mu}$ holds for some $\mu \in \dbraks{n}$. Independence of the entries within a row and the weak law of large numbers give
\begin{equation*}
    \eta_n
    := \bbp (\Xi_{i \mu})
    = \bbp \{\abs{x_{i \mu}} > \sqrt{n} b_n\}
    \bbp \curls*{\frac{1}{n} \sum\nolimits_{\nu \neq \mu} 
    \abs{x_{i \nu}}^2 \leq 2}
    = (1+o(1)) \bar{F}_{\xi} (\sqrt{n} b_n).
\end{equation*}

To count collisions among single-dominant rows, define
\begin{equation*}
    \hat{Z}_{k, n} := \sum\nolimits_{\abs{\caI} = k} \hat{U}_{\caI},
    \qquad 
    \hat{U}_{\caI} := \sum\nolimits_{\mu = 1}^n
    \prod\nolimits_{i \in \caI} \bbone (\Xi_{i \mu}),
\end{equation*}
where the first sum runs over subsets $\caI \subset \dbraks{p_n}$ with $\abs{\caI} = k$. The disjointness of $\{\Xi_{i\mu}\}_{\mu=1}^n$ gives $\hat{U}_{\caI} \in \{0,1\}$, and independence of the rows gives $\bbe \hat{U}_{\caI} = n \eta_n^k$. Thus, by \eqref{eqn:tail-bn-upper},
\begin{equation}
    \bbe \hat{Z}_{k, n} = \binom{p_n}{k} n \eta_n^k
    \sim \frac{1}{k!} n p_n^k \eta_n^k
    \sim \frac{\phi^k}{k!} \braks[\big]{n^{1 + 1 / k} \bar{F}_{\xi} (\sqrt{n} b_n)}^k
    \to \infty.
    \label{eqn:expectation-Zhat}
\end{equation}
To estimate the variance, consider subsets $\caI_1, \caI_2$ with $\abs{\caI_1 \cap \caI_2} = \ell$. If $\ell \geq 1$ and $\hat{U}_{\caI_1} = \hat{U}_{\caI_2} = 1$, the two collisions must occur in the same column, by disjointness of the events within any shared row. If $\ell = 0$, the two indicators are independent. Independence of the rows therefore gives
\begin{equation*}
    \bbe (\hat{U}_{\caI_1} \hat{U}_{\caI_2})
    = \begin{cases}
        (\bbe \hat{U}_{\caI_1}) (\bbe \hat{U}_{\caI_2})
        = n^2 \eta_n^{2 k}, & \quad \ell = 0, \\
        n \eta_n^{2 k - \ell}, & \quad 1 \leq \ell \leq k.
    \end{cases}
\end{equation*}
Using $n \eta_n \to 0$ from \eqref{eqn:tail-bn-lower} and $p_n \asymp n$, we obtain
\begin{align*}
    \Var (\hat{Z}_{k, n})
    & = \sum\nolimits_{\ell=0}^k 
    \sum\nolimits_{\abs{\caI_1} = \abs{\caI_2} = k,
    \, \abs{\caI_1 \cap \caI_2} = \ell}
    \braks[\big]{\bbe (\hat{U}_{\caI_1} \hat{U}_{\caI_2})
    - (\bbe \hat{U}_{\caI_1}) (\bbe \hat{U}_{\caI_2})} \\
    & \leq \sum\nolimits_{\ell = 1}^{k}
    \binom{p_n}{k} \binom{k}{\ell} \binom{p_n - k}{k - \ell}
    n \eta_n^{2 k - \ell}
    \leq \sum\nolimits_{\ell = 1}^{k} 
    n p_n^{2k-\ell} \eta_n^{2 k - \ell}
    \leq C n p_n^{k} \eta_n^{k}.
\end{align*}
Chebyshev's inequality and \eqref{eqn:expectation-Zhat} now give
\begin{equation*}
    \bbp \{\hat{Z}_{k, n} = 0\}
    \leq \bbp \curls[\big]{\abs{\hat{Z}_{k, n} - \bbe \hat{Z}_{k, n}} 
    \geq \bbe \hat{Z}_{k, n}}
    \leq \Var (\hat{Z}_{k, n}) / (\bbe \hat{Z}_{k, n})^2 
    \leq C / (\bbe \hat{Z}_{k, n}) \to 0.
\end{equation*}
Thus $\{\hat{Z}_{k, n} > 0\}$ holds with probability tending to one.

On this event, choose a set of rows $\caI \subset \dbraks{p_n}$ with $\abs{\caI} = k$ and a column $\mu \in \dbraks{n}$ such that $\Xi_{i \mu}$ holds for every $i \in \caI$. Since $\sqrt{n} b_n > \fku_n$ for all sufficiently large $n$, we have $\caI \subset \caB$ and $\mu \in \bigcap_{i \in \caI} \caA_i$. The resampling representation \eqref{eqn:unified-resampling}, the definitions \eqref{eqn:unified-frozen-denominator} and \eqref{eqn:unified-S-W-D}, and the row bound in \eqref{eqn:good-strong-star} yield
\begin{equation*}
    \hbar_i^2
    = n + \sum\nolimits_{\rho \in \caA_i} \abs{x_{i \rho}}^2
    \leq \abs{x_{i \mu}}^2 + 3 n,
    \qquad
    \abs{\fks_{i \mu}}^2
    \geq \frac{\abs{x_{i \mu}}^2}{\abs{x_{i \mu}}^2 + 3 n}
    \geq \frac{b_n^2}{b_n^2 + 3},    
    \qquad
    i \in \caI.
\end{equation*}
Write $\fkS_{\caB} = (\bfks_1, \ldots, \bfks_n)$ for the columns. Then the effective Gram matrix satisfies $\bfL_{\caB} \geq \bfks_\mu \bfks_\mu^\top$ by its definition in \eqref{eqn:unified-component-matrix}. Hence, with probability tending to one,
\begin{equation*}
    \lambda_1 (\bfL_{\caB})
    \geq \norm{\bfks_\mu}^2
    = \sum\nolimits_{i \in \caB} \abs{\fks_{i \mu}}^2
    \geq \sum\nolimits_{i \in \caI} \abs{\fks_{i \mu}}^2
    \geq \frac{k b_n^2}{b_n^2 + 3}
    = k - o (1),
\end{equation*}
where the last step uses $b_n \to \infty$. This proves the lower bound for $\bfL_\caB$ in \eqref{eqn:supercritical-local-bound}.

For the same rows $\caI$ and column $\mu$, the bound in \eqref{eqn:good-strong-star} also gives
\begin{equation*}
    \norm{\bfx_i}^2
    = \sum\nolimits_{\rho=1}^n \abs{x_{i \rho}}^2
    \leq \abs{x_{i \mu}}^2 + 2 n,
    \qquad
    \abs{y_{i \mu}}^2
    \geq \frac{\abs{x_{i \mu}}^2}{\abs{x_{i \mu}}^2 + 2 n}
    \geq \frac{b_n^2}{b_n^2 + 2},
    \qquad
    i \in \caI.
\end{equation*}
Set $\bfQ_n := \bfY_n^\top \bfY_n$ for the companion matrix. Then, with probability tending to one,
\begin{equation}
    \lambda_1 (\bfR_n)
    = \lambda_1 (\bfQ_n)
    \geq \angles{\bfe_\mu, \bfQ_n \bfe_\mu}
    = \sum\nolimits_{i=1}^{p_n} \abs{y_{i \mu}}^2
    \geq \sum\nolimits_{i \in \caI} \abs{y_{i \mu}}^2
    \geq \frac{k b_n^2}{b_n^2 + 2}
    = k - o (1).    
    \label{eqn:lower-Rn-k}
\end{equation}
This proves the lower bound for $\bfR_n$ in \eqref{eqn:supercritical-local-bound}.
\end{proof}

% *******************************************
\begin{proof}[Proof of Theorem \ref{thm:collision-layer}]
% *******************************************
Fix $k \geq k_* (\phi)$ and construct $\bfL_{\caB}$ using the threshold $\fku_n = n^{1/2-\tau_{k + 1}}$, where $\tau_{k + 1}$ is fixed and satisfies \eqref{eqn:unified-truncation} at order $k + 1$. Under the subcritical condition in \eqref{cond:deterministic-layer}, Lemma \ref{lem:collision-cap}, applied at order $k + 1$, gives the upper bound $\lambda_1 (\bfL_{\caB}) \leq k + o_{\bbp} (1)$. The supercritical condition at order $k$ allows us to apply Lemma \ref{lem:strong-stars} to obtain the matching lower bound $\lambda_1 (\bfL_{\caB}) \geq k - o_{\bbp} (1)$. Thus $\lambda_1 (\bfL_{\caB}) = k + o_{\bbp} (1)$. Since $k \geq k_* (\phi) > 1 + \sqrt{\phi}$, Corollary \ref{cor:extremal-transfer} \ref{item:extremal-transfer-outlier}, applied at order $k + 1$, yields \eqref{eqn:deterministic-layer-limit}.
\end{proof}

% *******************************************
\subsection{The ultra-supercritical regime}
\label{subsec:supercritical-proofs}
% *******************************************

The divergence in Theorem \ref{thm:all-orders-divergence} follows immediately from the direct lower bound for $\lambda_1 (\bfR_n)$ in Lemma \ref{lem:strong-stars}, applied at arbitrarily large fixed orders.

% *******************************************
\begin{proof}[Proof of Theorem \ref{thm:all-orders-divergence}]
% *******************************************
Fix $M > 0$ and choose an integer $k > \max \{M, k_* (\phi)\}$. Under the tail condition \eqref{cond:all-orders-supercritical}, Lemma \ref{lem:strong-stars}, applied at order $k$, gives $\lambda_1 (\bfR_n) \geq k - o_{\bbp} (1)$. Since $k > M$, it follows that $\bbp \{\lambda_1 (\bfR_n) > M\} \to 1$. As $M > 0$ was arbitrary, this proves \eqref{eqn:divergence-max-eig}.
\end{proof}

% *******************************************
\section{Critical laws}
\label{sec:critical-collisions}
% *******************************************

Throughout this section, we fix an integer $k \geq k_* (\phi)$ and assume \eqref{eqn:basic-assumption} and \eqref{cond:critical-collision}. The critical tail condition implies the tail bound \eqref{eqn:analytic-tail-envelope}. We therefore construct the effective Gram matrix $\bfL_{\caB}$ as in Section \ref{sec:truncation}, using the threshold $\fku_n = n^{1/2-\tau_{k}}$ with a fixed exponent $\tau_{k}$ satisfying \eqref{eqn:unified-truncation} at order $k$.

We prove Theorems \ref{thm:upper-poisson} and \ref{thm:critical-maximum} by analyzing the spectrum of $\bfL_{\caB}$ and applying Proposition \ref{prop:spectral-transfer}. Recall the event $\tilde{\Omega}_{k, n}$ from Lemma \ref{lem:unified-graph} \ref{item:unified-graph-components}, on which every connected component of the large-entry graph $\caG$ has at most $k$ edges. On this event, Lemma \ref{lem:collision-components} shows that an eigenvalue of $\bfL_{\caB}$ above $k - 1$ must arise from a star with $k$ row leaves. We first establish the Poisson limit of the amplitude process associated with these stars and use it to identify the limiting point process of eigenvalues of $\bfL_{\caB}$ above $\beta_\phi (k - 1)$. We show that its intensity measure is finite and use the spectral comparison to obtain the corresponding limit for the eigenvalues of $\bfR_n$ above $\vartheta_\phi (k - 1)$. The limiting law of $\lambda_1 (\bfR_n)$ then follows as a direct consequence.

% *******************************************
\subsection{The critical amplitude process}
\label{subsec:collision-amplitude-process}
% *******************************************

Recall the collection of star components $\bbg_{k\star}$ from \eqref{def:collection-stars} and their associated amplitude vectors $\bfa (\caC)$ from \eqref{def:raw-star-amplitudes}. For stars with $k$ row leaves, define the amplitude process on $\bbr_+^k$ by
\begin{equation}
    \scA_{k, n}
    := \sum\nolimits_{\caC \in \bbg_{k\star}}
    \delta_{\bfa (\caC)},
    \label{def:critical-amplitude-process}
\end{equation}
Conditioning on $\caH_n$ fixes the graph $\caG$ and hence the collection of stars $\bbg_{k\star}$. The remaining randomness in $\scA_{k, n}$ comes from the large-entry values $\psi_{i \mu}$. Distinct stars in $\bbg_{k\star}$ have disjoint edge sets, so their amplitude vectors $\bfa (\caC)$ are conditionally independent with common law $\rho_n^{\otimes k}$, where
\begin{equation}
    \rho_n := \caL (\abs{\psi_n} / \sqrt{n}).
    \label{def:conditional-amplitude-law}
\end{equation}
To determine the limit of $\scA_{k, n}$ under the critical tail condition \eqref{cond:critical-collision}, we first derive the asymptotics of the conditional amplitude law $\rho_n$ and the number of $k$-stars
\begin{equation}
    T_{k, n} := \abs{\bbg_{k\star}}
    = \scA_{k, n} (\bbr_+^k).
    \label{def:critical-star-count}
\end{equation}
Recall the tail measure $\varrho_{k, \omega}$ from \eqref{def:critical-tail-measure}. Under \eqref{cond:critical-collision}, the tail probability $\fkq_n$ defined in \eqref{eqn:unified-large-entry-probability} satisfies
\begin{equation}
    \fkq_n = \bar{F}_{\xi} (\fku_n)
    \sim \omega n^{-1 - 1 / k + \alpha_k \tau_k},
    \label{eqn:critical-truncation-probability}
\end{equation}
In particular, $p_n \fkq_n \to 0$ since $\alpha_k \tau_k < 1/k$.

% *******************************************
\begin{lemma}
\label{lem:critical-star-count}
% *******************************************
Under \eqref{cond:critical-collision}, the following limits hold.
\begin{enumerate}[label = (\roman*)]
    \item \label{item:critical-amplitude-vague} Let $\rho_n$ be defined as in \eqref{def:conditional-amplitude-law}. In the vague topologies on $\bbr_+$ and $\bbr_+^k$, respectively, we have
    \begin{equation}
        n^{\alpha_k \tau_k} \rho_n
        \Rightarrow \omega^{-1} \varrho_{k, \omega},
        \qquad
        n^{k \alpha_k \tau_k} \rho_n^{\otimes k}
        \Rightarrow \omega^{-k} \varrho_{k, \omega}^{\otimes k}.
        \label{eqn:conditional-amplitude-vague}
    \end{equation}
    \item \label{item:critical-star-count} The count $T_{k, n}$ of $k$-stars, defined in \eqref{def:critical-star-count}, satisfies
    \begin{equation}
        n^{-k \alpha_k \tau_k} T_{k, n}
        \pconv {\phi^k \omega^k} / {k!}.
        \label{eqn:critical-star-count-limit}
    \end{equation}
\end{enumerate}
\end{lemma}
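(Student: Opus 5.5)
For part \ref{item:critical-amplitude-vague}, we test against sets of the form $(t, \infty)$ with $t > 0$ fixed. By definition of $\rho_n$ in \eqref{def:conditional-amplitude-law} and the conditional law \eqref{eqn:alpha3-resampled-psi}, for $t\sqrt{n} > \fku_n$ (true for all large $n$),
\begin{equation*}
    \rho_n ((t, \infty)) = \fkq_n^{-1} \bar{F}_{\xi} (t \sqrt{n}).
\end{equation*}
The critical condition \eqref{cond:critical-collision} gives $\bar{F}_{\xi} (t\sqrt{n}) \sim \omega t^{-\alpha_k} n^{-\alpha_k/2}$. Combining this with \eqref{eqn:critical-truncation-probability} and $\alpha_k/2 = 1 + 1/k$ yields
\begin{equation*}
    n^{\alpha_k \tau_k} \rho_n ((t, \infty)) \to t^{-\alpha_k} = \omega^{-1} \varrho_{k, \omega} ((t, \infty)).
\end{equation*}
Since vague convergence on $\bbr_+ = (0,\infty]$-type spaces (sets bounded away from zero) is determined by such tail functions, and the limit is continuous in $t$, differences of tails give convergence on all intervals $(s, t]$ with $0 < s < t \leq \infty$. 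This yields the first statement in \eqref{eqn:conditional-amplitude-vague}. The product statement then follows by testing on rectangles $\prod_r (s_r, t_r]$ bounded away from the origin in each coordinate, which form a convergence-determining class for the product measures. Note that here the natural state space is $(0,\infty]^k$; I would make this convention explicit, since mass of $\rho_n$ near zero (amplitudes of order $n^{-\tau_k}$) blows up and must be excluded.

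For part \ref{item:critical-star-count}, I would compare $T_{k,n}$ with the $\caH_n$-measurable count $Z_{k,n}$ from \eqref{def:Zk-star-count}. On $\tilde{\Omega}_{k,n}$, each nonzero summand of $Z_{k,n}$ is exactly a $k$-star component, so $T_{k,n} = Z_{k,n}$ there, and $\bbp(\tilde\Omega_{k,n}^c) \to 0$ by Lemma \ref{lem:unified-graph}. We have $\bbe Z_{k,n} = n \binom{p_n}{k} \fkq_n^k \sim (\phi^k/k!)\, n^{k+1} \omega^k n^{-k-1+k\alpha_k\tau_k}$, which is exactly $(\phi^k \omega^k/k!) n^{k\alpha_k\tau_k} \to \infty$. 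It then suffices to show $\Var(Z_{k,n}) = o((\bbe Z_{k,n})^2)$ and conclude with Chebyshev.

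The variance computation is the main step. I would write $Z_{k,n} = \sum_{(\mu, \caI)} U_{\mu,\caI}$ and expand the covariance over pairs $(\mu,\caI_1),(\nu,\caI_2)$. Pairs with $\mu \neq \nu$, or with $\mu = \nu$ and $\caI_1 \cap \caI_2 = \varnothing$, are independent and contribute zero. Pairs with $\mu = \nu$ and $|\caI_1\cap\caI_2| = \ell \geq 1$ contribute at most
\begin{equation*}
    n\, p_n^{2k-\ell} \fkq_n^{2k-\ell} \leq C\, n\, p_n^{k}\fkq_n^{k}\, (p_n\fkq_n)^{k-\ell}.
\end{equation*}
Since $p_n \fkq_n \to 0$, each such term is $O(\bbe Z_{k,n})$. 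Summing over $\ell$ gives $\Var(Z_{k,n}) \leq C \bbe Z_{k,n} = o((\bbe Z_{k,n})^2)$. Hence $Z_{k,n}/\bbe Z_{k,n} \pconv 1$, and the same holds for $T_{k,n}$, giving \eqref{eqn:critical-star-count-limit}. I expect no serious obstacle here. The only delicate point is keeping track of the vague topology convention in part \ref{item:critical-amplitude-vague}.
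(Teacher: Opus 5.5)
Your proposal is correct and follows the paper's proof essentially step by step. Part (i) uses the explicit formula $\rho_n((t,\infty)) = \fkq_n^{-1}\bar{F}_{\xi}(t\sqrt{n})$ for $t\sqrt{n} > \fku_n$, together with the asymptotics of $\fkq_n$, and then takes products. Part (ii) uses $T_{k,n} = Z_{k,n}$ on $\tilde{\Omega}_{k,n}$, the same first-moment computation, the same bound $\Var(Z_{k,n}) \leq C\,\bbe Z_{k,n}$ via $p_n\fkq_n \to 0$, and Chebyshev.

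The only difference is cosmetic. The paper tests on bounded intervals $(u,v]$ with $0<u<v<\infty$, which is consistent with working on $(0,\infty)^k$; large amplitudes are then handled separately in the proof of Proposition~\ref{prop:collision-spectrum}. Your tail sets give convergence on $(0,\infty]^k$, which is slightly stronger and equally valid.
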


% *******************************************
\begin{proof}[Proof of Lemma \ref{lem:critical-star-count}]
% *******************************************
For part \ref{item:critical-amplitude-vague}, fix $0 < u < v < \infty$. The conditional law \eqref{eqn:alpha3-resampled-psi} and \eqref{eqn:critical-truncation-probability} give
\begin{equation*}
    n^{\alpha_k \tau_k} \rho_n ((u, v])
    = \frac{n^{\alpha_k \tau_k}}{\fkq_n}
    \braks[\big]{\bar{F}_{\xi} (\sqrt{n} u) - \bar{F}_{\xi} (\sqrt{n} v)}
    \to u^{-\alpha_k} - v^{-\alpha_k},
\end{equation*}
where the first identity holds for all sufficiently large $n$. By the definition of $\varrho_{k, \omega}$ in \eqref{def:critical-tail-measure}, this proves the first limit in \eqref{eqn:conditional-amplitude-vague}. Since $k$ is fixed, taking products gives the second limit.

For part \ref{item:critical-star-count}, recall the $\caH_n$-measurable counting variable $Z_{k, n}$ introduced in \eqref{def:Zk-star-count}. On $\tilde{\Omega}_{k, n}$, the tuples counted by $Z_{k, n}$ correspond exactly to the stars in $\bbg_{k\star}$, and hence $T_{k, n} = Z_{k, n}$. Since the indicators $\chi_{i \mu}$ are i.i.d. with law $\mathrm{Bern} (\fkq_n)$, we can use \eqref{eqn:critical-truncation-probability} and $p_n \sim \phi n$ to obtain
\begin{equation}
    \bbe Z_{k, n}
    = n \binom{p_n}{k} \fkq_n^k
    \sim \frac{\phi^k \omega^k}{k!} n^{k \alpha_k \tau_k}
    \to \infty.
    \label{eqn:expectation-count-Zk}
\end{equation}
To compute the variance of $Z_{k, n}$, write
\begin{equation*}
    Z_{k, n} = \sum\nolimits_{\abs{\caI} = k} 
    \sum\nolimits_{\mu=1}^n U_{\caI, \mu},
    \qquad
    U_{\caI, \mu} := \prod\nolimits_{i \in \caI} \chi_{i \mu},
\end{equation*}
where the summation over $\caI$ runs over subsets of $\dbraks{p_n}$. Two summands $U_{\caI_1, \mu_1}$ and $U_{\caI_2, \mu_2}$ are independent unless $\mu_1=\mu_2$ and $\caI_1 \cap \caI_2 \neq \varnothing$. Hence, for summands in the same column with $\abs{\caI_1 \cap \caI_2} = \ell \geq 1$, we have $\bbe (U_{\caI_1, \mu} U_{\caI_2, \mu}) = \fkq_n^{2k - \ell}$. Consequently, using $p_n \fkq_n \to 0$,
\begin{equation*}
    \Var (Z_{k, n})
    \leq n \sum\nolimits_{\ell = 1}^k
    \binom{p_n}{k} \binom{k}{\ell} \binom{p_n - k}{k - \ell}
    \fkq_n^{2k - \ell}
    \leq n \sum\nolimits_{\ell = 1}^k
    p_n^{2k - \ell} \fkq_n^{2k - \ell}
    \leq C n p_n^k \fkq_n^k.    
\end{equation*}
In particular, $\Var (Z_{k, n}) = O (\bbe Z_{k, n})$. Chebyshev's inequality and $\bbe Z_{k, n} \to \infty$ from \eqref{eqn:expectation-count-Zk} therefore lead to $Z_{k, n} / \bbe Z_{k, n} = 1 + o_{\bbp} (1)$. Combining this with \eqref{eqn:expectation-count-Zk} yields $n^{-k \alpha_k \tau_k} Z_{k, n} = {\phi^k \omega^k} / {k!} + o_{\bbp} (1)$. Since $T_{k, n} = Z_{k, n}$ on $\tilde{\Omega}_{k, n}$ and $\bbp (\tilde{\Omega}_{k, n}) \to 1$ by Lemma \ref{lem:unified-graph} \ref{item:unified-graph-components}, the same equation holds for $T_{k, n}$, proving \eqref{eqn:critical-star-count-limit}.
\end{proof}

Given an open set $A$, we write $C_c (A)$ for the space of continuous real-valued functions with compact support in $A$. For a locally finite measure $\Gamma$ on $A$ and $f \in C_c (A)$, write $\angles{f, \Gamma} \equiv \int_A f \, \rmd \Gamma$.

% *******************************************
\begin{proposition}[Star amplitude process]
\label{prop:collision-poisson}
% *******************************************
Under \eqref{cond:critical-collision},
\begin{equation}
    \scA_{k, n} \Rightarrow \scA_k,
    \qquad 
    \scA_k \sim \PRM (\Pi_{k, \phi, \omega}),
    \qquad 
    \Pi_{k, \phi, \omega}
    := \frac{\phi^k}{k!} \varrho_{k, \omega}^{\otimes k},
    \label{eqn:unified-amplitude-poisson}
\end{equation}
in the vague topology on $\bbr_+^k$. Equivalently, for every nonnegative $f \in C_c (\bbr_+^k)$,
\begin{equation}
    \bbe \braks*{\exp \{-\angles{f, \scA_{k, n}} \}}
    \to
    \exp \curls*{-\int_{\bbr_+^k}
    \braks[\big]{1 - \exp \{-f (\bfa)\}}
    \, \Pi_{k, \phi, \omega} (\rmd \bfa)},
    \label{eqn:conditional-amplitude-laplace-limit}
\end{equation}
\end{proposition}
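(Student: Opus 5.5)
We will prove the Laplace functional convergence \eqref{eqn:conditional-amplitude-laplace-limit} by conditioning on $\caH_n$. Conditioning fixes the graph $\caG$, hence the collection $\bbg_{k\star}$ and its cardinality $T_{k,n}$. The amplitude vectors $\bfa (\caC)$, $\caC \in \bbg_{k\star}$, are then conditionally i.i.d. with law $\rho_n^{\otimes k}$. This is because distinct stars have disjoint edge sets and the $\psi_{i\mu}$ are i.i.d. with law $\caL (\psi_n)$ given $\caH_n$. Consequently, for nonnegative $f \in C_c (\bbr_+^k)$,
\begin{equation*}
    \bbe^{\caH} \braks*{\exp \{-\angles{f, \scA_{k, n}}\}}
    = \pars*{1 - \int_{\bbr_+^k} \pars[\big]{1 - e^{-f (\bfa)}} \, \rho_n^{\otimes k} (\rmd \bfa)}^{T_{k, n}}.
\end{equation*}
The problem thus reduces to the asymptotics of the scalar integral $I_n := \int (1 - e^{-f}) \, \rmd \rho_n^{\otimes k}$ and of the count $T_{k,n}$.

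First, we set $g := 1 - e^{-f}$. This function is continuous, nonnegative, bounded by $1$, and has compact support in $\bbr_+^k$. Here we read $\bbr_+ = (0,\infty)$ as in the vague topology used by Lemma \ref{lem:critical-star-count}, so the support stays away from the coordinate hyperplanes. Lemma \ref{lem:critical-star-count} \ref{item:critical-amplitude-vague} then gives
\begin{equation*}
    n^{k \alpha_k \tau_k} I_n \to \omega^{-k} \int g \, \rmd \varrho_{k, \omega}^{\otimes k}.
\end{equation*}
In particular $I_n = O (n^{-k\alpha_k\tau_k}) \to 0$, and $\log (1 - I_n) = -I_n (1 + O (I_n))$.

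Second, we combine this with Lemma \ref{lem:critical-star-count} \ref{item:critical-star-count}, which gives $n^{-k\alpha_k\tau_k} T_{k,n} \pconv \phi^k \omega^k / k!$. Together these yield
\begin{equation*}
    T_{k,n} \log (1 - I_n) \pconv -\frac{\phi^k}{k!} \int g \, \rmd \varrho_{k,\omega}^{\otimes k} = -\int g \, \rmd \Pi_{k,\phi,\omega}.
\end{equation*}
The conditional Laplace functional therefore converges in probability to the claimed deterministic limit. It is bounded by $1$, so dominated convergence passes the limit to the unconditional expectation, proving \eqref{eqn:conditional-amplitude-laplace-limit}. By the standard characterization of vague convergence of point processes through Laplace functionals over nonnegative $C_c$ test functions (e.g.\ Kallenberg/Resnick), this is equivalent to $\scA_{k,n} \Rightarrow \PRM (\Pi_{k,\phi,\omega})$. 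The limit measure is Radon on $\bbr_+^k$: $\varrho_{k,\omega}^{\otimes k}$ gives finite mass to sets bounded away from the coordinate hyperplanes, which are exactly the compact subsets of $(0,\infty)^k$.

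The main obstacle is the conditioning step. Lemma \ref{lem:critical-star-count} \ref{item:critical-star-count} identifies $T_{k,n}$ with $Z_{k,n}$ only on $\tilde{\Omega}_{k,n}$. We must also ensure that the conditional law of the amplitudes is exactly $\rho_n^{\otimes k}$ and independent across stars, since stars in $\bbg_{k\star}$ are full components and share no edges with each other. Neither issue is difficult. We handle them by working on $\tilde{\Omega}_{k,n}$ and absorbing $\bbp (\tilde{\Omega}_{k,n}^c) = o(1)$ into the error, which is harmless because the Laplace functionals are bounded by $1$. A secondary point is the convention for $\bbr_+$: if closed half-lines were intended, test functions could charge the axes, and we would need the conventional compactification in which neighbourhoods of $0$ are excluded. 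We adopt the open-orthant reading, consistent with how Lemma \ref{lem:critical-star-count} was proved using intervals $(u,v]$ with $u>0$.
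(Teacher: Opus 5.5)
Your proposal is correct and follows essentially the same route as the paper. Condition on $\caH_n$ to get the product formula $(1-\fkJ_{k,n}(f))^{T_{k,n}}$, combine the vague limit of $n^{k\alpha_k\tau_k}\rho_n^{\otimes k}$ with the star-count asymptotics of Lemma~\ref{lem:critical-star-count} via $\log(1-u) = -u + O(u^2)$, and pass to unconditional expectations using the bound by $1$; your extra restriction to $\tilde{\Omega}_{k,n}$ is harmless but unnecessary, since the product formula holds for every realization of $\caH_n$ and Lemma~\ref{lem:critical-star-count}~\ref{item:critical-star-count} already gives the convergence of $T_{k,n}$ unconditionally.
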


% *******************************************
\begin{proof}[Proof of Proposition \ref{prop:collision-poisson}]
% *******************************************
Fix a nonnegative $f \in C_c (\bbr_+^k)$ and put
\begin{equation*}
    \fkJ_{k, n} (f) := \int_{\bbr_+^k}
    \braks[\big]{1 - \exp \{-f (\bfa)\} }
    \, \rho_n^{\otimes k} (\rmd \bfa).
\end{equation*}
Conditional on $\caH_n$, the count $T_{k, n} = \abs{\bbg_{k\star}}$ is fixed, and the amplitude vectors $\{\bfa (\caC)\}_{\caC \in \bbg_{k\star}}$ are independent with common law $\rho_n^{\otimes k}$. Consequently,
\begin{equation}
    \bbe^{\caH} \braks*{\exp \curls{-\angles{f, \scA_{k, n}}}}
    = (1 - \fkJ_{k, n} (f))^{T_{k, n}}.
    \label{eqn:conditional-amplitude-laplace}
\end{equation}
Since $\bfa \mapsto 1 - \exp \{-f (\bfa)\}$ is continuous and compactly supported, the vague convergence \eqref{eqn:conditional-amplitude-vague} gives
\begin{equation*}
    n^{k \alpha_k \tau_k} \fkJ_{k, n} (f)
    \to \omega^{-k} \int_{\bbr_+^k}
    \braks[\big]{1 - \exp \{-f (\bfa)\}}
    \, \varrho_{k, \omega}^{\otimes k} (\rmd \bfa).
\end{equation*}
Combining this with \eqref{eqn:critical-star-count-limit} and the definition of $\Pi_{k, \phi, \omega}$ in \eqref{eqn:unified-amplitude-poisson}, we obtain
\begin{equation*}
    T_{k, n} \fkJ_{k, n} (f)
    \pconv \int_{\bbr_+^k} \braks[\big]{1 - \exp \{-f (\bfa)\}}
    \, \Pi_{k, \phi, \omega} (\rmd \bfa),
    \qquad
    T_{k, n} \fkJ_{k, n} (f)^2 \pconv 0.
\end{equation*}
Applying the expansion $\log (1 - u) = -u + O (u^2)$ to \eqref{eqn:conditional-amplitude-laplace} gives
\begin{equation*}
    \bbe^\caH \braks*{\exp \pars{-\angles{f, \scA_{k, n}}}}
    \pconv
    \exp \curls[\bigg]{-\int_{\bbr_+^k}
    \braks[\big]{1 - \exp \{-f (\bfa)\}}
    \, \Pi_{k, \phi, \omega} (\rmd \bfa)}.
\end{equation*}
Here the conditional Laplace functionals lie in $[0, 1]$, and thus taking expectations yields \eqref{eqn:conditional-amplitude-laplace-limit}. The right-hand side is the Laplace functional of $\PRM (\Pi_{k, \phi, \omega})$, which proves \eqref{eqn:unified-amplitude-poisson}.
\end{proof}

To pass from the amplitude process $\scA_{k, n}$ to the eigenvalue process of $\bfL_{\caB}$, we need to study the intensity measure $\Pi_{k, \phi, \omega}$ under the map $\gamma_k$. Although $\Pi_{k, \phi, \omega}$ has infinite mass near the origin, the next lemma shows that its restriction to $\{\gamma_k > \beta_\phi (k - 1)\}$ has finite positive mass.

% *******************************************
\begin{lemma}
\label{lem:critical-intensity}
% *******************************************
The amplitude intensity $\Pi_{k, \phi, \omega}$ in \eqref{eqn:unified-amplitude-poisson} has the following properties.
\begin{enumerate}[label = (\roman*)]
    \item \label{item:critical-intensity-finite} The set $\{\gamma_k > \beta_\phi (k - 1)\}$ has finite positive $\Pi_{k, \phi, \omega}$-measure,
    \begin{equation}
        \Pi_{k, \phi, \omega} \pars[\big]{
        \{\bfa \in \bbr_+^k: 
        \gamma_k (\bfa) > \beta_\phi (k - 1)\} }
        \in (0, \infty).
        \label{eqn:finite-critical-intensity}
    \end{equation}
    \item \label{item:critical-intensity-level-sets} Every level set $\{\bfa \in \bbr_+^k: 
        \gamma_k (\bfa) = t\}$ with $t > 1$ has zero $\Pi_{k, \phi, \omega}$-measure.
\end{enumerate}
\end{lemma}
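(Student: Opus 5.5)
Both parts reduce to the structure of $\gamma_k$ from Lemma \ref{lem:star-spectrum} and to the product form of $\Pi_{k,\phi,\omega} = \frac{\phi^k}{k!} \varrho_{k,\omega}^{\otimes k}$, whose density $\propto \prod_r a_r^{-\alpha_k-1}$ is integrable away from each coordinate hyperplane but not near it.

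\emph{Finiteness in part (i).} Write $\beta := \beta_\phi(k-1) = (1+\sqrt{\phi}) \vee (k-1) \geq k-1$. If $\beta > k-1$, set $\eta := \beta - (k-1) > 0$. Lemma \ref{lem:star-spectrum} \ref{item:star-spectrum-amplitudes} then shows that $\{\gamma_k > \beta\}$ is contained in $[c_{\ref{eqn:raw-star-macroscopic}}(k,\eta), \infty)^k$. On this set the $\varrho_{k,\omega}^{\otimes k}$-mass is $\bigl(\omega\, c^{-\alpha_k}\bigr)^k < \infty$. The boundary case $\beta = k-1$ needs more care, since $\eta = 0$ and the uniform lower bound degenerates. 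I would redo the computation in the proof of part \ref{item:star-spectrum-amplitudes} with $x = \gamma_k(\bfa) > k-1$, keeping track of the error. The inequality $(k-1)s_r^2 \geq (x-k+1)(x-1)$ only gives $s_r>0$, so a sharper estimate is needed. From $H(x,\bfs) = 1$ with $x > k-1$, every term must be large: if some $s_r$ is small, the remaining $k-1$ terms sum to less than $(k-1)/x < 1$ by a fixed margin, since $x \ge k-1+$ something. That margin is controlled by $s_r^2/(x-1+s_r^2) \approx s_r^2/(k-2)$.

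The more efficient route in the boundary case is to bound the mass of $\{\gamma_k > k-1\}$ directly. For $x > k-1 \geq 1$ and $s_v \le 1$, each term satisfies $s_v^2/(x-1+s_v^2) \leq 1/x$ when $s_v\le1$. Hence $1 = H \le s_r^2/(x-1+s_r^2) + (k-1)/x$. Since $x > k-1$ gives $(k-1)/x<1$, this forces $s_r^2 \geq (x-1)(x-k+1)/(k-1) > 0$. This alone does not give a uniform bound, so I would also use the small-$s$ expansion. If $s_r \to 0$, then every other $s_v$ must tend to $1$ with $x \to k-1$, and I expect a quantitative relation of the form $\sum_{v\ne r} (1-s_v^2) \lesssim s_r^2$. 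Since $1-\varphi(a)^2 = 1/(1+a^2)$, this means the other amplitudes satisfy $a_v \gtrsim 1/s_r \approx 1/a_r$. The mass near $a_r \approx \epsilon$ is then about $\epsilon^{-\alpha_k}\cdot\prod_{v\neq r}(\epsilon^{\alpha_k})$, which is $\epsilon^{(k-2)\alpha_k}$ and hence integrable for $k\ge 3$. In that regime ($k \geq 3$, $\beta = k-1$, the first critical order) this yields finiteness. For $k=2$ we have $\beta = 1+\sqrt{\phi} > 1 = k-1$ always, so the easy case applies. I expect this boundary case to be the main obstacle, and I would make the perturbative relation between small $s_r$ and the defects $1-s_v^2$ rigorous via the secular equation \eqref{eqn:unified-star-secular}.

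\emph{Positivity in part (i).} By Lemma \ref{lem:star-spectrum} \ref{item:star-spectrum-bounds}, $\gamma_k(a\mathbf{1}_k) \to k > \beta$, because $\beta < k$ follows from $k \geq k_*(\phi) > 1+\sqrt{\phi}$. By continuity, $\{\gamma_k > \beta\}$ therefore contains an open box in $(0,\infty)^k$, and such a box has positive mass under the absolutely continuous measure $\Pi_{k,\phi,\omega}$.

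\emph{Part (ii).} The measure is absolutely continuous, so it suffices to show that the level set has Lebesgue measure zero. Fix $a_2,\dots,a_k$. The level set meets $(0,\infty)^k$ in a set whose section in $a_1$ has at most one point, because $\gamma_k$ is strictly increasing in $a_1$ there by part \ref{item:star-spectrum-monotonicity}; Fubini then gives measure zero. On the boundary $\partial \bbr_+^k$, the coordinate hyperplanes are Lebesgue-null, so they contribute nothing either.
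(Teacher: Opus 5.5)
Your positivity argument, your part (ii), and your treatment of the case $\beta_\phi(k-1) > k-1$ via Lemma \ref{lem:star-spectrum}\ref{item:star-spectrum-amplitudes} are correct. The last of these is a clean alternative to the paper's explicit $k=2$ computation.

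The gap is the case $\beta_\phi(k-1) = k-1$, and it is not a marginal one. Contrary to your remark, it is not the first critical order. Since $k_*(\phi) = \lfloor\sqrt{\phi}\rfloor + 2 \le 2 + \sqrt{\phi}$, one has $\beta_\phi(k-1) = k-1$ for every $k > k_*(\phi)$, and also for $k = k_*(\phi)$ when $\sqrt{\phi} \in \bbn$. So this is the generic situation at all higher critical orders. There you only say you ``expect'' $\sum_{v\ne r}(1-s_v^2) \lesssim s_r^2$ for the minimal coordinate $s_r$, yet finiteness rests entirely on this inequality. Neither route you sketch produces it. There is no ``fixed margin'', because $\gamma_k(\bfa)$ can be arbitrarily close to $k-1$. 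The bound $s_v^2/(x-1+s_v^2) \le 1/x$ discards exactly the defects $1-s_v^2$ you need to control.

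The missing step, as in the paper, is short. Each summand of \eqref{eqn:unified-star-secular} is decreasing in $x$ and $\gamma_k(\bfa) > k-1$, so evaluate the secular function at the fixed point $x = k-1$ rather than at $x = \gamma_k(\bfa)$. With $h(s) = s^2/(k-2+s^2)$ this gives $\sum_{r=1}^k h(s_r) > 1 = (k-1)h(1)$.

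Take $s_k = \min_r s_r$. Then $\sum_{r<k}[h(1)-h(s_r)] < h(s_k) \le s_k^2/(k-2)$. Since
$h(1)-h(s) = \frac{(k-2)(1-s^2)}{(k-1)(k-2+s^2)} \ge \frac{(k-2)(1-s^2)}{(k-1)^2}$,
this yields $1-s_r^2 \le \frac{(k-1)^2}{(k-2)^2}\, s_k^2 \le 4 s_k^2$ for $k \ge 3$.

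Using $1-\varphi(a)^2 = 1/(1+a^2)$, this becomes $a_r \ge (4a_k)^{-1}$ for all $r<k$ whenever $a_k \le 1/2$. Hence $\{\gamma_k > k-1\} \subset [1/2,\infty)^k \cup \bigcup_i \{a_i < 1/2,\ a_r \ge (4a_i)^{-1}\ \forall r \ne i\}$. The second set has $\Pi_{k,\phi,\omega}$-mass at most $C\int_0^{1/2} t^{\alpha_k(k-2)-1}\,\rmd t < \infty$. This is your heuristic $\epsilon^{(k-2)\alpha_k}$ made rigorous.

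With this step inserted your argument is complete. The paper in fact applies this bound for all $k \ge 3$ and treats only $k=2$ separately.
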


% *******************************************
\begin{proof}[Proof of Lemma \ref{lem:critical-intensity}]
% *******************************************
We first prove part \ref{item:critical-intensity-finite}. For every $k \geq 2$, positivity follows by considering the set where $\min_{r \in \dbraks{k}} a_r$ is sufficiently large. Indeed, Lemma \ref{lem:star-spectrum} \ref{item:star-spectrum-bounds} gives $\gamma_k (\bfa) \to k > \beta_\phi (k - 1)$ as $\min_{r \in \dbraks{k}} a_r \to \infty$.

To prove finiteness, first suppose $k = 2$. Since $k \geq k_* (\phi)$, we have $\phi \in (0,1)$ and hence $\beta_\phi (k - 1) = 1 + \sqrt{\phi}$. A direct computation gives $\gamma_2 (\bfa) = 1 + \varphi (a_1) \varphi (a_2)$. The definition of $\varphi$ in \eqref{def:star-amplitude-normalization} therefore yields
\begin{equation*}
    \gamma_2 (\bfa) > \beta_\phi (k - 1)
    \quad \Longrightarrow \quad
    \varphi (a_1) \varphi (a_2) > \sqrt{\phi}
    \quad \Longrightarrow \quad
    a_1 \wedge a_2 > \sqrt{\phi}.
\end{equation*}
This lower bound shows that $\{\gamma_2 (\bfa) > \beta_\phi (1)\}$ has finite $\Pi_{2, \phi, \omega}$-measure, proving finiteness in \eqref{eqn:finite-critical-intensity} for $k = 2$.

For $k \geq 3$, it suffices to control the $\Pi_{k, \phi, \omega}$-measure of the larger set $\{\gamma_k (\bfa) > k-1\}$. Set
\begin{equation*}
    s_r = \varphi (a_r) = \frac{a_r}{\sqrt{1 + a_r^2}},
    \qquad
    h (s) = \frac{s^2}{k - 2 + s^2}.
\end{equation*}
Each summand on the left-hand side of \eqref{eqn:unified-star-secular} is decreasing in $x \in (1, \infty)$. Lemma \ref{lem:star-spectrum} \ref{item:star-spectrum-secular} therefore gives
\begin{equation*}
    \gamma_k (\bfa) > k - 1
    \quad \Longrightarrow \quad
    \sum\nolimits_{r = 1}^k h (s_r) 
    = \sum\nolimits_{r = 1}^k 
    \frac{a_r^2}{k - 2 + (k-1) a_r^2}
    > 1.
\end{equation*}
By symmetry, we may assume that $a_k = \min_{r \in \dbraks{k}} a_r$, so $s_k = \min_{r \in \dbraks{k}} s_r$. Since $(k - 1) h (1) = 1$, the preceding inequality shows that $\gamma_k (\bfa) > k - 1$ further implies
\begin{equation*}
    \frac{k - 2}{(k - 1)^2}
    \sum\nolimits_{r=1}^{k-1} (1 - s_r^2)
    \leq \sum\nolimits_{r=1}^{k-1} [h (1) - h (s_r)] 
    < h (s_k)
    \leq \frac{s_k^2}{k - 2}.
\end{equation*}
Hence $1 - s_r^2 \leq 4 s_k^2$ for every $r < k$. If $a_k \leq 1/2$, then $s_r = \varphi (a_r)$ gives $a_r^2 \geq (4 a_k)^{-2}$ for every $r < k$. Thus
\begin{equation*}
    \{\gamma_k (\bfa) > \beta_\phi (k - 1)\}
    \subset \bba \cup [1/2, \infty)^k,
    \qquad
    \bba := \bigcup\nolimits_{i = 1}^k
    \curls[\big]{a_i \in (0, 1/2),
    \, a_r \geq (4 a_i)^{-1},
    \forall \, r \neq i }.
\end{equation*}
The definition of $\Pi_{k, \phi, \omega}$ gives $\Pi_{k, \phi, \omega} ([1/2, \infty)^k) < \infty$. It remains to bound $\Pi_{k, \phi, \omega} (\bba)$. By \eqref{def:critical-tail-measure}, we have $\varrho_{k, \omega} ([t, \infty)) = \omega t^{-\alpha_k}$ for every $t > 0$. The union bound and the product form of $\Pi_{k, \phi, \omega}$ therefore yield
\begin{equation*}
    \Pi_{k, \phi, \omega} (\bba)
    \leq \frac{k \phi^k}{k!}
    \int_0^{1/2}
    \curls[\big]{\varrho_{k, \omega} ([(4t)^{-1}, \infty))}^{k - 1}
    \, \varrho_{k, \omega} (\rmd t) 
    \leq C \int_0^{1/2} t^{\alpha_k (k - 2) - 1} \, \rmd t 
    < \infty,    
\end{equation*}
where the integral converges because $k \geq 3$ and $\alpha_k > 0$. This proves finiteness in \eqref{eqn:finite-critical-intensity} for $k \geq 3$.

For part \ref{item:critical-intensity-level-sets}, fix $t > 1$ and $(a_1, \ldots, a_{k-1}) \in \bbr_+^{k-1}$. By Lemma \ref{lem:star-spectrum} \ref{item:star-spectrum-monotonicity}, the map $a_k \mapsto \gamma_k (\bfa)$ is strictly increasing on $(0, \infty)$. Thus the level set $\{a_k > 0: \gamma_k (\bfa) = t\}$ contains at most one point. Since $\varrho_{k, \omega}$ has a density, this set has zero $\varrho_{k, \omega}$-measure. Tonelli's theorem and the product representation of $\Pi_{k, \phi, \omega}$ in \eqref{eqn:unified-amplitude-poisson} then imply that $\{\bfa \in \bbr_+^k: \gamma_k (\bfa) = t\}$ has zero $\Pi_{k, \phi, \omega}$-measure.
\end{proof}

% *******************************************
\subsection{The effective eigenvalue process}
\label{subsec:collision-effective-process}
% *******************************************

We now combine the block diagonal representation \eqref{eqn:unified-component-cap} with the amplitude process to identify the eigenvalues of $\bfL_{\caB}$ above $\beta_\phi (k - 1)$. Define a point measure on $(\beta_\phi (k - 1), \infty)$ by
\begin{equation}
    \scP_{k, n} := \sum\nolimits_{i = 1}^{\abs{\caB}}
    \bbone \{\lambda_i (\bfL_{\caB}) > \beta_\phi (k - 1)\}
    \, \delta_{\lambda_i (\bfL_{\caB})}.
    \label{def:point-process-scP}
\end{equation}

% *******************************************
\begin{proposition}[Critical effective eigenvalues]
\label{prop:collision-spectrum}
% *******************************************
Under \eqref{cond:critical-collision}, in the vague topology on $(\beta_\phi (k - 1), \infty)$,
\begin{equation}
    \scP_{k, n} \Rightarrow \scP_k,
    \qquad
    \scP_k \sim \PRM (\Gamma_{k, \phi, \omega}),
    \label{eqn:critical-effective-process}
\end{equation}
where, for Borel sets $B \subset (\beta_\phi (k - 1), \infty)$,
\begin{equation}
    \Gamma_{k, \phi, \omega} (B)
    := \Pi_{k, \phi, \omega} (\gamma_k^{-1} (B))
    = \frac{\phi^k}{k!} \int_{\bbr_+^k}
    \bbone \{\gamma_k (\bfa) \in B\} \,
    \varrho_{k, \omega}^{\otimes k} (\rmd \bfa).
    \label{def:critical-effective-intensity}
\end{equation}
\end{proposition}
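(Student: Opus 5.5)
The plan is to reduce $\scP_{k,n}$, on an event of probability tending to one, to the image of the amplitude process $\scA_{k,n}$ under $\gamma_k$. We then apply a continuous-mapping argument to Proposition \ref{prop:collision-poisson}, using Lemma \ref{lem:critical-intensity} to control the boundary.

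\emph{Step 1: exact reduction on $\tilde{\Omega}_{k,n}$.} On $\tilde{\Omega}_{k,n}$, the block decomposition \eqref{eqn:unified-component-cap} writes the spectrum of $\bfL_{\caB}$ as the union of the spectra of the blocks $\bfL_{\caI(\caC)}$. Since $\beta_\phi(k-1) \geq k-1$, Lemma \ref{lem:collision-components} shows that only blocks with $\caC \in \bbg_{k\star}$ can contribute eigenvalues above $\beta_\phi(k-1)$. For such a star, \eqref{eqn:star-sign-conjugation} shows that $\bfL_{\caI(\caC)}$ is orthogonally similar to $\bfM_k(\bfa(\caC))$. By Lemma \ref{lem:star-spectrum} \ref{item:star-spectrum-bounds}, this matrix has $\lambda_2 \leq 1 < \beta_\phi(k-1)$. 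So each star contributes at most the single point $\gamma_k(\bfa(\caC))$. Hence, on $\tilde{\Omega}_{k,n}$,
\begin{equation*}
    \scP_{k,n} = \sum\nolimits_{\caC \in \bbg_{k\star}} \bbone\{\gamma_k(\bfa(\caC)) > \beta_\phi(k-1)\}\, \delta_{\gamma_k(\bfa(\caC))},
\end{equation*}
which is the restriction of the pushforward $\scA_{k,n}\circ\gamma_k^{-1}$ to $(\beta_\phi(k-1),\infty)$. Since $\bbp(\tilde{\Omega}_{k,n}) \to 1$ by Lemma \ref{lem:unified-graph}, it suffices to prove the convergence for this pushforward.

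\emph{Step 2: Laplace functionals.} Fix a nonnegative $g \in C_c((\beta_\phi(k-1),\infty))$ and set $f := g\circ\gamma_k$ on $\bbr_+^k$, with $f = 0$ where $\gamma_k \leq \beta_\phi(k-1)$. Then $\angles{g,\scP_{k,n}} = \angles{f,\scA_{k,n}}$ on $\tilde{\Omega}_{k,n}$. The function $f$ is continuous by Lemma \ref{lem:star-spectrum} \ref{item:star-spectrum-monotonicity}, but it need not have compact support in $\bbr_+^k$, and the vague topology there is delicate near the coordinate hyperplanes and at infinity. Whether these points are excluded depends on the convention for $\bbr_+^k$. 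To avoid relying on that convention, I would redo the argument of Proposition \ref{prop:collision-poisson} directly for this $f$, via the conditional identity \eqref{eqn:conditional-amplitude-laplace}. The key input is
\begin{equation*}
    n^{k\alpha_k\tau_k}\int (1-e^{-f})\,\rmd\rho_n^{\otimes k} \to \omega^{-k}\int (1-e^{-f})\,\rmd\varrho_{k,\omega}^{\otimes k}.
\end{equation*}
Since $1-e^{-f}$ is bounded, continuous, and supported in $\{\gamma_k > \beta_\phi(k-1)+\delta\}$ for some $\delta > 0$, this limit follows from convergence of the rescaled measures $n^{k\alpha_k\tau_k}\rho_n^{\otimes k}$ restricted to $E := \{\gamma_k > \beta_\phi(k-1)\}$.

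\emph{Step 3: the main obstacle.} We must show that the rescaled measures on $E$ converge weakly as finite measures, not just vaguely. This requires uniform tightness near the boundary pieces of $E$, that is, regions where one amplitude is small while the others are large, or where some amplitude is huge. I would mimic the proof of Lemma \ref{lem:critical-intensity} \ref{item:critical-intensity-finite} at finite $n$. On $E$, the set is contained in $\bba \cup [1/2,\infty)^k$, and in $\bba$ the small coordinate $a_i = t$ forces the others to be at least $(4t)^{-1}$. The critical tail condition \eqref{cond:critical-collision} gives the uniform bound $n^{\alpha_k\tau_k}\rho_n([s,\infty)) \leq C s^{-\alpha_k}$ for $s \geq \fku_n/\sqrt{n}$. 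Combined with the fact that $\rho_n$ is supported on $a \geq n^{-\tau_k}$, this makes the $\bba$-integral uniformly bounded. It also makes the contribution from $t<\epsilon$ or from large amplitudes small uniformly in $n$. Given this tightness, vague convergence from Lemma \ref{lem:critical-star-count} on compacts, together with continuity of $\gamma_k$, upgrades to the required convergence of integrals. Lemma \ref{lem:critical-intensity} \ref{item:critical-intensity-level-sets} ensures that the level-set boundaries carry no mass, which is needed when approximating indicators.

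\emph{Step 4: conclusion.} As in the proof of Proposition \ref{prop:collision-poisson}, Lemma \ref{lem:critical-star-count} gives $T_{k,n}\fkJ \to \int(1-e^{-f})\,\rmd\Pi_{k,\phi,\omega}$ and $T_{k,n}\fkJ^2 \to 0$ in probability. Taking expectations then yields the Laplace functional of $\PRM(\Pi_{k,\phi,\omega}\circ\gamma_k^{-1})$, that is, of $\PRM(\Gamma_{k,\phi,\omega})$. This proves \eqref{eqn:critical-effective-process}.
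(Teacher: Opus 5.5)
Your Steps 1, 2 and 4 match the paper's proof: the exact representation of $\scP_{k,n}$ on $\tilde{\Omega}_{k,n}$ as the $\gamma_k$-image of $\scA_{k,n}$, the conditional Laplace identity $(1-\fkJ_{k,n})^{T_{k,n}}$, and the conclusion via Lemma \ref{lem:critical-star-count}. You differ in Step 3, and there the obstacle you identify does not actually arise.

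Your test function $g$ has compact support in $(\beta_\phi(k-1),\infty)$, and $\beta_\phi(k-1)\geq k-1$. Hence the integrand $1-e^{-g\circ\gamma_k}$ vanishes unless $\gamma_k(\bfa)\geq k-1+\eta$ for some fixed $\eta>0$. Lemma \ref{lem:star-spectrum} \ref{item:star-spectrum-amplitudes} then forces $\min_r a_r\geq c(k,\eta)>0$. So no point with a small amplitude lies in the support, and the paper only has to prevent escape to infinity. It splits $\{\min_r a_r>c\}$ into two pieces:
\begin{itemize}
\item the box $(c,L]^k$, where the product vague convergence \eqref{eqn:conditional-amplitude-vague} applies directly;
\item the set $\{\max_r a_r>L\}$, whose rescaled $\rho_n^{\otimes k}$-mass and $\varrho_{k,\omega}^{\otimes k}$-mass are $O(L^{-\alpha_k})$ uniformly in $n$ by the tail bound.
\end{itemize}
It then lets $L\to\infty$.

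Your finite-$n$ version of the $\bba$-estimate from Lemma \ref{lem:critical-intensity} is also sound for $k\geq 3$. It proves more: convergence of the rescaled intensities as finite measures on the whole set $\{\gamma_k>\beta_\phi(k-1)\}$. That would also handle test functions that do not vanish near the threshold, for instance the total count $\scP_{k,n}((\beta_\phi(k-1),\infty))$. This extra strength is not needed here, and it has costs. You need Lemma \ref{lem:critical-intensity} \ref{item:critical-intensity-level-sets} to control mass near the inner boundary $\{\gamma_k=\beta_\phi(k-1)\}$. Also, for $k=2$ your claimed inclusion of $\{\gamma_k>\beta_\phi(k-1)\}$ into $\bba\cup[1/2,\infty)^k$ is false. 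For example, with $\phi=0.01$ the point $(a_1,a_2)=(0.4,0.3)$ lies in the set but not in the union. That case must instead be handled with the separate bound $a_1\wedge a_2>\sqrt{\phi}$.
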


% *******************************************
\begin{proof}[Proof of Proposition \ref{prop:collision-spectrum}]
% *******************************************
Recall \eqref{eqn:unified-component-cap}. On $\tilde{\Omega}_{k, n}$, Lemma \ref{lem:collision-components} shows that a block $\bfL_{\caI (\caC)}$ can have an eigenvalue above $\beta_\phi (k - 1) \geq k - 1$ only if $\caC \in \bbg_{k\star}$. For such a block, \eqref{eqn:star-sign-conjugation} and Lemma \ref{lem:star-spectrum} \ref{item:star-spectrum-bounds} identify its largest eigenvalue as $\gamma_k (\bfa (\caC))$ and bound all remaining eigenvalues by one. Thus, on $\tilde{\Omega}_{k, n}$, we have the exact representation
\begin{equation}
    \scP_{k, n}
    = \sum\nolimits_{\caC \in \bbg_{k\star}}
    \bbone \{\gamma_k (\bfa (\caC)) > \beta_\phi (k - 1)\}
    \, \delta_{\gamma_k (\bfa (\caC))}
    = \sum\nolimits_{\bfa \in \scA_{k, n}}
    \bbone \{\gamma_k (\bfa) > \beta_\phi (k - 1)\}
    \, \delta_{\gamma_k (\bfa)},
    \label{eqn:critical-effective-mark-representation}
\end{equation}
where the sum over $\bfa \in \scA_{k, n}$ runs over the atoms of $\scA_{k, n}$, counted with multiplicity.

Fix a nonnegative function $h \in C_c ((\beta_\phi (k - 1), \infty))$ and extend it by zero outside $(\beta_\phi (k - 1), \infty)$. Put
\begin{equation*}
    g_h (\bfa)
    := 1 - \exp \{-h (\gamma_k (\bfa))\},
    \qquad
    \fkJ_{k, n} (h) := \int_{\bbr_+^k} g_h (\bfa) \,
    \rmd \rho_n^{\otimes k} (\bfa).
\end{equation*}
Since $h$ vanishes outside $(\beta_\phi (k - 1), \infty)$, the representation \eqref{eqn:critical-effective-mark-representation} gives $\angles{h, \scP_{k, n}} = \sum_{\caC \in \bbg_{k\star}} h ({\gamma_k (\bfa (\caC))})$ on $\tilde{\Omega}_{k, n}$. Conditional on any realization of $\caH_n$ in this event, the amplitude vectors $\{\bfa (\caC)\}_{\caC \in \bbg_{k\star}}$ are independent with common law $\rho_n^{\otimes k}$. Recalling that $T_{k, n} = \abs{\bbg_{k\star}}$, we obtain
\begin{equation}
    \bbe^{\caH} \braks*{\exp \pars{-\angles{h, \scP_{k, n}}}}
    = (1 - \fkJ_{k, n} (h))^{T_{k, n}}.
    \label{eqn:conditional-effective-laplace}
\end{equation}

We next determine the asymptotics of $\fkJ_{k, n} (h)$. The function $g_h$ is continuous and takes values in $[0, 1]$. Since the support of $h$ is a compact subset of $(\beta_\phi (k - 1), \infty) \subset (k - 1, \infty)$, Lemma \ref{lem:star-spectrum} \ref{item:star-spectrum-amplitudes} gives a constant $c > 0$ such that $g_h (\bfa) = 0$ whenever $\min_{r \in \dbraks{k}} a_r \leq c$. For $L \geq 1 \vee c$, the product convergence \eqref{eqn:conditional-amplitude-vague} gives
\begin{equation}
    n^{k \alpha_k \tau_k} \int_{(c, L]^k} 
    g_h (\bfa) \, \rmd \rho_n^{\otimes k} (\bfa)
    \to \omega^{-k} \int_{(c, L]^k} 
    g_h (\bfa) \, \rmd \varrho_{k, \omega}^{\otimes k} (\bfa).
    \label{eqn:convg-Jk-with-boundaries}
\end{equation}
It remains to control the integral over
\begin{equation*}
    \bba_L := \{\bfa \in \bbr_+^k: 
    \min\nolimits_{r \in \dbraks{k}} a_r > c, \ 
    \max\nolimits_{r \in \dbraks{k}} a_r > L \}.
\end{equation*}
For $L \geq 1 \vee c$, the critical tail condition \eqref{cond:critical-collision}, the asymptotic formula \eqref{eqn:critical-truncation-probability} for $\fkq_n$, and the conditional large-entry law \eqref{eqn:alpha3-resampled-psi} give, uniformly for all sufficiently large $n$,
\begin{equation*}
    n^{\alpha_k \tau_k} \rho_n ((c, \infty)) 
    = \frac{n^{\alpha_k \tau_k}}{\fkq_n}
    \bar{F}_{\xi} (c \sqrt{n})
    \leq C,
    \qquad
    n^{\alpha_k \tau_k} \rho_n ((L, \infty))
    = \frac{n^{\alpha_k \tau_k}}{\fkq_n}
    \bar{F}_{\xi} (L \sqrt{n})
    \leq C L^{-\alpha_k}.
\end{equation*}
Similarly, \eqref{def:critical-tail-measure} gives $\varrho_{k, \omega} ((c, \infty)) \leq C$ and $\varrho_{k, \omega} ((L, \infty)) \leq C L^{-\alpha_k}$. Consequently,
\begin{equation*}
    n^{k \alpha_k \tau_k} \rho_n^{\otimes k} (\bba_L)
    \leq C L^{-\alpha_k},
    \qquad
    \varrho_{k, \omega}^{\otimes k} (\bba_L)
    \leq C L^{-\alpha_k}.
\end{equation*}
Combining these tail bounds with \eqref{eqn:convg-Jk-with-boundaries} and letting $L \to \infty$, we obtain
\begin{equation*}
    n^{k \alpha_k \tau_k} \fkJ_{k, n} (h)
    \to \omega^{-k}
    \int_{(c, \infty)^k}  g_h (\bfa) \, 
    \rmd \varrho_{k, \omega}^{\otimes k} (\bfa)
    = \omega^{-k}
    \int_{\bbr_+^k}  g_h (\bfa) \, 
    \rmd \varrho_{k, \omega}^{\otimes k} (\bfa).
\end{equation*}
We combine this limit with the star-count asymptotics in Lemma \ref{lem:critical-star-count} \ref{item:critical-star-count}. Taking logarithms in \eqref{eqn:conditional-effective-laplace} and using the expansion $\log (1 - u) = -u + O (u^2)$, as in the proof of Proposition \ref{prop:collision-poisson}, we obtain
\begin{equation}
    \bbe^{\caH} \braks*{\exp \curls{-\angles{h, \scP_{k, n}}}}
    \pconv \exp \curls[\bigg]{-\int_{\beta_\phi (k - 1)}^{\infty}
    \braks[\big]{1 - \exp \{-h (t)\}}
    \, \Gamma_{k, \phi, \omega} (\rmd t)}.
    \label{eqn:conditional-effective-laplace-limit}
\end{equation}
Here we use the definition of $\Gamma_{k, \phi, \omega}$ in \eqref{def:critical-effective-intensity} and the fact that $\tilde{\Omega}_{k, n}$ is $\caH_n$-measurable with $\bbp (\tilde{\Omega}_{k, n}) \to 1$. Since the conditional Laplace functionals lie in $[0, 1]$, convergence in probability in \eqref{eqn:conditional-effective-laplace-limit} also gives convergence of their expectations. The limit is the Laplace functional of $\PRM (\Gamma_{k, \phi, \omega})$, which proves \eqref{eqn:critical-effective-process}.
\end{proof}

% *******************************************
\subsection{Critical spectral counts}
\label{subsec:critical-spectral-proofs}
% *******************************************

We now transfer from $\scP_{k, n}$ to $\scN_{k, n}$ using Proposition \ref{prop:spectral-transfer}.

% *******************************************
\begin{proof}[Proof of Theorem \ref{thm:upper-poisson}]
% *******************************************
Fix $\ell \geq 1$ and $z_1 > \cdots > z_\ell > \vartheta_\phi (k - 1)$. Set $x_i = \fkx_\phi (z_i) > \beta_\phi (k - 1)$. By Proposition \ref{prop:spectral-transfer}, there is a deterministic sequence $\eps_n \downarrow 0$ such that, with probability tending to one,
\begin{equation}
    \fkn^{\bfL_{\caB}} (x_i + \eps_n)
    \leq \fkn^{\bfR_n} (z_i)
    \leq \fkn^{\bfL_{\caB}} (x_i - \eps_n),
    \qquad i \in \dbraks{\ell}.
    \label{eqn:sandwich-eigs-count}
\end{equation}
Choose $0 < \delta < (x_\ell - \beta_\phi (k - 1)) / 2$. Each interval $[x_i - \delta, x_i + \delta]$ is relatively compact in $(\beta_\phi (k - 1), \infty)$, and its endpoints have zero $\Gamma_{k, \phi, \omega}$-measure by Lemma \ref{lem:critical-intensity} \ref{item:critical-intensity-level-sets}. For all sufficiently large $n$, we have $\eps_n < \delta$. On the event \eqref{eqn:sandwich-eigs-count}, the definition of $\scP_{k, n}$ in \eqref{def:point-process-scP} then gives the bound
\begin{equation*}
    \abs[\big]{\fkn^{\bfR_n} (z_i) - \fkn^{\bfL_{\caB}} (x_i)}
    \leq \scP_{k, n} ([x_i - \delta, x_i + \delta])
    \Rightarrow
    \mathrm{Poisson} (\kappa_i (\delta)),
\end{equation*}
where the convergence on the right follows from Proposition \ref{prop:collision-spectrum}, and the Poisson parameter satisfies
\begin{equation*}
    \kappa_i (\delta)
    = \Gamma_{k, \phi, \omega} ([x_i - \delta, x_i + \delta])
    \downarrow \Gamma_{k, \phi, \omega} (\{x_i\})
    = 0,
    \qquad
    \delta \downarrow 0.
\end{equation*}
Since the counts are integer-valued, the union bound and the preceding Poisson convergence give
\begin{align*}
    & ~ \limsup\nolimits_{n \to \infty}
    \bbp \curls[\big]{\max\nolimits_{i \in \dbraks{\ell}}
    \abs[\big]{\fkn^{\bfR_n} (z_i) - \fkn^{\bfL_{\caB}} (x_i)} > 0} \\
    \leq & ~ \limsup\nolimits_{n \to \infty} \sum\nolimits_{i = 1}^\ell 
    \bbp \curls[\big]{ \scP_{k, n} ([x_i - \delta, x_i + \delta]) > 0 } 
    = \sum\nolimits_{i = 1}^\ell
    \braks{1 - \exp \curls{ -\kappa_i (\delta) }}.
\end{align*}
Here we also used that \eqref{eqn:sandwich-eigs-count} holds with probability tending to one. Since $\ell$ is fixed, letting $\delta \downarrow 0$ gives
\begin{equation}
    \pars[\big]{ \fkn^{\bfR_n} (z_i)
    - \fkn^{\bfL_{\caB}} (x_i) }_{i = 1}^\ell \pconv \mathbf{0}.
    \label{eqn:proof-critical-count-equivalence}
\end{equation}

On $\tilde{\Omega}_{k, n}$, the upper bound in \eqref{eqn:basic-prop-diag-block} gives $\fkn^{\bfL_{\caB}} (x_i) = \scP_{k, n} ((x_i, k])$, with both sides zero when $x_i \geq k$. By \eqref{def:critical-effective-intensity} and Lemma \ref{lem:star-spectrum} \ref{item:star-spectrum-monotonicity}, $\scP_k$ has no points in $[k, \infty)$ almost surely. Its intensity is atomless by Lemma \ref{lem:critical-intensity} \ref{item:critical-intensity-level-sets}, so the nonempty intervals $(x_i, k]$ are relatively compact continuity sets almost surely. Since $\bbp (\tilde{\Omega}_{k, n}) \to 1$, Proposition \ref{prop:collision-spectrum}, the continuous mapping theorem, and \eqref{eqn:proof-critical-count-equivalence} yield
\begin{equation}
    \pars[\big]{\fkn^{\bfR_n} (z_i)}_{i = 1}^\ell
    \Rightarrow \pars[\big]{\scP_k ((x_i, \infty))}_{i = 1}^\ell
    = \pars[\big]{\scN_k ((z_i, \infty))}_{i = 1}^\ell,
    \label{eqn:convg-upper-counts-Rn}
\end{equation}
where the last equality uses $\scN_k := \scP_k \circ \theta_\phi^{-1}$. In fact, the definitions \eqref{def:critical-effective-intensity} and \eqref{def:critical-spectral-intensity} give 
\begin{equation}
    \Lambda_{k, \phi, \omega} = \Gamma_{k, \phi, \omega} \circ \theta_\phi^{-1}.
    \label{eqn:intensity-pushforward}
\end{equation}
Consequently, the mapping theorem for Poisson processes \cite[Proposition 3.7]{resnickExtremeValuesRegular1987} yields $\scN_k \sim \PRM (\Lambda_{k, \phi, \omega})$. Taking $\ell = 1$ in \eqref{eqn:convg-upper-counts-Rn} proves \eqref{eqn:fixed-gap-poisson}. Since $\theta_\phi$ is strictly increasing, $\Lambda_{k, \phi, \omega} (\{z\}) = \Gamma_{k, \phi, \omega} (\{\fkx_\phi (z)\}) = 0$ for every $z > \vartheta_\phi (k - 1)$, proving the atomlessness of $\Lambda_{k, \phi, \omega}$. The identity \eqref{eqn:intensity-pushforward} and Lemma \ref{lem:critical-intensity} \ref{item:critical-intensity-finite} also give
\begin{equation*}
    \Lambda_{k, \phi, \omega} ((\vartheta_\phi (k - 1), \infty))
    = \Gamma_{k, \phi, \omega} ((\beta_\phi (k - 1), k))
    \in (0, \infty).
\end{equation*}

Finally, for $\vartheta_\phi (k - 1) < w_\ell < z_{\ell} < \cdots < w_1 < z_1 < \infty$, taking differences in \eqref{eqn:convg-upper-counts-Rn} gives
\begin{equation*}
    \pars[\big]{\scN_{k, n} ((w_i, z_i])}_{i=1}^\ell
    = \pars[\big]{\fkn^{\bfR_n} (w_i) - \fkn^{\bfR_n} (z_i)}_{i=1}^\ell 
    \Rightarrow \pars[\big]{\scN_{k} ((w_i, z_i])}_{i=1}^\ell.
\end{equation*}
It remains to verify tightness of $\{\scN_{k, n}\}_{n \geq 1}$ in the vague topology. By the tightness criterion for random measures \cite[Theorem 23.15]{kallenbergFoundationsModernProbability2021}, it suffices to show that $\{\scN_{k, n} (\bbk)\}_{n \geq 1}$ is tight for every nonempty compact set $\bbk \subset (\vartheta_\phi (k - 1), \infty)$. Fix such a set and choose $z$ with $\vartheta_\phi (k - 1) < z < \inf \bbk$. Since $\scN_{k, n} (\bbk) \leq \fkn^{\bfR_n} (z)$ and all counts are integer-valued, \eqref{eqn:convg-upper-counts-Rn} and Markov's inequality give
\begin{align*}
    \lim_{L \to \infty} \limsup_{n \to \infty}
    \bbp \{\scN_{k, n} (\bbk) > L\}
    \leq \lim_{L \to \infty}
    \bbp \{\scN_k ((z, \infty)) > L\} 
    \leq \lim_{L \to \infty}
    L^{-1} \Lambda_{k, \phi, \omega} ((z, \infty))
    = 0.
\end{align*}
This establishes tightness and therefore proves the vague convergence $\scN_{k, n} \Rightarrow \scN_{k}$ in \eqref{eqn:upper-spectral-poisson}.
\end{proof}

We now use Theorem \ref{thm:upper-poisson} to derive the limiting law of $\lambda_1 (\bfR_n)$. In terms of the effective process $\scP_k$, the critical law $H_{k, \phi, \omega}$ in \eqref{eqn:critical-maximum-cdf} is the distribution of
\begin{equation*}
    \vartheta_\phi (k - 1) \vee
    \sup\nolimits_{x \in \scP_k} \theta_\phi (x).
\end{equation*}
Here the supremum runs over the atoms of $\scP_k$. This random variable equals the baseline $\vartheta_\phi (k - 1)$ if $\scP_k$ has no atoms, and otherwise equals the image of its largest atom under the outlier map $\theta_\phi$. In particular, its mass at the baseline is the probability that $\scP_k$ has no atoms.

% *******************************************
\begin{proof}[Proof of Theorem \ref{thm:critical-maximum}]
% *******************************************
For every fixed $z > \vartheta_\phi (k - 1)$, the Poisson convergence \eqref{eqn:fixed-gap-poisson} yields
\begin{equation*}
    \bbp \{\lambda_1 (\bfR_n) \leq z\}
    = \bbp \{\fkn^{\bfR_n} (z) = 0\} 
    \to \exp \{-\Lambda_{k, \phi, \omega} ((z, \infty))\}
    = H_{k, \phi, \omega} (z).
\end{equation*}
Note that this also applies to $z \geq \theta_\phi (k)$. To prove \eqref{eqn:weak-convergence-critical}, it remains to establish 
\begin{equation}
    \bbp \{\lambda_1 (\bfR_n) \leq z\} \to 0,
    \qquad z < \vartheta_\phi (k - 1).
    \label{eqn:proof-critical-baseline}
\end{equation}
When $\vartheta_\phi (k - 1) = \lambda_+$, this follows from the edge convergence $\lambda_1 (\bfR_{\caT}) = \lambda_+ (\phi) + o_{\bbp} (1)$ of the typical block deduced in \eqref{eqn:eig-typical-to-edge} and Cauchy's interlacing inequality $\lambda_1 (\bfR_{n}) \geq \lambda_1 (\bfR_{\caT})$.

Now suppose $\vartheta_\phi (k - 1) > \lambda_+$, which implies $\beta_\phi (k - 1) = k - 1 > 1 + \sqrt{\phi}$. In particular, $k \geq 3$. Since $\alpha_{k - 1} > \alpha_k$, condition \eqref{cond:critical-collision} implies the supercritical condition $t^{\alpha_{k - 1}} \bar{F}_{\xi} (t) \to \infty$ at order $k-1$. Therefore, we can apply Lemma \ref{lem:strong-stars} at order $k - 1$ to obtain
\begin{equation}
    \lambda_1 (\bfL_{\caB}) \geq k - 1 - o_{\bbp} (1).
    \label{eqn:critical-local-baseline}
\end{equation}
For $\lambda_+ < z < \vartheta_\phi (k - 1)$, we have $\fkx_\phi (z) < k - 1$. Hence the spectral reduction inequality \eqref{eqn:spectral-count-sandwich} gives
\begin{equation*}
    \bbp \{\lambda_1 (\bfR_n) \leq z\}
    \leq \bbp \{\lambda_1 (\bfL_{\caB})
    \leq \fkx_\phi (z) + \eps_n\} + o (1)
    \to 0.
\end{equation*}
Monotonicity covers $z \leq \lambda_+$, proving \eqref{eqn:proof-critical-baseline}. This completes the proof of \eqref{eqn:weak-convergence-critical}.

Finally, Theorem \ref{thm:upper-poisson} shows that $\Lambda_{k, \phi, \omega}$ is atomless with finite positive total mass. By Lemma \ref{lem:star-spectrum} \ref{item:star-spectrum-monotonicity}, $\gamma_k (\bfa) < k$ for $\bfa \in \bbr_+^k$, so the definition \eqref{def:critical-spectral-intensity} gives $\Lambda_{k, \phi, \omega} ([\theta_\phi (k), \infty)) = 0$. Consequently,
\begin{equation*}
    H_{k, \phi, \omega} (\vartheta_\phi (k - 1))
    = \exp \curls[\big]{-\Lambda_{k, \phi, \omega} ((\vartheta_\phi (k - 1), \infty))} \in (0, 1), 
    \qquad
    \lim\nolimits_{z \uparrow \theta_\phi (k)} H_{k, \phi, \omega} (z) = 1.    
\end{equation*}
Atomlessness and the second identity show that $H_{k, \phi, \omega}$ is continuous away from the baseline $\vartheta_\phi (k - 1)$. Since $H_{k, \phi, \omega} (z) = 0$ for $z < \vartheta_\phi (k - 1)$, the first identity gives the asserted atom and its mass.
\end{proof}

% *******************************************
\section{Pearson correlation matrices}
\label{sec:pearson}
% *******************************************

In this section, we extend the main results to the Pearson correlation matrix $\bfR_n^{\circ}$ defined in \eqref{def:pearson-correlation}. The key step is the following Pearson counterpart of Proposition \ref{prop:spectral-transfer}.

% *******************************************
\begin{proposition}
\label{prop:pearson-transfer}
% *******************************************
Suppose that \eqref{eqn:basic-assumption} and the tail bound \eqref{eqn:analytic-tail-envelope} hold for a fixed integer $k \geq 2$. For every compact interval $\bbk \subset (\lambda_+, \infty)$, there is a deterministic sequence $\eps_n \downarrow 0$ such that, with probability tending to one,
\begin{equation}
    \fkn^{\bfL_{\caB}} (\fkx_\phi (z) + \eps_n)
    \leq \fkn^{\bfR_n^{\circ}} (z)
    \leq \fkn^{\bfL_{\caB}} (\fkx_\phi (z) - \eps_n),
    \qquad
    \forall z \in \bbk.
    \label{eqn:pearson-spectral-sandwich}
\end{equation}
\end{proposition}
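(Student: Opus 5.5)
Centering by the sample mean is the projection $\bfx_i \mapsto \bfP \bfx_i$ with $\bfP := \bfI_n - n^{-1} \mathbf{1}_n \mathbf{1}_n^\top$. Hence $\bfR_n^\circ = \bfY^\circ_n (\bfY^\circ_n)^\top$ with rows $\bfy_i^\circ = \bfP \bfx_i / \norm{\bfP \bfx_i}$. The factor $(n-1)^{-1}$ cancels in the normalization. The plan is to rerun the proof of Proposition \ref{prop:spectral-transfer} verbatim: block decomposition into $\caT$ and $\caB$, Schur complement, and Sylvester's law of inertia. The only required input is the Pearson analogue of the two estimates in \eqref{def:event-reduction}, namely
\begin{equation*}
    \lambda_1 (\bfQ_{\caT}^\circ) \leq \min \bbk - c,
    \qquad
    \sup\nolimits_{z \in \bbk}
    \norm[\big]{\bfY^\circ_{\caB} (\bfQ^\circ_{\caT} - z \bfI)^{-1} (\bfY^\circ_{\caB})^{\top}
    - \fkm_\phi (z) \bfL_{\caB}} \pconv 0,
\end{equation*}
where $\bfQ_{\caT}^\circ := (\bfY_{\caT}^\circ)^\top \bfY_{\caT}^\circ$. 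Once these hold, the Weyl-type argument giving \eqref{eqn:spectral-count-sandwich} applies unchanged, with the same $\bfL_{\caB}$.

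For the typical block, I write $\bfP \bfx_i = \bfx_i - \bar{x}_i \mathbf{1}_n$. For $i \in \caT$, Bernstein's inequality gives $\abs{\bar{x}_i} \prec n^{-1/2}$, and hence $\norm{\bfP \bfx_i}^2 = \norm{\bfx_i}^2 (1 + O_\prec (n^{-1}))$. Therefore $\bfY_{\caT}^\circ = \bfY_{\caT} \bfP + \bfDelta$ with $\norm{\bfDelta} \prec n^{-2\tau}$. This gives $\bfQ^\circ_{\caT} = \bfP \bfQ_{\caT} \bfP + O_\prec (n^{-2\tau})$, so $\lambda_1 (\bfQ^\circ_{\caT}) \leq \lambda_1 (\bfQ_{\caT}) + o (1)$, and the first estimate follows from \eqref{eqn:unified-core-trace}. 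For the resolvent I would use the identity
\[
    (\bfP \bfQ_{\caT} \bfP - z)^{-1}
    = \bfP (\bfP \bfQ_{\caT} \bfP - z)^{-1} \bfP - z^{-1} n^{-1} \mathbf{1}_n \mathbf{1}_n^\top .
\]
The second term is annihilated by $\bfP$ on both sides, and $\bfY^\circ_{\caB}$ already carries the projection. On the range of $\bfP$, this is a rank-one compression of $\bfG_{\caT}$. I expect the entrywise law \eqref{eqn:unified-core-entrywise} to survive it, because $\bfG_{\caT} - \fkm_{\varphi_n} \bfI$ evaluated against the delocalized vector $n^{-1/2} \mathbf{1}_n$ is small. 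I would verify this with the isotropic form of the local law, or more crudely by a further finite-rank perturbation bound. Rank-one perturbations do not move the Schur-complement count beyond $O (1)$ eigenvalues, but I need exact counts here, so the isotropic control is the preferred route.

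For the atypical rows, Lemma \ref{lem:unified-freezing} gives $\bfP \bfx_i = \bfx_i - \bar{x}_i \mathbf{1}_n$ with $\abs{\bar{x}_i} \leq n^{-1} \sum_{\mu \in \caA_i} \abs{\psi_{i\mu}} + O_\prec (n^{-1/2})$. After dividing by $\hbar_i$, the centering correction is $\hbar_i^{-1} \bar{x}_i \mathbf{1}_n$. Its entries are at most of order $n^{-1/2} \cdot \abs{\psi_{i \mu}} / \hbar_i \cdot n^{-1/2} \leq n^{-1}$. This correction acts like a new $\fkD_{\caB}$-type term of rank one with norm $O (n^{-1/2})$ per row. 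Summing over $\abs{\caB} \leq n^{1 - \sigma/2}$ rows, the Frobenius bound gives $O (n^{-\sigma/4})$, which mirrors the bound on $\fkD_{\caB}$ in \eqref{eqn:unified-freezing-bounds}. The normalizers also change only by $1 + O_\prec (n^{-2\tau})$, because $\norm{\bfP \bfx_i}^2 = \norm{\bfx_i}^2 - n \bar{x}_i^2$ and $n \bar{x}_i^2 = O_\prec (1 + \abs{\psi_{i\mu}}^2 / n)$. That quantity is negligible relative to $\hbar_i^2$, so $\bfL_{\caB}$ is unchanged. The compression estimates of Proposition \ref{prop:unified-centered-compressions} then apply with $\fkS_{\caB} \bfP$ and $\fkW_{\caB} \bfP$ in place of $\fkS_{\caB}$ and $\fkW_{\caB}$. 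The differences $\fkS_{\caB} \mathbf{1}_n \mathbf{1}_n^\top / n$ and $\fkW_{\caB} \mathbf{1}_n \mathbf{1}_n^\top / n$ have norms at most $\norm{\fkS_{\caB}}_{\mathrm{F}} n^{-1/2} \lesssim n^{-\sigma/4}$ and $O_\prec (n^{-1/2 + \delta})$ respectively, since $\fkW_{\caB} \mathbf{1}_n$ is a sum of independent centered columns.

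I expect the main obstacle to be the first part: confirming that the centered resolvent estimate \eqref{eqn:unified-core-entrywise}, used through $\bfUps_{\caT}$, holds for the projected typical block $\bfQ^\circ_{\caT}$. The $\fkS$–$\fkS$ estimate relies on the max-norm smallness of $\bfUps_{\caT}$ together with the random compression in Lemma 5.9 of the companion paper. The projection adds the rank-one term $-n^{-1} \bfG_{\caT} \mathbf{1}_n \mathbf{1}_n^\top \bfG_{\caT} / (n^{-1} \mathbf{1}_n^\top \bfG_{\caT} \mathbf{1}_n)$, whose entries are $O_\prec (n^{-1})$ by the isotropic law. Its max norm is therefore negligible and its operator norm is bounded, so the proof of Proposition \ref{prop:unified-centered-compressions} goes through. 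The remaining steps, including assembling the Pearson version of Proposition \ref{prop:unified-resolvent} and rerunning the counting argument, are routine.
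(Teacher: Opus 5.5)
Your skeleton is the paper's. You use the same Schur complement over $\caT$, the same rank-one compression formula (your $\bfP$ is the paper's $\bfH_n$)
\[
\bfH_n(\bfH_n\bfQ_\caT\bfH_n-z\bfI)^{-1}\bfH_n=\bfG_\caT-\frac{\mathbf{w}\mathbf{w}^\top}{\langle\bfke_n,\mathbf{w}\rangle},
\qquad \mathbf{w}:=\bfG_\caT\bfke_n,
\]
and the same essential input, which is delocalization of $\mathbf{w}$. The differences are organizational.
\begin{itemize}
\item The paper removes the diagonal renormalization globally, via $\bfR_n^\circ=\bfDelta_n\bfY_n\bfH_n\bfY_n^\top\bfDelta_n$ and Weyl's inequality; you control the normalizers row by row.
\item The paper bounds $\|\fkS_\caB\mathbf{w}\|$ and $\|\fkW_\caB\mathbf{w}\|$ directly in Lemma \ref{lem:pearson-mean-direction}, using Bernoulli sampling for $\caJ$ and a matrix Bernstein bound. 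You instead absorb $-\mathbf{w}\mathbf{w}^\top/\langle\bfke_n,\mathbf{w}\rangle$ into $\bfUps_\caT$ and rerun Proposition \ref{prop:unified-centered-compressions}.
\end{itemize}
The absorption is sound and somewhat more economical. The modified kernel depends only on the typical rows, so it is $\bar{\caF}_n$-measurable and the random compression lemma still applies. It is bounded with bounded $z$-derivative on $\bbk$, because $-\langle\bfke_n,\bfG_\caT\bfke_n\rangle\ge 1/\max\bbk$ on the gap event. The $\fkS$--$\fkS$ step and the conditional-mean bound in the $\fkW$--$\fkW$ step use only its max norm, while the other steps need only its operator norm.

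The step you have not supplied is the max-norm bound on $\mathbf{w}$, and you claim it at a strength the available tools do not give. Saying the correction has entries $O_\prec(n^{-1})$ ``by the isotropic law'' amounts to $\|\mathbf{w}\|_{\max}\prec n^{-1/2}$. No isotropic law is among the inputs here: the cited local law for the sparse proxy is entrywise. An isotropic law for entries bounded by $n^{-\tau}$ would carry an $n^{-\tau}$ error, which swamps the main term $\fkm_{\varphi_n}n^{-1/2}$. In addition, the transfer from $\tilde{\bfG}_\caT$ to $\bfG_\caT$ through \eqref{eqn:unified-core-comparison} only controls the difference in $\ell^2$, which gives $n^{-2\tau}$ in max norm.

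The paper proves the weaker bound $\|\mathbf{w}\|_{\max}\prec n^{-\tau}$ as follows. It uses the leave-one-out identity
\[
\langle\bfe_\mu,\tilde{\bfG}_\caT\bfke_n\rangle=\tilde G_{\mu\mu}\bigl[n^{-1/2}-\langle\tilde{\bfu}_\mu,\bfb^{(\mu)}\rangle\bigr],
\qquad \|\bfb^{(\mu)}\|\le C,
\]
then applies Bernstein's inequality to the independent column $\tilde{\bfu}_\mu$, whose entries are $O(n^{-\tau})$, and finally the crude comparison. This bound is enough for your route, since it yields $\|\mathbf{w}\mathbf{w}^\top\|_{\max}/|\langle\bfke_n,\mathbf{w}\rangle|\prec n^{-2\tau}$. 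You should replace the isotropic-law appeal with this argument.

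A minor slip: $\|\fkW_\caB\bfke_n\|$ is of order $\sqrt{|\caB|/n}$, not $n^{-1/2+\delta}$, because $\fkW_\caB\mathbf{1}_n$ has $|\caB|$ coordinates of size $O_\prec(1)$. This is still $\prec n^{-\sigma/4}$, so nothing breaks.
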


Let $\bfH_n$ denote the orthogonal projection onto the subspace perpendicular to $\mathbf{1}_n$,
\begin{equation*}
    \bfH_n := \bfI - \bfke_n \bfke_n^\top,
    \qquad
    \bfke_n := n^{-1/2} \mathbf{1}_n.
\end{equation*}
As right multiplication by $\bfH_n$ subtracts the sample mean from each row, $\bfX_n^\circ = \bfX_n \bfH_n$. Define
\begin{equation}
    \bfv_n := \bfY_n \bfke_n = (v_i)_{i=1}^{p_n},
    \qquad
    v_i := \angles{\bfy_i, \bfke_n},
    \qquad
    \bfDelta_n := \diag \pars{
    \Delta_1, \ldots, \Delta_{p_n}},
    \qquad
    \Delta_i := \frac{1}{\sqrt{1 - \abs{v_i}^2}}.
\end{equation}
Since $\Delta_i = 1 / \norm{\bfH_n \bfy_i} = \norm{\bfx_i} / \norm{\bfx_i^\circ}$, the Pearson correlation matrix can be written as
\begin{equation}
    \bfR_n^{\circ} = \bfDelta_n \bfY_n \bfH_n \bfY_n^\top \bfDelta_n.
    \label{eqn:pearson-diagonal-comparison}
\end{equation}

% *******************************************
\begin{lemma}
\label{lem:pearson-means}
% *******************************************
Under \eqref{eqn:basic-assumption},
\begin{equation}
    \norm{\bfv}_{\max}
    \prec n^{-1/2},
    \qquad
    \norm{\bfDelta_n - \bfI} \prec n^{-1}.
    \label{eqn:pearson-means}
\end{equation}
\end{lemma}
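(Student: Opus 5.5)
The plan is to write $v_i = \angles{\bfx_i, \mathbf{1}_n} / (\sqrt{n}\, \norm{\bfx_i})$ and bound the numerator and the denominator separately. The second claim then follows from the first. Indeed, $\Delta_i - 1 = (1 - v_i^2)^{-1/2} - 1 = O(v_i^2)$ as soon as $\max_i \abs{v_i} \leq 1/2$. So $\max_i \abs{v_i} \prec n^{-1/2}$ gives $\norm{\bfDelta_n - \bfI} = \max_i \abs{\Delta_i - 1} \prec n^{-1}$, since $\bfDelta_n$ is diagonal.

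For the denominator, I will prove a lower bound of the form $\min_i \norm{\bfx_i}^2 \geq c n$ with overwhelming probability. This only needs a lower tail estimate: $\norm{\bfx_i}^2 \geq \sum_\mu \abs{x_{i\mu}}^2 \bbone\{\abs{x_{i\mu}} \leq M\}$. Choose $M$ so that the truncated second moment exceeds $1/2$. The summands are then bounded by $M^2$, and Hoeffding's inequality gives a lower bound $n/4$ with probability $1 - e^{-cn}$. A union bound over $p_n \asymp n$ rows finishes this step. No heavy-tail issue arises, because truncation only decreases the norm.

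For the numerator, the mean-zero sum $\sum_\mu x_{i\mu}$ has heavy-tailed summands, so a direct Bernstein bound at scale $\sqrt{n}$ is unavailable under only finite variance. The key observation is that $v_i$ is self-normalized. By Cauchy--Schwarz it suffices to treat large and small entries differently. Split $x_{i\mu} = x_{i\mu}\bbone\{\abs{x_{i\mu}} \leq \sqrt{n}\} + x_{i\mu}\bbone\{\abs{x_{i\mu}} > \sqrt{n}\}$.

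The small part is handled by Bernstein's inequality \eqref{eqn:scalar-Bernstein-domination}. Here $M_n = \sqrt{n}$ and $V_n = n$, and the centering error is $n\abs{\bbe \xi \bbone\{\abs{\xi} \leq \sqrt{n}\}} \leq \sqrt{n}\, \bbe[\xi^2 \bbone\{\abs{\xi} > \sqrt{n}\}] = o(\sqrt{n})$. This yields $\prec \sqrt{n}$, and dividing by $\sqrt{n} \cdot \sqrt{cn}$ gives a contribution $\prec n^{-1/2}$.

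For the large part, let $N_i$ be the number of entries in row $i$ exceeding $\sqrt{n}$. Then Cauchy--Schwarz gives $\abs{\sum_{\mu \in \text{large}} x_{i\mu}} \leq \sqrt{N_i}\, \norm{\bfx_i}$, so its contribution to $\abs{v_i}$ is at most $\sqrt{N_i / n}$. Now $N_i \sim \operatorname{Binom}(n, \bar{F}_\xi(\sqrt{n}))$, and $n \bar{F}_\xi(\sqrt{n}) \leq 1$ by Chebyshev's inequality. Hence $N_i \prec 1$ via a Chernoff bound, $\bbp\{N_i \geq m\} \leq \binom{n}{m} \bar{F}_\xi(\sqrt{n})^m \leq 1/m!$. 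The catch is that this bound is not polynomially small for fixed $m$, so I will take $m = n^{\eps}$, which gives decay like $\exp(-n^{\eps})$. A union bound over rows then gives $\max_i N_i \leq n^{\eps}$ with overwhelming probability, hence a contribution $\prec n^{-1/2}$.

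The main obstacle is this large-entry term. The crude Cauchy--Schwarz step is exactly what makes the argument work without any tail assumption beyond finite variance, so that is what I would try first. The $n^{\eps}$ loss is absorbed into the definition of $\prec$.
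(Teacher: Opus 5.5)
Your proposal is correct and follows the paper's proof: the same split of each row sum at level $\sqrt{n}$, Bernstein for the truncated part with $\abs{\bbe A_i} \lesssim \sqrt{n}$, Cauchy--Schwarz reducing the large-entry part to $\sqrt{N_i/n}$ with $\max_i N_i \prec 1$, the lower bound $\min_i \norm{\bfx_i}^2 \geq n/4$ with overwhelming probability, and the expansion $(1-t)^{-1/2} - 1 = O(t)$ for $\bfDelta_n$. The only differences are cosmetic: you control $N_i$ through the factorial tail $\bbp\{N_i \geq m\} \leq 1/m!$ with $m = n^{\eps}$ rather than Bernstein, and you prove the denominator bound directly via Hoeffding at a fixed truncation level, where the paper cites it.
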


% *******************************************
\begin{proof}[Proof of Lemma \ref{lem:pearson-means}]
% *******************************************
A standard truncation argument and Bernstein's inequality \eqref{eqn:scalar-Bernstein-domination} give $\min_{i \in \dbraks{p_n}} \norm{\bfx_i}^2 \geq n/4$ with overwhelming probability (see, e.g., \cite[Lemma 2.1]{linPhaseTransitionSmallest2026}). To control the row sums, define
\begin{equation*}
    A_i := \sum\nolimits_{\mu = 1}^n
    x_{i \mu} \bbone\{\abs{x_{i \mu}} \leq \sqrt{n}\},
    \qquad
    B_i := \sum\nolimits_{\mu = 1}^n
    \bbone\{\abs{x_{i \mu}} > \sqrt{n}\}.
\end{equation*}
The assumptions $\bbe \xi = 0$ and $\bbe \xi^2 = 1$, together with Markov's inequality, give
\begin{equation*}
    \abs{\bbe A_i}
    = n \abs[\big]{\bbe \braks[\big]{\xi \bbone\{\abs{\xi} > \sqrt{n}\}}}
    \leq \sqrt{n},
    \qquad
    \operatorname{Var} (A_i) \leq n,
    \qquad
    \bbe B_i \leq 1,
    \qquad
    \operatorname{Var} (B_i) \leq 1.
\end{equation*}
The centered summands of $A_i$ are bounded by $2\sqrt{n}$, while those of $B_i$ are bounded by one. Hence, applying Bernstein's inequality \eqref{eqn:scalar-Bernstein-domination} and taking a union bound over $i \in \dbraks{p_n}$, we obtain
\begin{equation*}
    \max\nolimits_{i \in \dbraks{p_n}} \abs{A_i} \prec \sqrt{n},
    \qquad
    \max\nolimits_{i \in \dbraks{p_n}} B_i \prec 1.
\end{equation*}
On the event $\min_{i \in \dbraks{p_n}} \norm{\bfx_i}^2 \geq n/4$, the Cauchy--Schwarz inequality yields
\begin{equation*}
    \abs{v_i}
    = \frac{1}{\sqrt{n} \norm{\bfx_i}}
    \abs*{\sum\nolimits_{\mu = 1}^n x_{i \mu}}
    \leq \frac{1}{\sqrt{n} \norm{\bfx_i}} \abs{A_i}
    + \sqrt{{B_i} / {n}}
    \leq 2 \abs{A_i} / n
    + \sqrt{{B_i} / {n}}.
\end{equation*}
Combining these estimates proves the first bound in \eqref{eqn:pearson-means}. The second estimate follows immediately from the expansion $(1-t)^{-1/2} - 1 = O(t)$ as $t \to 0$.
\end{proof}

Lemma \ref{lem:pearson-means} controls the diagonal renormalization $\bfDelta_n$. Centering also introduces the rank-one term in $\bfY_n \bfH_n \bfY_n^\top = \bfY_n \bfY_n^\top - \bfv_n \bfv_n^\top$, and the bound on $\norm{\bfv}_{\max}$ alone does not show that this term is small in operator norm. We instead control its effect through the resolvent $\bfG_{\caT}$ of the typical block. The next lemma provides the additional estimates needed to complement Proposition \ref{prop:unified-centered-compressions}.

% *******************************************
\begin{lemma}
\label{lem:pearson-mean-direction}
% *******************************************
Suppose that \eqref{eqn:basic-assumption} and \eqref{eqn:analytic-tail-envelope} hold for a fixed integer $k \geq 2$. Fix a compact interval $\bbk\subset(\lambda_+,\infty)$. Then, uniformly over all realizations of $\bar{\caH}_n$ in $\tilde{\Omega}_n$, the following holds under $\bbp^{\bar{\caH}}$,
\begin{equation}
    \sup\nolimits_{z \in \bbk}
    \curls[\big]{\norm{\fkS_{\caB} \bfG_{\caT}(z) \bfke_n}
    + \norm{\fkW_{\caB} \bfG_{\caT}(z) \bfke_n}} 
    \prec n^{-\tau}.
    \label{eqn:pearson-mean-direction}
\end{equation}
\end{lemma}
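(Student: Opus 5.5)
We prove the estimate for fixed $z \in \bbk$ and then pass to uniformity over $\bbk$. For uniformity we use the same $\varepsilon$-net argument as in Proposition \ref{prop:unified-centered-compressions}: by \eqref{eqn:unified-freezing-bounds}, \eqref{eqn:unified-column-bound} and \eqref{eqn:unified-centered-lipschitz}, both quantities are $n^C$-Lipschitz in $z$. The key structural fact is that $\bfG_{\caT}(z)$ and the unit vector $\bfke_n$ are $\bar{\caF}_n$-measurable, and $\bfke_n$ is invariant under column permutations. Set $\bfu := \bfG_{\caT}(z) \bfke_n$. By \eqref{eqn:unified-core-trace}, $\norm{\bfu} \leq 1/c_{\ref{eqn:unified-core-trace}}$ with overwhelming probability.

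\emph{The $\fkW$ term.} Condition on $\caF_n$. Then $\fkW_{\caB} \bfu$ has independent coordinates $\sum_{\mu} \fkw_{i\mu} u_\mu$, one for each $i \in \caB$, and each is centered. First, $\norm{\fkW_{\caB}\bfu}^2$ is a sum of independent nonnegative terms with conditional mean
\begin{equation*}
    \sum_{i \in \caB} \sum_\mu \bbe^{\caF}\abs{\fkw_{i\mu}}^2 u_\mu^2 \leq C \abs{\caB}\norm{\bfu}^2/n \leq C n^{-\sigma/2}.
\end{equation*}
Second, each coordinate is $O_\prec(\norm{\bfu}_{\max} n^{-\tau} + n^{-1/2}) = O_\prec(n^{-1/2})$ by the scalar Bernstein bound \eqref{eqn:scalar-Bernstein-domination}. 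A more direct route is to view $\fkW_{\caB}\bfu = \sum_{i,\mu} \fkw_{i\mu} u_\mu \bfe_i$ as a sum of independent centered vectors. The vector Bernstein bound \eqref{eqn:matrix-Bernstein-domination}, applied with $d_2=1$, $M_n = Cn^{-\tau}\norm{\bfu}_{\max}$ and $V_n = C\abs{\caB}/n \leq Cn^{-\sigma/2}$, then gives $\norm{\fkW_{\caB}\bfu} \prec n^{-\tau} + n^{-\sigma/4} \prec n^{-\tau}$, using $\tau<\sigma/4$.

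\emph{The $\fkS$ term.} Here we use the random location of the occupied columns. Since $\fkS_{\caB}$ is supported on $\caJ$ and $\norm{\fkS_{\caB}} \leq k$, we get $\norm{\fkS_{\caB}\bfu} \leq k \norm{\bfP_{\caJ}\bfu}$. Now decompose
\begin{equation*}
    \bfu = \fkm_{\varphi_n}(z)\bfke_n + \bfUps_{\caT}(z)\bfke_n.
\end{equation*}
The first piece contributes $\abs{\fkm_{\varphi_n}} \sqrt{\abs{\caJ}/n} \leq C n^{-\sigma/4} \leq C n^{-\tau}$, deterministically on $\Omega_n$. For the second piece, set $\bfw := \bfUps_{\caT}\bfke_n$, which is $\bar{\caF}_n$-measurable and has $\norm{\bfw} \prec 1$. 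Under $\bbp^{\bar{\caF}}$, $\caJ$ is a uniform random subset of size $\abs{\bar{\caJ}} \leq n^{1-\sigma/2}$, so $\norm{\bfP_{\caJ}\bfw}^2$ is a sampling-without-replacement sum of the values $w_\mu^2$. Its mean is $(\abs{\caJ}/n)\norm{\bfw}^2 \prec n^{-\sigma/2}$, and a Bernstein/Hoeffding bound for sampling without replacement controls the fluctuations, provided we have a bound on $\norm{\bfw}_{\max}$. Alternatively, apply the random compression estimate \cite[Lemma 5.9]{linPhaseTransitionSmallest2026} to the rank-one matrix $\bfw\bfw^\top$; this gives $\norm{\bfP_\caJ \bfw}^2 \prec \norm{\bfw}_{\max}^2 + n^{-\sigma/4}$.

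\emph{Main obstacle.} Everything therefore reduces to an entrywise bound $\norm{\bfUps_{\caT}\bfke_n}_{\max} \prec n^{-\tau}$. This does not follow from the entrywise law \eqref{eqn:unified-core-entrywise} alone, since summing $n$ off-diagonal entries of size $n^{-\tau}$ against weights $n^{-1/2}$ loses a factor $\sqrt n$. We would first try an isotropic local law for the proxy $\tilde{\bfQ}_{\caT}$ outside the spectrum, as in \cite{hwangLocalLawTracy2019}, which gives $\abs{\angles{\bfe_\mu, \tilde{\bfG}_{\caT}\bfke_n} - \fkm_{\varphi_n} n^{-1/2}} \prec n^{-\tau}$. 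We would then transfer it to $\bfG_{\caT}$ by the resolvent identity together with \eqref{eqn:unified-core-comparison}, which costs only an operator-norm error $n^{-2\tau}$. If an isotropic law is unavailable in that form, a fallback is to exploit exchangeability of the columns of $\bfY_{\caT}$: conditional on $\bar{\caF}_n$ up to permutation, the vector $\bfu$ has exchangeable coordinates, so $\norm{\bfP_\caJ\bfu}^2$ concentrates around $(\abs{\caJ}/n)\norm{\bfu}^2$ once a crude bound $\norm{\bfu}_{\max} \prec n^{-\tau}$ is established. That crude bound can be obtained by a Schur-complement expansion of $G_{\mu\mu}$ and of the row sums of $\bfG_{\caT}$.
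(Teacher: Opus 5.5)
Your architecture matches the paper's: the $\varepsilon$-net reduction, the $\fkW$ term by matrix Bernstein under $\bbp^{\caF}$ (which only needs $\norm{\bfG_{\caT}\bfke_n} \leq C$), and the $\fkS$ term via $\norm{\fkS_{\caB}\bfG_{\caT}\bfke_n} \leq k \norm{\bfP_{\caJ}\bfG_{\caT}\bfke_n}$ plus concentration over the uniformly random occupied set $\caJ$. The paper Poissonizes $\caJ$ with i.i.d.\ Bernoulli indicators and pays a factor $n+1$ to condition on the cardinality; your sampling-without-replacement Bernstein bound does the same job. You also correctly isolate $\norm{\bfG_{\caT}(z)\bfke_n}_{\max} \prec n^{-\tau}$ as the crux, and correctly note that the entrywise law cannot give it.

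The gap is that you never prove this bound. Your primary plan rests on an isotropic local law for the sparse proxy outside the spectrum. That law is not among the inputs established in Section~\ref{sec:truncation}, which provides only the entrywise law \eqref{eqn:entrywise-law-iid-proxy} and the rigidity \eqref{eqn:unified-proxy-law}, and it would be a substantial result to prove for sparse entries. Your fallback is a one-line hint. As phrased, it also targets $\bfG_{\caT}$ itself, where row normalization destroys the independence between a column and its minor that such an expansion needs.

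The paper closes this with a short leave-one-out argument that needs no isotropic law. It works with the i.i.d.\ proxy $\tilde{\bfY}_{\caT}$ rather than $\bfY_{\caT}$. Write $\tilde{\bfu}_\mu$ for the $\mu$-th column of $\tilde{\bfY}_{\caT}$ and a superscript $(\mu)$ for deletion of that column. The Schur complement formula then gives
\begin{equation*}
    \angles{\bfe_\mu, \tilde{\bfG}_{\caT}\bfke_n}
    = \tilde{G}_{\mu\mu}\bigl[n^{-1/2} - \angles{\tilde{\bfu}_\mu, \bfb^{(\mu)}}\bigr],
    \qquad
    \bfb^{(\mu)} := \tilde{\bfY}_{\caT}^{(\mu)}\tilde{\bfG}_{\caT}^{(\mu)}\bfke_n^{(\mu)}.
\end{equation*}
By \eqref{eqn:unified-proxy-law} and interlacing, $\lambda_1(\tilde{\bfQ}_{\caT}^{(\mu)}) \leq \min\bbk - c$ with overwhelming probability, so $\abs{\tilde{G}_{\mu\mu}} \leq C$ and $\norm{\bfb^{(\mu)}} \leq C$. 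The vector $\bfb^{(\mu)}$ is independent of $\tilde{\bfu}_\mu$, whose entries are centered with variance $1/n$ and size $O(n^{-\tau})$. Scalar Bernstein therefore gives $\abs{\angles{\tilde{\bfu}_\mu, \bfb^{(\mu)}}} \prec n^{-\tau}$. A union bound over $\mu$ follows, and the resolvent identity with \eqref{eqn:unified-core-comparison} transfers the bound to $\bfG_{\caT}$; its operator-norm error $n^{-2\tau}$ also controls the max norm. With this in hand, your $\fkS$ estimate gives $\norm{\bfP_{\caJ}\bfG_{\caT}\bfke_n}^2 \prec n^{-2\tau} + n^{-\sigma/2}$, which suffices since $\tau < \sigma/4$.

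Finally, keep the sampling-without-replacement route and drop the alternative that applies the random compression estimate to $\mathbf{w}\mathbf{w}^\top$. In the form used in the paper, that estimate produces $\sqrt{\abs{\caJ}/n}$ instead of $\abs{\caJ}/n$, so it only yields $\norm{\bfP_{\caJ}\mathbf{w}} \prec n^{-\tau} + n^{-\sigma/8}$. This is not $O_\prec(n^{-\tau})$ when $k = 2$ and $\tau > 1/22$.
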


% *******************************************
\begin{proof}[Proof of Lemma \ref{lem:pearson-mean-direction}]
% *******************************************
Fix a realization of $\bar{\caH}_n$ in $\tilde{\Omega}_n$. Unless further conditioning is specified, all stochastic estimates below hold under $\bbp^{\bar{\caH}}$, uniformly over such realizations. As demonstrated in the proof of Proposition \ref{prop:unified-centered-compressions}, a standard $\varepsilon$-net argument reduces the proof to estimates for each fixed $z \in \bbk$. We therefore fix $z$ and suppress it from the notation. We first show that
\begin{equation}
    \norm{\bfG_{\caT} \bfke_n}_{\max}
    = \max\nolimits_{\mu \in \dbraks{n}} 
    \abs{\angles{\bfe_\mu, \bfG_{\caT} \bfke_n}}
    \prec n^{-\tau}.
    \label{eqn:resolvent-averaging}
\end{equation}
Recall the i.i.d. proxy $\tilde{\bfY}_{\caT}$ from \eqref{def:iid-proxy}, and write $\tilde{\bfu}_\mu := \tilde{\bfY}_{\caT} \bfe_\mu$ for its $\mu$-th column. We denote the matrix obtained by removing this column, the resulting companion matrix, and the resolvent by
\begin{equation*}
    \tilde{\bfY}_{\caT}^{(\mu)}
    := (\tilde{y}_{i \rho})_{i \in \caT\!, \, \rho \in \dbraks{n} \backslash \{\mu\}},
    \qquad
    \tilde{\bfQ}_\caT^{(\mu)} 
    := \tilde{\bfY}_{\caT}^{(\mu) \top} \tilde{\bfY}_{\caT}^{(\mu)},
    \qquad
    \tilde{\bfG}_{\caT}^{(\mu)} (z)
    := \pars[\big]{ \tilde{\bfQ}_\caT^{(\mu)} 
    - z \bfI}^{-1}.
\end{equation*}
Let $\bfke_n^{(\mu)}$ denote $\bfke_n$ with its $\mu$-th coordinate deleted. The Schur complement formula then gives
\begin{equation}
    \angles{\bfe_\mu, \tilde{\bfG}_{\caT} \bfke_n}
    = \tilde{G}_{\mu \mu}
    \braks[\big]{n^{-1/2}-
    \angles[\big]{\tilde{\bfu}_\mu, \tilde{\bfY}_{\caT}^{(\mu)}
    \tilde{\bfG}_{\caT}^{(\mu)} \bfke_{n}^{(\mu)} }}
    =: \tilde{G}_{\mu \mu}
    \braks[\big]{n^{-1/2}-
    \angles{\tilde{\bfu}_\mu, \bfb^{(\mu)}}}.
    \label{eqn:leave-one-out}
\end{equation}
The rigidity estimate \eqref{eqn:unified-proxy-law} and Cauchy's interlacing inequality imply that, for some constant $c \equiv c(\bbk) > 0$,
\begin{equation}
    \max\nolimits_{\mu \in \dbraks{n}} 
    \lambda_1 (\tilde{\bfQ}_{\caT}^{(\mu)})
    \leq \lambda_1 (\tilde{\bfQ}_{\caT}) \leq \min \bbk - c
    \label{eqn:upper-gap-minor}
\end{equation}
with overwhelming probability. In particular, $\norm{\bfb^{(\mu)}} \leq C$ with overwhelming probability for a sufficiently large constant $C > 0$. Conditional on the minor, the entries of $\tilde{\bfu}_\mu = (\tilde{y}_{i \mu})_{i \in \caT}$ remain i.i.d., centered, and bounded by $O (n^{-\tau})$. Therefore, by Bernstein's inequality \eqref{eqn:scalar-Bernstein-domination},
\begin{equation*}
    \abs{\angles{\tilde{\bfu}_\mu,\bfb^{(\mu)}}}
    \prec n^{-\tau} + n^{-1/2} \leq 2 n^{-\tau}.
\end{equation*}
Note that \eqref{eqn:upper-gap-minor} also gives $\abs{\tilde{G}_{\mu \mu}} \prec 1$. Substituting these bounds into \eqref{eqn:leave-one-out} and taking a union bound yields
\begin{equation*}
    \norm{\tilde{\bfG}_{\caT} \bfke_n}_{\max}
    = \max\nolimits_{\mu \in \dbraks{n}} 
    \abs{\angles{\bfe_\mu, \tilde{\bfG}_{\caT} \bfke_n}}
    \prec n^{-\tau}.
\end{equation*}
The comparison \eqref{eqn:unified-core-comparison} and the resolvent identity transfer this bound to $\bfG_{\caT}$, proving \eqref{eqn:resolvent-averaging}.

\shortpara{The $\fkS$-$\bfke$ estimate}. We next prove the bound involving $\fkS_{\caB}$ in \eqref{eqn:pearson-mean-direction}. Conditional on $\bar{\caF}_n$, the resolvent $\bfG_{\caT}$ is fixed, while the occupied column set $\caJ$ is uniformly distributed among subsets of $\dbraks{n}$ of size $\abs{\bar{\caJ}}$. To estimate the restriction to these columns, let $(\eta_\mu)_{\mu=1}^n$ be i.i.d. Bernoulli variables with mean $\abs{\bar{\caJ}}/n$, independent of all other randomness. Set $\caJ_\eta := \{\mu\in\dbraks{n}:\eta_\mu=1\}$. Then
\begin{equation}
    \caJ \,|\, \bar{\caF}_n
    \overset{\rmd}{=} 
    \caJ_\eta \,\big|\,
    \abs{\caJ_\eta} = \abs{\bar{\caJ}},
    \qquad
    \bbp \curls[\big]{\abs{\caJ_\eta} = \abs{\bar{\caJ}}}
    \geq ({n+1})^{-1}.
    \label{eqn:property-caJ-eta}
\end{equation}
The probability bound follows because $\abs{\bar{\caJ}}$ is a mode of $\abs{\caJ_\eta} \sim \mathrm{Binom} (n, \abs{\bar{\caJ}}/n)$. Before conditioning on the cardinality of $\caJ_\eta$, we apply Bernstein's inequality \eqref{eqn:scalar-Bernstein-domination} to the Bernoulli sum, conditional on $\bar{\caF}_n$. Averaging this estimate and using \eqref{eqn:resolvent-averaging} and the bound $\norm{\bfG_{\caT}} \prec 1$ from \eqref{eqn:unified-core-trace}, we obtain
\begin{align*}
    \norm{\bfP_{\caJ_\eta} \bfG_{\caT} \bfke_n}^2
    & = \sum\nolimits_{\mu=1}^n
    \abs{\angles{\bfe_\mu, \bfG_{\caT} \bfke_n}}^2
    \pars[\big]{\eta_\mu - \abs{\bar{\caJ}}/n}
    + (\abs{\bar{\caJ}}/n) \, \norm{\bfG_{\caT}\bfke_n}^2 \\
    & \prec 
    \norm{\bfG_{\caT}\bfke_n}_{\max}^2
    + (\abs{\bar{\caJ}}/n) \, \norm{\bfG_{\caT}\bfke_n}^2    
    \prec n^{-2\tau} + n^{-\sigma/2}
    \leq 2 n^{-2\tau},
\end{align*}
where we used $\abs{\bar{\caJ}}/n\leq n^{-\sigma/2}$ from Lemma \ref{lem:unified-graph} and $\tau \leq \sigma/4$ from \eqref{eqn:unified-truncation}. Conditioning on $\abs{\caJ_\eta} = \abs{\bar{\caJ}}$ multiplies failure probabilities by at most $n+1$, so it preserves stochastic domination. The equality in distribution in \eqref{eqn:property-caJ-eta} therefore transfers the bound to $\caJ$. Using the norm bound of $\fkS_{\caB}$ in \eqref{eqn:unified-freezing-bounds}, we conclude that
\begin{equation*}
    \norm{\fkS_{\caB}\bfG_{\caT}\bfke_n}
    = \norm[\big]{\fkS_{\caB} \bfP_{\caJ}^\top \bfP_{\caJ} \bfG_{\caT}\bfke_n}
    \leq k\norm{\bfP_{\caJ}\bfG_{\caT}\bfke_n}
    \prec n^{-\tau}.
\end{equation*}

\shortpara{The $\fkW$-$\bfke$ estimate}. Conditional on $\caF_n$, the summands in the decomposition
\begin{equation*}
    \fkW_{\caB} \bfG_{\caT} \bfke_n
    = \sum\nolimits_{i \in \caB} \sum\nolimits_{\mu = 1}^n \fkX_{i \mu},
    \qquad
    \fkX_{i \mu} := \fkw_{i \mu}
    \angles{\bfe_\mu, \bfG_{\caT} \bfke_n} \bfe_i
\end{equation*}
are independent and centered. By \eqref{eqn:unified-diffuse-entries} and $\abs{\caB}/n \leq n^{-\sigma/2}$ on $\tilde{\Omega}_n$, we have
\begin{align*}
    \norm{\fkX_{i \mu}}
    & \leq C n^{-\tau} \norm{\bfG_{\caT} \bfke_n}_{\max}, \\
    \norm*{\sum\nolimits_{i \in \caB}
    \sum\nolimits_{\mu = 1}^n
    \bbe^{\caF} [\fkX_{i \mu} \fkX_{i \mu}^{\top}] }
    & \leq C n^{-1} \norm{\bfG_{\caT} \bfke_n}^2, \\
    \norm*{\sum\nolimits_{i \in \caB}
    \sum\nolimits_{\mu = 1}^n
    \bbe^{\caF} [\fkX_{i \mu}^{\top} \fkX_{i \mu}] }
    & \leq C n^{-\sigma/2} \norm{\bfG_{\caT} \bfke_n}^2.
\end{align*}
We apply Tropp's matrix Bernstein inequality \eqref{eqn:matrix-Bernstein-domination} under $\bbp^{\caF}$ and then average the resulting conditional estimate. Together with \eqref{eqn:unified-core-trace} and \eqref{eqn:resolvent-averaging}, this gives
\begin{equation*}
    \norm{\fkW_{\caB}\bfG_{\caT}\bfke_n}
    \prec n^{-\tau} \norm{\bfG_{\caT}\bfke_n}_{\max}
    + \sqrt{\abs{\caB}/n} \norm{\bfG_{\caT}\bfke_n}
    \prec n^{-2\tau} + n^{-\sigma/4}
    \leq n^{-\tau},
\end{equation*}
which completes the proof of \eqref{eqn:pearson-mean-direction}.
\end{proof}

% *******************************************
\begin{proof}[Proof of Proposition \ref{prop:pearson-transfer}]
% *******************************************
Fix a compact interval $\bbk \subset (\lambda_+, \infty)$. We first show that the diagonal renormalization $\bfDelta_n$ has a negligible effect on the spectrum. Proposition \ref{prop:unified-core} gives $\norm{\bfY_{\caT}} = O_{\bbp}(1)$. For the atypical block, the proof of Proposition \ref{prop:unified-resolvent} gives $\norm{\bfY_{\caB}\bfY_{\caB}^\top-\bfL_{\caB}} \prec n^{-\tau}$ under $\bbp^{\bar{\caH}}$, uniformly over realizations of $\bar{\caH}_n$ in $\tilde{\Omega}_n$. Since $\norm{\bfL_{\caB}} \leq k$ on $\tilde{\Omega}_n$ and $\bbp(\tilde{\Omega}_n)\to1$, it follows that $\norm{\bfY_{\caB}} = O_{\bbp}(1)$ as well. Hence,
\begin{equation}
    \norm{\bfY_n} = O_{\bbp}(1),
    \qquad
    \norm{\bfY_n \bfH_n \bfY_n^\top} = O_{\bbp}(1),
    \qquad
    \norm{\bfR_n^{\circ} - \bfY_n \bfH_n \bfY_n^\top} = o_{\bbp}(1),
    \label{eqn:pearson-small-renormalization}
\end{equation}
where the last estimate follows from \eqref{eqn:pearson-diagonal-comparison} and the bound $\norm{\bfDelta_n-\bfI} \prec n^{-1}$ in Lemma \ref{lem:pearson-means}.

By Weyl's inequality and \eqref{eqn:pearson-small-renormalization}, it suffices to prove the spectral comparison \eqref{eqn:pearson-spectral-sandwich} for $\bfY_n \bfH_n \bfY_n^\top$. Up to a simultaneous permutation of rows and columns, the centered matrix has the block form
\begin{equation*}
    \bfY_n\bfH_n\bfY_n^\top-z\bfI
    = \begin{pmatrix}
        \bfY_{\caT}\bfH_n\bfY_{\caT}^{\top}-z\bfI
        & \bfY_{\caT}\bfH_n\bfY_{\caB}^{\top} \\
        \bfY_{\caB}\bfH_n\bfY_{\caT}^{\top}
        & \bfY_{\caB}\bfH_n\bfY_{\caB}^{\top}-z\bfI
    \end{pmatrix}.
\end{equation*}
Since $\lambda_1 (\bfH_n \bfQ_{\caT} \bfH_n) \leq \lambda_1 (\bfQ_{\caT})$, the gap bound \eqref{eqn:unified-core-trace} ensures that, with probability tending to one, the upper-left block is negative definite for every $z\in\bbk$. Its Schur complement is
\begin{equation*}
    \pars{ \bfY_{\caB}\bfH_n\bfY_{\caB}^{\top}-z\bfI }
    - \bfY_{\caB}\bfH_n\bfY_{\caT}^{\top}
    \pars{\bfY_{\caT}\bfH_n\bfY_{\caT}^{\top}-z\bfI}^{-1}
    \bfY_{\caT}\bfH_n\bfY_{\caB}^{\top}
    = - z\braks[\big]{\bfI
    +\bfY_{\caB}\bfH_n(\bfH_n\bfQ_{\caT}\bfH_n-z\bfI)^{-1}
    \bfH_n\bfY_{\caB}^{\top}}.
\end{equation*}
The argument in the proof of Proposition \ref{prop:spectral-transfer} therefore reduces the claim to
\begin{equation}
    \sup\nolimits_{z \in \bbk}
    \norm*{\bfY_{\caB}\bfH_n
    (\bfH_n\bfQ_{\caT}\bfH_n-z\bfI)^{-1}
    \bfH_n\bfY_{\caB}^\top
    - \fkm_\phi(z)\bfL_{\caB}}
    \pconv 0.
    \label{eqn:pearson-block-resolvent}
\end{equation}
To compare this resolvent with the uncentered one, we use the Sherman--Morrison formula,
\begin{equation*}
    \bfH_n (\bfH_n \bfQ_{\caT} \bfH_n - z \bfI)^{-1} \bfH_n
    = \bfG_{\caT} (z)
    - \frac{\bfG_{\caT}(z) \bfke_n \bfke_n^\top \bfG_{\caT}(z)}
    {\angles{\bfke_n, \bfG_{\caT}(z) \bfke_n}}.
\end{equation*}
Proposition \ref{prop:unified-resolvent} already controls $\bfY_{\caB} \bfG_{\caT}(z) \bfY_{\caB}^\top - \fkm_\phi(z) \bfL_{\caB}$ uniformly on $\bbk$. Also, the denominator in the correction term is bounded away from zero. Indeed, $\bfQ_{\caT} \geq 0$ implies $-\angles{\bfke_n, \bfG_{\caT}(z) \bfke_n} \geq 1/z \geq 1/(\max \bbk)$ on the gap event \eqref{eqn:unified-core-trace}. Therefore, it remains to bound $\norm{\bfY_{\caB} \bfG_{\caT}(z) \bfke_n}$ uniformly in $z$.

We use the decomposition \eqref{eqn:unified-atypical-decomposition}. The normalizer comparison \eqref{eqn:unified-normalizer-comparison} controls $\boldsymbol{\Theta}_{\caB}$, Lemma \ref{lem:pearson-mean-direction} controls the terms involving $\fkS_\caB$ and $\fkW_\caB$, while \eqref{eqn:unified-freezing-bounds} controls the term involving $\fkD_{\caB}$. Together, these estimates give, uniformly over realizations of $\bar{\caH}_n$ in $\tilde{\Omega}_n$,
\begin{equation*}
    \sup\nolimits_{z \in \bbk}
    \norm{\bfY_{\caB} \bfG_{\caT}(z) \bfke_n}
    \prec n^{-\tau}.
\end{equation*}
Since $\bbp(\tilde{\Omega}_n) \to 1$, this proves \eqref{eqn:pearson-block-resolvent} and completes the proof.
\end{proof}

% *******************************************
\begin{proof}[Proof of Corollary \ref{cor:pearson-extension}]
% *******************************************
We first establish the upper-edge limit for the typical block of the Pearson correlation matrix $\bfR_n^{\circ}$. Recall the i.i.d. proxy $\tilde{\bfY}_{\caT}$ from \eqref{def:iid-proxy}. By \cite[Theorem 3.10]{baiSpectralAnalysisLarge2010} and Lemma \ref{lem:unified-graph} \ref{item:unified-graph-edge-count}, the empirical spectral distribution of $\tilde{\bfY}_{\caT} \tilde{\bfY}_{\caT}^\top$ converges weakly almost surely to the MP law with aspect ratio $\phi$. Here the Lindeberg condition follows from $\tilde{y}_{i\mu} = O(n^{-\tau})$. Combining this with \eqref{eqn:unified-core-comparison}, \eqref{eqn:eig-typical-to-edge}, and interlacing yields
\begin{equation*}
    \lambda_+ - o_\bbp (1)
    \leq \lambda_2 (\bfR_{\caT})
    \leq \lambda_1(\bfY_{\caT}\bfH_n\bfY_{\caT}^\top)
    \leq \lambda_1(\bfR_{\caT})
    \leq \lambda_+ + o_\bbp (1).
\end{equation*}
The renormalization estimate \eqref{eqn:pearson-small-renormalization} then gives
\begin{equation}
    \lambda_1 \pars[\big]{\bfP_{\caT} \bfR_n^{\circ} \bfP_{\caT}^{\top}}
    \pconv \lambda_+.
    \label{eqn:pearson-typical-edge}
\end{equation}

The spectral comparison \eqref{eqn:pearson-spectral-sandwich} and the edge limit \eqref{eqn:pearson-typical-edge} are the Pearson counterparts of \eqref{eqn:spectral-count-sandwich} and \eqref{eqn:eig-typical-to-edge}. The effective Gram matrix $\bfL_{\caB}$ remains the same. We may therefore repeat the proof of Corollary \ref{cor:extremal-transfer} and the analysis of $\bfL_{\caB}$ in Sections \ref{sec:collisions} and \ref{sec:critical-collisions} with these substitutions. This proves the Pearson versions of Theorems \ref{thm:upper-edge-convergence}, \ref{thm:collision-layer}, \ref{thm:upper-poisson}, and \ref{thm:critical-maximum}, with the same critical intensity and limiting law.

We prove the divergence assertion in Theorem \ref{thm:all-orders-divergence} separately by comparing principal blocks of a fixed size. Fix an integer $k \geq k_*(\phi)$. For every $\caI \subset \dbraks{p_n}$ with $\abs{\caI} = k$, we have
\begin{equation*}
    0 \leq \bfP_{\caI} \bfY_n \bfH_n \bfY_n^\top \bfP_{\caI}^{\top} \leq \bfP_{\caI} \bfR_n \bfP_{\caI}^{\top},
    \qquad
    \norm[\big]{\bfP_{\caI} \bfR_n \bfP_{\caI}^{\top}}
    \leq \operatorname{Tr} \pars[\big]{\bfP_{\caI} \bfR_n \bfP_{\caI}^{\top}}
    \leq k.
\end{equation*}
Together with Lemma \ref{lem:pearson-means} and \eqref{eqn:pearson-diagonal-comparison}, these bounds give the uniform comparison
\begin{equation}
    \sup\nolimits_{\abs{\caI}=k}
    \norm[\big]{\bfP_{\caI} (\bfR_n^{\circ}-\bfR_n) \bfP_{\caI}^{\top}}
    \leq k (1+\norm{\bfDelta_n}) \norm{\bfDelta_n-\bfI}
    + k \norm{\bfv}_{\max}^2
    = o_{\bbp}(1).
    \label{eqn:comparison-center}
\end{equation}
The proof of Lemma \ref{lem:strong-stars} shows that, with probability tending to one, there exist a set of rows $\caI \subset \dbraks{p_n}$ with $\abs{\caI}=k$ and a column $\mu \in \dbraks{n}$ such that $\Xi_{i\mu}$ holds for every $i \in \caI$. Here the events $\Xi_{i \mu}$ are defined in \eqref{eqn:good-strong-star}. For these rows, the argument leading to \eqref{eqn:lower-Rn-k} gives
\begin{equation*}
    \lambda_1(\bfP_{\caI} \bfR_n \bfP_{\caI}^{\top})
    \geq \sum\nolimits_{i \in \caI} \abs{y_{i\mu}}^2
    \geq k-o(1).
\end{equation*}
The uniform comparison \eqref{eqn:comparison-center} and interlacing now imply $\lambda_1 (\bfR_n^{\circ}) \geq k-o_{\bbp}(1)$. Since $k$ can be arbitrarily large, Theorem \ref{thm:all-orders-divergence} also holds for $\bfR_n^{\circ}$.
\end{proof}

% --- Acknowledgements ---

\section*{Acknowledgements}
The authors are sincerely grateful to Guangming Pan and Wang Zhou for the many helpful discussions during the preparation of this manuscript.

% --- Bibliography ---

\printbibliography

% --- Document ends here ---

\end{document}